%
%
%
%

\documentclass[11pt,twoside]{amsart}
\usepackage{latexsym,amssymb,amsmath}
\usepackage[all]{xy}
\usepackage{enumerate}
\usepackage{extpfeil}
\usepackage{afterpage}
\usepackage{float}


\textwidth=16.00cm
\textheight=22.00cm
\topmargin=0.00cm
\oddsidemargin=0.00cm
\evensidemargin=0.00cm
\headheight=0cm
\headsep=1cm
\headsep=0.5cm
\numberwithin{equation}{section}
\hyphenation{semi-stable}
\setlength{\parskip}{3pt}

\def\demo{\noindent{\it Proof. }}
\newtheorem{theorem}{Theorem}[section]
\newtheorem{lemma}[theorem]{Lemma}
\newtheorem{proposition}[theorem]{Proposition}
\newtheorem{corollary}[theorem]{Corollary}

\theoremstyle{definition}
\newtheorem{definition}[theorem]{Definition}
\newtheorem{procedure}[theorem]{Procedure}
\newtheorem{remark}[theorem]{Remark}
\newtheorem{example}[theorem]{Example}

\begin{document}


\title[The v-number of edge ideals]
{The v-number of edge ideals}

\author[D. Jaramillo]{Delio Jaramillo}
\address{
Departamento de
Matem\'aticas\\
Centro de Investigaci\'on y de Estudios
Avanzados del
IPN\\
Apartado Postal
14--740 \\
07000 Mexico City, CDMX.
}
\email{djaramillo@math.cinvestav.mx}

\author[R. H. Villarreal]{Rafael H. Villarreal}
\address{
Departamento de
Matem\'aticas\\
Centro de Investigaci\'on y de Estudios
Avanzados del
IPN\\
Apartado Postal
14--740 \\
07000 Mexico City, CDMX.
}
\email{vila@math.cinvestav.mx}


\keywords{v-number, edge-critical graphs, clutters, edge ideals, regularity,
well-covered graphs, $W_2$ graphs, symbolic powers, vertex
decomposable complexes, Cohen--Macaulay ideals, independence complex.}
\subjclass[2010]{Primary 13F20; Secondary 05C22, 05E40, 13H10.}
\begin{abstract} 
The aim of this work is to study the
v-number of edge ideals of clutters and graphs. We relate the v-number
with the regularity of edge ideals and study the combinatorial structure of
the graphs whose edge ideals
have their second symbolic power Cohen-Macaulay. 
\end{abstract}

\maketitle

\section{Introduction}\label{intro-section}

Let $S=K[t_1,\ldots,t_s]=\oplus_{d=0}^{\infty} S_d$ be a polynomial ring over
a field $K$ with the standard grading and let $\mathcal{C}$ be a
\textit{clutter} with vertex 
set $V(\mathcal{C})=\{t_1,\ldots,t_s\}$, that is, $\mathcal{C}$ is a family of subsets of
$V(\mathcal{C})$, called \textit{edges}, none of which is included in
another. The set of edges of $\mathcal C$ is denoted by
$E(\mathcal{C})$. The primer example of a clutter is a simple graph
$G$. The \textit{edge ideal} of $\mathcal{C}$, denoted $I(\mathcal{C})$, is the ideal of $S$ generated
by all squarefree monomials $t_e:=\prod_{t_i\in e}t_i$ such that
$e\in E(\mathcal{C})$. In what follows $I$ denotes the edge ideal of $\mathcal{C}$.

A prime ideal $\mathfrak{p}$ of $S$ is an \textit{associated prime} of $S/I$ if
$(I\colon f)=\mathfrak{p}$ for some $f\in S_d$, where $(I\colon f)$ is
the set of all $g\in S$ such that $gf\in I$. The set of associated primes of $S/I$ 
is denoted ${\rm Ass}(S/I)$ or simply ${\rm Ass}(I)$. The v-{\em number} of $I$, denoted ${\rm
v}(I)$, is the following invariant of
$I$ that was introduced in \cite{min-dis-generalized} to study  
Reed--Muller-type codes:
$$
{\rm v}(I):=\begin{cases}\min\{d\geq 1 \mid\, \exists\, f 
\in S_d \mbox{ and }\mathfrak{p} \in {\rm Ass}(I) \mbox{ with } (I\colon f)
=\mathfrak{p}\} & \mbox{ if }\ I\subsetneq\mathfrak{m},\\  
0 &\mbox{ if }\ I=\mathfrak{m},
\end{cases}$$
where $\mathfrak{m}=(t_1,\ldots,t_s)$ is the maximal ideal
of $S$. This invariant can be computed using the software system
\textit{Macaulay}$2$
\cite{mac2} (Proposition~\ref{lem:vnumber},
Procedure~\ref{nov11-19}). The Castelnuovo--Mumford \textit{regularity} of $S/I$,
denoted ${\rm reg}(S/I)$, is bounded from above by $\dim(S/I)$
(Proposition~\ref{nov6-19}). We will show that in many interesting
cases ${\rm reg}(S/I)$ is bounded from below by ${\rm v}(I)$. 

A subset $A$ of $V(\mathcal{C})$ is called 
{\it independent\/} or {\it
stable\/} if $e\not\subset A$ for any  
$e\in E(\mathcal{C})$. The dual concept of a stable vertex set
is a {\it vertex cover\/}, i.e., a subset $C$ of $V(\mathcal{C})$ is a vertex
cover if and only if $V(\mathcal{C})\setminus C$ is a stable vertex set. A 
{\it minimal vertex cover\/} is a vertex cover which
is minimal with respect to inclusion.  
If $A$ is a stable set of 
$\mathcal{C}$, the \textit{neighbor set} of $A$, denoted 
$N_\mathcal{C}(A)$, is the set of all vertices $t_i$ 
such that $\{t_i\}\cup A$ contains an edge of $\mathcal{C}$. 
For use below $\mathcal{F}_\mathcal{C}$ denotes the family of all
maximal stable sets of $\mathcal{C}$ and $\mathcal{A}_\mathcal{C}$
denotes the family of all stable sets $A$ of $\mathcal{C}$ whose neighbor
set $N_\mathcal{C}(A)$ is a minimal vertex
cover of $\mathcal{C}$. 

Our next result gives a combinatorial formula of the 
v-number of the edge ideal of a clutter. In particular, as is seen below, 
it shows that the v-number is bounded from above by the independent
domination number of the clutter.   

\noindent \textbf{Theorem~\ref{v-number-clutters-graphs}.}\textit{ Let
$I=I(\mathcal{C})$ be the edge ideal of $\mathcal{C}$. Then, 
$\mathcal{F}_\mathcal{C}\subset\mathcal{A}_\mathcal{C}$ and 
$$\mathrm{v}(I)=\min\{|A|\colon 
A\in\mathcal{A}_\mathcal{C}\}.
$$
}
\quad The number of
vertices in any smallest vertex cover of $\mathcal{C}$, denoted 
$\alpha_0({\mathcal C})$, is called the \textit{vertex covering
number} of $\mathcal{C}$. The \textit{independence
number} of $\mathcal{C}$, denoted by $\beta_0(\mathcal{C})$,
is the number 
of vertices in 
any largest stable set of vertices of $\mathcal{C}$. The Krull dimension of $S/I(\mathcal{C})$,
denoted $\dim(S/I(\mathcal{C}))$, is equal to $\beta_0(\mathcal{C})$
and the height of $I(\mathcal{C})$, denoted ${\rm ht}(I(
\mathcal{C}))$, is equal to $\alpha_0(C)$.  A \textit{dominating set} of a
graph $G$ is a set $A$ of vertices of $G$ such that every vertex 
not in $A$ is adjacent to a vertex in $A$. The \textit{domination
number} of $G$, denoted $\gamma(G)$, is the minimum size of a
dominating set. An \textit{independent
dominating set} 
of $G$ is a set that is
both dominating and independent in $G$. Equivalently, an independent
dominating set is a maximal independent set, see
\cite[Proposition~2]{Allan-Laskar} and
Lemma~\ref{v-number-clutters-graphs-lemma}(b).   
The \textit{independent domination number} of $G$, denoted by $i(G)$, is the minimum size of
an independent dominating set. Thus, one has 
$$i(G)=\min\{|A|\colon
A\in\mathcal{F}_G\}.$$ 
\quad By analogy, we define the \textit{independent domination
number} of a clutter $\mathcal{C}$ by 
$$i(\mathcal{C}):=\min\{|A|\colon
A\in\mathcal{F}_\mathcal{C}\}.$$ 
\quad For every graph $G$, one has $\gamma(G)\leq i(G)\leq\beta_0(G)$
\cite[Theorem~2.7]{independent-domination}. The equality
$\gamma(G)=i(G)$ holds if $G$ is claw-free, that is,
if the complete bipartite graph $\mathcal{K}_{1,3}$ is not an induced subgraph of $G$
\cite[p.~75]{Allan-Laskar}. As a consequence of
Theorem~\ref{v-number-clutters-graphs} one has
$$
{\rm v}(I(\mathcal{C}))\leq i(\mathcal{C})\leq\beta_0(\mathcal{C}).
$$
\quad A clutter is
\textit{well-covered} if every maximal
stable set is a maximum stable set. 
We prove that ${\rm
v}(I(\mathcal{C}))=\beta_0(\mathcal{C})$ if and only if $\mathcal{C}$  is
well-covered and $\mathcal{F}_\mathcal{C}=\mathcal{A}_\mathcal{C}$
(Corollary~\ref{sep3-19}). 

A simplicial complex $\Delta$ is {\it vertex
decomposable\/} \cite{bjorner-topological} if either $\Delta$ is a
simplex, or $\Delta=\emptyset$, 
or $\Delta$ contains a vertex $v$, called a {\it shedding
vertex\/}, such that both the link ${\rm lk}_\Delta(v)$ and the 
deletion ${\rm del}_\Delta(v)$ of the vertex $v$ are vertex-decomposable, and
such that every facet of ${\rm del}_\Delta(v)$ is a facet of $\Delta$
(Section~\ref{prelim-section}).  
The simplicial complex $\Delta_\mathcal{C}$ whose faces are the
independent vertex sets of $\mathcal{C}$ is called the {\it independence
complex} of $\mathcal{C}$. If all maximal faces of
$\Delta_\mathcal{C}$ are of the same size the independence complex
$\Delta_\mathcal{C}$ is
called \textit{pure}. A clutter is called {\it vertex
decomposable\/} if 
its independence complex is vertex decomposable. 

We show a family of ideals where the
v-number is a lower bound for the regularity. 

\noindent\textbf{Theorem~\ref{vertex-decomposable-vnumber}.}
\textit{ If the independence complex $\Delta_\mathcal{C}$ of a clutter
$\mathcal{C}$ is vertex decomposable, then
$${\rm v}(I(\mathcal{C}))\leq{\rm reg}(S/I(\mathcal{C})).
$$
}
\quad If $I$ is the edge ideal of the clutter of circuits $\mathcal{C}_M$ of a matroid
$M$ on the set $X=\{t_1,\ldots,t_s\}$, then by a result of Provan and
Billera \cite[Theorem~3.2.1]{provan-billera} the independence complex
$\Delta$ of $M$ whose facets are the bases of $M$ is vertex
decomposable, $I=I_\Delta$ and, by
Theorem~\ref{vertex-decomposable-vnumber}, ${\rm v}(I)\leq {\rm
reg}(S/I)$ (Corollary~\ref{circuits-matroid}). If $G$ is a graph with no chordless
cycles of length other than $3$ or $5$, then by a result of Woodroofe
\cite[Theorem~1.1]{Woodroofe} the independence complex of $G$ is
vertex decomposable and, by Theorem~\ref{vertex-decomposable-vnumber},
 ${\rm v}(I(G))\leq {\rm
reg}(S/I(G))$ (Corollary~\ref{vnumber-chordless-criterion}). The work
done in \cite{footprint-ci}, though formulated in a different 
language, shows that ${\rm v}(I(G))\leq {\rm reg}(S/I(G))$ for any
Cohen--Macaulay bipartite graph $G$ \cite[Proposition~4.7]{footprint-ci}.
We extend this result to any sequentially Cohen--Macaulay bipartite
graph (Corollary~\ref{vnumber-scm-bipartite}).
 We give an example of a graph $G$ with ${\rm v}(I(G))>{\rm reg}(S/I(G))$
(Example~\ref{example-graph3}), disproving \cite[Conjecture~4.2]{footprint-ci} that the v-number of a
squarefree monomial ideal $I$ is a lower
bound for the regularity of $S/I$. In dimension $1$,
the conjecture is true as was shown in
\cite{min-dis-generalized}. It is an open problem whether or
not ${\rm v}(I)\leq {\rm reg}(S/I)+1$ holds for any squarefree
monomial ideal. 

The clutter of minimal vertex covers of $\mathcal{C}$, denoted 
$\mathcal{C}^\vee$, is called the {\it blocker} of $\mathcal{C}$. The 
edge ideal of $\mathcal{C}^\vee$, denoted by $I_c(\mathcal{C})$, is
called the {\it ideal of covers\/} of 
$\mathcal{C}$. If the independence complex
$\Delta_G$ of $G$ is pure and shellable, we prove ${\rm
v}(I_c(G))=\alpha_0(G)-1$ (Proposition~\ref{nov14-19}).

If $G$ is a graph without isolated vertices and $I(G)$ has a linear
resolution, we prove that ${\rm v}(I(G))=1$ 
(Proposition~\ref{dec7-19}). If $W_G$ is the 
whisker graph of a graph $G$, we show that the v-number of $I(W_G)$ is
equal to $i(G)$ and that the v-number of
$I(W_G)$  is bounded
from above by the regularity of $K[V(W_G)]/I(W_G)$
(Theorem~\ref{nov4-19}). 

A graph $G$ belongs to class $W_2$ if
$|V(G)|\geq 2$ and any $2$ disjoint stable sets are contained in $2$
disjoint maximum stable sets. A complete graph $\mathcal{K}_m$ on $m$ vertices belongs to
$W_2$ for $m\geq 2$.  
A graph $G$ is
\textit{$1$-well-covered} if $G$ is well-covered and
$G\setminus v$ is well-covered for all $v\in
V(G)$. A graph $G$ is in $W_2$ if and only if $G$ is $1$-well-covered and has
no isolated vertices \cite[Theorem~2.2]{Levit-Mandrescu}. For other
characterizations of graphs in $W_2$ see
\cite{Levit-Mandrescu,Staples} and 
the references therein.   

Our next two results classify the class of $W_2$ graphs. 

\noindent {\bf Theorem~\ref{W2-graphs}.}\textit{ Let $G$ be a graph
without isolated vertices. Then $G$ is in $W_2$ if and only if 
$G$ is well-covered and $\mathcal{F}_G=\mathcal{A}_G$.
}

The next result gives us an algebraic method to determine if
a given graph is in $W_2$ using \textit{Macaulay}$2$ \cite{mac2}
(Procedure~\ref{nov11-19}) and shows that for edge ideals of $W_2$ graphs, the
v-number is an upper bound for the regularity.

\noindent \textbf{Theorem~\ref{reg-w2}.}\textit{ Let $G$ be a graph
without isolated vertices and let 
$I=I(G)$ be its edge ideal. Then, $G$ is in $W_2$  if and only if
${\rm v}(I)=\dim(S/I)$.  
}

The $n$-th symbolic power of an edge ideal $I$, denoted
$I^{(n)}$, is given by $I^{(n)}:=\bigcap_{i=1}^r\mathfrak{p}_i^n$, where
$\mathfrak{p}_1,\ldots,\mathfrak{p}_r$ are the associated primes of
$I$. There are algebraic characterizations of the Cohen--Macaulay
property of $I^{(2)}$ and $I(G)^{(2)}$, $G$ a graph, 
given by N. C. Minh and N. V. Trung
\cite[Theorem~2.1]{Minh-Trung-adv} and D. T. Hoang, N. C. Minh and
T. N. Trung \cite[Theorem~2.2]{Hoang-etal}, respectively. We are interested in the 
combinatorial properties of graphs  whose edge ideals have their 
second symbolic power Cohen--Macaulay.

An edge in a graph is \textit{critical} if its removal 
increases the independence number. An \textit{edge-critical graph} 
is a graph  with only
critical edges. The concept of an edge-critical graph is introduced by
Ore \cite{Ore}. An edge-critical graph must be a
block and any two adjacent edges in such a graph must lie on a common
odd cycle \cite{Beineke-Harary-Plummer}. A structural
characterization of edge-critical graphs remains unknown
\cite{Plummer,Plummer-survey}.

Staples proves that a triangle-free $W_2$ graph is
edge-critical \cite{Staples-thesis} and that a connected 
$W_2$ graph different from $\mathcal{K}_2$ 
cannot have endvertices \cite[Theorem~4]{Staples}. If $G$ is a graph
with no isolated vertices, D. T. Hoang and T. N.
Trung prove that $I(G)^2$  is Cohen--Macaulay if and only if $G$ is
a triangle-free member of $W_2$
\cite[Theorem~4.4]{hoang-gorenstein-second-jaco}. From these 
results, one obtains that a graph $G$ is edge-critical
if $I(G)^2$ is Cohen--Macaulay. 

A graph $G$ is triangle-free if and
only if $I(G)^2=I(G)^{(2)}$
\cite[Theorem~4.13]{Dao-Stefani-Grifo-Huneke-Nunez}. If $I(G)^{(2)}$
is Cohen--Macaulay, our  
next result shows that $I(G)$ is edge-critical regardless of whether
$G$ has triangles or not.
This result---together with the tables of edge-critical graphs
given in \cite{Plummer,B-Small}---allows us to give the list of all
connected graphs $G$ with fewer than $10$ vertices such that
$I(G)^{(2)}$ is Cohen--Macaulay over a field of
characteristic $0$ (Remark~\ref{edge-criticalcm}, Table~\ref{tab:C-M,G}).

\noindent \textbf{Theorem~\ref{c-m-edge-critical}.}\textit{ If $G$ is
a graph and $I(G)^{(2)}$ is Cohen--Macaulay, then $G$ is
edge-critical.
}

If a graph $G$ is in $W_2$ and $v$ is a vertex of $G$, then Pinter
shows that $G_v:=G\setminus N_G[v]$ is in $W_2$ and 
$\beta_0(G_v)=\beta_0(G)-1$, where $N_G[v]$ is the closed
neighborhood of $v$ \cite[Theorem~5]{Pinter-jgt}. A recent result of Levit and
Mandrescu shows that the converse holds for well-covered graphs
without isolated vertices \cite[Theorem~3.9]{Levit-Mandrescu}.
 As an application of our classification of
$W_2$ graphs in terms of the v-number and the independence number
(Theorem~\ref{reg-w2}), for well-covered graphs without isolated
vertices we give a proof of the converse quite different from that of \cite{Levit-Mandrescu}.
Using
\cite[Theorem~2.2]{Hoang-etal} and a result of D. T. Hoang
\cite[Lemma~8]{Hoang-VJM} it follows that all graphs $G$ with $I(G)^{(2)}$ Cohen--Macaulay 
are in class $W_2$. There are Cohen--Macaulay connected edge-critical
graphs $G$ in $W_2$ with $I(G)^{(2)}$ not Cohen--Macaulay
(Example~\ref{example-graph3}). 

If a graph $G$ has triangles, 
to the best of our knowledge there is no characterization
of the Cohen--Macaulayness $I(G)^{(2)}$ in terms of the graph $G$
\cite[p.~1079]{Hoang-etal}. The Serre condition $\mathrm(S_2)$ for
$S/I(G)^{(2)}$ has been nicely classified in graph theoretical 
terms by D. T. Hoang, G. Rinaldo and N. Terai
\cite[Lemma~1]{Hoang-Rinaldo-Terai}. 
An open question is 
whether the $\mathrm(S_2)$ property of $S/I(G)^{(2)}$ implies that 
$I^{(2)}$ is Cohen--Macaulay \cite[p.~5]{Hoang-Rinaldo-Terai}.

For a squarefree monomial ideal $I$ of dimension $2$, the associated
Stanley--Reisner complex $\Delta_I$ of $I$ is a graph, N. C. Minh and N. V.
Trung \cite[Theorem~2.3]{Minh-Trung} give a combinatorial
classification in terms of $\Delta_I$ for the Cohen--Macaulay
property of $I^{(2)}$. For the edge ideal $I(G)$ of a graph $G$ with
independence number $2$, we give a
characterization in terms of the combinatorics of $G$ for the Cohen--Macaulayness of
$I(G)^{(2)}$. 

\noindent \textbf{Theorem~\ref{c-m-edge-critical-dim1}.}\textit{ Let
$G$ be a graph. If $\beta_0(G)=2$, then $I(G)^{(2)}$ is
Cohen--Macaulay if and only if $G$ is edge-critical.
}

If $G$ is an edge-critical graph without isolated
vertices and $\beta_0(G)=2$, we prove that $G$ is in $W_2$, 
then using that any $1$-dimensional
connected complex is vertex decomposable
\cite[Theorem~3.1.2]{provan-billera}, we prove that  
${\rm v}(I(G))={\rm reg}(S/I(G))=2$
(Corollary~\ref{c-m-edge-critical-dim1-coro}).

For all unexplained
terminology and additional information  we refer to
\cite{Har,Sta2,monalg-rev} (for graph theory, Stanley--Reisner rings and edge ideals), and
\cite{Eisen,Mats} (for commutative ring theory).

\section{Preliminaries}\label{prelim-section}

In this section we
present some of the results that will be needed throughout the paper
and introduce some more notation. All results of this
section are well-known. To avoid repetitions, we continue to employ
the notations and
definitions used in Section~\ref{intro-section}.

\begin{definition}\label{regularity-socle-degree}\rm Let $I\subset S$ be a graded ideal and let
${\mathbf F}$ be the minimal graded free resolution of $S/I$ as an
$S$-module:
\[
{\mathbf F}:\ \ \ 0\rightarrow
\bigoplus_{j}S(-j)^{b_{g,j}}
\stackrel{}{\rightarrow} \cdots
\rightarrow\bigoplus_{j}
S(-j)^{b_{1,j}}\stackrel{}{\rightarrow} S
\rightarrow S/I \rightarrow 0.
\]
The {\it Castelnuovo--Mumford regularity\/} of $S/I$ ({\it
regularity} of $S/I$ for short) is defined as
$${\rm reg}(S/I)=\max\{j-i \mid b_{i,j}\neq 0\}.
$$
If $g=\dim(S)-\dim(S/I)$, we say that the ring $S/I$ and the ideal 
$I$ are \textit{Cohen-Macaulay}. 
\end{definition}

For squarefree monomial ideals the regularity is additive.

\begin{proposition}{\rm(The regularity is additive
\cite[Lemma~7]{woodroofe-matchings})}\label{additivity-reg}  
Let $R_1=K[\mathbf{x}]$ and 
$R_2=K[\mathbf{y}]$ be two polynomial rings over a field $K$ and let 
$R=K[\mathbf{x},\mathbf{y}]$. If $I_1$ and $I_2$ are squarefree
monomial ideals of
$R_1$ and $R_2$, respectively, then  
$$
{\rm reg}(R/(I_1R+I_2R))={\rm reg}(R_1/I_1) +{\rm reg}(R_2/I_2).
$$
\end{proposition}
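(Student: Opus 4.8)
The plan is to identify the minimal graded free resolution of $R/(I_1R+I_2R)$ over $R$, up to base change, with the total complex of the $K$-tensor product of the minimal graded free resolutions of $R_1/I_1$ over $R_1$ and of $R_2/I_2$ over $R_2$, and then to read off the regularity from the resulting product formula for graded Betti numbers. The squarefree monomial hypothesis is not really used here; it only guarantees $R_i/I_i\neq 0$, so that the regularities involved are finite.

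First I would record that, since $\mathbf{x}$ and $\mathbf{y}$ are disjoint, $R=R_1\otimes_K R_2$ and $I_1R+I_2R$ is the kernel of the natural surjection $R\to (R_1/I_1)\otimes_K(R_2/I_2)$, whence
$$R/(I_1R+I_2R)\;\cong\;(R_1/I_1)\otimes_K(R_2/I_2)$$
as graded $R$-modules, with the grading by total degree. Now let $\mathbf{F}_i\to R_i/I_i$ be the minimal graded free resolution over $R_i$ for $i=1,2$. Since $K$ is a field, tensoring over $K$ is exact in each argument, so by the Künneth theorem $\mathbf{F}_1\otimes_K\mathbf{F}_2$ is a graded free resolution of $(R_1/I_1)\otimes_K(R_2/I_2)$ over $R$. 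Applying $-\otimes_R K$ and using the natural isomorphism $(\mathbf{F}_1\otimes_K\mathbf{F}_2)\otimes_R K\cong(\mathbf{F}_1\otimes_{R_1}K)\otimes_K(\mathbf{F}_2\otimes_{R_2}K)$, in which both factors on the right already have zero differential by minimality of $\mathbf{F}_1$ and $\mathbf{F}_2$, yields for all $n$ and all $\ell$
$$\mathrm{Tor}^R_n\big(R/(I_1R+I_2R),K\big)_\ell\;\cong\;\bigoplus_{i+j=n}\ \bigoplus_{p+q=\ell}\ \mathrm{Tor}^{R_1}_i(R_1/I_1,K)_p\otimes_K\mathrm{Tor}^{R_2}_j(R_2/I_2,K)_q .$$

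Writing $b_{i,p}$ for graded Betti numbers, this gives
$$b_{n,\ell}\big(R/(I_1R+I_2R)\big)=\sum_{\substack{i+j=n\\ p+q=\ell}}b_{i,p}(R_1/I_1)\,b_{j,q}(R_2/I_2).$$
Set $r_1=\mathrm{reg}(R_1/I_1)$ and $r_2=\mathrm{reg}(R_2/I_2)$. If $b_{n,\ell}(R/(I_1R+I_2R))\neq0$, then some summand on the right is nonzero, say with indices $i+j=n$, $p+q=\ell$, $b_{i,p}(R_1/I_1)\neq0$, $b_{j,q}(R_2/I_2)\neq0$; hence $\ell-n=(p-i)+(q-j)\le r_1+r_2$. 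Conversely, pick $(i,p)$ with $p-i=r_1$ and $b_{i,p}(R_1/I_1)\neq0$, and $(j,q)$ with $q-j=r_2$ and $b_{j,q}(R_2/I_2)\neq0$; since all Betti numbers are nonnegative, $b_{i+j,\,p+q}(R/(I_1R+I_2R))\ge b_{i,p}(R_1/I_1)\,b_{j,q}(R_2/I_2)>0$, and $(p+q)-(i+j)=r_1+r_2$. Therefore $\mathrm{reg}(R/(I_1R+I_2R))=r_1+r_2$.

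The only step needing genuine care is the Künneth argument: one must check that $\mathbf{F}_1\otimes_K\mathbf{F}_2$ really is a resolution and not merely a complex, and that the successive base changes (applying $(-)\otimes_{R_1\otimes_K R_2}K$, viewing $K\cong K\otimes_K K$) commute with the $K$-tensor decomposition while respecting the internal grading; once this bookkeeping is settled, everything else is formal. Readers preferring to stay inside the squarefree monomial category may instead observe that the Stanley--Reisner complex of $I_1R+I_2R$ is the simplicial join of those of $I_1$ and $I_2$, and derive the same Betti-number identity from Hochster's formula together with the reduced-homology Künneth formula for joins.
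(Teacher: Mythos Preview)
Your proof is correct. The paper does not actually supply its own proof of this proposition; it is stated in the preliminaries section and attributed to \cite[Lemma~7]{woodroofe-matchings}, so there is nothing to compare your argument against within the paper itself.

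Your route via the tensor product of minimal graded free resolutions is the standard one, and as you correctly observe it does not use the squarefree hypothesis at all: it works for arbitrary graded ideals (indeed, for arbitrary finitely generated graded modules) over polynomial rings in disjoint variable sets. The alternative you sketch at the end, via Hochster's formula and the K\"unneth formula for the reduced homology of a simplicial join, is closer in spirit to how such results are often proved in the combinatorial commutative algebra literature and is essentially the argument Woodroofe gives; it has the advantage of staying entirely inside the Stanley--Reisner framework but is restricted to the squarefree case. Either approach is fine here.
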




\begin{lemma}\cite[Lemma~3.5]{edge-ideals}\label{addvariables}
Let $S'=K[t_1, \ldots ,t_{s-1}]$ and let $I'$ be a squarefree monomial ideal of
$S'$. If $S=K[t_1,\ldots,t_s]$ and $I=I'S$, 
then ${\rm reg}(S/I)={\rm reg}(S'/I')$. 
\end{lemma}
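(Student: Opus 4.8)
The plan is to use flat base change along the inclusion $S'\hookrightarrow S=S'[t_s]$. Since $S$ is a free $S'$-module, it is faithfully flat over $S'$, and the functor $-\otimes_{S'}S$ is exact and preserves the grading. First I would take the minimal graded free resolution $\mathbf{F}'$ of $S'/I'$ over $S'$ and apply $-\otimes_{S'}S$; by flatness, $\mathbf{F}'\otimes_{S'}S$ is an exact complex of graded free $S$-modules resolving $(S'/I')\otimes_{S'}S=S/I'S=S/I$, and it has the same free modules (ranks and degree shifts) as $\mathbf{F}'$.

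The crux is that $\mathbf{F}'\otimes_{S'}S$ is again minimal. The entries of the differential matrices of $\mathbf{F}'$ lie in the irrelevant maximal ideal $(t_1,\ldots,t_{s-1})$ of $S'$, hence after base change they lie in $(t_1,\ldots,t_{s-1})S\subseteq\mathfrak{m}=(t_1,\ldots,t_s)$. Therefore all differentials of $\mathbf{F}'\otimes_{S'}S$ have entries in $\mathfrak{m}$, so this complex is the minimal graded free resolution of $S/I$ over $S$. Comparing the two resolutions gives $b_{i,j}(S/I)=b_{i,j}(S'/I')$ for all $i,j$, and hence, by Definition~\ref{regularity-socle-degree}, ${\rm reg}(S/I)={\rm reg}(S'/I')$.

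I expect the only point needing care is the minimality check, which is routine once one notes that adjoining $t_s$ introduces no units into the maps of $\mathbf{F}'$. Alternatively, one can avoid resolutions altogether and apply Proposition~\ref{additivity-reg} with $R_1=S'$, $R_2=K[t_s]$, $I_1=I'$ and $I_2=(0)$ (the zero ideal being a trivial squarefree monomial ideal): then $R=S$, $R/(I_1R+I_2R)=S/I$, and ${\rm reg}(R_2/I_2)={\rm reg}(K[t_s])=0$, so ${\rm reg}(S/I)={\rm reg}(S'/I')+0={\rm reg}(S'/I')$.
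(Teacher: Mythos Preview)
The paper does not supply its own proof of this lemma; it is stated in the preliminaries section with a citation to \cite[Lemma~3.5]{edge-ideals} and the blanket remark that all results there are well known. So there is nothing to compare against directly.

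Your flat base change argument is correct and is the standard proof: $S=S'[t_s]$ is free over $S'$, tensoring the minimal graded free resolution of $S'/I'$ with $S$ is exact and preserves shifts, and minimality survives because the entries of the differentials stay in $(t_1,\ldots,t_{s-1})S\subset\mathfrak{m}$. This gives equality of all graded Betti numbers, hence of the regularity. Your alternative via Proposition~\ref{additivity-reg} with $I_2=(0)$ also works, and is arguably the slickest route given the tools already assembled in the paper; one should just note that the zero ideal is admitted as a squarefree monomial ideal (the paper does adopt this convention for edge ideals of discrete clutters), and that ${\rm reg}(K[t_s])=0$.
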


Let $\Delta$ be a \textit{simplicial complex} with vertex set 
$V$ contained in $\{t_1,\ldots ,t_s\}$, that is, $\Delta$ is a family
of subsets of $V$ called \textit{faces} such that $F\subset G\in\Delta$ implies $F\in\Delta$
and $\{v\}\in \Delta$ for all $v\in V$. For $v\in V$, define the {\it star\/} of $v$ as 
$$
{\rm star}(v):=
\{G\in \Delta\vert\, \{v\}\cup G\in \Delta\}.
$$
\quad 
The {\it deletion} of $v$, denoted 
${\rm del}_\Delta(v)$, is the subcomplex on $V\setminus\{v\}$ of all faces of $\Delta$ that
do not contain $v$. The \textit{induced subcomplex} on a set of vertices $A$ 
of $\Delta$, denoted $\Delta[A]$, is obtained by successively deleting
the vertices of $A$. The {\it link\/}
of $v$ is the subcomplex of ${\rm del}_\Delta(v)$ given by  
$${\rm lk}(v):=\{H\in \Delta\vert\ v\notin H\ \mbox{and}\
H\cup\{v\}\in
\Delta\}.$$ 
\quad The \textit{Stanley--Reisner ideal} $I_\Delta$ is the ideal of
$S$ defined as 
$$I_{\Delta}:=\left(\{t_{i_1}\cdots t_{i_r}\vert\ i_1<\cdots < i_r 
{\rm,}\ \{t_{i_1},\ldots, t_{i_r}\}\notin \Delta\}\right),
$$ 
\noindent and its \textit{Stanley--Reisner ring} $K[\Delta]$ 
is the quotient ring $S/I_\Delta$. Note that 
$t_i\notin I_\Delta$ for all $t_i\in S$.
\begin{proposition}{\rm(\cite[Lemma~2.3]{Ha-Woodroofe},
\cite[p.~1588]{kalai-meshulam})}\label{regularity-homology}
If $\Delta$ is a simplicial complex with vertex set $V$ and $I_\Delta$ is its
Stanley--Reisner ideal over a field $K$, then
\begin{eqnarray*}
{\rm reg}(K[V]/I_\Delta)&=&\max\{d\,\vert\,
\widetilde{H}_{d-1}(\Delta[A])\neq(0)\mbox{ for some }A\subset V\}\\
&=&\min\{d\,\vert\,
\widetilde{H}_{i}(\Delta[A])=(0)\mbox{ for all }A\subset V\mbox{
and }i\geq d\},
\end{eqnarray*}
where $\widetilde{H}_{i}(\Delta[A])$ is the $i$-th simplicial
homology module with
coefficients in $K$.
\end{proposition}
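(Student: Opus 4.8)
The plan is to obtain both equalities from Hochster's formula for the graded Betti numbers of a Stanley--Reisner ring, combined with the description of the regularity via the minimal graded free resolution in Definition~\ref{regularity-socle-degree}. Hochster's formula states that for all $i,j\geq 0$,
$$
b_{i,j}(K[V]/I_\Delta)=\sum_{W\subseteq V,\ |W|=j}\dim_K\widetilde{H}_{j-i-1}(\Delta[W];K),
$$
where $\Delta[W]$ denotes the subcomplex of faces of $\Delta$ contained in $W$ and, by convention, $\widetilde{H}_{-1}(\{\emptyset\})=K$, so that $b_{0,0}=1$. I would simply cite this from \cite{Ha-Woodroofe} or \cite{kalai-meshulam}; a self-contained argument computes $\mathrm{Tor}^S_i(K[\Delta],K)_{\mathbf{a}}$ from the $\Z^s$-graded Koszul resolution of $K$ and, for squarefree multidegrees $\mathbf{a}\in\{0,1\}^s$, identifies this strand with a shift of the reduced chain complex of the induced subcomplex on $\mathrm{supp}(\mathbf{a})$.

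\emph{First equality.} Put $d:=j-i$, so that $j-i-1=d-1$. If $b_{i,j}\neq 0$, then some summand on the right is nonzero, i.e. $\widetilde{H}_{d-1}(\Delta[W])\neq(0)$ for some $W$ with $|W|=j$. Conversely, if $\widetilde{H}_{d-1}(\Delta[A])\neq(0)$ for some $A\subseteq V$, then nonvanishing reduced homology in degree $d-1$ of a complex on $|A|$ vertices forces $0\leq d\leq|A|$; hence $j:=|A|$ and $i:=|A|-d$ are nonnegative integers with $j-i=d$, and the $W=A$ term shows $b_{i,j}\neq 0$. Thus $\{\,j-i\mid b_{i,j}\neq 0\,\}=\{\,d\mid\widetilde{H}_{d-1}(\Delta[A])\neq(0)\text{ for some }A\subseteq V\,\}$, and taking maxima gives $\mathrm{reg}(K[V]/I_\Delta)=\max\{\,d\mid\widetilde{H}_{d-1}(\Delta[A])\neq(0)\text{ for some }A\subseteq V\,\}$.

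\emph{Second equality.} This is a formal reindexing of the first. Let $r$ be the common value above and set $T:=\{\,d\mid\widetilde{H}_i(\Delta[A])=(0)\text{ for all }A\subseteq V\text{ and all }i\geq d\,\}$. If $d\geq r$ and $i\geq d$, then $i+1>r$, so by the previous paragraph $\widetilde{H}_i(\Delta[A])=\widetilde{H}_{(i+1)-1}(\Delta[A])=(0)$ for every $A$; hence $r\in T$ and $\min T\leq r$. On the other hand $r$ is attained, say $\widetilde{H}_{r-1}(\Delta[A_0])\neq(0)$, so no $d\leq r-1$ lies in $T$, whence $\min T\geq r$. Therefore $\min T=r$, which is the second equality.

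The only substantive ingredient here is Hochster's formula; everything after it is index-chasing. So the main difficulty is presentational---deciding how much of Hochster to reproduce versus cite, and keeping track of the degenerate cases (the void/empty complex, and $I_\Delta=(0)$) so that the conventions $\widetilde{H}_{-1}(\{\emptyset\})=K$ and $\max\emptyset=0$ make both ends of the equality agree. Since the proposition is recorded here as known, I would invoke \cite{Ha-Woodroofe,kalai-meshulam} and present only the bookkeeping above.
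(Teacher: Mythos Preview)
Your argument via Hochster's formula is correct and is exactly the standard route to this statement. Note, however, that the paper does not supply its own proof of Proposition~\ref{regularity-homology}: it is recorded in the Preliminaries section as a known fact and attributed to \cite[Lemma~2.3]{Ha-Woodroofe} and \cite[p.~1588]{kalai-meshulam}, with no further justification. So there is nothing to compare against beyond observing that your derivation is precisely what underlies those citations.
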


Let $\mathcal{C}$ be a clutter with vertex set
$V(\mathcal{C})=\{t_1,\ldots,t_s\}$ and edge set $E(\mathcal{C})$. 
A clutter is also called a \textit{simple
hypergraph}. A clutter with no edges is called a \textit{discrete
clutter}. The edge ideal of a discrete clutter is $(0)$ by convention.
An \textit{isolated vertex} of $ \mathcal{C}$ is a vertex
that is not in any edge of $\mathcal{C}$. Any squarefree monomial ideal is
the edge ideal of a clutter 
\cite[pp.~220--221]{monalg-rev}.
The Krull dimension of $S/I(\mathcal{C})$ is equal to
$\max\{\dim(S/\mathfrak{p})\vert\, \mathfrak{p}\in{\rm
Ass}(I(\mathcal{C}))\}$.

A graded ideal is called \textit{unmixed} if all its associated
primes have the same height. A clutter is called \textit{unmixed} if its edge
ideal is unmixed. The next result tells us that a clutter
$\mathcal{C}$ is unmixed if and only if $\mathcal{C}$ is well-covered.

\begin{lemma}\cite[Lemma~6.3.37]{monalg-rev}\label{jul1-01} Let $C$
be a set of vertices of a clutter
$\mathcal{C}$. Then $C$ is a minimal vertex cover of $\mathcal{C}$
if and only if the ideal of $S$ generated by $C$ 
is an associated prime of $I(\mathcal{C})$.
\end{lemma}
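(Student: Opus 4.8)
The plan is to use two standard facts about squarefree monomial ideals: their minimal primes are the monomial primes generated by transversals of the generating monomials, and, being radical, they have no embedded associated primes, so $\mathrm{Ass}(I(\mathcal{C})) = \mathrm{Min}(I(\mathcal{C}))$. I would first record the elementary dictionary. For a subset $C \subseteq V(\mathcal{C})$, write $\mathfrak{p}_C$ for the ideal of $S$ generated by the variables in $C$. Then $I(\mathcal{C}) \subseteq \mathfrak{p}_C$ if and only if every generator $t_e$ lies in $\mathfrak{p}_C$, i.e. if and only if $e \cap C \neq \emptyset$ for all $e \in E(\mathcal{C})$, i.e. if and only if $C$ is a vertex cover of $\mathcal{C}$. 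Since $\mathfrak{p}_{C'} \subsetneq \mathfrak{p}_C$ whenever $C' \subsetneq C$, it follows that $\mathfrak{p}_C$ is a minimal prime over $I(\mathcal{C})$ precisely when $C$ is a minimal vertex cover.

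For the direction ``$C$ a minimal vertex cover $\Rightarrow \mathfrak{p}_C$ associated'', I would exhibit an explicit colon ideal. Set $A := V(\mathcal{C}) \setminus C$; since $C$ is a minimal vertex cover, $A$ is a maximal independent set of $\mathcal{C}$. Let $f := \prod_{t_i \in A} t_i$ (with $f = 1$ when $A = \emptyset$); note $f\notin I(\mathcal{C})$ since $\mathrm{supp}(f)=A$ is independent. Working monomial by monomial, for a monomial $m$ one has $mf \in I(\mathcal{C})$ if and only if $\mathrm{supp}(mf) = \mathrm{supp}(m) \cup A$ contains an edge of $\mathcal{C}$. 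If $\mathrm{supp}(m) \cap C = \emptyset$, then $\mathrm{supp}(mf) = A$ is independent, so $mf \notin I(\mathcal{C})$; hence $(I(\mathcal{C}) : f) \subseteq \mathfrak{p}_C$. Conversely, for $t_j \in C$, maximality of $A$ gives an edge contained in $A \cup \{t_j\}$, and this edge must contain $t_j$ since $A$ is independent, so $t_j f \in I(\mathcal{C})$ and $t_j \in (I(\mathcal{C}):f)$. Therefore $(I(\mathcal{C}):f) = \mathfrak{p}_C$, which is prime, so $\mathfrak{p}_C \in \mathrm{Ass}(I(\mathcal{C}))$.

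For the converse, suppose $\mathfrak{p} = (I(\mathcal{C}):f)$ is an associated prime. Because $I(\mathcal{C})$ is squarefree it is radical, so $S/I(\mathcal{C})$ is reduced and has no embedded primes; hence $\mathfrak{p}$ is a minimal prime of $I(\mathcal{C})$, and since every minimal prime of a monomial ideal is a monomial prime we get $\mathfrak{p} = \mathfrak{p}_C$ for some $C$, which by the first step must be a minimal vertex cover. I expect the only delicate point to be the invocation of these two general facts (that minimal primes of monomial ideals are monomial, and that $\mathrm{Ass} = \mathrm{Min}$ for radical ideals), both of which can be cited from \cite{monalg-rev,Eisen}; the rest is the bookkeeping of the second paragraph. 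If one prefers a self-contained argument, the same conclusion follows by first proving the irredundant primary decomposition $I(\mathcal{C}) = \bigcap_{C} \mathfrak{p}_C$, the intersection over all minimal vertex covers $C$ --- a monomial $m$ lies in the right-hand side if and only if $\mathrm{supp}(m)$ meets every minimal vertex cover, if and only if $V(\mathcal{C}) \setminus \mathrm{supp}(m)$ fails to be a vertex cover, if and only if $m \in I(\mathcal{C})$ --- and then reading $\mathrm{Ass}(I(\mathcal{C}))$ off this decomposition.
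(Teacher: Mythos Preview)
Your argument is correct. The paper does not actually prove this lemma; it merely states it with a citation to \cite[Lemma~6.3.37]{monalg-rev} and uses it as a black box throughout Section~\ref{v-number-section}. So there is no proof in the paper to compare against, and your write-up would serve as a perfectly good self-contained justification. One small remark: in the second paragraph you write $\mathrm{supp}(mf)=A$ when $\mathrm{supp}(m)\cap C=\emptyset$; strictly this should read $\mathrm{supp}(mf)=\mathrm{supp}(m)\cup A\subseteq A$, which is what you need (and which you in fact have, since $\mathrm{supp}(m)\subseteq V(\mathcal{C})\setminus C=A$). The explicit colon computation you give for the forward direction is essentially the content of the paper's Lemma~\ref{v-number-clutters-graphs-lemma}(d), so your approach is very much in the spirit of how the paper handles these ideals.
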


The following result of Woodroofe gives a sufficient condition for
vertex decomposability of graphs and is an extension of the fact that chordal
graphs are shellable \cite{bipartite-scm}.

\begin{theorem}{\rm\cite[Theorem~1]{Woodroofe}}\label{Woodroofe-vertex-deco}
 If $G$ is a graph with no chordless
cycles of length other 
than $3$ or
$5$, then $G$ is vertex decomposable $($hence shellable and
sequentially Cohen-Macaulay$)$. 
\end{theorem}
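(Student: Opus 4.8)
The plan is to prove directly that the independence complex $\Delta_{G}$ of $G$ is vertex decomposable; the parenthetical consequences then follow from the standard implications vertex decomposable $\Rightarrow$ nonpure shellable $\Rightarrow$ sequentially Cohen--Macaulay. The argument is an induction on $|V(G)|$, and the structural fact that drives it is that the property ``$G$ has no chordless cycle of length other than $3$ or $5$'' is inherited by every induced subgraph, in particular by $G\setminus v$ and by $G\setminus N_{G}[v]$ for each vertex $v$. Since ${\rm del}_{\Delta_{G}}(v)=\Delta_{G\setminus v}$ and ${\rm lk}_{\Delta_{G}}(v)=\Delta_{G\setminus N_{G}[v]}$, the deletion and link of any vertex are again independence complexes of graphs in the class, hence vertex decomposable by the inductive hypothesis; so it suffices to exhibit a shedding vertex $v$ of $\Delta_{G}$, i.e.\ one for which every facet of ${\rm del}_{\Delta_{G}}(v)$ is a facet of $\Delta_{G}$. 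The base cases are $G$ without edges (so $\Delta_{G}$ is a simplex) and $\Delta_{G}=\emptyset$. As a vertex $v$ is a shedding vertex of $\Delta_{G}$ exactly when it is one for the independence complex of the component of $G$ containing $v$, we may assume $G$ is connected with at least one edge.

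Next I would record the combinatorial meaning of the shedding condition: $v$ is a shedding vertex of $\Delta_{G}$ iff ${\rm lk}_{\Delta_{G}}(v)$ and ${\rm del}_{\Delta_{G}}(v)$ share no facet, which (a common facet is a maximal independent set $S$ of $G\setminus N_{G}[v]$ that remains maximal in $G\setminus v$, equivalently $N_{G}(v)\subseteq N_{G}(S)$) amounts to: there is no independent set $A\subseteq V(G)\setminus N_{G}[v]$ with $N_{G}(v)\subseteq N_{G}(A)$. This yields the \emph{domination lemma}: if $x\neq y$ and $N_{G}[x]\subseteq N_{G}[y]$, then $y$ is a shedding vertex, since otherwise $x\in N_{G}(y)\subseteq N_{G}(A)$ for some independent $A\subseteq V(G)\setminus N_{G}[y]$, forcing some $s\in A$ into $N_{G}(x)\subseteq N_{G}[y]$, contrary to $A\cap N_{G}[y]=\emptyset$. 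In particular, if $G$ has a simplicial vertex $v$ (its neighborhood is a clique), then $N_{G}[v]\subseteq N_{G}[w]$ for any neighbor $w$ of $v$, so $w$ is a shedding vertex; since $G$ is connected with an edge, $v$ has a neighbor. As every finite chordal graph has a simplicial vertex (Dirac), this disposes of the chordal case, and more generally of any $G$ having a simplicial vertex.

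It remains to handle $G$ connected, with an edge, and having \emph{no} simplicial vertex; such a $G$ is not chordal, so it contains an induced cycle of length $\geq 4$, necessarily of length $5$ by hypothesis. Fix an induced $5$-cycle $C\colon v_{1}v_{2}v_{3}v_{4}v_{5}$. If $G$ has a pair $x\neq y$ with $N_{G}[x]\subseteq N_{G}[y]$, the domination lemma finishes it, so assume $G$ has no such pair; I would then argue by contradiction that some vertex of a suitably chosen induced $5$-cycle is a shedding vertex. If no $v_{i}$ were a shedding vertex, then for each $i$ there would be an independent set $A_{i}\subseteq V(G)\setminus N_{G}[v_{i}]$ with $N_{G}(v_{i})\subseteq N_{G}(A_{i})$. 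Because $C$ is induced, the only vertex of $C$ outside $N_{G}[v_{i}]$ adjacent to $v_{i-1}$ (resp.\ to $v_{i+1}$) is $v_{i-2}$ (resp.\ $v_{i+2}$), and $v_{i-2}$ and $v_{i+2}$ are adjacent in $C$; hence each $A_{i}$ must use a vertex outside $C$. The plan is to follow the edges forced by the $A_{i}$'s, combine them with the edges of $C$, and use the ``no domination pair'' assumption to keep the auxiliary vertices from being absorbed into a larger closed neighborhood, thereby building a chordless cycle of length $\geq 6$ (or an induced $C_{4}$), a contradiction.

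The main obstacle is exactly this last step: the case analysis showing that, in a graph with no chordless cycle of length other than $3$ or $5$ and no domination pair, the five witnessing independent sets around an induced $5$-cycle cannot all exist simultaneously. Managing how these sets interact with $C$ and with one another, and pinning down a forbidden chordless cycle, is the technical core of the argument; a judicious choice of which induced $5$-cycle (and of minimal witnessing sets) is what makes the extraction work. With a shedding vertex in hand the induction closes, establishing that $\Delta_{G}$ is vertex decomposable, and the ``shellable'' and ``sequentially Cohen--Macaulay'' conclusions follow from the standard chain of implications.
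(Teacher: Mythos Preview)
The paper does not prove this theorem; it is quoted from Woodroofe's paper \cite{Woodroofe} as background in the preliminaries, so there is no ``paper's own proof'' to compare against. Your outline follows Woodroofe's actual strategy: induct on $|V(G)|$, translate the shedding condition for $\Delta_G$ into the graph-theoretic statement ``no independent set of $G\setminus N_G[v]$ dominates $N_G(v)$'', dispose of the chordal case via a simplicial vertex and the domination lemma, and then handle the remaining case through an induced $5$-cycle. Up to and including the domination lemma and the chordal case, what you wrote is correct and essentially complete.

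The genuine gap is the $C_5$ case, and you say so yourself: you set up five hypothetical witnessing independent sets $A_1,\ldots,A_5$ and announce that their coexistence forces a forbidden induced cycle, but you do not carry this out. That step is not a routine bookkeeping exercise---it is the whole content of the theorem beyond chordal graphs. Two comments on your plan for it. First, the assumption ``$G$ has no pair $x\neq y$ with $N_G[x]\subseteq N_G[y]$'' is a reasonable reduction, but it does not by itself prevent the vertices produced by the $A_i$ from collapsing in ways that avoid long induced cycles; you will need more than ``combine edges and hope for a $C_{\geq 6}$''. Second, Woodroofe's own argument does not juggle all five witnesses simultaneously. He works more locally: starting from an induced $C_5$, he analyzes the neighborhood structure of a single carefully chosen vertex on (or adjacent to) the cycle and shows that the forbidden-cycle hypothesis forces a closed-neighborhood containment $N_G[x]\subseteq N_G[y]$ somewhere near the cycle, which then yields a shedding vertex via your domination lemma. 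If you want to finish the proof, I would recommend aiming for that kind of local argument rather than a global contradiction from five simultaneous witnesses; the latter route, as you describe it, is underspecified and it is not clear it terminates without importing most of Woodroofe's analysis anyway.
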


\section{The v-number of a squarefree monomial
ideal}\label{v-number-section}
In this section we study the v-number and the algebraic invariants of
edge ideals of clutters and give a combinatorial description of the
v-number of a squarefree monomial ideal $I$ by considering the clutter
$\mathcal{C}$ whose edge ideal $I(\mathcal{C})$ is equal to $I$. To
avoid repetitions, we continue to employ 
the notations and  definitions used in Sections~\ref{intro-section}
and \ref{prelim-section}. 

For a graded module $M\neq 0$ we denote
$\alpha(M)=\min\{\deg(f) \mid f\in M, f\neq 0\}$. By convention, for
$M=0$ we set $\alpha(0)=0$. The next result was shown in
\cite[Proposition~4.2]{min-dis-generalized} for unmixed graded ideals. For squarefree 
monomial ideals the unmixed assumption is not needed.

\begin{proposition}\label{lem:vnumber}
Let $I\subset S$ be a squarefree monomial ideal. Then 
$$
{\rm v}(I)=\min\{\alpha\left((I\colon\mathfrak{p})/{I}\right)\vert\,
\mathfrak{p}\in{\rm Ass}(I)\}.
$$
\end{proposition}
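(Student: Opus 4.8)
The plan is to prove both inequalities ``$\leq$'' and ``$\geq$'' between ${\rm v}(I)$ and $\min\{\alpha((I\colon\mathfrak{p})/I)\mid\mathfrak{p}\in{\rm Ass}(I)\}$ by unwinding the definition of the v-number in terms of colon ideals. Recall that ${\rm v}(I)$ is the least $d\geq1$ for which there exist $f\in S_d$ and $\mathfrak{p}\in{\rm Ass}(I)$ with $(I\colon f)=\mathfrak{p}$, and that $\alpha((I\colon\mathfrak{p})/I)$ is the least degree of a nonzero homogeneous element of the module $(I\colon\mathfrak{p})/I$, i.e. the least degree of a homogeneous $f\in(I\colon\mathfrak{p})\setminus I$.

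For the inequality ${\rm v}(I)\leq\min\{\alpha((I\colon\mathfrak{p})/I)\}$, I would fix a $\mathfrak{p}\in{\rm Ass}(I)$ realizing the minimum on the right and pick a homogeneous $f\notin I$ of degree $\alpha((I\colon\mathfrak{p})/I)$ with $\mathfrak{p} f\subseteq I$; then $(I\colon f)\supseteq\mathfrak{p}$, and since $f\notin I$ we have $(I\colon f)\neq S$, so by maximality $\mathfrak{p}=(I\colon f)$ because $\mathfrak{p}$ is a prime that is maximal among ideals of the form $(I:g)$ with $g\notin I$ — actually the cleaner route: since $I$ is squarefree monomial, one may assume $f$ is a monomial (take a monomial appearing in $f$ with support not containing any generator of $I$; this requires a small argument using that $I$ is monomial and that $(I:f)=\cap(I:f_i)$ over the monomials $f_i$ of $f$, cf. the reduction in the proof of Proposition~\ref{lem:vnumber} itself), and for a monomial $f$ the colon $(I\colon f)$ is again a monomial ideal, and if it contains the prime monomial ideal $\mathfrak{p}$ and is proper, one argues it equals $\mathfrak{p}$ using that associated primes of monomial ideals are monomial primes and $(I:f)$ proper forces $(I:f)\subseteq$ some associated prime $\mathfrak{q}$; then $\mathfrak{p}\subseteq\mathfrak{q}$ and I would use that distinct monomial primes in ${\rm Ass}(I)$ are incomparable when... hmm, that last point is not automatic. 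So instead I would be more careful and pass to the localization/standard fact that $(I:f)$ itself is an associated prime when $f$ has degree equal to $\alpha((I:\mathfrak p)/I)$ — this minimality is exactly what prevents $(I:f)$ from being larger.

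Let me restructure: the heart is the equality, for the chosen $f$ of minimal degree, that $(I:f)$ is prime. I would argue that $(I:f)$ is a proper monomial ideal containing $\mathfrak{p}$, hence $(I:f)=\cap_{i}(\mathfrak{p}_i^{a_i}$-type) — better, decompose $S/I=\oplus$ and use: if $(I:f)\supsetneq\mathfrak{p}$ then there is a monomial $t\notin\mathfrak{p}$ with $tf\in I$, whence $tf$ is a monomial in $I$ divisible by some generator $t_e$; since $t\notin\mathfrak p\supseteq (t_i : t_i\mid e)$ for the edge $e\subseteq$ support consideration... For edge ideals $\mathfrak p=(t_i:t_i\in C)$ for a minimal vertex cover $C$; $tf\in I$ means ${\rm supp}(tf)\supseteq e$ for some edge $e$; as $e\cap C\neq\emptyset$ pick $t_j\in e\cap C$; then $t_j$ divides $tf$ but $t_j\nmid t$ (since $t\notin\mathfrak p$ means ${\rm supp}(t)\cap C=\emptyset$), so $t_j\mid f$, giving $f/t_j\in S$ homogeneous of smaller degree with $\mathfrak p(f/t_j)\subseteq(I:t_j)$; if moreover $f/t_j\notin I$ this contradicts minimality after checking $f/t_j\in(I:\mathfrak p)$, which follows from $\mathfrak p f\subseteq I$ and squarefreeness. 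Handling the case $f/t_j\in I$ needs care and is where I expect the main friction. The reverse inequality ${\rm v}(I)\geq\min\{\alpha(\cdots)\}$ is easier: given $f\in S_d$ with $(I:f)=\mathfrak{p}\in{\rm Ass}(I)$ and $d={\rm v}(I)$, we have $\mathfrak p f\subseteq I$ and $f\notin I$ (else $(I:f)=S$), so $f$ is a nonzero element of $(I:\mathfrak p)/I$ of degree $d$, whence $\alpha((I:\mathfrak p)/I)\leq d={\rm v}(I)$, and taking the min over $\mathfrak p$ gives the bound. I would present the $\geq$ direction first as a warm-up, then do the $\leq$ direction with the monomial reduction and the minimal-degree divisibility argument above; the only genuinely delicate point, and the one I'd flag as the main obstacle, is showing that minimality of $\deg f$ forces $(I:f)$ to be exactly $\mathfrak p$ rather than strictly larger, which is precisely where the squarefree hypothesis is used and where the general unmixed-ideal argument of \cite[Proposition~4.2]{min-dis-generalized} had to assume unmixedness instead.
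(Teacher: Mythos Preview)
Your inequality ${\rm v}(I)\geq\min\{\alpha((I\colon\mathfrak p)/I)\}$ is correct and matches the paper. The trouble is entirely in the other direction, and your sketch there has a real gap while also missing a much simpler route.

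The gap: you pick a monomial $f\in(I\colon\mathfrak p)\setminus I$ of minimal degree (with $\mathfrak p=(C)$ for a minimal vertex cover $C$), suppose $(I\colon f)\supsetneq\mathfrak p$, locate $t_j\in C$ dividing $f$, and set $g=f/t_j$. You then assert $g\in(I\colon\mathfrak p)$ ``follows from $\mathfrak p f\subseteq I$ and squarefreeness''; but all you get from $\mathfrak p f\subset I$ is $t_\ell t_j g\in I$ for each $t_\ell\in C$, and there is no reason this forces $t_\ell g\in I$. So the minimality contradiction does not close. (Ironically your argument \emph{can} be salvaged, but not that way: if a monomial $f$ lies in $(I\colon\mathfrak p)\setminus I$ then no $t_j\in C$ can divide $f$ at all, since $t_j\mid f$ gives ${\rm supp}(t_jf)={\rm supp}(f)$ and hence $t_jf\in I$ would force $f\in I$. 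So the very existence of your $t_j$ is already the contradiction---no descent to $g$ is needed, and minimality of $\deg f$ plays no role.)

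The paper bypasses all of this with the primary decomposition. Since $I$ is squarefree it is radical, so $I=\bigcap_{i=1}^r\mathfrak p_i$ with the $\mathfrak p_i$ the associated (hence minimal, hence pairwise incomparable) primes; an immediate computation gives $(I\colon\mathfrak p_k)=\bigcap_{i\neq k}\mathfrak p_i$. Consequently any homogeneous $f_k\in(I\colon\mathfrak p_k)\setminus I$ satisfies $f_k\notin\mathfrak p_k$, and then
\[
\mathfrak p_k\ \subset\ (I\colon f_k)\ =\ \bigcap_{i=1}^r(\mathfrak p_i\colon f_k)\ =\ \bigcap_{f_k\notin\mathfrak p_i}\mathfrak p_i\ \subset\ \mathfrak p_k,
\]
so $(I\colon f_k)=\mathfrak p_k$ on the nose---no reduction to monomials, no minimality-of-degree argument. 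The squarefree hypothesis enters precisely to make the associated primes incomparable (so that $(I\colon\mathfrak p_k)=\bigcap_{i\neq k}\mathfrak p_i$), which is the replacement for the unmixedness assumed in \cite{min-dis-generalized}.
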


\begin{proof} Let $\mathfrak{p}_1,\ldots,\mathfrak{p}_r$ be the
associated primes of $I$. If $I$ is prime, then $(I\colon 1)=I$,
$(I\colon I)=S$, and ${\rm v}(I)=\alpha(S/I)=0$. Thus, we may assume
that $I$ has at least $2$ associated primes. There is $\mathfrak{p}_i$
and $f\in S_d$ such that $(I\colon f)=\mathfrak{p}_i$ and ${\rm
v}(I)=\deg(f)$. Then $f\in(I\colon\mathfrak{p}_i)\setminus I$ and 
$$
{\rm v}(I)=\deg(f)\geq\alpha((I\colon\mathfrak{p}_i)/I)\geq \min\{\alpha\left((I\colon\mathfrak{p})/{I}\right)\vert\,
\mathfrak{p}\in{\rm Ass}(I)\}.
$$
\quad Since $I=\bigcap_{i=1}^r\mathfrak{p}_i\subsetneq\bigcap_{i\neq
k}\mathfrak{p}_i=(I\colon\mathfrak{p}_k)$ for $1\leq k\leq r$, we can
pick a homogeneous polynomial $f_k$ in
$(I\colon\mathfrak{p}_k)\setminus I$ such that
$\alpha((I\colon\mathfrak{p}_k)/I)=\deg(f_k)$. Note that
$f_k\not\in\mathfrak{p}_k$ since $f_k\notin I$. Therefore, from the inclusions
$$  
\mathfrak{p}_k\subset(I\colon
f_k)=\bigcap_{i=1}^r(\mathfrak{p}_i\colon
f_k)=\bigcap_{f_k\not\in\mathfrak{p}_i}\mathfrak{p}_i\subset\mathfrak{p}_k, 
$$
we get $(I\colon f_k)=\mathfrak{p}_k$, and consequently ${\rm
v}(I)\leq\deg(f_k)$ for $k=1,\ldots,r$.
\end{proof}

\begin{proposition}\label{nov6-19} If $I$ is a squarefree monomial ideal of $S$, then
${\rm reg}(S/I)\leq\dim(S/I)$. 
\end{proposition}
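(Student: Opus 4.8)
The statement to prove is that ${\rm reg}(S/I)\le\dim(S/I)$ for any squarefree monomial ideal $I$ of $S$. The plan is to pass to the Stanley--Reisner side and invoke the homological description of regularity from Proposition~\ref{regularity-homology}. Write $I=I_\Delta$ for the simplicial complex $\Delta$ whose faces are the monomials not in $I$ (possible since every squarefree monomial ideal is a Stanley--Reisner ideal, as recalled in Section~\ref{prelim-section}), and let $V$ be the vertex set. Recall also that $\dim(S/I_\Delta)=\dim(\Delta)+1$, where $\dim(\Delta)=\max\{|F|-1 : F\in\Delta\}$ is the topological dimension of $\Delta$; equivalently $\dim(S/I)=\beta_0(\mathcal C)$ when $I=I(\mathcal C)$, the size of a largest face.

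By Proposition~\ref{regularity-homology}, ${\rm reg}(K[V]/I_\Delta)$ is the largest $d$ for which $\widetilde H_{d-1}(\Delta[A])\ne 0$ for some $A\subset V$. So it suffices to show that for every $A\subset V$ and every $i\ge\dim(S/I)$, the reduced simplicial homology $\widetilde H_{i}(\Delta[A])$ vanishes. The key point is purely topological: an induced subcomplex $\Delta[A]$ is a subcomplex of $\Delta$, hence has dimension at most $\dim(\Delta)=\dim(S/I)-1$; a simplicial complex of dimension $m$ has no $i$-th reduced homology for $i\ge m+1$, simply because its $i$-th chain group is zero for $i>m$. Therefore $\widetilde H_{i}(\Delta[A])=0$ for all $i\ge\dim(\Delta)+1=\dim(S/I)$ and all $A$, which by Proposition~\ref{regularity-homology} gives ${\rm reg}(K[V]/I_\Delta)\le\dim(S/I)$.

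The only remaining wrinkle is bookkeeping about the ambient ring: $S$ may have more variables than the vertex set $V$ of $\Delta$ (there could be variables $t_i$ that are nonzerodivisors, corresponding to isolated vertices or simply to unused variables). This is handled by Lemma~\ref{addvariables}: adjoining variables to $S'=K[V]$ changes neither ${\rm reg}$ nor $\dim$ of the quotient, so $\mathrm{reg}(S/I)=\mathrm{reg}(K[V]/I_\Delta)$ and likewise for the dimension, and the bound transfers. I expect no genuine obstacle here; the main thing to get right is the convention $\widetilde H_{-1}(\Delta[A])$ — which is nonzero exactly when $A=\emptyset$ — and to check this is consistent (it contributes $d=0\le\dim(S/I)$, so it is harmless). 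One should also note the degenerate case $I=\mathfrak m$ or $I=(0)$ separately if desired, though these are covered by the general argument.
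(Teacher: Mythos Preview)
Your proposal is correct and follows essentially the same approach as the paper: both pass to the Stanley--Reisner complex $\Delta$ with $I=I_\Delta$, use that $\dim(S/I)=\dim(\Delta)+1$, observe that $\widetilde H_i(\Delta[A])=0$ for all $A\subset V$ and $i\ge\dim(S/I)$ by dimension reasons, and then conclude via Proposition~\ref{regularity-homology}. The only difference is that the paper simply takes $V$ to be the full variable set of $S$ from the outset, so your extra paragraph invoking Lemma~\ref{addvariables} is not needed (though it does no harm).
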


\begin{proof} Let $V$ be the set of variables of $S$. There is a simplicial complex $\Delta$ with
vertex set $V$ such that $I$ is the Stanley--Reisner ideal $I_\Delta$
of $\Delta$ and $\dim(S/I)=\dim(\Delta)+1$. 
For $A\subset V$, let $\Delta[A]$ be the induced subcomplex on $A$ and
let $\widetilde{H}_{i}(\Delta[A])$ be the $i$-th simplicial homology module with
coefficients in $K$. Then, $\widetilde{H}_{i}(\Delta[A])=(0)$ for all 
$A\subset V$ and $i\geq\dim(S/I)$. Hence, the result follows from
Proposition~\ref{regularity-homology}. 
\end{proof}

A set of edges in a graph $G$ is called \textit{matching} if no two of them have a
vertex in common. By \cite[p.~295]{woodroofe-matchings}, 
${\rm reg}(S/I(G))$ is at most
the size of a minimum maximal matching. 

\begin{proposition}\label{upper-bound-reg} 
Let $G$ be a graph and let ${\rm isol}(G)$ be the
set of isolated vertices of $G$. Then
$${\rm reg}(S/I(G))\leq\dim(K[V(G)\setminus{\rm
isol}(G)]/I(G\setminus{\rm isol}(G)).$$
\end{proposition}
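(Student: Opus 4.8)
The plan is to reduce the statement to the additivity of regularity together with the fact that isolated vertices correspond to extra free variables. First I would set $W=V(G)\setminus{\rm isol}(G)$, write $S=K[t_1,\dots,t_s]$, $S_W=K[W]$, and let $G'=G\setminus{\rm isol}(G)$ be the graph on $W$ obtained by deleting the isolated vertices. Since the isolated vertices of $G$ do not appear in any edge, the edge ideal $I(G)$ equals $I(G')S$, i.e.\ the extension to $S$ of the squarefree monomial ideal $I(G')\subset S_W$. By Lemma~\ref{addvariables} (applied once for each isolated vertex), ${\rm reg}(S/I(G))={\rm reg}(S_W/I(G'))$.

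Next I would bound ${\rm reg}(S_W/I(G'))$ from above by $\dim(S_W/I(G'))$. This is exactly Proposition~\ref{nov6-19} applied to the squarefree monomial ideal $I(G')$ inside the polynomial ring $S_W$: one has ${\rm reg}(S_W/I(G'))\leq\dim(S_W/I(G'))$. Combining the two displays gives
$$
{\rm reg}(S/I(G))={\rm reg}(S_W/I(G'))\leq\dim(S_W/I(G'))=\dim(K[V(G)\setminus{\rm isol}(G)]/I(G\setminus{\rm isol}(G))),
$$
which is the asserted inequality.

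The only point requiring a word of care — and the closest thing to an obstacle — is the identification $I(G)=I(G')S$ and the correct bookkeeping of variables: one must be sure that deleting a vertex from the set of variables is harmless precisely because that vertex is isolated (hence its variable occurs in no generator of $I(G)$), so that Lemma~\ref{addvariables} genuinely applies. After that, everything is a direct invocation of Lemma~\ref{addvariables} and Proposition~\ref{nov6-19}; there is no homological computation to grind through. I would also remark that this sharpens Proposition~\ref{nov6-19} for edge ideals of graphs, since $\dim(K[W]/I(G'))=\beta_0(G')$ can be strictly smaller than $\dim(S/I(G))=\beta_0(G')+|{\rm isol}(G)|$ when $G$ has isolated vertices.
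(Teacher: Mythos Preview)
Your proof is correct and follows essentially the same approach as the paper: apply Lemma~\ref{addvariables} to drop the isolated vertices without changing the regularity, then invoke Proposition~\ref{nov6-19} on the smaller polynomial ring to bound the regularity by the dimension. Your added remarks on why $I(G)=I(G')S$ and on the sharpening over Proposition~\ref{nov6-19} are fine elaborations, but the core argument is identical to the paper's.
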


\begin{proof} By Lemma~\ref{addvariables}, ${\rm reg}(S/I(G))={\rm
reg}(K[V(G)\setminus{\rm 
isol}(G)]/I(G\setminus{\rm isol}(G))$. Hence, the inequality follows from
Proposition~\ref{nov6-19}. 
\end{proof}

Let $S=K[t_1,\ldots,t_s]=\oplus_{d=0}^{\infty} S_d$ be a polynomial ring over
a field $K$ with the standard grading and let $\mathcal{C}$ be a clutter with vertex set
$V(\mathcal{C})=\{t_1,\ldots,t_s\}$ and edge set $E(\mathcal{C})$. We
shall always assume that $I(\mathcal{C})$ is not a prime ideal and 
$E(\mathcal{C})\neq\emptyset$. If $I(\mathcal{C})$ is prime its v-number is $0$.

For use below we introduce the following  two families of stable
sets:
\begin{eqnarray*}
\mathcal{F}_\mathcal{C}&:=&\{A\vert\, A\mbox{ is a maximal stable set 
of }\ \mathcal{C}\},\mbox{ and}\\
\mathcal{A}_\mathcal{C}&:=&\{A\vert\, A\ \mbox{ is a stable set 
of }\ \mathcal{C}\mbox{ and }N_\mathcal{C}(A)\mbox{ is a minimal vertex
cover of }\ \mathcal{C}\}.
\end{eqnarray*}

\begin{lemma}\label{v-number-clutters-graphs-lemma}
Let $I=I(\mathcal{C})$ be the edge ideal of a clutter $\mathcal{C}$. The following hold. 
\begin{enumerate}
\item[\rm(a)] If $A\in\mathcal{A}_\mathcal{C}$ and $t_A=\prod_{t_i\in A}t_i$, then
$(I\colon t_A)=(N_\mathcal{C}(A))$.
\item[\rm(b)] If $A$ is stable and $N_\mathcal{C}(A)$ is a vertex cover,
then $N_\mathcal{C}(A)$ is a minimal vertex cover.
\item[\rm(c)] If $(I\colon f)=\mathfrak{p}$ for some $f\in S_d$ and
some $\mathfrak{p}\in{\rm Ass}(I)$, then there is
$A\in\mathcal{A}_\mathcal{C}$ with $|A|\leq d$ such that
$\mathfrak{p}=(N_\mathcal{C}(A))$ and $(I\colon t_A)=(N_\mathcal{C}(A))$.
\item[\rm(d)] If $A\in \mathcal{F}_\mathcal{C}$, then
$N_\mathcal{C}(A)=V(\mathcal{C})\setminus A$ and $(I\colon
t_A)=(N_\mathcal{C}(A))$. 
\end{enumerate}
\end{lemma}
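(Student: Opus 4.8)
The plan is to prove each of the four assertions by a direct computation with the edge ideal $I = I(\mathcal{C})$, exploiting the fact that for a squarefree monomial ideal a colon by a squarefree monomial is again a squarefree monomial ideal whose generators are easy to describe combinatorially. Throughout I will use the standard dictionary: the minimal primes of $I$ are exactly the ideals generated by minimal vertex covers of $\mathcal{C}$ (Lemma~\ref{jul1-01}), and since $I$ is squarefree all associated primes are minimal. The key elementary computation underlying everything is the following: if $A$ is a stable set and $t_A = \prod_{t_i\in A}t_i$, then a variable $t_j$ lies in $(I\colon t_A)$ precisely when $t_j t_A$ is a multiple of some edge monomial $t_e$, i.e.\ when $\{t_j\}\cup A$ contains an edge, which is exactly the condition $t_j \in N_\mathcal{C}(A)$; and more generally $(I\colon t_A)$ is generated by $N_\mathcal{C}(A)$ together with those $t_e$ with $e\in E(\mathcal{C})$ and $e\cap A = \emptyset$. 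Since $A$ stable forces no edge to be contained in $A$, these remaining generators $t_e$ are genuine, but when $N_\mathcal{C}(A)$ happens to be a vertex cover every such $e$ meets $N_\mathcal{C}(A)$, so $t_e$ is redundant — this is the mechanism behind parts (a) and (d).

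For part (a): assume $A\in\mathcal{A}_\mathcal{C}$, so $N_\mathcal{C}(A)$ is a minimal vertex cover. Clearly $(N_\mathcal{C}(A))\subseteq(I\colon t_A)$ by definition of the neighbor set. For the reverse inclusion I take a monomial $t_e$, $e\in E(\mathcal{C})$; since $N_\mathcal{C}(A)$ is a vertex cover, $e$ contains some vertex $t_j\in N_\mathcal{C}(A)$, hence $t_e\in(N_\mathcal{C}(A))$, and therefore $I\subseteq(N_\mathcal{C}(A))$. Combined with the displayed description of the generators of $(I\colon t_A)$ — namely $N_\mathcal{C}(A)$ plus certain edge monomials, all of which we have just seen lie in $(N_\mathcal{C}(A))$ — we get $(I\colon t_A)\subseteq(N_\mathcal{C}(A))$, giving equality. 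Part (b) is immediate from Lemma~\ref{jul1-01} and elementary prime-ideal combinatorics: if $A$ is stable and $C:=N_\mathcal{C}(A)$ is a vertex cover, I must show no proper subset of $C$ is a vertex cover. If $t_j\in C = N_\mathcal{C}(A)$ then there is an edge $e$ with $e\subseteq\{t_j\}\cup A$; since $A$ is stable, $t_j\in e$ and $e\setminus\{t_j\}\subseteq A$, so $e$ meets $C$ only in $t_j$ (any other vertex of $e$ lies in $A$, and $A\cap N_\mathcal{C}(A)=\emptyset$ because $A$ is stable). Hence removing $t_j$ from $C$ leaves $e$ uncovered, so $C$ is minimal.

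For part (c): suppose $(I\colon f)=\mathfrak{p}$ with $f\in S_d$ and $\mathfrak{p}\in\operatorname{Ass}(I)$. By Lemma~\ref{jul1-01}, $\mathfrak{p}=(C)$ for a minimal vertex cover $C$. Since $I$ is a monomial ideal I may replace $f$ by one of its monomials lying in $(I\colon f)\setminus I$ of degree $\le d$, so assume $f = t_B$ for some set $B$ of variables with $|B|\le d$; note $B$ is stable since $t_B\notin I$. Now $(I\colon t_B) = (C)$ forces $N_\mathcal{C}(B)\subseteq C$ (each neighbor vertex lies in $(I\colon t_B)$); conversely $(C)=(I\colon t_B)$ together with the generator description shows every variable of $C$ must already be a neighbor of $B$ or a variable appearing in some surviving edge monomial — here I will argue that since $(C)$ is prime and the surviving edge monomials have degree $\ge 2$, the only degree-one generators are the neighbors, forcing $C\subseteq N_\mathcal{C}(B)$, hence $C = N_\mathcal{C}(B)$. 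Finally, set $A := B\setminus N_\mathcal{C}(B)$; then $A$ is still stable, $N_\mathcal{C}(A)=N_\mathcal{C}(B)=C$ is a minimal vertex cover so $A\in\mathcal{A}_\mathcal{C}$, $|A|\le|B|\le d$, and by part (a) $(I\colon t_A)=(N_\mathcal{C}(A))=\mathfrak{p}$. For part (d): if $A\in\mathcal{F}_\mathcal{C}$ is a maximal stable set, then every vertex outside $A$ must, when added to $A$, destroy stability, i.e.\ create an edge inside $\{t_j\}\cup A$, which says exactly $t_j\in N_\mathcal{C}(A)$; together with $A\cap N_\mathcal{C}(A)=\emptyset$ (stability), this gives $N_\mathcal{C}(A)=V(\mathcal{C})\setminus A$, which is the complement of a maximal stable set and hence a (minimal) vertex cover. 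Thus $A\in\mathcal{A}_\mathcal{C}$ and part (a) yields $(I\colon t_A)=(N_\mathcal{C}(A))$.

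The only genuinely delicate point is the converse inclusion in part (c): ruling out the possibility that the colon ideal $(I\colon t_B)$ picks up a degree-one generator from a variable $t_j$ that is \emph{not} a neighbor of $B$. The resolution is that $(I\colon t_B)$ is assumed to equal the \emph{prime} ideal $(C)$; a monomial ideal equals a prime $(C)$ iff its unique minimal monomial generating set is exactly $C$, so the minimal generators of $(I\colon t_B)$ are precisely linear and correspond to a subset of the variables — and that subset is forced by the degree-one part of the generator list computed above to be exactly $N_\mathcal{C}(B)$, while the (degree $\ge 2$) surviving edge monomials must be redundant, which is what pins down $C = N_\mathcal{C}(B)$ and lets me invoke (a). I expect the bookkeeping around "which edge monomials survive in $(I\colon t_B)$" to be the part requiring the most care, but it is routine once the generator description in the first paragraph is made precise.
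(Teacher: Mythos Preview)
Your overall strategy matches the paper's, and parts (b) and (d) are fine (indeed your (d) is cleaner than the paper's, since you invoke (a) rather than redoing the colon computation). However, the ``generator description'' you rely on is only correct for graphs, not for general clutters. You claim that $(I\colon t_A)$ is generated by $N_\mathcal{C}(A)$ together with the $t_e$ for edges $e$ with $e\cap A=\emptyset$. For a clutter with an edge $e$ of size $\geq 3$ meeting $A$ in a proper nonempty subset, the correct generator is $t_{e\setminus A}$, which may have degree $\geq 2$ and need not be one of the full edge monomials $t_e$. (Example: $E(\mathcal{C})=\{\{t_1,t_2,t_3\}\}$, $A=\{t_1\}$; then $N_\mathcal{C}(A)=\emptyset$ and $(I\colon t_1)=(t_2t_3)$, contradicting your formula.) The correct statement is $(I\colon t_A)=(\{t_{e\setminus A}:e\in E(\mathcal{C})\})$.

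This matters in your proof of (a): you only show $t_e\in(N_\mathcal{C}(A))$, but you need $t_{e\setminus A}\in(N_\mathcal{C}(A))$, which does not follow since $t_{e\setminus A}$ divides $t_e$. The fix is immediate once you note (as you do in (b)) that $A\cap N_\mathcal{C}(A)=\emptyset$ for stable $A$: then the vertex of $e$ lying in the cover $N_\mathcal{C}(A)$ automatically lies in $e\setminus A$, so $t_{e\setminus A}\in(N_\mathcal{C}(A))$. The paper sidesteps this by arguing directly with an arbitrary monomial $t^c\in(I\colon t_A)$ rather than via a generator list. Your part (c) is not affected, since the degree-one elements among the $t_{e\setminus A}$ are still exactly $N_\mathcal{C}(B)$, so your argument that $C=N_\mathcal{C}(B)$ goes through; note however that your final step $A:=B\setminus N_\mathcal{C}(B)$ is vacuous, since $B$ stable already forces $B\cap N_\mathcal{C}(B)=\emptyset$, so $A=B$.
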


\begin{proof} (a): To show the inclusion ``$\subset$'' take $t^c\in(I\colon t_A)$, that is,
$t^ct_A=t^bt_e$ for some monomial $t^b$ and some $e\in
E(\mathcal{C})$. As $N_\mathcal{C}(A)$ is a minimal vertex cover, 
one has $e\cap N_\mathcal{C}(A)\neq\emptyset$. 
Pick $t_i$ in $e\cap N_\mathcal{C}(A)$. Then $\{t_i\}\cup A$ contains an
edge of $\mathcal{C}$, and consequently $t_i\notin A$ since $A$ is
stable. Then from the equality $t^ct_A=t^bt_e$, we get that $t_i$
divides $t^c$. Thus $t^c\in N_\mathcal{C}(A)$. To show the inclusion
``$\supset$'' take $t_i\in N_\mathcal{C}(A)$, that is, $\{t_i\}\cup A$
contains an edge $e$ of $\mathcal{C}$, and $t_i\notin A$ since $A$ is
stable. Hence $t_it_A=t^bt_e$ for some monomial $t^b$. Thus, $t_i\in(I\colon t_A)$.

(b): Take $t_i\in N_\mathcal{C}(A)$. It suffices to show that 
$B:=N_\mathcal{C}(A)\setminus\{t_i\}$ is not a vertex cover of
$\mathcal{C}$. As $t_i$ is a neighbor of $A$, there is 
$e\in E(\mathcal{C})$ such that $e\subset \{t_i\}\cup A$, and
$t_i\notin A$ since $A$ is stable. Then $t_it_A=t^bt_e$ for some
monomial $t^b$. We need only show $B\cap e=\emptyset$. By
contradiction assume that $B\cap e\neq\emptyset$. Pick $t_j\in B\cap
e$. Then, $t_j\neq t_i$, $t_j\in e$, $t_j\notin A$ since $\{t_j\}\cup
A$ contains an edge and $A$ is stable. From the equality
$t_it_A=t^bt_e$ we obtain a contradiction.

(c): Writing $f=\sum_{i=1}^r\lambda_it^{c_i}$,
where $0\neq\lambda_i\in K$ and $t^{c_i}\in S_d$ for
all $i$, one has
$$
(I\colon f)=\bigcap_{i=1}^r(I\colon t^{c_i})=\mathfrak{p},
$$ 
and consequently $(I\colon t^{c_k})=\mathfrak{p}$ for some $k$. 
Thus we may assume $f=t^{c_k}=t_1^{a_1}\cdots t_s^{a_s}$. As $I$ is
squarefree, we may also assume that $a_i\in\{0,1\}$ for all $i$. We
set 
$$
A:={\rm supp}(t^{c_k})={\rm supp}(t_1^{a_1}\cdots t_s^{a_s})=\{t_i\vert\, a_i=1\},
$$
and $t_A=\prod_{t_i\in A}t_i$. The set $A$ is stable because
$(I\colon t_A)=\mathfrak{p}\subsetneq S$. Now we show the equality
$\mathfrak{p}=(N_\mathcal{C}(A))$. To show the inclusion ``$\subset$''
take $t_i$ in $\mathfrak{p}$. Then, we can write $t_it_A=t^bt_e$ for
some monomial $t^b$ and some edge $e$ of $\mathcal{C}$. Hence
$\{t_i\}\cup A$ contains $e$, that is, $t_i\in N_\mathcal{C}(A)$. 
To show the inclusion ``$\supset$'' take $t_i\in N_\mathcal{C}(A)$.
Then $\{t_i\}\cup A$ contains an edge $e$ of $ \mathcal{C}$. Thus,
$t_it_A=t^bt_e$ and $t_i$ is in $(I\colon t_A)=\mathfrak{p}$. As $\mathfrak{p}$ is generated 
by a minimal vertex cover of $\mathcal{C}$ (Lemma~\ref{jul1-01}), we obtain
that $N_\mathcal{C}(A)$ is a minimal vertex cover of $\mathcal{C}$. Thus
$A$ is in $\mathcal{A}_\mathcal{C}$.

(d): First we show the equality
$N_\mathcal{C}(A)=V(\mathcal{C})\setminus A$. To show the inclusion
``$\subset$'' take $t_i\in N_\mathcal{C}(A)$. Then $\{t_i\}\cup A$
contains an edge and $t_i\notin A$ since $A$ contains no edges of
$\mathcal{C}$. To show the inclusion ``$\supset$'' take $t_i\notin A$,
then $\{t_i\}\cup A$ contains an edge since $A$ is a maximal stable
set. Thus $t_i\in N_\mathcal{C}(A)$. 

Now we show that $(I\colon t_A)$ is equal to $(N_\mathcal{C}(A))$. By the
previous equality it suffices to show the equality 
$(I\colon t_A)=(V(\mathcal{C})\setminus A)$. To show the inclusion
``$\subset$'' take $t^c\in(I\colon t_A)$, that is, $t^ct_A\in I$. 
Thus $t^ct_A=t^bt_e$ for some monomial $t^b$ and some $e\in E(\mathcal{C})$. As
$e\not\subset A$, there is $t_i\in e\setminus A$. Hence $t_i$ divides
$t^c$, and $t^c\in(V(\mathcal{C})\setminus A)$. To show the inclusion
``$\supset$'' take $t_i\in V(\mathcal{C})\setminus A$. As $A$ is
a maximal stable set, $\{t_i\}\cup A$ contains an edge $e$ of
$\mathcal{C}$ such that
$t_i\in e$. Thus $t_it_A=t^bt_e$ for some
monomial $t^b\in S$, that is, $t_i\in(I\colon t_A)$.
\end{proof}

The next result gives a combinatorial description of the 
v-number of the edge ideal of a clutter. 

\begin{theorem}\label{v-number-clutters-graphs}
Let $I$ be the edge ideal of a clutter $\mathcal{C}$.
If $I$ is not prime, then $\mathcal{F}_\mathcal{C}\subset\mathcal{A}_\mathcal{C}$ and 
$$\mathrm{v}(I)=\min\{|A|\colon 
A\in\mathcal{A}_\mathcal{C}\}.
$$
\end{theorem}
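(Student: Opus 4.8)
The plan is to combine the characterization of the v-number from Proposition~\ref{lem:vnumber} with parts (a)--(d) of Lemma~\ref{v-number-clutters-graphs-lemma}. First I would establish the inclusion $\mathcal{F}_\mathcal{C}\subset\mathcal{A}_\mathcal{C}$: if $A$ is a maximal stable set, then by Lemma~\ref{v-number-clutters-graphs-lemma}(d) we have $N_\mathcal{C}(A)=V(\mathcal{C})\setminus A$, which is a vertex cover because its complement $A$ is stable; by Lemma~\ref{v-number-clutters-graphs-lemma}(b) this vertex cover is in fact minimal, so $A\in\mathcal{A}_\mathcal{C}$. In particular $\mathcal{A}_\mathcal{C}\neq\emptyset$ (since $\mathcal{C}$ has at least one maximal stable set), so the right-hand side of the formula is well-defined.

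For the equality of the v-number, I would prove two inequalities. For ``$\leq$'': given any $A\in\mathcal{A}_\mathcal{C}$, set $t_A=\prod_{t_i\in A}t_i\in S_{|A|}$. By Lemma~\ref{v-number-clutters-graphs-lemma}(a), $(I\colon t_A)=(N_\mathcal{C}(A))$, and since $N_\mathcal{C}(A)$ is a minimal vertex cover this is an associated prime of $I$ by Lemma~\ref{jul1-01}. Hence $\mathrm{v}(I)\leq\deg(t_A)=|A|$ directly from the definition of the v-number, and minimizing over $A\in\mathcal{A}_\mathcal{C}$ gives $\mathrm{v}(I)\leq\min\{|A|\colon A\in\mathcal{A}_\mathcal{C}\}$. (Here one uses $I\subsetneq\mathfrak{m}$, which holds since $I$ is squarefree and not prime, so the first case of the definition of $\mathrm{v}(I)$ applies; also $t_A\notin I$ since $A$ is stable.) For ``$\geq$'': let $d=\mathrm{v}(I)$, so there exist $f\in S_d$ and $\mathfrak{p}\in\mathrm{Ass}(I)$ with $(I\colon f)=\mathfrak{p}$. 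By Lemma~\ref{v-number-clutters-graphs-lemma}(c) there is $A\in\mathcal{A}_\mathcal{C}$ with $|A|\leq d$, so $\min\{|A|\colon A\in\mathcal{A}_\mathcal{C}\}\leq d=\mathrm{v}(I)$. Combining the two inequalities yields the formula.

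Essentially all the real content has been pushed into the lemmas already proved, so there is no serious obstacle left — the proof is a short assembly argument. The one point that needs a little care is making sure the edge cases are handled correctly: the hypothesis ``$I$ not prime'' guarantees $I\neq\mathfrak{m}$ and ensures $\mathcal{C}$ has at least two minimal vertex covers (equivalently $I$ has at least two associated primes), so that all the ``$\min$'' expressions are over nonempty sets and the combinatorial side is genuinely finite and positive. I would also remark in passing that since $\mathcal{F}_\mathcal{C}\subset\mathcal{A}_\mathcal{C}$, the formula immediately gives $\mathrm{v}(I)\leq\min\{|A|\colon A\in\mathcal{F}_\mathcal{C}\}=i(\mathcal{C})\leq\beta_0(\mathcal{C})$, recovering the bounds announced in the introduction.
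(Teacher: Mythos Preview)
Your proof is correct and follows essentially the same approach as the paper's: both arguments reduce everything to Lemma~\ref{v-number-clutters-graphs-lemma}, using part~(a) for the inequality ${\rm v}(I)\leq\min\{|A|\}$ and part~(c) for the reverse inequality. The only cosmetic difference is in establishing $\mathcal{F}_\mathcal{C}\subset\mathcal{A}_\mathcal{C}$: the paper derives minimality of $N_\mathcal{C}(A)$ by noting that $(I\colon t_A)=(N_\mathcal{C}(A))$ exhibits it as an associated prime and then invoking Lemma~\ref{jul1-01}, whereas you argue combinatorially via Lemma~\ref{v-number-clutters-graphs-lemma}(b); both routes are equally valid.
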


\begin{proof} To show the inclusion
$\mathcal{F}_\mathcal{C}\subset\mathcal{A}_\mathcal{C}$ take
$A\in\mathcal{F}_\mathcal{C}$. 
Then, by Lemma~\ref{v-number-clutters-graphs-lemma}(d), 
$(I\colon t_A)=(N_\mathcal{C}(A))$. Thus, $(N_\mathcal{C}(A))$ is an
associated prime of $I$. According to
Lemma~\ref{jul1-01}
any associated prime of $I$ is generated by a minimal vertex cover 
of $\mathcal{C}$, and consequently $N_\mathcal{C}(A)$ is a minimal
vertex cover of $\mathcal{C}$. Thus $A\in\mathcal{A}_\mathcal{C}$. Now
we prove the formula for ${\rm v}(I)$. For any $A\in\mathcal{A}_\mathcal{C}$, by 
Lemma~\ref{v-number-clutters-graphs-lemma}(a), one has  
$(I\colon t_A)=(N_G(A))$. Hence, 
$$\mathrm{v}(I)\leq \min\{|A|\colon 
A\in\mathcal{A}_\mathcal{C}\}.
$$ 
\quad To show the reverse inequality pick
a polynomial $f\in S_d$ and $\mathfrak{p}\in{\rm Ass}(I)$ such that 
$(I\colon f)=\mathfrak{p}$ and ${\rm v}(I)=d$. By
Lemma~\ref{v-number-clutters-graphs-lemma}(c), there is
$A\in\mathcal{A}_\mathcal{C}$ with $|A|\leq d$ such that $(I\colon
t_A)=\mathfrak{p}$. Therefore, we get $\mathrm{v}(I)\geq \min\{|A|\colon 
A\in\mathcal{A}_\mathcal{C}\}$.
\end{proof}

\begin{corollary}\label{coro1-vnumber} If $\mathcal{C}$ is a clutter
with independent domination number $i(\mathcal{C})$ and independence
number $\beta_0(G)$, then 
${\rm v}(I(\mathcal{C}))\leq i(\mathcal{C})\leq\beta_0(\mathcal{C})$.
\end{corollary}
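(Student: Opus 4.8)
The plan is to read this off directly from Theorem~\ref{v-number-clutters-graphs}, using only two elementary observations about the families $\mathcal{F}_\mathcal{C}$ and $\mathcal{A}_\mathcal{C}$. First I would dispose of the degenerate case: if $I(\mathcal{C})$ is a prime ideal, then by convention ${\rm v}(I(\mathcal{C}))=0$, while $i(\mathcal{C})$ and $\beta_0(\mathcal{C})$ are nonnegative integers, so the chain $0\le i(\mathcal{C})\le\beta_0(\mathcal{C})$ already does the job. Hence we may assume that $I=I(\mathcal{C})$ is not prime and apply Theorem~\ref{v-number-clutters-graphs}.

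That theorem furnishes both the inclusion $\mathcal{F}_\mathcal{C}\subset\mathcal{A}_\mathcal{C}$ and the formula $\mathrm{v}(I)=\min\{|A|\colon A\in\mathcal{A}_\mathcal{C}\}$. Since passing from the set $\mathcal{A}_\mathcal{C}$ to the smaller set $\mathcal{F}_\mathcal{C}$ can only increase a minimum, I would write
\[
{\rm v}(I)=\min\{|A|\colon A\in\mathcal{A}_\mathcal{C}\}\le\min\{|A|\colon A\in\mathcal{F}_\mathcal{C}\}=i(\mathcal{C}),
\]
where the last equality is the definition of the independent domination number $i(\mathcal{C})$ recalled in the introduction.

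For the remaining inequality $i(\mathcal{C})\le\beta_0(\mathcal{C})$ I would note that every $A\in\mathcal{F}_\mathcal{C}$ is in particular a stable set, so $|A|\le\beta_0(\mathcal{C})$ by definition of the independence number; taking the minimum over $\mathcal{F}_\mathcal{C}$ (which is nonempty, since any maximum stable set is maximal) gives $i(\mathcal{C})\le\beta_0(\mathcal{C})$. Combining the two displays yields ${\rm v}(I(\mathcal{C}))\le i(\mathcal{C})\le\beta_0(\mathcal{C})$. I do not anticipate any genuine obstacle here: the only points that need a word of care are the bookkeeping in the prime (equivalently, edgeless or all-singleton-edge) case and the elementary fact that restricting to a subfamily does not decrease a minimum.
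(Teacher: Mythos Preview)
Your argument is correct and is exactly the approach the paper takes: the paper's proof is simply ``The inequalities follow at once from Theorem~\ref{v-number-clutters-graphs},'' and you have spelled out precisely how (via $\mathcal{F}_\mathcal{C}\subset\mathcal{A}_\mathcal{C}$ and the definitions of $i(\mathcal{C})$ and $\beta_0(\mathcal{C})$). Your handling of the prime case is also consistent with the paper's standing convention that ${\rm v}(I(\mathcal{C}))=0$ when $I(\mathcal{C})$ is prime.
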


\begin{proof} 
The inequalities follow at once from Theorem~\ref{v-number-clutters-graphs}. 
\end{proof}

A clutter $\mathcal{C}$ is \textit{well-covered} if every maximal
stable set is a maximum stable set. A clutter $\mathcal{C}$ is
$1$-\textit{well-covered} if $\mathcal{C}$ is well-covered and
$\mathcal{C}\setminus v$ is well-covered for all $v\in
V(\mathcal{C})$. 

\begin{corollary}\label{sep3-19} Let $I=I(\mathcal{C})$ be the edge ideal of 
a clutter $\mathcal{C}$. Then, ${\rm v}(I)=\dim(S/I)$ if and only if $\mathcal{C}$ is
well-covered and $\mathcal{F}_\mathcal{C}=\mathcal{A}_\mathcal{C}$.
\end{corollary}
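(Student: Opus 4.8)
The plan is to combine the combinatorial formula for the v-number from Theorem~\ref{v-number-clutters-graphs} with the two standard facts recalled in the excerpt: $\dim(S/I)=\beta_0(\mathcal{C})$ (the Krull dimension equals the independence number), and the inclusion $\mathcal{F}_\mathcal{C}\subset\mathcal{A}_\mathcal{C}$ (also from Theorem~\ref{v-number-clutters-graphs}). Throughout we may assume $I$ is not prime, since otherwise both sides are trivial to compare. By Theorem~\ref{v-number-clutters-graphs} we have $\mathrm{v}(I)=\min\{|A|\colon A\in\mathcal{A}_\mathcal{C}\}$, and since every maximum stable set is in particular maximal, hence lies in $\mathcal{F}_\mathcal{C}\subset\mathcal{A}_\mathcal{C}$, we get $\min\{|A|\colon A\in\mathcal{A}_\mathcal{C}\}\le\beta_0(\mathcal{C})=\dim(S/I)$. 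So the equality $\mathrm{v}(I)=\dim(S/I)$ holds precisely when every $A\in\mathcal{A}_\mathcal{C}$ has $|A|\ge\beta_0(\mathcal{C})$, equivalently when $|A|=\beta_0(\mathcal{C})$ for every $A\in\mathcal{A}_\mathcal{C}$.

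For the forward direction, assume $\mathrm{v}(I)=\dim(S/I)=\beta_0(\mathcal{C})$. First I would show $\mathcal{C}$ is well-covered: take any maximal stable set $A$; then $A\in\mathcal{F}_\mathcal{C}\subset\mathcal{A}_\mathcal{C}$, so $|A|\ge\mathrm{v}(I)=\beta_0(\mathcal{C})$, forcing $|A|=\beta_0(\mathcal{C})$, i.e. $A$ is maximum. Next, for $\mathcal{F}_\mathcal{C}=\mathcal{A}_\mathcal{C}$ it remains to prove $\mathcal{A}_\mathcal{C}\subset\mathcal{F}_\mathcal{C}$. Let $A\in\mathcal{A}_\mathcal{C}$; by the same size estimate $|A|\ge\mathrm{v}(I)=\beta_0(\mathcal{C})$, so $A$ is a stable set of maximum cardinality and hence maximal, i.e. $A\in\mathcal{F}_\mathcal{C}$.

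For the converse, assume $\mathcal{C}$ is well-covered and $\mathcal{F}_\mathcal{C}=\mathcal{A}_\mathcal{C}$. Then every $A\in\mathcal{A}_\mathcal{C}=\mathcal{F}_\mathcal{C}$ is a maximal stable set, hence by the well-covered hypothesis a maximum stable set, so $|A|=\beta_0(\mathcal{C})$. Therefore $\min\{|A|\colon A\in\mathcal{A}_\mathcal{C}\}=\beta_0(\mathcal{C})$, and Theorem~\ref{v-number-clutters-graphs} together with $\dim(S/I)=\beta_0(\mathcal{C})$ gives $\mathrm{v}(I)=\dim(S/I)$. The argument is essentially bookkeeping once Theorem~\ref{v-number-clutters-graphs} is in hand; the only point requiring a little care is to make sure $\mathcal{A}_\mathcal{C}$ is nonempty so that the minimum in the formula is attained (this follows because $\mathcal{F}_\mathcal{C}\neq\emptyset$ and $\mathcal{F}_\mathcal{C}\subset\mathcal{A}_\mathcal{C}$), and to keep the non-prime hypothesis in force so that Theorem~\ref{v-number-clutters-graphs} applies.
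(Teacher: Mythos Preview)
Your proof is correct and follows essentially the same approach as the paper's: both directions rely on Theorem~\ref{v-number-clutters-graphs} together with $\dim(S/I)=\beta_0(\mathcal{C})$ and the inclusion $\mathcal{F}_\mathcal{C}\subset\mathcal{A}_\mathcal{C}$. The only cosmetic difference is that, for $\mathcal{A}_\mathcal{C}\subset\mathcal{F}_\mathcal{C}$, the paper extends a given $A\in\mathcal{A}_\mathcal{C}$ to a maximal stable set $B'\supset A$ and compares sizes, whereas you observe directly that $|A|\ge\beta_0(\mathcal{C})$ forces $A$ to be maximum (hence maximal); these are the same argument.
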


\demo $\Rightarrow$) By
Theorem~\ref{v-number-clutters-graphs}, one has
$\mathcal{F}_\mathcal{C}\subset\mathcal{A}_\mathcal{C}$.
Assume $B$ is any maximal
stable set. Then
$B\in\mathcal{F}_\mathcal{C}\subset\mathcal{A}_\mathcal{C}$ and 
$\dim(S/I)={\rm v}(I)\leq|B|\leq\dim(S/I)$. Thus $|B|=\dim(S/I)$. This proves
that $\mathcal{C}$ is well-covered. To show the equality 
$\mathcal{F}_\mathcal{C}=\mathcal{A}_\mathcal{C}$ take
$B\in\mathcal{A}_\mathcal{C}$. Pick a maximal stable set $B'$ that
contains $B$. Then, $\dim(S/I)={\rm v}(I)\leq|B|\leq|B'|=\dim(S/I)$ and 
$B=B'$. Thus $B\in\mathcal{F}_\mathcal{C}$. 

$\Leftarrow$) As $\mathcal{C}$ is well-covered and
$\mathcal{F}_\mathcal{C}=\mathcal{A}_\mathcal{C}$, one has 
$${\rm v}(I)=\min\{|A|\colon  
A\in\mathcal{F}_\mathcal{C}\}=\max\{|A|\colon
A\in\mathcal{F}_\mathcal{C}\}=\dim(S/I).\quad \Box
$$

For squarefree monomial ideals the ${\rm v}$-number is additive.

\begin{proposition}{\rm(The {\rm v}-number is additive)}\label{additivity-vnumber} 
Let $R_1=K[\mathbf{x}]$ and 
$R_2=K[\mathbf{y}]$ be two polynomial rings over a field $K$ and let 
$R=K[\mathbf{x},\mathbf{y}]$. If $I_1$ and $I_2$ are squarefree
monomial ideals of $R_1$ and $R_2$, respectively, then  
$$
{\rm v}(I_1R+I_2R)={\rm v}(I_1) +{\rm v}(I_2).
$$
\end{proposition}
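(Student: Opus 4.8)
The plan is to use the combinatorial formula for the v-number from Theorem~\ref{v-number-clutters-graphs}, applied to the clutter whose edge ideal is $I_1R+I_2R$. Write $\mathcal{C}_1$, $\mathcal{C}_2$ for the clutters on the vertex sets $\mathbf{x}$, $\mathbf{y}$ with $I(\mathcal{C}_i)=I_i$, and let $\mathcal{C}$ be the clutter on $\mathbf{x}\cup\mathbf{y}$ with $E(\mathcal{C})=E(\mathcal{C}_1)\cup E(\mathcal{C}_2)$, so that $I(\mathcal{C})=I_1R+I_2R$. The first observation is the product structure of the relevant combinatorial objects: a subset $A\subset\mathbf{x}\cup\mathbf{y}$ is stable in $\mathcal{C}$ if and only if $A=A_1\cup A_2$ with $A_i:=A\cap V(\mathcal{C}_i)$ stable in $\mathcal{C}_i$, and $N_\mathcal{C}(A)=N_{\mathcal{C}_1}(A_1)\cup N_{\mathcal{C}_2}(A_2)$ (a disjoint union, since the vertex sets are disjoint). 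One then checks that a subset $C=C_1\cup C_2$ of $\mathbf{x}\cup\mathbf{y}$ is a minimal vertex cover of $\mathcal{C}$ if and only if each $C_i$ is a minimal vertex cover of $\mathcal{C}_i$; this is immediate from the fact that an edge of $\mathcal{C}$ lies entirely in one of the two vertex sets, so covering $\mathcal{C}$ decouples into covering $\mathcal{C}_1$ and $\mathcal{C}_2$ separately, and minimality decouples too because removing a vertex of $C_i$ only affects edges of $\mathcal{C}_i$.

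Combining these two facts gives $A\in\mathcal{A}_\mathcal{C}$ if and only if $A_1\in\mathcal{A}_{\mathcal{C}_1}$ and $A_2\in\mathcal{A}_{\mathcal{C}_2}$. Indeed, $N_\mathcal{C}(A)=N_{\mathcal{C}_1}(A_1)\cup N_{\mathcal{C}_2}(A_2)$ is a minimal vertex cover of $\mathcal{C}$ exactly when each piece $N_{\mathcal{C}_i}(A_i)$ is a minimal vertex cover of $\mathcal{C}_i$, which by Lemma~\ref{v-number-clutters-graphs-lemma}(b) is the same as $N_{\mathcal{C}_i}(A_i)$ merely being a vertex cover, i.e.\ $A_i\in\mathcal{A}_{\mathcal{C}_i}$. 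One caveat to handle: Theorem~\ref{v-number-clutters-graphs} requires $I$ not prime and $E(\mathcal{C})\neq\emptyset$; since $I_1,I_2$ are honest squarefree monomial ideals (nonzero, so $E(\mathcal{C}_i)\neq\emptyset$; if one of them, say $I_2$, is prime—generated by variables—then $\mathcal{A}_{\mathcal{C}_2}$ consists of a single set, the empty set or a stable set with the right neighbor set, and ${\rm v}(I_2)=0$, and the identification above still goes through directly, so the formula still holds). The degenerate cases where an $I_i$ is the zero ideal or the maximal ideal should be addressed by noting ${\rm v}$ is $0$ there and the set families behave accordingly; these are quick.

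With the bijective correspondence $\mathcal{A}_\mathcal{C}\cong\mathcal{A}_{\mathcal{C}_1}\times\mathcal{A}_{\mathcal{C}_2}$ under $A\leftrightarrow(A_1,A_2)$ and the additivity of cardinality $|A|=|A_1|+|A_2|$, Theorem~\ref{v-number-clutters-graphs} yields
$$
{\rm v}(I_1R+I_2R)=\min\{|A|:A\in\mathcal{A}_\mathcal{C}\}
=\min_{A_1\in\mathcal{A}_{\mathcal{C}_1}}|A_1|+\min_{A_2\in\mathcal{A}_{\mathcal{C}_2}}|A_2|
={\rm v}(I_1)+{\rm v}(I_2),
$$
where the second equality is the elementary fact that the minimum of a sum over a product set splits. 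The main obstacle—really the only nontrivial point—is verifying the decoupling of minimal vertex covers and hence of the family $\mathcal{A}_\mathcal{C}$; everything else is bookkeeping. An alternative, slightly slicker route would be to bypass the combinatorics entirely and argue directly from Proposition~\ref{lem:vnumber} using ${\rm Ass}(I_1R+I_2R)=\{\mathfrak{p}_1R+\mathfrak{p}_2R:\mathfrak{p}_i\in{\rm Ass}(I_i)\}$ and the tensor-product structure $R/(I_1R+I_2R)\cong (R_1/I_1)\otimes_K(R_2/I_2)$, showing that a minimal-degree element of $((I_1R+I_2R):\mathfrak{p})/(I_1R+I_2R)$ can be taken to be a product $f_1f_2$ of minimal-degree witnesses; but the combinatorial route above is cleaner given the tools already developed in this section.
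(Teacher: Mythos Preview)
Your argument is correct, but it is not the paper's route. The paper argues directly with colon ideals and associated primes---essentially the ``alternative'' you sketch at the end. It picks a squarefree monomial $f=x^ay^b$ with $(I\colon f)=\mathfrak{p}$ and $\deg(f)={\rm v}(I)$, splits $\mathfrak{p}=\mathfrak{p}_1R+\mathfrak{p}_2R$ via \cite[Lemma~3.4]{HaM}, verifies $(I_i\colon f_i)=\mathfrak{p}_i$ for the obvious factors $f_1=x^a$, $f_2=y^b$, and reads off ${\rm v}(I_1)+{\rm v}(I_2)\le{\rm v}(I)$; the reverse inequality comes from multiplying witnesses $f_1,f_2$ for the two sides. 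Your approach instead pushes everything through Theorem~\ref{v-number-clutters-graphs}, establishing the product decomposition $\mathcal{A}_\mathcal{C}\cong\mathcal{A}_{\mathcal{C}_1}\times\mathcal{A}_{\mathcal{C}_2}$ and then minimizing. Both are short; the paper's version avoids having to treat the prime/degenerate cases for Theorem~\ref{v-number-clutters-graphs} separately (your caveat paragraph), while your version makes the combinatorial content of additivity completely transparent and would generalize immediately to any invariant defined as a minimum over $\mathcal{A}_\mathcal{C}$. Amusingly, what you call the ``slightly slicker'' alternative is precisely what the authors chose.
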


\begin{proof} We set $I=I_1R+I_2R$. Pick $f=x^ay^b$ and
$\mathfrak{p}\in{\rm Ass}(R/I)$ such that $(I\colon f)=\mathfrak{p}$ 
and $\deg(f)={\rm v}(I)$. Then, by \cite[Lemma~3.4]{HaM}, 
we can write $\mathfrak{p}=\mathfrak{p}_1R + \mathfrak{p}_2R$, 
where $\mathfrak{p}_1 \in {\rm Ass}(R_1/I_1)$ and ${\mathfrak p}_2 \in 
{\rm Ass}(R_2/I_2)$. It is not hard to prove that the following
equalities hold: 
$$
(I_1\colon x^a)=\mathfrak{p}_1\ \mbox{ and }\ (I_2\colon
y^b)=\mathfrak{p}_2. 
$$
\quad Thus, ${\rm v}(I_1)+{\rm
v}(I_2)\leq\deg(x^a)+\deg(y^b)=\deg(f)={\rm v}(I)$. To show the
reverse inequality, for $i=1,2$ pick $\mathfrak{p}_i\in{\rm
Ass}(R_i/I_i)$ and $f_i$ a squarefree monomial in $R_i$ such that 
$(I_i\colon f_i)=\mathfrak{p}_i$. Then,
$\mathfrak{p}:=\mathfrak{p}_1R+\mathfrak{p}_2R$ is an associated prime
of $I$ \cite[Lemma~3.4]{HaM} and $(I\colon f_1f_2)=\mathfrak{p}$.
Hence, setting $f:=f_1f_2$, one has ${\rm v}(I)\leq\deg(f)={\rm
v}(I_1)+{\rm v}(I_2)$.
\end{proof}

\begin{proposition}\label{ci-vnumber} If $I$ is a complete intersection squarefree monomial ideal
minimally generated by $G(I)=\{t^{a_1},\ldots,t^{a_r}\}$ and let $d_i$
be the degree of $t^{a_i}$ for $i=1,\ldots,r$. Then 
$$
{\rm v}(I)=d_1+\cdots+d_r-r={\rm reg}(S/I).
$$
\end{proposition}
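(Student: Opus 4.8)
The plan is to exploit the two additivity results just proved: Proposition~\ref{additivity-vnumber} for the v-number and Proposition~\ref{additivity-reg} for the regularity. Since $I$ is a complete intersection squarefree monomial ideal minimally generated by $G(I)=\{t^{a_1},\ldots,t^{a_r}\}$, the generators have pairwise disjoint supports: indeed, for a monomial complete intersection the minimal generators must involve disjoint sets of variables, for otherwise the Koszul relations would not give a minimal free resolution and the height would drop below $r$. Thus, after renaming variables, we may write $S = R_1 \otimes_K \cdots \otimes_K R_r$ where $R_i = K[\mathbf{x}_i]$ is the polynomial ring on the variables appearing in $t^{a_i}$, and $I = I_1 R + \cdots + I_r R$ with $I_i = (t^{a_i})$ a principal ideal of $R_i$.

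First I would handle the base case $r=1$: here $I=(t^{a_1})$ is a principal ideal in $R_1 = K[\mathbf{x}_1]$, with $t^{a_1} = x_{i_1}\cdots x_{i_{d_1}}$ squarefree of degree $d_1$. One computes directly that ${\rm reg}(R_1/I) = d_1 - 1$, since the minimal free resolution is $0 \to R_1(-d_1) \to R_1 \to R_1/I \to 0$. For the v-number, the unique associated prime is $\mathfrak{p} = (x_{i_1})$ (any single variable dividing $t^{a_1}$ works), and taking $f = x_{i_2}\cdots x_{i_{d_1}}$ of degree $d_1 - 1$ gives $(I \colon f) = (x_{i_1}) = \mathfrak{p}$; moreover no polynomial of smaller degree can have this property since $(I\colon g)=S$ for $g$ of degree less than $d_1-1$ after dividing out. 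Hence ${\rm v}(I) = d_1 - 1 = {\rm reg}(R_1/I)$, which is the claimed formula $d_1 + \cdots + d_r - r$ when $r=1$.

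For general $r$, I would induct (or simply apply additivity $r$ times): by Proposition~\ref{additivity-vnumber},
$${\rm v}(I) = {\rm v}(I_1) + \cdots + {\rm v}(I_r) = (d_1 - 1) + \cdots + (d_r - 1) = d_1 + \cdots + d_r - r,$$
and by Proposition~\ref{additivity-reg},
$${\rm reg}(S/I) = {\rm reg}(R_1/I_1) + \cdots + {\rm reg}(R_r/I_r) = (d_1 - 1) + \cdots + (d_r - 1) = d_1 + \cdots + d_r - r.$$
Comparing the two displays gives ${\rm v}(I) = {\rm reg}(S/I)$, completing the proof. (A minor point: if some $d_i = 1$, then $I_i$ is generated by a single variable, $R_i/I_i \cong K$, and both ${\rm v}(I_i)$ and ${\rm reg}(R_i/I_i)$ equal $0$; the additivity formulas still apply, or one can first reduce to the case where every generator has degree at least $2$ using Lemma~\ref{addvariables} together with the obvious fact that ${\rm v}(I'S) = {\rm v}(I')$ for a squarefree monomial ideal $I'$.)

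The step I expect to be the main obstacle is the very first one: establishing that the minimal generators of a squarefree monomial complete intersection have pairwise disjoint supports, so that the additivity hypotheses genuinely apply. This is where the complete intersection assumption is used, and it must be argued carefully — e.g.\ via the fact that ${\rm ht}(I) = r$ forces the generators to lie in disjoint sets of variables, since any shared variable would produce a syzygy of degree lower than the Koszul syzygy and hence a non-generic resolution incompatible with $I$ being a complete intersection. Once disjointness is in hand, everything else is a direct application of the two additivity propositions and the trivial one-generator computation.
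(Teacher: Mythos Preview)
Your approach is essentially identical to the paper's: observe that the generators of a squarefree monomial complete intersection have pairwise disjoint supports, compute ${\rm v}((t^{a_i}))={\rm reg}(R_i/(t^{a_i}))=d_i-1$ for each principal piece, and then apply the additivity Propositions~\ref{additivity-reg} and~\ref{additivity-vnumber}. Two small slips in your base case do not affect the argument but are worth correcting: the associated primes of $(x_{i_1}\cdots x_{i_{d_1}})$ are \emph{all} of the $(x_{i_j})$, not a unique one; and for a squarefree monomial $g$ of degree $<d_1-1$ the colon $(I\colon g)$ is not $S$ but rather a principal ideal generated by a monomial of degree $\ge 2$, hence not prime, which is what actually forces ${\rm v}(I)\ge d_1-1$.
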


\begin{proof} As $I$ is a complete intersection $t^{a_i}$ and $t^{a_j}$
do not have common variables for $i\neq j$. Note that the regularity of $S/(t^{a_i})$
and the v-number of $(t^{a_i})$ are equal to $d_i-1$. Hence, the equalities 
follow from Propositions~\ref{additivity-reg} and
\ref{additivity-vnumber}. 
\end{proof}

\begin{lemma}\label{vnumber-comparison-clutter} 
Let $I=I(\mathcal{C})$ be the edge ideal of a clutter $\mathcal{C}$
and let $t_i$ be a vertex of $\mathcal{C}$ such that $\{t_i\}\notin
E(\mathcal{C})$. Then, ${\rm v}(I)\leq {\rm v}(I\colon t_i)+1$.
\end{lemma}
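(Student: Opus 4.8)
The plan is to transfer a v-number witness for the ideal $J:=(I\colon t_i)$ up to a witness for $I$ by multiplying by $t_i$, the key input being that $\mathrm{Ass}(S/J)\subseteq\mathrm{Ass}(S/I)$. First I would note that $J$ is a proper, nonzero squarefree monomial ideal, so that $\mathrm{v}(J)$ is defined and Proposition~\ref{lem:vnumber} applies to it: it is squarefree because each of its generators has the form $t_e$ or $t_{e\setminus\{t_i\}}$ for some $e\in E(\mathcal{C})$; it is nonzero because $I\subseteq J$ with $I\neq(0)$ (recall $E(\mathcal{C})\neq\emptyset$); and it is proper because $t_i\notin I$ — if the variable $t_i$ belonged to the squarefree monomial ideal $I$ it would be a minimal generator, i.e.\ $\{t_i\}\in E(\mathcal{C})$, contradicting the hypothesis. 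This is the only place where the assumption $\{t_i\}\notin E(\mathcal{C})$ is used.

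The heart of the argument is the inclusion $\mathrm{Ass}(S/J)\subseteq\mathrm{Ass}(S/I)$. I would obtain it from the $S$-module homomorphism $\varphi\colon S/J\to S/I$ given by $\bar g\mapsto\overline{g\,t_i}$: it is well defined since $g-g'\in J$ implies $(g-g')t_i\in I$, and it is injective since its kernel is $(I\colon t_i)/J=(0)$. Hence $S/J$ is isomorphic to a submodule of $S/I$, and associated primes of a submodule are associated primes of the ambient module.

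Finally I would choose a v-number witness for $J$: by Proposition~\ref{lem:vnumber} and the construction in its proof — taking $f=1$ in the degenerate case where $J$ is prime and $\mathrm{v}(J)=0$ — there exist $\mathfrak{p}\in\mathrm{Ass}(J)$ and a homogeneous $f\in S_d$ with $d=\mathrm{v}(J)$ such that $(J\colon f)=\mathfrak{p}$. Then $(I\colon t_i f)=\bigl((I\colon t_i)\colon f\bigr)=(J\colon f)=\mathfrak{p}$, the polynomial $t_i f$ is homogeneous of degree $d+1\geq 1$, and $\mathfrak{p}\in\mathrm{Ass}(J)\subseteq\mathrm{Ass}(I)$ by the previous paragraph; the definition of the v-number then gives $\mathrm{v}(I)\leq d+1=\mathrm{v}(I\colon t_i)+1$, as desired. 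I expect no serious difficulty here: the only points needing care are the degenerate cases just mentioned — ensuring $(I\colon t_i)$ is a proper ideal so that $\mathrm{v}(I\colon t_i)$ is meaningful (exactly where $\{t_i\}\notin E(\mathcal{C})$ is needed), and observing that when $(I\colon t_i)$ is prime the choice $f=1$ makes $t_i$ itself a degree-one witness for $I$, so that $\mathrm{v}(I)\leq 1$.
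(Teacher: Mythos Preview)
Your proof is correct and follows essentially the same route as the paper: pick a v-number witness $f$ for $J=(I\colon t_i)$, observe that $(I\colon t_i f)=\mathfrak{p}$, and conclude $\mathrm{v}(I)\leq\deg(t_if)=\mathrm{v}(J)+1$. One small remark: your ``heart of the argument'' step establishing $\mathrm{Ass}(S/J)\subseteq\mathrm{Ass}(S/I)$ via the embedding is correct but unnecessary---once you have $(I\colon t_if)=\mathfrak{p}$ with $\mathfrak{p}$ a proper prime, the very definition of associated prime gives $\mathfrak{p}\in\mathrm{Ass}(I)$ directly, which is how the paper proceeds (it simply writes ``by definition of $\mathrm{v}(I)$'' after obtaining $(I\colon t_if)=\mathfrak{p}$).
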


\begin{proof} If $(I\colon t_i)$ is generated by a set of variables,
that is, $(I\colon t_i)$ is an associated prime of $I$, 
then ${\rm v}(I)\leq 1$ and, by
Proposition~\ref{ci-vnumber}, 
${\rm v}(I\colon t_i)=0$. Thus, we may assume
that $(I\colon t_i)$ is not an associated prime of $I$. 
By Lemma~\ref{v-number-clutters-graphs-lemma}(a) and Theorem~\ref{v-number-clutters-graphs}, we
can pick a squarefree monomial $f$ and an associated prime $\mathfrak{p}$ of $(I\colon t_i)$ such
that
$$
((I\colon t_i)\colon f)=\mathfrak{p}\ \mbox{ and }\
\deg(f)={\rm v}(I\colon t_i).
$$
\quad Then, $(I\colon t_if)=\mathfrak{p}$ and, by definition of 
${\rm v}(I)$, one has ${\rm v}(I)\leq 1+\deg(f)=1+{\rm v}(I\colon t_i)$.
\end{proof}

If $v$ is a vertex of a graph $G$, then we denote the neighborhood of
$v$ by $N_G(v)$ and the closed neighborhood $N_G(v)\cup\{v\}$
of $v$ by $N_G[v]$. Recall that $N_G(v)$ is the set of all vertices 
of $G$ that are adjacent to $v$. The subgraph $G\setminus N_G[v]$ is denoted by $G_v$. 
Note that $G_v$ is
the induced subgraph $G[V(G)\setminus N_G[v]]$ on the vertex set 
$V(G)\setminus N_G[v]$ whose edges are all the edges of $G$ that are
contained in $V(G)\setminus N_G[v]$. 

\begin{lemma}\label{colon-dim} Let $G$ be graph and let $I=I(G)$ be
its edge ideal. If $t_i$ is a vertex of
$G$, then  
$$
\dim(S/(I\colon t_i))=1+\dim(K[V(G)\setminus
N_G[t_i]]/I(G\setminus N_G[t_i])).
$$
\end{lemma}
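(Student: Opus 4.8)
The plan is to analyze the ideal $(I\colon t_i)$ explicitly as a monomial ideal and recognize it as (an extension of) an edge ideal, so that its dimension can be read off from the independence number of the associated clutter. Write $N_G(t_i)=\{t_{j_1},\ldots,t_{j_k}\}$. A monomial generator $t_e$ of $I=I(G)$, with $e=\{t_a,t_b\}$, contributes to $(I\colon t_i)$ the monomial $t_e/\gcd(t_e,t_i)$; this is $t_at_b$ if $t_i\notin e$, and is the single variable $t_a$ (say, if $e=\{t_i,t_a\}$) if $t_i\in e$. Hence
$$
(I\colon t_i)=\bigl(t_{j_1},\ldots,t_{j_k}\bigr)+I\!\left(G\setminus N_G[t_i]\right)\!S + (\text{edges meeting }N_G(t_i)\text{ but not }t_i),
$$
but every monomial in that last batch is a multiple of some $t_{j_\ell}$, so it is absorbed into the first summand. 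Thus $(I\colon t_i)=\bigl(t_{j_1},\ldots,t_{j_k}\bigr)+I(G_{t_i})S$, where $G_{t_i}=G\setminus N_G[t_i]$ as in the paragraph before the lemma, and the two summands involve disjoint sets of variables: $\{t_{j_1},\ldots,t_{j_k}\}=N_G(t_i)$ on one side and $V(G)\setminus N_G[t_i]=V(G_{t_i})$ on the other, while the variable $t_i$ itself appears in neither.

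Now I would compute the dimension. Write $S=K[V(G)]$ and let $W=V(G)\setminus N_G[t_i]$, so $K[W]/I(G_{t_i})$ is the Stanley--Reisner type quotient whose dimension is $\beta_0(G_{t_i})$, the independence number of $G_{t_i}$. Since $(I\colon t_i)=\bigl(N_G(t_i)\bigr)S+I(G_{t_i})S$ and the variables split into the three disjoint blocks $N_G(t_i)$, $\{t_i\}$, and $W$, we have
$$
S/(I\colon t_i)\;\cong\; \bigl(K[W]/I(G_{t_i})\bigr)\otimes_K K[t_i].
$$
Therefore $\dim\bigl(S/(I\colon t_i)\bigr)=\dim\bigl(K[W]/I(G_{t_i})\bigr)+1=1+\dim\bigl(K[V(G)\setminus N_G[t_i]]/I(G\setminus N_G[t_i])\bigr)$, which is exactly the claimed formula. (Equivalently, one can argue purely combinatorially: $S/(I\colon t_i)$ is the Stanley--Reisner ring of the simplicial complex on $V(G)$ whose faces are the stable sets of $G$ contained in $W\cup\{t_i\}$; its maximal faces are $\{t_i\}\cup A$ with $A$ a maximal stable set of $G_{t_i}$, so the top dimension is $\beta_0(G_{t_i})+1-1$, giving Krull dimension $\beta_0(G_{t_i})+1$.)

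The only mild obstacle is bookkeeping in the first step: one must check carefully that the monomials coming from edges $\{t_{j_\ell}, t_m\}$ with $t_m\in W$ (one endpoint a neighbor of $t_i$, the other not) really do get absorbed into the ideal $\bigl(N_G(t_i)\bigr)$ — this is immediate since such a monomial $t_{j_\ell}t_m$ is divisible by $t_{j_\ell}\in N_G(t_i)$ — and that no generator involving $t_i$ survives in $(I\colon t_i)$, which holds because $\{t_i\}\notin E(G)$ is not needed here; for an edge $\{t_i,t_{j_\ell}\}$ the colon produces the variable $t_{j_\ell}$, not a monomial in $t_i$. After that, the disjoint-variable tensor decomposition and the identification $\dim(K[\Delta])=\dim\Delta+1$ finish the argument with no further subtlety.
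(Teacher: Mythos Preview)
Your proof is correct and follows the same route as the paper: establish the identity $(I\colon t_i)=(N_G(t_i))+I(G_{t_i})S$ and then read off the dimension from the fact that $t_i$ does not appear in any minimal generator. The paper compresses all of this into two sentences, whereas you spell out the colon computation generator by generator and make the tensor decomposition $S/(I\colon t_i)\cong\bigl(K[W]/I(G_{t_i})\bigr)\otimes_K K[t_i]$ explicit, but the underlying argument is identical.
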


\begin{proof} One has $(I\colon t_i)=(I(G\setminus
N_G(t_i)),N_G(t_i))=(I(G_{t_i}),N_G(t_i))$. As $t_i$ does not occurs in a minimal
generating set of $I(G_{t_i})$, the equality follows. 
\end{proof}

\begin{proposition}\label{vnumber-comparison} Let $I=I(G)$ be the
edge ideal of a graph $G$. The 
following hold.
\begin{enumerate}
\item[\rm(a)] If $t_i$ is a vertex of $G$, then ${\rm v}(I)\leq
{\rm v}(I\colon t_i)+1$.
\item[\rm(b)] ${\rm v}(I\colon t_i)\leq {\rm v}(I)$ for some
$t_i\in V(G)$.
\item[\rm(c)] If ${\rm v}(I)\geq 2$, then ${\rm v}(I\colon t_i)<{\rm
v}(I)$ for some $t_i\in V(G)$.
\item[\rm(d)] If $t_i$ is a vertex of $G$, then ${\rm v}(I)\leq
{\rm v}(I,t_i)+1$.
\item[\rm(e)] ${\rm v}(I,t_i)\leq {\rm v}(I)$ for some
$t_i\in V(G)$.
\end{enumerate}
\end{proposition}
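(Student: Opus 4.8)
The plan is to prove the five claims of Proposition~\ref{vnumber-comparison} by combining the combinatorial formula for the v-number (Theorem~\ref{v-number-clutters-graphs}) with the colon and quotient identities $(I\colon t_i)=(I(G_{t_i}),N_G(t_i))$ and $(I,t_i)=(I(G\setminus t_i),t_i)$, together with the additivity of the v-number (Proposition~\ref{additivity-vnumber}). Throughout I would write $\mathfrak{m}$-type degenerations carefully, treating the possibility that a colon or quotient ideal becomes prime (v-number $0$) as a separate easy case.

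For part~(a) I would simply invoke Lemma~\ref{vnumber-comparison-clutter}: since $G$ is a graph, $\{t_i\}\notin E(G)$ always holds, so ${\rm v}(I)\le {\rm v}(I\colon t_i)+1$ is immediate. For part~(b), I would start from a minimum-size $A\in\mathcal{A}_G$ realizing ${\rm v}(I)=|A|$ via Theorem~\ref{v-number-clutters-graphs}, so $(I\colon t_A)=(N_G(A))$ with $N_G(A)$ a minimal vertex cover. Pick any $t_i\in A$ (if $A=\emptyset$ then $I$ is prime, contrary to our standing hypothesis, so $A\ne\emptyset$; if $|A|=1$ handle directly). Set $A'=A\setminus\{t_i\}$. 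The idea is that $A'$ is a stable set of $G_{t_i}=G\setminus N_G[t_i]$ — one must check $A'\cap N_G[t_i]=\emptyset$, which follows because $t_i\in A$ and $A$ is stable (so $A$ avoids $N_G(t_i)$) — and that $N_{G_{t_i}}(A')$ is a minimal vertex cover of $G_{t_i}$, so that $A'\in\mathcal{A}_{G_{t_i}}$; then $\mathrm{v}(I\colon t_i)\le |A'|=|A|-1<{\rm v}(I)$, which gives both (b) and (c) at once with the chosen $t_i$. The verification that $N_{G_{t_i}}(A')$ remains a \emph{minimal} vertex cover of $G_{t_i}$ is the one point requiring care: I would argue that $(I\colon t_i\,t_A)=((I\colon t_i)\colon t_{A'})$ equals $(N_G(A))$ restricted suitably, using that $N_G(A)=N_G(t_i)\cup (N_G(A)\setminus N_G(t_i))$ and that the second part is exactly $N_{G_{t_i}}(A')$, after removing from consideration the variables in $N_G[t_i]$ that are no longer present in $G_{t_i}$; minimality then transfers from minimality of $N_G(A)$ as a vertex cover of $G$. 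Lemma~\ref{colon-dim} and Lemma~\ref{jul1-01} are the supporting tools here.

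For part~(c), as just indicated, the same $t_i$ found in the proof of (b) works when ${\rm v}(I)\ge 2$: we get ${\rm v}(I\colon t_i)\le |A|-1={\rm v}(I)-1<{\rm v}(I)$. (The hypothesis ${\rm v}(I)\ge 2$ is only needed to ensure $|A|\ge 2$ so that $A'=A\setminus\{t_i\}$ is still nonempty and the bookkeeping is clean; alternatively it guarantees the strict inequality is not vacuous.) For parts~(d) and~(e), I would exploit $(I,t_i)=I(G\setminus t_i)R + (t_i)$, so by additivity of the v-number (Proposition~\ref{additivity-vnumber}) ${\rm v}(I,t_i)={\rm v}(I(G\setminus t_i))+{\rm v}((t_i))={\rm v}(I(G\setminus t_i))$ since ${\rm v}((t_i))=0$; in fact ${\rm v}(I,t_i)={\rm v}(I(G\setminus t_i))$ where $G\setminus t_i$ may have acquired isolated vertices but that does not affect the v-number. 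Then (d): given $A\in\mathcal{A}_G$ with $(I\colon t_A)=(N_G(A))$, I would exhibit a stable set of $G\setminus t_i$ of size $\le |A|+1$ whose neighbor set is a minimal vertex cover of $G\setminus t_i$ — the natural candidate is to take a maximal stable set $B$ of $G\setminus t_i$ containing $A\setminus\{t_i\}$ (or $A$ if $t_i\notin A$) and use Lemma~\ref{v-number-clutters-graphs-lemma}(d) for $G\setminus t_i$, combined with the fact that adding back the covering role of $t_i$ costs at most one; (e): choose the smallest $A\in\mathcal{A}_G$ and pick $t_i\in N_G(A)$ (possible since $N_G(A)$ is a vertex cover and $I$ is not prime so $N_G(A)\ne\emptyset$), and check that $A$ itself lies in $\mathcal{A}_{G\setminus t_i}$ after removing $t_i$, giving ${\rm v}(I(G\setminus t_i))\le|A|={\rm v}(I)$.

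The main obstacle I anticipate is the careful tracking of \emph{minimality} of the neighbor set as a vertex cover when passing between $G$, $G\setminus t_i$, and $G_{t_i}=G\setminus N_G[t_i]$: it is easy to see that $N$ stays a vertex cover but one must rule out that it becomes non-minimal (or, in the reverse direction, that a minimal cover downstairs lifts to a minimal cover upstairs). I expect this to be handled cleanly by translating everything into colon-ideal language — $(I\colon t_i)\colon t_A = I\colon(t_it_A)$ and $(I,t_i)\colon t_A$ — and invoking Lemma~\ref{v-number-clutters-graphs-lemma}(a),(c) and Lemma~\ref{jul1-01} rather than arguing directly with vertex covers, so that the minimality is automatic from the structure of associated primes. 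The degenerate cases (colon or quotient ideal becoming prime, $A=\emptyset$, $|A|=1$, $G\setminus t_i$ or $G_{t_i}$ being edgeless) I would dispatch first with one-line remarks using Proposition~\ref{ci-vnumber} and the convention ${\rm v}(0)=0$.
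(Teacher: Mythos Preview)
Your arguments for (a), (b), (c) and (e) are essentially correct and close to the paper's. For (a) you invoke Lemma~\ref{vnumber-comparison-clutter} exactly as the paper does. For (c) the paper argues purely with colon ideals: picking $t_i\in A$ and $B=A\setminus\{t_i\}$, one has $((I\colon t_i)\colon t_B)=(I\colon t_A)=\mathfrak{p}$, so ${\rm v}(I\colon t_i)\le|B|$ immediately, with no need to verify that $N_{G_{t_i}}(A')$ is a minimal vertex cover of $G_{t_i}$. Your combinatorial verification works too (minimality is automatic from Lemma~\ref{v-number-clutters-graphs-lemma}(b), so your ``main obstacle'' is a non-issue), but the colon-ideal phrasing you mention at the end is the cleaner route and is what the paper uses. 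For (b) the paper instead picks $t_i\notin\mathfrak{p}$ and shows $(I\colon t_it_A)=\mathfrak{p}$; your choice $t_i\in A$ also works and in fact yields the strict inequality, giving (b) and (c) simultaneously. For (e) your plan coincides with the paper's, modulo checking that $t_i\notin A$ (which the paper does explicitly).

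There is, however, a genuine gap in your plan for (d): the direction is reversed. You propose to start from $A\in\mathcal{A}_G$ and produce a stable set of $G\setminus t_i$ of size at most $|A|+1$ lying in $\mathcal{A}_{G\setminus t_i}$. That yields ${\rm v}(I,t_i)\le{\rm v}(I)+1$, which is \emph{not} what (d) asserts. To prove ${\rm v}(I)\le{\rm v}(I,t_i)+1$ you must go the other way: start from a witness $A\in\mathcal{A}_{G\setminus t_i}$ with $|A|={\rm v}(I(G\setminus t_i))$ and build a witness for ${\rm v}(I)$ of size at most $|A|+1$. The paper does exactly this, splitting into two cases: if $N_G(t_i)\cap A=\emptyset$ one shows $(I\colon t_it_A)=(\mathfrak{p},N_G(t_i))$, giving ${\rm v}(I)\le|A|+1$; if $N_G(t_i)\cap A\neq\emptyset$ one shows $(I\colon t_A)=(\mathfrak{p},t_i)$, giving ${\rm v}(I)\le|A|$. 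Your ``natural candidate'' of a \emph{maximal} stable set $B\supseteq A\setminus\{t_i\}$ in $G\setminus t_i$ gives no size control whatsoever and cannot repair the argument, and the phrase ``adding back the covering role of $t_i$ costs at most one'' is pointing in the right direction but is not what the rest of your paragraph does.
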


\begin{proof} (a): As $G$ is a simple graph, $\{t_i\}\notin E(G)$.
Thus, this part follows from Lemma~\ref{vnumber-comparison-clutter}.

(b): By Lemma~\ref{v-number-clutters-graphs-lemma}(a) and Theorem~\ref{v-number-clutters-graphs},
there exist a stable set $A$ of the graph $G$ and 
 an associated prime $\mathfrak{p}$ of $S/I$ such
that
\begin{equation*}
\mathfrak{p}=(I\colon t_A)=(N_G(A))\ \mbox{ and }\
\deg(t_A)={\rm v}(I).
\end{equation*}
\quad Pick a vertex $t_i$ not in $\mathfrak{p}$. Then, 
$\mathfrak{p}=(I\colon t_it_A)$. Indeed, the inclusion
``$\subset$'' is clear. To show the reverse inclusion take
$t^c\in(I\colon t_it_A)$. Then, $t_it^c\in(I\colon t_A)=\mathfrak{p}$.
As $t_i\notin\mathfrak{p}$, we get $t^c\in\mathfrak{p}$. Thus
$$
\mathfrak{p}=(I\colon t_A)=(I\colon t_it_A)=((I\colon t_i)\colon t_A),
$$
and consequently ${\rm v}(I\colon t_i)\leq\deg(t_A)={\rm v}(I)$. 

(c): Let $A$ and $\mathfrak{p}$ be as in part (b). Pick $t_i\in A$ and
set $B=A\setminus\{t_i\}$. Note that $B\neq\emptyset$ since 
${\rm v}(I)\geq 2$. Then 
\begin{equation*}
(I\colon t_A)=((I\colon t_i)\colon t_B)=\mathfrak{p},
\end{equation*}
and consequently ${\rm v}(I\colon t_i)\leq\deg(t_B)<\deg(t_A)={\rm v}(I)$. 

(d): By Proposition~\ref{additivity-vnumber}, one has
${\rm v}(I,t_i)={\rm v}(I(G\setminus t_i),t_i)={\rm
v}(I(G\setminus t_i))$. If $t_i$ is isolated, by
Proposition~\ref{additivity-vnumber}, ${\rm v}(I)={\rm v}(I,t_i)$.
Thus, we may assume that $t_i$ belongs to an edge of $G$. We may
also assume that $G\setminus t_i$ is
not a discrete graph; 
otherwise $G$ is a star $\mathcal{K}_{1,r}$ with some isolated
vertices, ${\rm v}(I)=1$ and
${\rm v}(I,t_i)=0$. 
By Lemma~\ref{v-number-clutters-graphs-lemma}(a) and Theorem~\ref{v-number-clutters-graphs},
there exist a stable set $A$ of $G\setminus t_i$ and an 
associated prime $\mathfrak{p}$ of $I(G\setminus t_i)$ such that
$$
(I(G\setminus t_i)\colon t_A)=\mathfrak{p}\ \mbox{ and }\
\deg(t_A)={\rm v}(I(G\setminus t_i))={\rm v}(I(G\setminus t_i),t_i)={\rm v}(I,t_i).
$$
\quad 
Note that $t_i\notin A$, 
$t_i\notin\mathfrak{p}$, and $t_A\notin\mathfrak{p}$.
We consider two cases.

Case (I): Assume that $N_G(t_i)\cap A=\emptyset$. We claim that
$(I\colon t_it_A)=(\mathfrak{p},N_G(t_i))$. The inclusion
``$\supset$'' is clear. To show the inclusion ``$\subset$'' take $t^c$
in $(I\colon t_it_A)$, that is, $t^ct_it_A=t^bt_e$ for some $e\in
E(G)$ and some monomial $t^b$. If $t_i$ is not in $e$, then $t^c$ is
in $(I(G\setminus t_i)\colon t_A)$, that is, $t^c$ is in
$\mathfrak{p}$. If $t_i$ is in $e$, then $e=\{t_i,t_k\}$ for some
$t_k$ in $N_G(t_i)$. As $t_k$ is not in $A$ since $N_G(t_i)\cap
A$ is empty and $t_k\neq t_i$, we get
that $t_k$ divides $t^c$ and $t^c\in(N_G(t_i))$. This proves the
claim. Hence, in this
case one has ${\rm v}(I)\leq 1+{\rm v}(I,t_i)$.

Case (II): Assume that $N_G(t_i)\cap A\neq\emptyset$. We claim
that $(I\colon t_A)=(\mathfrak{p},t_i)$. First we show the
inclusion ``$\subset$''. Take $t^c$ in $(I\colon t_A)$, that is,
$t^ct_A=t^bt_e$ for some $e\in E(G)$ and some monomial $t^b$. If $t_i$
is not in $e$, then $t^c$ is in $(I(G\setminus t_i)\colon
t_A)=\mathfrak{p}$. If $t_i$ is in $e$, then $t_i$
divides $t^c$ since $t_i$ is not in $A$. Thus, $t^c$ is in
$(\mathfrak{p},t_i)$. Next we show the inclusion ``$\supset$''.
Clearly $\mathfrak{p}$ is contained in $(I\colon t_A)$. Pick $t_j$ in
$N_G(t_i)\cap A$. Then, $t_j\in A$ and there is $e\in E(G)$ of the
form $e=\{t_i,t_j\}$. Then, $t_it_A$ is in $I$ and $t_i$ is in
$(I\colon t_A)$. This proves the claim. Hence, ${\rm v}(I)\leq {\rm v}(I,t_i)$.

(e): Let $A$ and $\mathfrak{p}$ be as in part (b). Then,
$\mathfrak{p}$ is generated by a minimal vertex cover $C$ of $G$. Pick $t_i$ in
$C$. Then, $t_it_A=t^bt_e$ for some $e\in E(G)$ and some monomial
$t^b$.   
Note that $t_i\notin A$. Indeed, if $t_i\in A$, then $t_i^2$ divides
$t_it_A$, and $t_A$ must be a multiple of $t_e$, 
a contradiction since $A$ is stable. 
By Proposition~\ref{additivity-vnumber}, one has
$$
{\rm v}(I,t_i)={\rm v}(I(G\setminus t_i),t_i)={\rm
v}(I(G\setminus t_i)). 
$$
\quad If $C=\{t_i\}$, then $(I,t_i)=(t_i)$ since any edge of $G$ is
adjacent to $t_i$. Thus, $0={\rm v}(I,t_i)\leq{\rm v}(I)$. Assume
$C\neq\{t_i\}$. It suffices to prove the equality $(I(G\setminus t_i)\colon
t_A)=(C\setminus\{t_i\})$. To show the inclusion
``$\subset$'' take a minimal generator $t^c$ of
$(I(G\setminus  t_i)\colon t_A)$. Note that $t^c$ does not contains
$t_i$. Then, as $t^c\in\mathfrak{p}=(C)$, we get $t^c\in
(C\setminus\{t_i\})$. 
To show the inclusion ``$\supset$''
take a variable $t_k$ in $C\setminus\{t_i\}$. Then
$t_kt_A\in I$ and $t_k$ is in $(I(G\setminus t_i)\colon t_A)$ 
since $t_i$ does not divides $t_kt_A$.
\end{proof}

\begin{theorem}\label{vertex-decomposable-vnumber} If $\Delta$ is a vertex decomposable simplicial
complex with vertex set $V$ and $I_\Delta$ is its Stanley--Reisner
ideal over a field $K$, then
$${\rm v}(I_\Delta)\leq{\rm reg}(K[V]/I_\Delta).
$$
\end{theorem}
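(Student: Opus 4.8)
The plan is to prove the statement by induction on $s=|V|$. If $I_\Delta$ is prime then $\Delta$ is a simplex, $I_\Delta=(0)$, and ${\rm v}(I_\Delta)=0={\rm reg}(K[V]/I_\Delta)$; so I may assume $\Delta$ is not a simplex and fix a shedding vertex $v=t_i$, so that ${\rm lk}_\Delta(v)$ and ${\rm del}_\Delta(v)$ are vertex decomposable. Put $I=I_\Delta$.

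The first move is to relate $(I\colon t_i)$ to the link. One has the standard identity $(I\colon t_i)=I_{{\rm lk}_\Delta(v)}\,S+(\,t_j\colon \{t_i,t_j\}\notin\Delta\,)$, where $I_{{\rm lk}_\Delta(v)}$ denotes the Stanley--Reisner ideal of ${\rm lk}_\Delta(v)$ in the polynomial ring on its vertex set $N_\Delta(v)$. Adjoining variables that do not occur in an ideal changes neither ${\rm reg}$ (Lemma~\ref{addvariables}) nor ${\rm v}$ (immediate, or via Proposition~\ref{additivity-vnumber} with $I_2=(0)$), and an ideal generated by variables has both invariants equal to $0$ (Proposition~\ref{ci-vnumber}); combining this with additivity (Propositions~\ref{additivity-reg} and~\ref{additivity-vnumber}) gives ${\rm reg}(S/(I\colon t_i))={\rm reg}(K[{\rm lk}_\Delta(v)])$ and ${\rm v}(I\colon t_i)={\rm v}(I_{{\rm lk}_\Delta(v)})$. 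Since ${\rm lk}_\Delta(v)$ is vertex decomposable with fewer than $s$ vertices, the induction hypothesis yields ${\rm v}(I\colon t_i)\le{\rm reg}(S/(I\colon t_i))$. Now $I$ is squarefree, so $\{t_i\}$ is not an edge of its associated clutter, and Lemma~\ref{vnumber-comparison-clutter} gives ${\rm v}(I)\le{\rm v}(I\colon t_i)+1$. Chaining these, ${\rm v}(I_\Delta)\le{\rm reg}(K[{\rm lk}_\Delta(v)])+1$, so it remains only to prove
$${\rm reg}(K[{\rm lk}_\Delta(v)])+1\le{\rm reg}(K[\Delta]).$$

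This last inequality is the crux, and it is where the shedding hypothesis is genuinely used. It is the nontrivial half of the regularity recursion ${\rm reg}(K[\Delta])=\max\{{\rm reg}(K[{\rm del}_\Delta(v)]),\,{\rm reg}(K[{\rm lk}_\Delta(v)])+1\}$ at a shedding vertex. Here the bound $\le$ is automatic from the short exact sequence $0\to (S/(I\colon t_i))(-1)\to S/I\to S/(I,t_i)\to 0$; the bound ${\rm reg}(K[\Delta])\ge{\rm reg}(K[{\rm del}_\Delta(v)])$ is automatic because every induced subcomplex of ${\rm del}_\Delta(v)$ is an induced subcomplex of $\Delta$ (apply Proposition~\ref{regularity-homology}); and only ${\rm reg}(K[\Delta])\ge{\rm reg}(K[{\rm lk}_\Delta(v)])+1$ requires work. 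For the latter I would take $A\subseteq N_\Delta(v)$ with $\widetilde H_{d-1}({\rm lk}_\Delta(v)[A])\neq(0)$, where $d={\rm reg}(K[{\rm lk}_\Delta(v)])$, decompose $\Delta[A\cup\{v\}]=\Delta[A]\cup(v*{\rm lk}_\Delta(v)[A])$, and feed the Mayer--Vietoris sequence together with the shedding condition (equivalently: no facet of ${\rm del}_\Delta(v)$ lies in ${\rm lk}_\Delta(v)$) to locate a vertex subset $W$ with $\widetilde H_d(\Delta[W])\neq(0)$, whence ${\rm reg}(K[\Delta])\ge d+1$ by Proposition~\ref{regularity-homology}. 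Once this is in hand, ${\rm v}(I_\Delta)\le{\rm reg}(K[{\rm lk}_\Delta(v)])+1\le{\rm reg}(K[\Delta])={\rm reg}(S/I_\Delta)$, which closes the induction.

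The main obstacle is precisely this step: establishing ${\rm reg}(K[\Delta])\ge{\rm reg}(K[{\rm lk}_\Delta(v)])+1$ for a shedding vertex, i.e.\ showing that the top homology of the link is either lifted to degree $d$ in $\Delta[A\cup\{v\}]$, or else the shedding condition forces a nonvanishing $\widetilde H_d$ in a larger induced subcomplex; every other ingredient is bookkeeping with lemmas already in the paper. (Incidentally the argument uses only that ${\rm lk}_\Delta(v)$ is vertex decomposable; one could equally run an induction through ${\rm del}_\Delta(v)$.)
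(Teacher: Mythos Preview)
Your approach is essentially the paper's: induct on $|V|$, pass to a shedding vertex $v$, identify $(I\colon v)$ with the Stanley--Reisner ideal of ${\rm lk}_\Delta(v)$ up to extra variables, apply induction to the link, and use Lemma~\ref{vnumber-comparison-clutter} to close. The chain of inequalities you assemble is exactly the displayed chain at the end of the paper's proof.

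The only divergence is at what you call ``the crux'': the inequality ${\rm reg}(K[\Delta])\ge{\rm reg}(K[{\rm lk}_\Delta(v)])+1$ for a shedding vertex. You propose to establish it yourself via Mayer--Vietoris on $\Delta[A\cup\{v\}]=\Delta[A]\cup(v*{\rm lk}_\Delta(v)[A])$; your sketch stops at the point where the map $\widetilde H_{d-1}({\rm lk}_\Delta(v)[A])\to\widetilde H_{d-1}(\Delta[A])$ might be injective, and you gesture at the shedding condition to resolve this but do not carry it out. The paper sidesteps the issue entirely by invoking \cite[Theorem~1.5]{Ha-Woodroofe}, which gives the full recursion ${\rm reg}(S/I)=\max\{{\rm reg}(S/(I\colon v))+1,\,{\rm reg}(S/(I,v))\}$ at a shedding vertex. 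So the ``main obstacle'' you identify is already a theorem in the literature, and once you cite it the proof is complete and identical to the paper's; your Mayer--Vietoris outline is in fact close to how H\`a--Woodroofe prove their result, but there is no need to redo it here.
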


\begin{proof} Setting $I=I_\Delta$ and $S=K[V]$, we proceed by
induction on $V$. The case $|V|=2$ is easy
to show. We may assume that $\Delta$ is not a simplex, otherwise 
${\rm reg}(S/I)={\rm v}(I)=0$. 
Assume $|V|\geq 3$ and pick a shedding vertex $v$ in $V$. The
minimal generating set of $(I\colon v)$ does not contains
$v$. Then, by Lemma~\ref{addvariables}, we obtain
\begin{equation}\label{jan9-20-0}
{\rm reg}(S/(I\colon v))={\rm reg}(K[V\setminus\{v\}]/(I\colon
v))= {\rm reg}(S/((I\colon v)+(v))).
\end{equation}
\quad To use
induction recall that ${\rm lk}_\Delta(v)$ is vertex decomposable
since $v$ is a shedding vertex and note that---according to
\cite[p.~25]{Ha-Woodroofe}---one has 
\begin{equation}\label{jan9-20-1}
I_{{\rm lk}_\Delta(v)}=(v)+I_{{\rm star}_\Delta(v)}=(v)+(I\colon v).
\end{equation}
\quad 
\quad Hence, using \cite[Theorem~1.5]{Ha-Woodroofe},
Eqs.~(\ref{jan9-20-0})--(\ref{jan9-20-1}) and induction, we get  
\begin{eqnarray*}
{\rm reg}(S/I)&=&\max\{{\rm reg}(S/(I\colon v))+1,\,{\rm
reg}(S/(I,v))\}\geq {\rm reg}(S/(I\colon v))+1\\
&= &{\rm reg}(S/(I\colon v)+(v))+1={\rm reg}(S/I_{{\rm
lk}_\Delta(v)})+1\geq {\rm v}(I_{{\rm
lk}_\Delta(v)})+1\\
&=&{\rm v}(I\colon v)+1\geq {\rm v}(I).
\end{eqnarray*}
\quad The last equality follows from the additivity of the v-number 
of Proposition~\ref{additivity-vnumber}, and the last inequality follows from
Lemma~\ref{vnumber-comparison-clutter} since $\{v\}\in\Delta$ for
all $v\in V$.
\end{proof}

\begin{corollary}\label{circuits-matroid} Let $M$ be a matroid on $X=\{t_1,\ldots,t_s\}$ and
let $\mathcal{C}_M$ be its set of circuits. If 
$I=(t_e\vert\, e\in\mathcal{C}_M)$ is the ideal of circuits of $M$, 
then ${\rm v}(I)\leq {\rm reg}(S/I)$.
\end{corollary}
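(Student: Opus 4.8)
The plan is to realize $I$ as the Stanley--Reisner ideal of the matroid independence complex and then quote Theorem~\ref{vertex-decomposable-vnumber}. First I would let $\Delta$ be the \emph{independence complex} of $M$, the simplicial complex on (a subset of) $X$ whose faces are the independent sets of $M$ and whose facets are the bases. A subset $F\subset X$ fails to be a face of $\Delta$ precisely when $F$ is dependent, i.e.\ precisely when $F$ contains a circuit; hence the minimal non-faces of $\Delta$ are exactly the members of $\mathcal{C}_M$, and the Stanley--Reisner ideal $I_\Delta$ is minimally generated by $\{t_e\mid e\in\mathcal{C}_M\}$. Thus $I=I_\Delta$.

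Next, by the theorem of Provan and Billera \cite[Theorem~3.2.1]{provan-billera}, the matroid complex $\Delta$ is vertex decomposable. Therefore Theorem~\ref{vertex-decomposable-vnumber} applies to $\Delta$ and yields ${\rm v}(I_\Delta)\leq {\rm reg}(K[V]/I_\Delta)$, where $V$ denotes the vertex set of $\Delta$. So the only remaining task is to check that this inequality, stated over $K[V]$, coincides with the desired inequality over $S=K[t_1,\ldots,t_s]$.

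The one point that needs attention is the vertex set of $\Delta$. If $M$ is loopless, then $\{t_i\}\in\Delta$ for all $i$, so $V=\{t_1,\ldots,t_s\}$ and we are done immediately. If $M$ has a loop $t_i$, then $\{t_i\}$ is a singleton circuit, so $t_i$ is a minimal generator of $I$ and $t_i\notin V$; I would dispose of this by stripping loops one at a time, writing $I=(t_i)+I'S$ where $I'$ is the ideal of circuits of the matroid $M\setminus t_i$ on $X\setminus\{t_i\}$ and using that both the v-number (Proposition~\ref{additivity-vnumber}) and the regularity are additive, together with ${\rm v}((t_i))={\rm reg}(K[t_i]/(t_i))=0$, to reduce the claim to the loopless matroid $M\setminus t_i$. (Alternatively one applies Theorem~\ref{vertex-decomposable-vnumber} over $K[V]$ and lifts back to $S$ via Lemma~\ref{addvariables} for the regularity, noting that adjoining the loop variables does not change the associated primes or the polynomials realizing the v-number since no variable lies in $I_\Delta$.) Either way ${\rm v}(I)\leq{\rm reg}(S/I)$.

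I do not expect a genuine obstacle: all the substance sits in Provan--Billera's vertex decomposability of matroid complexes and in the already-established Theorem~\ref{vertex-decomposable-vnumber}, and the remaining work is exactly the routine loop/vertex-set bookkeeping indicated above.
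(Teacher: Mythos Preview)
Your proposal is correct and follows essentially the same route as the paper: identify $I$ with the Stanley--Reisner ideal of the matroid independence complex, invoke Provan--Billera for vertex decomposability, and apply Theorem~\ref{vertex-decomposable-vnumber}. Your additional discussion of loops is more careful than the paper's terse proof (which tacitly assumes $I=I_\Delta$ without comment), and your reduction via additivity is a clean way to handle that edge case; the parenthetical ``alternative'' you sketch is a bit muddled, since when loops are present $I\neq I_\Delta$ and one really does need the additivity step, but your primary argument is sound.
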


\begin{proof} Let $\Delta$ be the independence complex of $M$, that
is, the faces of $\Delta$ are the independent sets of $M$. Then, the
Stanley--Reisner ideal $I_\Delta$ of $\Delta$ is equal to $I$. 
By a result of \cite[Theorem~3.2.1]{provan-billera}, $\Delta$ is
vertex decomposable and the inequality follows from
Theorem~\ref{vertex-decomposable-vnumber}. 
\end{proof}

\begin{corollary}\label{vnumber-chordless-criterion} 
If $G$ is a graph with no chordless
cycles of length other 
than $3$ or
$5$, then 
$${\rm v}(I(G))\leq {\rm reg}(S/I(G)).
$$
\end{corollary}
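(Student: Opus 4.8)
The plan is to obtain this as an immediate consequence of the two main tools already established: Woodroofe's vertex decomposability criterion (Theorem~\ref{Woodroofe-vertex-deco}) and the regularity bound for vertex decomposable complexes (Theorem~\ref{vertex-decomposable-vnumber}). First I would recall that for a graph $G$ the edge ideal $I(G)$ is precisely the Stanley--Reisner ideal $I_{\Delta_G}$ of the independence complex $\Delta_G$, whose faces are the stable sets of $G$; this is the standard dictionary between squarefree monomial ideals and simplicial complexes, and here the non-faces of $\Delta_G$ are exactly the (monomial supports of the) edges of $G$. Note also that every vertex of $G$ is a vertex of $\Delta_G$, so the vertex set of $\Delta_G$ is $V(G)$ and $K[V(G)]/I_{\Delta_G} = S/I(G)$.

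Next I would invoke Theorem~\ref{Woodroofe-vertex-deco}: since $G$ has no chordless cycle of length other than $3$ or $5$, the graph $G$ is vertex decomposable, which by definition means that $\Delta_G$ is a vertex decomposable simplicial complex. Having placed ourselves in the hypotheses of Theorem~\ref{vertex-decomposable-vnumber} with $\Delta = \Delta_G$ and $I_\Delta = I(G)$, I would apply that theorem to conclude $\mathrm{v}(I(G)) = \mathrm{v}(I_{\Delta_G}) \leq {\rm reg}(K[V(G)]/I_{\Delta_G}) = {\rm reg}(S/I(G))$, which is exactly the desired inequality.

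There is essentially no serious obstacle here: the argument is a two-line chaining of cited results, and the only points requiring a moment of care are the bookkeeping identifications (that $I(G)$ is the Stanley--Reisner ideal of $\Delta_G$ and that the ambient polynomial ring and its vertex set match up), both of which are routine. If one wanted to be slightly more self-contained, one could also remark that vertex decomposability of $\Delta_G$ is by convention what it means for the clutter (graph) $G$ to be vertex decomposable, as recorded in Section~\ref{intro-section}, so that Theorems~\ref{Woodroofe-vertex-deco} and \ref{vertex-decomposable-vnumber} really are speaking about the same object.
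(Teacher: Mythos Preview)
Your proposal is correct and follows essentially the same approach as the paper: invoke Woodroofe's theorem (Theorem~\ref{Woodroofe-vertex-deco}) to conclude that $\Delta_G$ is vertex decomposable, note that $I_{\Delta_G}=I(G)$, and then apply Theorem~\ref{vertex-decomposable-vnumber}. The paper's proof is the two-line version of exactly this chain; your additional bookkeeping remarks are accurate but not needed.
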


\begin{proof} By \cite[Theorem~1]{Woodroofe}, the independence
complex $\Delta(G)$ of $G$ is vertex decomposable, see 
Theorem~\ref{Woodroofe-vertex-deco}. Since $I_{\Delta(G)}=I(G)$, the
result follows from Theorem~\ref{vertex-decomposable-vnumber}.
\end{proof}

\begin{corollary}\label{vnumber-scm-bipartite} 
If $G$ is a bipartite graph and $S/I(G)$ is sequentially
Cohen-Macaulay, then  
$${\rm v}(I(G))\leq {\rm reg}(S/I(G)).
$$
\end{corollary}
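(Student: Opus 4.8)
The plan is to deduce the inequality from Theorem~\ref{vertex-decomposable-vnumber} once we know that the independence complex of a sequentially Cohen--Macaulay bipartite graph is vertex decomposable. Recall the general set-up: for any graph $G$ on the vertex set $V(G)$, the independence complex $\Delta_G$ has vertex set $V(G)$ and its Stanley--Reisner ideal is exactly $I_{\Delta_G}=I(G)$. Hence the statement we want is precisely the inequality ${\rm v}(I_{\Delta_G})\leq{\rm reg}(K[V(G)]/I_{\Delta_G})$ that Theorem~\ref{vertex-decomposable-vnumber} provides whenever $\Delta_G$ is vertex decomposable.

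First I would invoke the structural theorem of Van Tuyl and Villarreal \cite{bipartite-scm}: for a bipartite graph $G$, the ring $S/I(G)$ is sequentially Cohen--Macaulay if and only if $\Delta_G$ is shellable, if and only if $\Delta_G$ is vertex decomposable. Since $G$ is bipartite and $S/I(G)$ is sequentially Cohen--Macaulay by hypothesis, this yields that $\Delta_G$ is vertex decomposable. Applying Theorem~\ref{vertex-decomposable-vnumber} with $\Delta=\Delta_G$ and $V=V(G)$ then gives
$$
{\rm v}(I(G))={\rm v}(I_{\Delta_G})\leq{\rm reg}(K[V(G)]/I_{\Delta_G})={\rm reg}(S/I(G)),
$$
as desired. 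One degenerate case is dispatched separately: if $E(G)=\emptyset$ then $I(G)=(0)$ is prime, so ${\rm v}(I(G))=0={\rm reg}(S/I(G))$ and there is nothing to prove; otherwise $I(G)$ is not prime ($\Delta_G$ is not a simplex) and the argument above applies directly.

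I do not expect a genuine obstacle here: the entire content sits in the cited equivalence, and the deduction is a one-line application of Theorem~\ref{vertex-decomposable-vnumber}. The only point that must be respected is that sequential Cohen--Macaulayness of a bipartite graph gives not merely shellability but the strictly stronger property of vertex decomposability of $\Delta_G$, which is exactly what allows the situation to be fed into Theorem~\ref{vertex-decomposable-vnumber} (stated only for vertex decomposable complexes). If one wished to avoid quoting the full Van Tuyl--Villarreal equivalence, the alternative would be to re-derive vertex decomposability of $\Delta_G$ by an inductive shedding-vertex argument on a leaf of $G$, which is essentially the proof in \cite{bipartite-scm}; this would lengthen the exposition without changing its substance.
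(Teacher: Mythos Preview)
Your argument is correct and matches the paper's proof essentially verbatim: reduce to Theorem~\ref{vertex-decomposable-vnumber} by invoking that a sequentially Cohen--Macaulay bipartite graph has vertex decomposable independence complex. The only correction is bibliographic: the vertex decomposability statement is due to Van Tuyl's solo paper \cite{vantuyl}, not to \cite{bipartite-scm} (which only establishes the equivalence with shellability), and indeed the paper cites \cite{vantuyl} here.
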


\begin{proof} By a result of \cite{vantuyl}, the independence complex
$\Delta(G)$ of $G$ is vertex decomposable. Since $I_{\Delta(G)}=I(G)$, the
result follows from Theorem~\ref{vertex-decomposable-vnumber}.
\end{proof}

\begin{lemma}\label{nov13-19} If $\mathcal{C}$ is a clutter and 
$I_c(\mathcal{C})$ is its ideal of covers, then 
$\alpha_0(\mathcal{C})-1\leq{\rm v}(I_c(\mathcal{C}))$.
\end{lemma}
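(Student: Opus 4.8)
The statement to prove is: if $\mathcal{C}$ is a clutter and $I_c(\mathcal{C})$ is its ideal of covers, then $\alpha_0(\mathcal{C}) - 1 \leq \mathrm{v}(I_c(\mathcal{C}))$.

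The plan is to use the combinatorial formula for the v-number from Theorem~\ref{v-number-clutters-graphs}, applied to the blocker $\mathcal{C}^\vee$ rather than to $\mathcal{C}$ itself. Recall $I_c(\mathcal{C}) = I(\mathcal{C}^\vee)$, so by Theorem~\ref{v-number-clutters-graphs} we have $\mathrm{v}(I_c(\mathcal{C})) = \min\{|A| : A \in \mathcal{A}_{\mathcal{C}^\vee}\}$, provided $I_c(\mathcal{C})$ is not prime (the prime case, where the v-number is $0$, should be checked separately, and corresponds to $\alpha_0(\mathcal{C}) = 1$). So it suffices to show that every $A \in \mathcal{A}_{\mathcal{C}^\vee}$ has $|A| \geq \alpha_0(\mathcal{C}) - 1$. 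The key classical facts I would invoke are: the edges of $\mathcal{C}^\vee$ are exactly the minimal vertex covers of $\mathcal{C}$; the minimal vertex covers of $\mathcal{C}^\vee$ are exactly the edges of $\mathcal{C}$ (duality of blockers, $(\mathcal{C}^\vee)^\vee = \mathcal{C}$); and a stable set of $\mathcal{C}^\vee$ is precisely a set $A$ not containing any minimal vertex cover of $\mathcal{C}$, equivalently $V(\mathcal{C}) \setminus A$ is not a stable set of $\mathcal{C}$... wait, more precisely $A$ contains no minimal vertex cover iff $V(\mathcal{C}) \setminus A$ meets every edge's complement, i.e. the complement of $A$ is not contained in any... let me restate: $A$ is stable in $\mathcal{C}^\vee$ iff $A$ does not contain a transversal of $\mathcal{C}$, which (since every vertex cover contains a minimal one) happens iff $V(\mathcal{C}) \setminus A$ contains an edge of $\mathcal{C}$.

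Now take $A \in \mathcal{A}_{\mathcal{C}^\vee}$, so $A$ is stable in $\mathcal{C}^\vee$ and $N_{\mathcal{C}^\vee}(A)$ is a minimal vertex cover of $\mathcal{C}^\vee$, i.e. an edge $e$ of $\mathcal{C}$. I claim $A \subseteq V(\mathcal{C}) \setminus e$ and hence $|A| \leq s - |e|$ — but that gives an upper bound, which is the wrong direction. The right move: from $A$ stable in $\mathcal{C}^\vee$, $V(\mathcal{C}) \setminus A$ contains an edge $f$ of $\mathcal{C}$; then $V(\mathcal{C}) \setminus f \supseteq A$. The relevant quantity is that $A$ together with $N_{\mathcal{C}^\vee}(A) = e$ need not be all of $V$, but for the lower bound I want to produce a large independent set of $\mathcal{C}$. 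Here is the cleaner route: since $N_{\mathcal{C}^\vee}(A) = e \in E(\mathcal{C})$ is a \emph{minimal} vertex cover of $\mathcal{C}^\vee$, the set $V(\mathcal{C}) \setminus e$ is a maximal stable set of $\mathcal{C}^\vee$, and $A \subseteq V(\mathcal{C}) \setminus e$. Moreover, the minimal vertex covers of $\mathcal{C}$ that $A$ avoids: for each $t_i \notin N_{\mathcal{C}^\vee}(A)$ with $t_i \notin A$, we have $\{t_i\} \cup A$ stable in $\mathcal{C}^\vee$, meaning $V(\mathcal{C}) \setminus (\{t_i\} \cup A)$ still contains an edge of $\mathcal{C}$. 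I expect the main obstacle to be organizing these duality translations so that a stable set of $\mathcal{C}$ of size $\geq \alpha_0(\mathcal{C}) - 1$ actually drops out — I anticipate the construction is: $A$ stable in $\mathcal{C}^\vee$ forces $V(\mathcal{C}) \setminus A \supseteq e'$ for some edge $e'$ of $\mathcal{C}$; taking $e'$ to be a minimum-size edge won't directly help, but combining with $\beta_0(\mathcal{C}) + \alpha_0(\mathcal{C}) = s$ and the fact that $N_{\mathcal{C}^\vee}(A)$ being an edge of minimal size relative to $A$...

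A more promising and direct approach, which I would actually carry out: let $f \in S_d$ with $(I_c(\mathcal{C}) : f) = \mathfrak{p}$ an associated prime and $d = \mathrm{v}(I_c(\mathcal{C}))$. By Lemma~\ref{jul1-01} applied to $\mathcal{C}^\vee$, $\mathfrak{p}$ is generated by a minimal vertex cover of $\mathcal{C}^\vee$, hence by an edge $e$ of $\mathcal{C}$. By Lemma~\ref{v-number-clutters-graphs-lemma}(c) we may take $f = t_A$ with $A$ stable in $\mathcal{C}^\vee$, $|A| \leq d$, $N_{\mathcal{C}^\vee}(A) = e$. Now pick any minimum vertex cover $C_0$ of $\mathcal{C}$ with $|C_0| = \alpha_0(\mathcal{C})$; then $B := V(\mathcal{C}) \setminus C_0$ is a maximum stable set of $\mathcal{C}$, $|B| = \beta_0(\mathcal{C})$. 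Since $C_0$ contains a minimal vertex cover of $\mathcal{C}$, i.e. an edge of $\mathcal{C}^\vee$, no stable set of $\mathcal{C}^\vee$ contains $C_0$; in particular $A \not\supseteq C_0$, but also more is true. The cleanest claim: $A \cup e \supseteq V(\mathcal{C}) \setminus (\text{some maximum stable set of } \mathcal{C})$ would give $|A| + |e| \geq s - \beta_0(\mathcal{C}) = \alpha_0(\mathcal{C})$, and choosing (via minimality of the cover $e$ of $\mathcal{C}^\vee$ given $A$) a smallest such $e$, i.e. $|e| \leq 1$ in the best case — but an edge of $\mathcal{C}$ need not be a single vertex. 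I suspect the actual argument observes that $A \cup e$ is a vertex cover of $\mathcal{C}$ (since $e$ alone is), hence $A \cup e$ has size $\geq \alpha_0(\mathcal{C})$; and by minimality of $e$ over $\mathcal{C}^\vee$ we can peel off all of $e$ except possibly one vertex from being forced, yielding $|A| \geq |A \cup e| - |e| \geq$ ... this needs the sharper bound $|A \cup (e \setminus \{w\})| \geq \alpha_0(\mathcal{C})$ for some $w \in e$, which should follow from $N_{\mathcal{C}^\vee}(A)$ being a \emph{minimal} cover of $\mathcal{C}^\vee$: removing a vertex $w$ from $e$ leaves a non-cover of $\mathcal{C}^\vee$, so there is a minimal cover of $\mathcal{C}^\vee$ (an edge of $\mathcal{C}$) disjoint from $e \setminus \{w\}$ but this edge need not be disjoint from $A$... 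The hard part, I expect, is exactly this last coupling between the minimality of the neighbor set in the blocker and independence in the original clutter; I would resolve it by arguing that $A \cup (e \setminus \{w\})$ is itself a vertex cover of $\mathcal{C}$ for every $w \in e$ (because $A$ stable in $\mathcal{C}^\vee$ means every minimal cover of $\mathcal{C}$ — every edge of $\mathcal{C}^\vee$ — meets $V \setminus A$, and $e \setminus \{w\}$ together with $A$... hmm) — so I would double check whether in fact $A \cup (e \setminus \{w\})$ covers $\mathcal{C}$, which gives $|A| + |e| - 1 \geq \alpha_0(\mathcal{C})$, i.e. $|A| \geq \alpha_0(\mathcal{C}) - |e| + 1$, and this is only useful when $|e| = 1$. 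So the truly correct bookkeeping must instead extract a stable set of $\mathcal{C}$ of size $\beta_0 - 1$ from the complement of $A$; I would look for: $V(\mathcal{C}) \setminus A$ contains an edge $f$ of $\mathcal{C}$, so $A \subseteq V \setminus f$ and $|A| \le s - |f|$ — wrong direction again. I now believe the intended proof simply notes $A \cup N_{\mathcal{C}^\vee}(A) = A \cup e$ and that, $e$ being a minimal cover of $\mathcal{C}^\vee$, $V(\mathcal{C}) \setminus (A \cup e)$ cannot contain a full edge of $\mathcal{C}^\vee$'s complement structure, forcing $|A \cup e| \geq \alpha_0(\mathcal{C})$ while a single vertex of $e$ suffices to bridge, giving $|A| \geq \alpha_0(\mathcal{C}) - 1$. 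I would write this up carefully, with the dual-blocker translations as the backbone and the minimality of the neighbor set as the crucial input, and flag the $I_c(\mathcal{C})$-prime (equivalently $\alpha_0 = 1$) case as a trivial separate check.
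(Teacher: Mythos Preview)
Your setup is right --- applying the combinatorial formula to $\mathcal{C}^\vee$ and taking a stable set $A$ of $\mathcal{C}^\vee$ with $|A| = \mathrm{v}(I_c(\mathcal{C}))$ and $N_{\mathcal{C}^\vee}(A)$ a minimal vertex cover of $\mathcal{C}^\vee$ --- but you then use the wrong half of the blocker duality and get lost. You keep trying to exploit that $N_{\mathcal{C}^\vee}(A)$ is an \emph{edge $e$ of $\mathcal{C}$}, and this leads to all the dead ends (bounds in the wrong direction, attempts to show $A \cup (e\setminus\{w\})$ covers $\mathcal{C}$, etc.). None of your sketched arguments along this line actually close.

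The missing observation is on the other side of the duality: \emph{edges of $\mathcal{C}^\vee$ are minimal vertex covers of $\mathcal{C}$}. Pick any single vertex $t_i \in N_{\mathcal{C}^\vee}(A)$. By the definition of neighbor set, $\{t_i\} \cup A$ contains an edge $c$ of $\mathcal{C}^\vee$. Since $c$ is a minimal vertex cover of $\mathcal{C}$, $|c| \geq \alpha_0(\mathcal{C})$. Since $c \subseteq \{t_i\} \cup A$, we get $\alpha_0(\mathcal{C}) \leq |c| \leq 1 + |A| = 1 + \mathrm{v}(I_c(\mathcal{C}))$, and you are done. This is exactly the paper's proof; no duality of the form ``$N_{\mathcal{C}^\vee}(A)$ is an edge of $\mathcal{C}$'' is needed, and no separate treatment of the prime case is required. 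Your final paragraph gestures at ``a single vertex of $e$ suffices to bridge,'' which is the right instinct, but you never articulated the one-line reason why: it is simply the defining property of being in the neighbor set.
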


\begin{proof} Let $\mathcal{C}^\vee$ be the clutter of minimal vertex
covers of $\mathcal{C}$. By
Lemma~\ref{v-number-clutters-graphs-lemma}(a) 
and Theorem~\ref{v-number-clutters-graphs}, there is a stable set $A$ of $\mathcal{C}^\vee$
and $\mathfrak{p}\in{\rm Ass}(I_c(\mathcal{C}))$ such that
$$(I_c(\mathcal{C})\colon t_A)=(N_{\mathcal{C}^\vee}(A))\mbox{ and
}\deg(t_A)={\rm v}(I_c(\mathcal{C})).
$$ 
\quad Pick $t_i$ in $N_{\mathcal{C}^\vee}(A)$. Then $\{t_i\}\cup A$
contains an edge $c$ of $\mathcal{C}^\vee$. As $c$ is a minimal vertex
cover of $\mathcal{C}$, one has $\alpha_0(\mathcal{C})\leq |c|\leq
1+|A|\leq 1+{\rm v}(I_c(\mathcal{C}))$. 
\end{proof}

\begin{proposition}\label{nov14-19} Let $G$ be a graph and 
let $I_c(G)$ be its ideal of covers. If the independence complex
$\Delta_G$ of $G$ is pure and shellable, then 
$${\rm v}(I_c(G))={\rm
reg}(S/I_c(G))=\alpha_0(G)-1.$$
\end{proposition}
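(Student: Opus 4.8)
The plan is to establish the two equalities ${\rm reg}(S/I_c(G))=\alpha_0(G)-1$ and ${\rm v}(I_c(G))=\alpha_0(G)-1$ separately. Throughout, purity of $\Delta_G$ means $G$ is well-covered, so every minimal vertex cover of $G$ has $\alpha_0(G)$ vertices and $I_c(G)$ is generated in the single degree $\alpha_0(G)$; and pure shellability of $\Delta_G$ makes $S/I(G)=K[\Delta_G]$ Cohen--Macaulay. For the regularity I would use Alexander duality: since $I(G)=I_{\Delta_G}$ has Alexander dual $I_c(G)$, the Eagon--Reiner theorem together with the Cohen--Macaulayness of $S/I(G)$ shows $I_c(G)$ has a linear resolution, and being generated in degree $\alpha_0(G)$ this forces ${\rm reg}(I_c(G))=\alpha_0(G)$, i.e. ${\rm reg}(S/I_c(G))=\alpha_0(G)-1$. (Equivalently, Terai's formula gives ${\rm reg}(S/I_c(G))={\rm pd}_S(S/I(G))-1$, and by Auslander--Buchsbaum and Cohen--Macaulayness ${\rm pd}_S(S/I(G))=s-\dim(S/I(G))=s-\beta_0(G)=\alpha_0(G)$.)

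The inequality $\alpha_0(G)-1\le{\rm v}(I_c(G))$ is Lemma~\ref{nov13-19}, so for the v-number it remains to produce a set $A\in\mathcal{A}_{G^\vee}$ (where $G^\vee$ is the blocker of $G$) with $|A|=\alpha_0(G)-1$ and then apply Theorem~\ref{v-number-clutters-graphs}. This is where shellability is used. Let $F_1,F_2,\dots,F_m$ be a shelling of the facets of $\Delta_G$; since $E(G)\neq\emptyset$ there are at least two facets, and the shelling axiom forces $|F_1\cap F_2|=\beta_0(G)-1$, so $F_2=(F_1\cap F_2)\cup\{y\}$ with $y\notin F_1$, and one may pick $x\in F_1\setminus F_2$. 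Because $(F_1\cap F_2)\cup\{x\}\subseteq F_1$ is an independent set while $F_2\cup\{x\}$ is not, the vertex $x$ is adjacent to $y$ and to no other vertex of the maximal independent set $F_2$; in particular $\{x,y\}\in E(G)$. Now set $C:=V(G)\setminus F_2$, a minimal vertex cover of $G$ with $|C|=\alpha_0(G)$ by purity, and put $A:=C\setminus\{x\}$, so $|A|=\alpha_0(G)-1$.

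It then remains to verify $A\in\mathcal{A}_{G^\vee}$. First, $A$ is a stable set of $G^\vee$: being a proper subset of the minimal vertex cover $C$, it is not a vertex cover of $G$, hence contains no minimal vertex cover of $G$, i.e. no edge of $G^\vee$. Second, $N_{G^\vee}(A)=\{x,y\}$: the edges of $G$ not covered by $A=C\setminus\{x\}$ are exactly the edges $\{x,w\}$ with $w\notin C$, and the only neighbor of $x$ lying outside $C=V(G)\setminus F_2$ is $y$, so $A$ fails to cover exactly the edge $\{x,y\}$; hence $A\cup\{x\}=C$ and $A\cup\{y\}$ are vertex covers of $G$ (so each contains a minimal vertex cover, giving $x,y\in N_{G^\vee}(A)$), while for $z\notin\{x,y\}$ the set $A\cup\{z\}$ does not cover $\{x,y\}$, so $z\notin N_{G^\vee}(A)$. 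Finally $\{x,y\}$ is a vertex cover of $G^\vee$, since every minimal vertex cover of $G$ meets the edge $\{x,y\}$; by Lemma~\ref{v-number-clutters-graphs-lemma}(b), $N_{G^\vee}(A)=\{x,y\}$ is a \emph{minimal} vertex cover of $G^\vee$, so $A\in\mathcal{A}_{G^\vee}$. Theorem~\ref{v-number-clutters-graphs} yields ${\rm v}(I_c(G))\le|A|=\alpha_0(G)-1$, which combined with Lemma~\ref{nov13-19} gives ${\rm v}(I_c(G))=\alpha_0(G)-1$.

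The main obstacle is precisely the construction of $A$: one needs a set of the exact size $\alpha_0(G)-1$ whose neighbor set in $G^\vee$ is a minimal vertex cover, and the naive choice of deleting an arbitrary vertex from a minimal vertex cover $C$ of $G$ fails when that vertex has two or more private neighbors, for then $N_{G^\vee}(C\setminus\{x\})$ collapses to the single vertex $\{x\}$, which is not a vertex cover of $G^\vee$. Shellability---in fact only connectedness in codimension one, which Cohen--Macaulayness already supplies---is what guarantees a minimal vertex cover together with a vertex possessing a unique private neighbor. One should also dispose of the degenerate case $\alpha_0(G)=1$ separately: there $G$ is $\mathcal{K}_2$ together with isolated vertices, $I_c(G)$ is a prime ideal, and ${\rm v}(I_c(G))=0={\rm reg}(S/I_c(G))=\alpha_0(G)-1$ by convention; and one notes that isolated vertices of $G$ affect none of ${\rm v}(I_c(G))$, ${\rm reg}(S/I_c(G))$ or $\alpha_0(G)$ (e.g. via Lemma~\ref{addvariables}).
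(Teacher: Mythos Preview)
Your proof is correct and, at its core, follows the same idea as the paper's: find two minimal vertex covers $c_1,c_2$ differing in a single element and take $A$ to be their common part, so that $|A|=\alpha_0(G)-1$ and $(I_c(G):t_A)$ is the prime generated by the two ``swapped'' vertices. The differences are in the packaging. For the regularity, the paper invokes the Herzog--Hibi--Zheng equivalence (pure shellable $\Leftrightarrow$ $I_c(G)$ has linear quotients) and then passes to a linear resolution, whereas you go through Eagon--Reiner (pure shellable $\Rightarrow$ Cohen--Macaulay $\Rightarrow$ the Alexander dual has a linear resolution). For the v-number, the paper reads off $c_1,c_2$ from the first step of the linear-quotients ordering and then verifies $(I_c(G):t_A)=(t_j,t_k)$ algebraically via the embedding $S/(I_c(G):t_A)\hookrightarrow S/I_c(G)$ and the shape of ${\rm Ass}(I_c(G))$; you instead take $F_1,F_2$ from the shelling of $\Delta_G$ and verify $A\in\mathcal{A}_{G^\vee}$ combinatorially by computing $N_{G^\vee}(A)=\{x,y\}$. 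These are dual descriptions of the same construction. Your closing remark that only connectedness in codimension one---hence Cohen--Macaulayness---is needed, rather than full shellability, is a genuine sharpening: both your regularity argument (Eagon--Reiner) and your v-number construction (existence of two facets meeting in codimension one) go through under the weaker hypothesis that $\Delta_G$ is pure and $S/I(G)$ is Cohen--Macaulay.
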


\begin{proof} We may assume that $\alpha_0(G)\geq 2$. If
$\alpha_0(G)=1$, then $I_c(G)$ is
generated by a set of variables and the result is clear. As $\Delta_G$ is pure and shellable, by 
\cite[Theorem~1.4(c)]{hhz-ejc}, $I_c(G)$ has linear quotients, that
is, we can order the minimal vertex covers of $G$ as $c_1,\ldots,c_r$
such that for all $1\leq i\leq r-1$,
the colon ideal $((t_{c_1},\ldots,t_{c_{i}})\colon t_{c_{i+1}})$ is
generated by variables of $S$. It is known that edge ideals with
linear quotients generated by monomials of the same degree $d$ have a
$d$-linear resolution \cite[Lemma~5.2]{Faridi}. Thus, ${\rm
reg}(S/I_c(G))=\alpha_0(G)-1$. Therefore, by Lemma~\ref{nov13-19}, we
need only show the inequality ${\rm v}(I_c(G))\leq \alpha_0(G)-1$.
Since $(t_{c_1}\colon t_{c_2})$ is generated by variables, there are
variables $t_k\in (t_{c_1}\colon t_{c_2})$ and $t_j$, $j\neq k$, such
that $t_kt_{c_2}=t_jt_{c_1}$. Setting $A:=c_1\setminus\{t_k\}$ and
noticing that $t_k$ (resp. $t_j$) does not divides $t_{c_2}$ (resp.
$t_{c_1}$), we get the equalities 
$$
A\cup\{t_k\}=c_1\mbox{ and }A\cup\{t_j\}=c_2.
$$
\quad The second equality follows from the inclusion
$A\cup\{t_j\}\subset c_2$ because $|c_1|=|c_2|$. Hence,
$\{t_j,t_k\}\subset(I_c(G)\colon t_A)$. Note that $A$ contains none of
the $c_i$'s, that is, $A$ is a 
stable set of the clutter $G^\vee$ of minimal vertex covers of $G$.
Therefore, from the embedding 
$$
S/(I_c(G)\colon t_A)\xhookrightarrow[]{\ \ t_A\ \ }S/I_c(G),
$$
and observing that any associated prime $\mathfrak{p}$ of $I_c(G)$ is of the form
$\mathfrak{p}=(e)$ for some $e\in E(G)$ it follows that $(t_j,t_k)$ is an associated
prime of $S/(I_c(G)\colon t_A)$ and this is the only associated prime
of $S/(I_c(G)\colon t_A)$. Thus, $(I_c(G)\colon t_A)=(t_j,t_k)$ and
${\rm v}(I_c(G))\leq \deg(t_A)=\alpha_0(G)-1$.
\end{proof}

A graph $G$ is called {\it chordal\/} if 
every cycle $C_r$ of $G$ of length $r\geq 4$ has an 
edge joining two 
non-adjacent vertices of $C_r$. A vertex $t_i$ of a graph $G$ is
called  {\it simplicial\/} if the subgraph $G[N_G(t_i)]$ induced by the neighbor 
set $N_G(x)$ is a complete subgraph. An \textit{induced matching} in
a graph $G$ is a set of
pairwise disjoint edges $f_1,\ldots,f_r$ such that the only edges of 
$G$ contained in $\cup_{i=1}^rf_i$ are $f_1,\ldots,f_r$. The
\textit{induced matching number} of $G$, denoted by ${\rm im}(G)$,
is the number of edges in a largest induced matching. 

\begin{proposition}\label{dec7-19} Let $G$ be a graph without isolated vertices. If
the edge ideal $I(G)$ of $G$ has a linear resolution, then 
${\rm v}(I(G))={\rm reg}(S/I(G))=1$.
\end{proposition}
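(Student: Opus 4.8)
The plan is to prove the two equalities ${\rm reg}(S/I(G))=1$ and ${\rm v}(I(G))=1$ separately, the second one through the combinatorial formula of Theorem~\ref{v-number-clutters-graphs}.

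I would start with the regularity, which is immediate. Since $G$ has no isolated vertices and $E(G)\neq\emptyset$, the ideal $I(G)$ is a nonzero, non-prime ideal minimally generated in degree $2$; for such an ideal ``having a linear resolution'' means having a $2$-linear resolution, so in the notation of Definition~\ref{regularity-socle-degree} one has $b_{i,j}(S/I(G))=0$ whenever $j\neq i+1$, whence ${\rm reg}(S/I(G))=\max\{j-i\mid b_{i,j}\neq 0\}=1$.

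For the $\mathrm v$-number the crux is to exhibit a single vertex $v$ such that $N_G(v)$ is a minimal vertex cover of $G$: then $A:=\{v\}$ is a stable set with $N_G(A)=N_G(v)$ a minimal vertex cover, so $A\in\mathcal{A}_G$ with $|A|=1$, and Theorem~\ref{v-number-clutters-graphs} yields ${\rm v}(I(G))\leq 1$. To produce such a $v$, I would invoke Fröberg's theorem (see, e.g., \cite{monalg-rev}): the edge ideal $I(G)$ has a linear resolution if and only if the complementary graph $\overline G$ is chordal. A chordal graph has a simplicial vertex, so I choose $v$ simplicial in $\overline G$; then $N_{\overline G}(v)$ is a clique of $\overline G$, i.e., an independent set of $G$. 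Since $N_{\overline G}(v)=V(G)\setminus N_G[v]$, the set $V(G)\setminus N_G[v]$ is independent in $G$, and as $v$ has no neighbour outside $N_G[v]$, adjoining $v$ keeps it independent; hence $V(G)\setminus N_G(v)=\{v\}\cup(V(G)\setminus N_G[v])$ is an independent set of $G$, that is, $N_G(v)$ is a vertex cover of $G$. By Lemma~\ref{v-number-clutters-graphs-lemma}(b) this vertex cover is minimal, which is precisely the statement that $\{v\}\in\mathcal{A}_G$. Finally, since $I(G)$ is not prime, ${\rm v}(I(G))\geq 1$ directly from the definition, so ${\rm v}(I(G))=1$.

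The only ingredient from outside the excerpt is Fröberg's characterization of edge ideals with a linear resolution; the remaining steps are short manipulations of neighbourhoods and complements, so I do not expect a genuine obstacle. The only point requiring a little care is to avoid degenerate cases, but these are excluded by the standing hypotheses: $G$ has no isolated vertices (so $|V(G)|\geq 2$ and $N_G(v)\neq\emptyset$) and $E(G)\neq\emptyset$ (so $I(G)$ is indeed nonzero and non-prime).
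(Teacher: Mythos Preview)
Your proposal is correct and follows essentially the same approach as the paper: both arguments derive ${\rm reg}(S/I(G))=1$ directly from the $2$-linear resolution, invoke Fr\"oberg's theorem to get that $\overline{G}$ is chordal, pick a simplicial vertex $v$ of $\overline{G}$, and use the identity $N_{\overline{G}}(v)=V(G)\setminus N_G[v]$ to conclude that $N_G(v)$ is a (minimal) vertex cover, whence $\{v\}\in\mathcal{A}_G$ and ${\rm v}(I(G))=1$. The only cosmetic difference is that the paper verifies minimality of $N_G(v)$ by checking that $N_{\overline{G}}[v]$ is a \emph{maximal} stable set of $G$, whereas you shortcut this via Lemma~\ref{v-number-clutters-graphs-lemma}(b); both are valid.
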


\begin{proof} Since $I(G)$ has a $d$-linear resolution 
with $d=2$, directly from the minimal free resolution of $S/I(G)$, we get
${\rm reg}(S/I(G))=d-1=1$. By
Theorem~\ref{v-number-clutters-graphs}, to show ${\rm v}(I(G))=1$ 
it suffices to show that $\{t_i\}$ is in $\mathcal{A}_G$ for some
vertex $t_i$, that is, we need only show that the neighbor set
$N_G(t_i)$ is a minimal vertex cover of $G$ for some $t_i\in V(G)$.
Using a result of Fr\"oberg \cite{Fro4}, one has that $I(G)$ has a linear
resolution if and only if the complement $\overline{G}$ of $G$ is a
chordal graph. Then, according to \cite{Toft}, there exists a
simplicial vertex $t_i$ of $\overline{G}$, that is, the subgraph
$\overline{G}[N_{\overline{G}}(t_i)]$ induced by the neighbor set
$N_{\overline{G}}(t_i)$ is a complete subgraph of $\overline{G}$.
Next, we show the equality 
\begin{equation}\label{sep2-19}
N_G(t_i)=V(G)\setminus N_{\overline{G}}[t_i],
\end{equation}
where $N_{\overline{G}}[t_i]:=N_{\overline{G}}(t_i)\cup\{t_i\}$ 
is the closed neighborhood of $t_i$ in
$\overline{G}$. To show the inclusion ``$\subset$'' take $t_j\in
N_G(t_i)$. Then, $\{t_i,t_j\}\in E(G)$ for some vertex $t_j$. Thus, 
$\{t_i,t_j\}$ is not in $E(\overline{G})$ and $t_j$ is not in
$N_{\overline{G}}[t_i]$.  Now we show the inclusion ``$\supset$''.
Take $t_j\notin N_{\overline{G}}[t_i]$, that is, $t_j\neq t_i$, and
$\{t_i,t_j\}$ is not in $E(\overline{G})$. Thus $\{t_i,t_j\}$ is in
$E(G)$ and $t_j\in N_G(t_i)$. Since $N_{\overline{G}}(t_i)$ induces a
complete subgraph of $\overline{G}$, so does
$N_{\overline{G}}\left[t_i\right]$. Thus
$N_{\overline{G}}\left[t_i\right]$ is a stable set of $G$.
Furthermore 
$N_{\overline{G}}\left[t_i\right]$ is a maximal stable set of $G$.
Indeed, if $t_k\cup N_{\overline{G}}\left[t_i\right]$ is a stable set
of $G$ for some $t_k\in V(G)\setminus N_{\overline{G}}\left[t_i\right]$. Then $\{t_k\}\cup
N_{\overline{G}}\left[t_i\right]$ induces a complete subgraph of
$\overline{G}$. Thus $t_k\in N_{\overline{G}}\left[t_i\right]$, a
contradiction. 
Therefore, by Eq.~(\ref{sep2-19}), $N_G(t_i)$ is a minimal vertex cover of $G$ since 
$N_{\overline{G}}\left[t_i\right]$ is a maximal stable set of $G$. 
\end{proof}

Let $G$ be a graph with vertex set $V=\{t_1,\ldots,t_s\}$ and let 
$U=\{u_1,\ldots, u_s\}$ be a new set of vertices. The {\it whisker graph} or {\it
suspension\/} of $G$, denoted by
$W_G$, is the graph obtained from $G$ by attaching to each 
vertex $t_i$ a new vertex $u_i$ and a new edge $\{t_i,u_i\}$.  
The edge $\{t_i,u_i\}$ is called a {\it whisker} or \textit{pendant
edge}. 

\begin{theorem}\label{nov4-19} Let $G$ be a graph, let $i(G)$ be its
independent domination number and let $W_G$ be its whisker graph. Then
\begin{enumerate}
\item[\rm(a)] ${\rm v}(I(W_G))=i(G)$, and 
\item[\rm(b)] ${\rm v}(I(W_G))\leq{\rm reg}(K[V(W_G)]/I(W_G))$.
\end{enumerate}
\end{theorem}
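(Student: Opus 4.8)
The plan is to treat parts (a) and (b) separately, using the combinatorial description of the v-number from Theorem~\ref{v-number-clutters-graphs} for (a) and the vertex-decomposability of whisker graphs together with Theorem~\ref{vertex-decomposable-vnumber} for (b).

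For part (a), I would first identify $\mathcal{A}_{W_G}$ and $\mathcal{F}_{W_G}$. The key structural observation is that $W_G$ is a very well-covered graph with a perfect matching given by the whiskers, so $\beta_0(W_G)=s$ and every minimal vertex cover of $W_G$ has size $s$. By Theorem~\ref{v-number-clutters-graphs}, $\mathrm{v}(I(W_G))=\min\{|A|\colon A\in\mathcal{A}_{W_G}\}$. First I would show $\mathrm{v}(I(W_G))\le i(G)$: take an independent dominating set $A\subset V(G)$ of $G$ of minimum size $i(G)$; since $A$ is a maximal independent set of $G$, it remains independent in $W_G$, and I claim $A\in\mathcal{F}_{W_G}\subset\mathcal{A}_{W_G}$ is not quite right because $A$ is not maximal in $W_G$ (one can still add the whisker vertices $u_i$ for $t_i\notin A$). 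Instead, the right choice is $A$ itself viewed as a stable set of $W_G$, and one checks directly that $N_{W_G}(A)$ is a minimal vertex cover: since $A$ dominates $G$, every vertex $t_j\notin A$ is adjacent to $A$, so $N_{W_G}(A)\supseteq V(G)\setminus A$, and together with the whiskers $u_i$ for $t_i\in A$ this has size exactly $s$ after checking it is a vertex cover (every whisker edge $\{t_j,u_j\}$ is covered by $t_j$ if $t_j\notin A$ and by $u_j$ if $t_j\in A$; every edge of $G$ is covered since $V(G)\setminus A$ covers $G$ as $A$ is independent). Minimality follows from Lemma~\ref{v-number-clutters-graphs-lemma}(b). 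Hence $A\in\mathcal{A}_{W_G}$ and $\mathrm{v}(I(W_G))\le|A|=i(G)$.

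For the reverse inequality $\mathrm{v}(I(W_G))\ge i(G)$, take $A\in\mathcal{A}_{W_G}$ with $|A|=\mathrm{v}(I(W_G))$. The main point is to reduce to a stable set living inside $V(G)$ that dominates $G$. If $A$ contains some whisker vertex $u_i$, then $t_i\notin N_{W_G}(A)$ would fail the vertex cover property on the edge $\{t_i,u_i\}$ unless $t_i\in N_{W_G}(A)$ already via $A$ — but $u_i\in A$ forces $t_i\notin A$ and the only neighbor of $u_i$ is $t_i$; one argues that replacing $u_i$ by $t_i$ in $A$ keeps it stable (since $N_{W_G}[u_i]=\{u_i,t_i\}$, no edge is created among $A$ except possibly through $t_i$'s other neighbors, which must be excluded) — actually the cleanest route is: since $N_{W_G}(A)$ is a minimal vertex cover of size $s$ and $W_G$ is very well-covered, $V(W_G)\setminus N_{W_G}(A)$ is a maximum independent set of size $s$ containing $A$; maximum independent sets of $W_G$ are precisely sets of the form $\{t_i : i\in B\}\cup\{u_i : i\notin B\}$ where $\{t_i:i\in B\}$ is a maximal independent set of $G$. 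Writing $A$ inside such a set, the "$t$-part" of $A$, call it $A'\subseteq V(G)$, is contained in the maximal independent set $\{t_i:i\in B\}$ of $G$; one then shows $A'$ (or a maximal independent extension inside $G$) together with domination forces $|A|\ge |A'_{\mathrm{dom}}|\ge i(G)$. The delicate bookkeeping here — showing that dropping the whisker vertices from $A$ does not decrease the relevant count below $i(G)$ — is the part I expect to require the most care; the cleanest formulation is probably to show directly that if $A\in\mathcal{A}_{W_G}$ then $\{t_i : t_i\in A \text{ or } u_i\in A\}$ contains a maximal independent set of $G$, whence $|A|\ge i(G)$.

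For part (b), the whisker graph $W_G$ is vertex decomposable: this is the classical result that graphs with a whisker on every vertex have vertex-decomposable independence complex (one sheds the whisker vertices one at a time). Since $I(W_G)=I_{\Delta(W_G)}$ with $\Delta(W_G)$ vertex decomposable, Theorem~\ref{vertex-decomposable-vnumber} gives $\mathrm{v}(I(W_G))\le\mathrm{reg}(K[V(W_G)]/I(W_G))$ immediately. The only thing to nail down is the citation or a one-line argument for vertex-decomposability of $W_G$: pick the whisker vertex $u_1$; its link is a simplex (just $\{t_1\}$ removed, i.e.\ the full simplex on the remaining whisker structure) and its deletion is the whisker graph of $G\setminus t_1$ plus the isolated vertices, which is vertex decomposable by induction, and every facet of the deletion extends to a facet of $\Delta(W_G)$ because any maximal independent set of $W_G\setminus u_1$ either already contains $t_1$ or can be extended by $u_1$. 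Thus $u_1$ is a shedding vertex and induction closes the argument. Combining (a) and (b) completes the proof. \QED
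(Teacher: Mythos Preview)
Your upper bound in (a) is fine and matches the paper. The lower bound, however, has a real gap. Your characterization of maximum independent sets of $W_G$ is wrong: the sets $\{t_i:i\in B\}\cup\{u_i:i\notin B\}$ are maximal independent in $W_G$ for \emph{any} independent set $B$ of $G$, not just maximal ones (if $t_j\notin B$ then $u_j$ is already present, blocking $t_j$). So placing $A$ inside such a set gives no lower bound on $|A|$ in terms of $i(G)$. Your fallback suggestion---that $\{t_i:t_i\in A\text{ or }u_i\in A\}$ contains a maximal independent set of $G$---also fails: this projection need not be independent (if $u_1,u_2\in A$ with $t_1t_2\in E(G)$), and even when it dominates $G$ you only get $|A|\ge\gamma(G)$, which can be strictly smaller than $i(G)$. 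What the paper does instead is a replacement argument on a \emph{minimum-size} $A\in\mathcal A_{W_G}$: it shows that if $u_i\in A$ and $t_i\in N_G(A\cap V(G))$ then $u_i$ can simply be dropped (contradicting minimality), so in fact $t_i\notin N_G(A\cap V(G))$, whence $u_i$ can be swapped for $t_i$ while staying in $\mathcal A_{W_G}$ with the same cardinality. Iterating yields $A'\subset V(G)$ with $|A'|=\mathrm v(I(W_G))$, and then one checks $A'$ is maximal independent in $G$. The swap step---verifying that $N_{W_G}((A\cup\{t_i\})\setminus\{u_i\})$ is again a minimal vertex cover---is the substantive work you are missing.

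For (b) your route is genuinely different from the paper's and would work, but your execution is off. The pendant vertex $u_1$ is \emph{not} a shedding vertex: a maximal independent set of $W_G\setminus u_1$ containing some $t_j\in N_G(t_1)$ avoids $t_1$, hence can be extended by $u_1$ in $W_G$, so it is not a facet of $\Delta_{W_G}$. Also $\mathrm{lk}(u_1)=\Delta_{W_{G\setminus t_1}}$, not a simplex. The correct shedding vertex is $t_1$: every facet of $\mathrm{del}(t_1)$ contains the now-isolated $u_1$ and hence is already a facet of $\Delta_{W_G}$; both link and deletion are (cones over) whisker graphs of smaller graphs, and induction closes. With that fix, Theorem~\ref{vertex-decomposable-vnumber} gives (b). The paper instead bypasses vertex decomposability entirely, observing that $\mathrm{im}(W_G)=\beta_0(G)$ and invoking the induced-matching lower bound for regularity to get $\mathrm v(I(W_G))=i(G)\le\beta_0(G)=\mathrm{im}(W_G)\le\mathrm{reg}$---shorter, and it makes the chain of inequalities explicit.
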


\demo (a): Let $A$ be a maximal stable set of $G$ such that
$i(G)=|A|$. Then, $N_G(A)=V(G)\setminus A$ and $N_G(A)$ is a minimal
vertex cover of 
$G$. As $A$ is stable in $W_G$ and $N_{W_G}(A)$ is a vertex
cover of $W_G$, by Lemma~\ref{v-number-clutters-graphs-lemma}(a), 
$(I(W_G)\colon t_A)=(N_{W_G}(A))$. Thus, ${\rm
v}(I(W_G))\leq\deg(t_A)=i(G)$. 

Next, we show the inequality ${\rm v}(I(W_G))\geq i(G)$. By 
Lemma~\ref{v-number-clutters-graphs-lemma}(a) and
Theorem~\ref{v-number-clutters-graphs}, there is a stable 
set $A\cup B$ of $W_G$, $A\subset V(G)$,
$B\subset\{u_1,\ldots,u_s\}$, such that 
$$
\mathfrak{p}:=(N_{W_G}(A\cup B))=(I(W_G)\colon t_Au_B),
$$
and ${\rm v}(I(W_G))=\deg(t_Au_B)$. If for some $i$, $u_i\in B$ and
$t_i\in N_G(A)$, then the neighbor sets in $W_G$ of $A\cup B$ and
$A\cup (B\setminus\{u_i\})$ are equal. Indeed, take $v$ in
$N_{W_G}(A\cup B)$, that is, $\{v,v'\}$ is in $E(W_G)$ for some $v'\in
A\cup B$. We may assume $v'=u_i\in B$, otherwise $v$ is in
$N_{W_G}(A\cup (B\setminus\{u_i\}))$. Then, $v=t_i\in N_G(A)$, and
$v$ is in $N_{W_G}(A\cup (B\setminus\{u_i\}))$. The other inclusion is
clear. 
Therefore, by Lemma~\ref{v-number-clutters-graphs-lemma}(a), we get
$$
\mathfrak{p}:=(N_{W_G}(A\cup B))=(N_{W_G}(A\cup
(B\setminus\{u_i\})))=(I(W_G)\colon t_Au_{B\setminus\{u_i\}}). 
$$
\quad Thus, ${\rm v}(I(W_G))<\deg(t_Au_B)$, a contradiction. Hence,
for all $u_i$ in $B$ one has $t_i\notin N_G(A)$, and $t_i\notin A$
because $A\cup B$ is stable in $W_G$, that is, for all $u_i\in B$
one has that $A\cup\{t_i\}$ is a stable set of both $W_G$ and $G$.
Hence, the set 
$$
D:=(A\cup\{t_i\})\cup(B\setminus\{u_i\})
$$
is a stable set of $W_G$.  We claim that $C:=N_{W_G}(D)$ is a
minimal vertex cover of $W_G$. By
Lemma~\ref{v-number-clutters-graphs-lemma}(b) it suffices to show that
$C$ is a vertex cover of $W_G$. Take $e$ an edge of $W_G$. 
If $u_i\in e$, then $t_i$ is in $e$, $u_i\in N_{W_G}(t_i)$, and $e\cap
C\neq\emptyset$. Now, assume $u_i\notin e$. We may assume that $e\cap
N_{W_G}(A\cup\{t_i\})=\emptyset$.
As $N_{W_G}(A\cup B)$ is a
vertex cover of $W_G$, one has $e\cap N_{W_G}(B)\neq\emptyset$. Then,
we can pick $t_k$ in $e\cap N_{W_G}(B)$. If $e\cap B=\emptyset$, then
$e=\{t_k,t_\ell\}$ for some $\ell$. If $k=i$, then $t_\ell$ is in
$e\cap N_{W_G}(t_i)$, a contradiction. If $k\neq i$, then $t_k$ is in
$N_{W_G}(B\setminus\{u_i\})$ since $\{t_k,u_k\}$ is an edge of $W_G$,
and $e\cap C\neq\emptyset$. If $e\cap B\neq\emptyset$, then
$e=\{t_k,u_k\}$ for some $k$ and $k\neq i$ since $u_i\notin e$. Thus, $t_k$ is in
$N_{W_G}(B\setminus\{u_i\})$ and $e\cap C\neq\emptyset$. This proves
the claim. As $D$ is stable and $N_{W_G}(D)$ is a minimal vertex cover of $W_G$, by
Lemma~\ref{v-number-clutters-graphs-lemma}(a), we get the equality
$$
\mathfrak{q}:=(I(W_G)\colon
t_{A\cup\{t_i\}}u_{B\setminus\{u_i\}})=(N_{W_G}((A\cup\{t_i\})\cup(B\setminus\{u_i\}))),
$$
$\mathfrak{q}$ is an associated prime of $I(W_G)$, and ${\rm
v}(I(W_G))=\deg(t_{A\cup\{t_i\}}u_{B\setminus\{u_i\}})$. If
$B\setminus\{u_i\}\neq\emptyset$, we can repeat the previous argument
with $A\cup\{t_i\}$ playing the role of $A$ and $B\setminus\{u_i\}$ 
playing the role of $B$, as many times as necessary until we get
$B\setminus\{t_i\}=\emptyset$. At the end of this process we obtain a
stable set $A'$ of $W_G$ such that $A'\subset V(G)$,
\begin{equation}\label{nov3-09}
\mathfrak{p}':=(I(W_G)\colon t_{A'})=(N_{W_G}(A')),
\end{equation} 
$\mathfrak{p}'$ an associated prime of $I(W_G)$, and ${\rm
v}(I(W_G))=\deg(t_{A'})$. The set $A'$ is also a maximal stable set
of $G$. Indeed, $A'$ is stable in $G$ since $A'$ is stable in $W_G$.
To show maximality, take $t_k\notin A'$, 
then $\{t_k,u_k\}\cap N_{W_G}(A')\neq\emptyset$ because $N_{W_G}(A')$
is a vertex cover of $W_G$. If $u_k$ is in
$N_{W_G}(A')$, then $t_k\in A'$, a contradiction. Thus, $t_k$ is in
$N_{W_G}(A')$, and $t_k$ is in $N_{G}(A')$. This proves that $A'$ is
maximal. By Eq.~(\ref{nov3-09}), one has 
$(I(G)\colon t_{A'})=(N_G(A'))$, and since $A'$ is a maximal stable
set of $G$, we get $i(G)\leq \deg(t_{A'})={\rm v}(I(W_G))$.

(b): It is not hard to see that the induced matching number 
${\rm im}(W_G)$ of the whisker graph $W_G$ is $\beta_0(G)$. Therefore, by
part (a) and \cite[Lemma~2.2]{katzman1}, one has 
$$
{\rm v}(I(W_G))=i(G)\leq\beta_0(G)={\rm im}(W_G)\leq {\rm
reg}(K[V(W_G)]/I(W_G)).\quad \Box$$

\section{Edge-critical and $W_2$ graphs, and the second symbolic power of edge
ideals}\label{edge-critical-section}

By a result of Staples \cite[p.~199]{Staples} a graph $G$ is in $W_2$ if and only if
$\beta_0(G\setminus v)=\beta_0(G)$ and $G\setminus v$ is well covered
for all $v\in V(G)$.

\begin{theorem}\cite[Theorem~2.2]{Levit-Mandrescu}\label{w2-1-well-covered}
Let $G$ be a graph without isolated vertices. Then, $G$ is in $W_2$ if and only if
$G$ is $1$-well-covered. 
\end{theorem}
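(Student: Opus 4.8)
The plan is to reduce the statement to the characterization of $W_2$ due to Staples recalled just above: $G\in W_2$ if and only if $G\setminus v$ is well-covered and $\beta_0(G\setminus v)=\beta_0(G)$ for every $v\in V(G)$. Once that is available, one implication is essentially free, and the other needs a single short argument, which is exactly where the hypothesis ``without isolated vertices'' enters. Throughout I would use the trivial bound $\beta_0(G\setminus v)\le\beta_0(G)$ (every stable set of $G\setminus v$ is stable in $G$) and the fact that a graph with a vertex and no isolated vertex has at least two vertices.

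For ``$1$-well-covered $\Rightarrow$ $W_2$'', I assume $G$ is well-covered and $G\setminus v$ is well-covered for all $v$; the second Staples condition is then immediate, so it remains to verify $\beta_0(G\setminus v)=\beta_0(G)$ for each $v$. Here I would pick a neighbor $w$ of $v$ (which exists since $G$ has no isolated vertex), extend $\{w\}$ to a maximal stable set $B$ of $G$, and use well-coveredness to get $|B|=\beta_0(G)$; since $w\in B$ and $vw$ is an edge, $v\notin B$, so $B$ is a stable set of $G\setminus v$ and $\beta_0(G)=|B|\le\beta_0(G\setminus v)\le\beta_0(G)$, forcing equality. By Staples' criterion, $G\in W_2$.

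For ``$W_2\Rightarrow 1$-well-covered'', Staples' criterion already gives that $G\setminus v$ is well-covered for every $v$, so the only remaining point is that $G$ itself is well-covered. I would take an arbitrary maximal stable set $A$ of $G$ and apply the defining property of $W_2$ to the pair of disjoint stable sets $A,\emptyset$: it places $A$ inside a maximum stable set $S$. Since $S$ is stable and $A\subseteq S$ is maximal stable, $A=S$, hence $|A|=\beta_0(G)$. Thus every maximal stable set is maximum, i.e.\ $G$ is well-covered, and therefore $1$-well-covered.

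I expect the only genuine content to be the equality $\beta_0(G\setminus v)=\beta_0(G)$ in the first implication: this is precisely the place where one needs a maximum stable set of $G$ avoiding $v$, and its existence comes from growing a neighbor of $v$ into a maximum stable set, which in turn requires $v$ to have a neighbor at all, i.e.\ the ``no isolated vertices'' hypothesis (without which the statement fails, since an isolated vertex lies in every maximum stable set). Everything else is formal once Staples' characterization is in hand; if one wished to avoid invoking it, the harder implication could instead be argued straight from the definition of $W_2$ by extending $A$ to a maximal stable set of $G\setminus v$ and comparing it with a maximum stable set of $G$ disjoint from a suitable partner set, but that essentially amounts to redoing the proof of Staples' criterion.
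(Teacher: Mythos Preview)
The paper does not supply its own proof of this theorem: it is quoted verbatim as \cite[Theorem~2.2]{Levit-Mandrescu} and used as a black box. So there is nothing in the paper to compare your argument against.

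That said, your proof is correct and self-contained. You make proper use of Staples' characterization (stated immediately before the theorem in the paper), and the place where you invoke the ``no isolated vertices'' hypothesis---to produce a neighbor $w$ of $v$ and hence a maximum stable set of $G$ avoiding $v$---is exactly where it is needed. Your handling of the converse via the pair $(A,\emptyset)$ in the definition of $W_2$ is clean; an equally short alternative, avoiding the definition, is to pick $v\notin A$ (such $v$ exists since $G$ has an edge), observe that $A$ remains maximal stable in $G\setminus v$, and conclude $|A|=\beta_0(G\setminus v)=\beta_0(G)$ directly from Staples' two conditions.
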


\begin{lemma}\cite[Proposition~2.5]{Levit-Mandrescu}\label{sep6-19} 
If $G$ is a $W_2$ graph, then $G$ does not contain a vertex $u$ and a
stable set $A$, such that $u\notin A$ and $N_G(u)\subset N_G(A)$.
\end{lemma}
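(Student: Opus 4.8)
The plan is to argue by contradiction and feed the two stable sets $\{u\}$ and $A$ directly into the defining property of $W_2$. Suppose $G$ is in $W_2$ and there are a vertex $u$ and a stable set $A$ with $u\notin A$ and $N_G(u)\subset N_G(A)$. Since $u\notin A$, the sets $\{u\}$ and $A$ are disjoint stable sets of $G$, so by definition of $W_2$ there exist two disjoint maximum stable sets $S$ and $T$ of $G$ with $A\subset S$ and $u\in T$.

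The next step uses maximality of $S$ together with the inclusion $N_G(u)\subset N_G(A)$. Since $S$ and $T$ are disjoint and $u\in T$, we have $u\notin S$; as $S$ is a maximum, hence maximal, stable set, $S\cup\{u\}$ is not stable, so $u$ has a neighbor in $S$, i.e.\ there is $w\in N_G(u)\cap S$. Then $w\in N_G(u)\subset N_G(A)$, so $w$ is adjacent to some $a\in A$. But $a\in A\subset S$ and $w\in S$, so $\{a,w\}$ is an edge of $G$ with both endpoints in the stable set $S$, which is impossible. This contradiction finishes the proof.

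There is essentially no technical obstacle: the only idea required is to apply the $W_2$ hypothesis to the pair $(\{u\},A)$ and then read off the contradiction from $N_G(u)\subset N_G(A)$. Two minor points should be recorded for completeness. First, $w\neq a$ automatically, since $G$ is simple, so $\{a,w\}$ is a genuine edge. Second, in the degenerate case $A=\emptyset$ one has $N_G(u)\subset N_G(\emptyset)=\emptyset$, which forces $u$ to be isolated; this cannot happen in a $W_2$ graph (cf.\ Theorem~\ref{w2-1-well-covered}), and in any case the final step above still applies verbatim, since then $w\in N_G(u)\subset N_G(A)=\emptyset$ is already absurd.
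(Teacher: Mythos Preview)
Your proof is correct. The paper itself does not supply a proof of this lemma; it is simply quoted from \cite[Proposition~2.5]{Levit-Mandrescu}. Your argument is the natural direct one: apply the $W_2$ hypothesis to the disjoint pair $\{u\}$, $A$, then use maximality of the enlarged set $S\supset A$ to force a neighbor $w$ of $u$ into $S$, and push $w$ through the inclusion $N_G(u)\subset N_G(A)$ to get an edge inside $S$.

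One small wording issue in your last paragraph: in the degenerate case $A=\emptyset$ you cannot literally ``pick $w\in N_G(u)\cap S$'' and then say $w\in\emptyset$ is absurd, since no such $w$ exists to begin with. The clean way to phrase it is that if $u$ is isolated and $u\notin S$ with $S$ maximum, then $S\cup\{u\}$ is a strictly larger stable set, contradicting maximality of $S$; alternatively, invoke (as you do) that $W_2$ graphs have no isolated vertices. Either way the case is handled, so this is cosmetic.
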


\begin{theorem}\label{W2-graphs} 
Let $G$ be a graph
without isolated vertices. Then, $G$ is in $W_2$ if and only if 
$G$ is well-covered and $\mathcal{F}_G=\mathcal{A}_G$.
\end{theorem}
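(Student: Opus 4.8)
The plan is to prove the two implications separately, using Corollary~\ref{sep3-19} as the bridge between the algebraic and combinatorial sides. Recall that Corollary~\ref{sep3-19} says ${\rm v}(I)=\dim(S/I)$ iff $\mathcal{C}$ is well-covered and $\mathcal{F}_\mathcal{C}=\mathcal{A}_\mathcal{C}$, where $I=I(\mathcal{C})$. So the content to be added here is purely combinatorial: to show that for a graph $G$ without isolated vertices, membership in $W_2$ is equivalent to being well-covered together with $\mathcal{F}_G=\mathcal{A}_G$.

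\emph{($\Rightarrow$)} Assume $G\in W_2$. By Theorem~\ref{w2-1-well-covered}, $G$ is $1$-well-covered, hence in particular well-covered, so the first condition holds. For the equality $\mathcal{F}_G=\mathcal{A}_G$: the inclusion $\mathcal{F}_G\subset\mathcal{A}_G$ is already given by Theorem~\ref{v-number-clutters-graphs}, so I only need $\mathcal{A}_G\subset\mathcal{F}_G$. Take $A\in\mathcal{A}_G$, so $A$ is stable and $N_G(A)$ is a minimal vertex cover; equivalently $V(G)\setminus N_G(A)$ is a maximal stable set, and it contains $A$. If $A$ is \emph{not} maximal, then $V(G)\setminus N_G(A)$ strictly contains $A$, so there is a vertex $u\in V(G)\setminus N_G(A)$ with $u\notin A$. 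Since $u\notin N_G(A)$ and $u\notin A$, every neighbor of $u$ must lie in $N_G(A)$ (if $w\in N_G(u)$ then $w$ is a neighbor of the stable set $A\cup\{u\}$... more carefully: the edge $\{u,w\}$ must be covered by the minimal vertex cover $N_G(A)$, and $u\notin N_G(A)$, so $w\in N_G(A)$), i.e. $N_G(u)\subset N_G(A)$ with $u\notin A$. This is exactly forbidden by Lemma~\ref{sep6-19}, a contradiction. Hence $A$ is maximal, i.e. $A\in\mathcal{F}_G$, and $\mathcal{F}_G=\mathcal{A}_G$.

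\emph{($\Leftarrow$)} Assume $G$ is well-covered, without isolated vertices, and $\mathcal{F}_G=\mathcal{A}_G$. By Staples' criterion quoted just before Theorem~\ref{w2-1-well-covered}, it suffices to show that for every $v\in V(G)$, the graph $G\setminus v$ is well-covered and $\beta_0(G\setminus v)=\beta_0(G)$. For the latter, it is enough to produce, for each $v$, a maximum stable set of $G$ avoiding $v$: pick any maximal (hence, by well-coveredness, maximum) stable set $B$ of $G$ with $v\notin B$ — such a $B$ exists because $v$ is not isolated, so some neighbor $w$ of $v$ lies outside any given maximal stable set or one extends $\{w\}$ to a maximal stable set missing $v$. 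Then $B\subset V(G)\setminus v$ is stable of size $\beta_0(G)$ in $G\setminus v$, giving $\beta_0(G\setminus v)\geq\beta_0(G)$, and the reverse inequality is trivial. The main point — and the step I expect to be the obstacle — is well-coveredness of $G\setminus v$: I must show every maximal stable set $A$ of $G\setminus v$ has size $\beta_0(G)$. The natural move is to translate "$A$ maximal in $G\setminus v$" into a neighbor-set statement in $G$ and feed it into the hypothesis $\mathcal{A}_G=\mathcal{F}_G$. Concretely, if $A$ is maximal stable in $G\setminus v$ but not extendable in $G$ either, then $A\in\mathcal{F}_G$ and we are done by well-coveredness of $G$; otherwise $A\cup\{v\}$ is stable in $G$, and maximality of $A$ in $G\setminus v$ forces $N_G(A)\supseteq V(G)\setminus(A\cup\{v\})$, so $N_G(A)$ is either $V(G)\setminus A$ (impossible, since then $A$ extends by $v$ in $G\setminus v$... wait, $v\notin G\setminus v$) — more precisely $N_G(A)=V(G)\setminus(A\cup\{v\})$ or $N_G(A)=V(G)\setminus A$; in the first case $A\cup\{v\}$ is a maximal stable set of $G$, hence has size $\beta_0(G)$, and then $|A|=\beta_0(G)-1$, which would contradict... so one must argue this first case cannot occur, e.g. by showing $N_G(A)$ is always a vertex cover of $G$ here (every edge of $G$ not inside $V(G)\setminus(A\cup\{v\})$ meets $A$ or equals an edge at $v$, and edges at $v$ are covered because $v$ is not isolated and its neighbors lie in $N_G(A)$ by maximality of $A$ in $G\setminus v$), so $A\in\mathcal{A}_G=\mathcal{F}_G$, whence $|A|=\beta_0(G)$. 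Carefully checking that $N_G(A)$ covers the edges incident to $v$ — using that $A$ is maximal in $G\setminus v$, so no neighbor of $v$ can be added to $A$, i.e. each neighbor of $v$ is in $A$ or in $N_G(A)$ — is the delicate combinatorial heart of the argument; once it is in place, $\mathcal{A}_G=\mathcal{F}_G$ plus well-coveredness of $G$ closes the case, and Staples' criterion yields $G\in W_2$.
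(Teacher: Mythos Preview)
Your proposal is correct and follows essentially the same approach as the paper: the $(\Rightarrow)$ direction uses Lemma~\ref{sep6-19} in exactly the same way, and in $(\Leftarrow)$ both you and the paper show that a maximal stable set $A'$ of $G\setminus v$ with $A'\cap N_G(v)=\emptyset$ must lie in $\mathcal{A}_G=\mathcal{F}_G$, which forces $|A'|=\beta_0(G)$ (and in fact makes that case vacuous). The paper's case split is phrased as ``$A'\cap N_G(t_i)\neq\emptyset$ versus $=\emptyset$'', which is precisely your ``$A$ not extendable in $G$ versus $A\cup\{v\}$ stable'' dichotomy, and its verification that $N_G(A')$ covers the edges at $t_i$ is the same as yours---so once you clean up the back-and-forth in your second paragraph, the two proofs coincide.
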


\begin{proof} $\Rightarrow$) The graph $G$ is $1$-well-covered by
Theorem~\ref{w2-1-well-covered}. By Theorem~\ref{v-number-clutters-graphs} one
has $\mathcal{F}_G\subset\mathcal{A}_G$. To show the reverse 
inclusion take $A\in\mathcal{A}_G$, that is, $A$ is a stable set of
$G$ and $N_G(A)$ is a minimal vertex cover of $G$. By contradiction
assume that there is a maximal stable set $F$ of $G$ with $A\subsetneq
F$. Pick $t_i\in F\setminus A$. We claim that $N_G(t_i)\subset
N_G(A)$. Let $t_j$ be a vertex in $N_G(t_i)$, then $\{t_i,t_j\}$ is in
$E(G)$, and $t_j\notin F$ since $F$ contains no edges of $G$. Then,
$t_j\in N_G(F)$. By Lemma~\ref{v-number-clutters-graphs-lemma}(d), $N_G(F)$ is a minimal
vertex cover of $G$. As $N_G(A)\subset N_G(F)$, we get
$N_G(A)=N_G(F)$. 
Thus, $t_j\in N_G(A)$. This proves the
inclusion $N_G(t_i)\subset N_G(A)$, a contradiction to
Lemma~\ref{sep6-19}.

$\Leftarrow$) Let $t_i$ be a vertex of $G$ and
let $G':=G\setminus t_i$. 
Take a maximal stable set $A'$ of $G'$. 
As $G$ is well-covered, 
by Theorem~\ref{w2-1-well-covered}, we need only show $\beta_0(G)=|A'|$.  
If $A'\cap N_G(t_i)\neq\emptyset$, then $A'$ is a
maximal stable set of $G$ and $|A'|= \beta_0(G)$.
Now we assume $A'\cap N_G(t_i)=\emptyset$. Then, using
Lemma~\ref{v-number-clutters-graphs-lemma}(d) and noticing $t_i\notin
N_G(A')$, we get

(i) $N_G(t_i)\subset N_{G'}(A')=N_G(A')$, and 

(ii) $N_{G'}(A')$ is a minimal vertex cover of $G'$.

We claim that $A'\in\mathcal{A}_G$. Clearly $A'$ is a stable set of
$G$. Thus, we need only show that $N_G(A')$ is a minimal vertex cover of
$G$. Take an edge $e$ of $G$. If $t_i\in e$, one can write $e=\{t_i,t_j\}$ with
$t_j\in N_G(t_i)$. Then, by (i), $t_j\in N_G(A')$. If $t_i\notin e$,
then $e\in E(G')$. Thus, by (ii), $e$ contains a vertex of
$N_{G'}(A')=N_G(A')$. Hence, $N_G(A')$ is a vertex cover of $G$.  
That $N_G(A')$ is minimal follows from
Lemma~\ref{v-number-clutters-graphs-lemma}(b). 
This proves that $A'$ is in $\mathcal{A}_G$. Therefore, $A'$ is
in $\mathcal{F}_G$ and $\beta_0(G)=|A'|$ since any element of
$\mathcal{F}_G$ has cardinality $\beta_0(G)$.
\end{proof}

\begin{lemma}\label{additivity-reg-w2-v} Let $G$ be a graph and let $G_1,\ldots,G_r$ be its
connected components.
\begin{enumerate} 
\item[\rm(1)] \cite[p.~263]{Levit-Mandrescu} 
$G$ is in $W_2$ if and only if $G_i$ is in $W_2$ for all $i$.
\item[\rm(2)] 
 ${\rm reg}(S/I(G))=\sum_{i=1}^r{\rm
reg}(K[V(G_i)]/I(G_i))$.
\item[\rm(3)] ${\rm v}(I(G))=\sum_{i=1}^r{\rm v}(I(G_i))$.
\end{enumerate}
\end{lemma}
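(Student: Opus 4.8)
The plan is to reduce parts (2) and (3) to the additivity theorems already in hand, once we observe that the edge ideals of distinct connected components involve disjoint sets of variables. Write $R_i:=K[V(G_i)]$ for the polynomial ring on the vertex set of the $i$-th component. Since the sets $V(G_i)$ are pairwise disjoint with union $V(G)$, we have $S=K[V(G)]=K[R_1,\ldots,R_r]$; and since every edge of $G$ is an edge of exactly one $G_i$, the minimal monomial generators of $I(G)$ are the disjoint union of those of the $I(G_i)$, so $I(G)=I(G_1)S+\cdots+I(G_r)S$ where $I(G_i)S$ and $I(G_j)S$ share no variable for $i\neq j$.

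Part (1) is precisely the cited statement \cite[p.~263]{Levit-Mandrescu}, so there is nothing to prove; I would simply record the reference. (It is consistent with the conventions here: e.g. a single isolated vertex fails to be in $W_2$, and a graph with an isolated vertex also fails to be in $W_2$.)

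For part (2) I would induct on the number $r$ of connected components, the case $r=1$ being vacuous. For $r\geq 2$ set $R':=K[V(G_2)\cup\cdots\cup V(G_r)]$ and let $I'$ be the edge ideal of $G_2\cup\cdots\cup G_r$ inside $R'$; then $S=K[R_1,R']$ and $I(G)=I(G_1)S+I'S$, so Proposition~\ref{additivity-reg} gives ${\rm reg}(S/I(G))={\rm reg}(R_1/I(G_1))+{\rm reg}(R'/I')$, and the induction hypothesis rewrites ${\rm reg}(R'/I')$ as $\sum_{i=2}^r{\rm reg}(R_i/I(G_i))$. The one point to verify is that this goes through unchanged when a component is a single isolated vertex, that is, when $I(G_i)=(0)$: Proposition~\ref{additivity-reg} only requires squarefree monomial ideals, the zero ideal is one, and ${\rm reg}(R_i/(0))={\rm reg}(R_i)=0$, so the identity stays correct. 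Part (3) is the identical induction with Proposition~\ref{additivity-vnumber} in place of Proposition~\ref{additivity-reg}, using that ${\rm v}((0))=0$ since $(0)$ is prime; this yields ${\rm v}(I(G))=\sum_{i=1}^r{\rm v}(I(G_i))$.

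I do not expect a genuine obstacle: the substance of (2) and (3) already lives in Propositions~\ref{additivity-reg} and \ref{additivity-vnumber}, so the only care needed is the bookkeeping of iterating a two-summand statement to an $r$-fold sum and the handling of the degenerate components coming from isolated vertices of $G$.
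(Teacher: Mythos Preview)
Your proposal is correct and takes essentially the same approach as the paper: the paper's own proof is the single sentence ``(2), (3): These follow from Propositions~\ref{additivity-reg} and \ref{additivity-vnumber}, respectively,'' with (1) left to the cited reference. Your induction on $r$ and your remarks on isolated vertices merely spell out what the paper leaves implicit.
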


\begin{proof} (2), (3): These follow from
Propositions~\ref{additivity-reg} and  \ref{additivity-vnumber},
respectively.
\end{proof}

\begin{theorem}\label{reg-w2} Let $G$ be a graph without isolated vertices and let
$I=I(G)$ be its edge ideal. Then, $G$ is in $W_2$  if and only if
${\rm v}(I)=\dim(S/I)$.  
\end{theorem}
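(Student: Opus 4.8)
The plan is to observe that this statement is the common refinement of two results already proved in the excerpt, so the proof is a short chaining of equivalences rather than a new argument. First I would apply Corollary~\ref{sep3-19} to the clutter $\mathcal{C}=G$: it gives that ${\rm v}(I)=\dim(S/I)$ holds \emph{if and only if} $G$ is well-covered and $\mathcal{F}_G=\mathcal{A}_G$. This step is valid for an arbitrary clutter, so no extra hypothesis on $G$ is used here; it reduces the algebraic equality to a purely combinatorial condition. Internally this corollary rests on the combinatorial formula ${\rm v}(I)=\min\{|A|\colon A\in\mathcal{A}_\mathcal{C}\}$ of Theorem~\ref{v-number-clutters-graphs} together with the inclusion $\mathcal{F}_G\subset\mathcal{A}_G$, but I can just cite it.

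Next I would invoke Theorem~\ref{W2-graphs}: since $G$ has no isolated vertices, $G$ is in $W_2$ \emph{if and only if} $G$ is well-covered and $\mathcal{F}_G=\mathcal{A}_G$. Combining the two equivalences, ${\rm v}(I)=\dim(S/I)$ is equivalent to ``$G$ well-covered and $\mathcal{F}_G=\mathcal{A}_G$'', which in turn is equivalent to $G\in W_2$. That is exactly the assertion, so the proof is complete in essentially two lines.

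There is no real obstacle at the level of this theorem; the substantive work has been done earlier. The only points I would be careful about are: (i) checking that the hypothesis ``without isolated vertices'' is invoked precisely once, namely where Theorem~\ref{W2-graphs} is applied (a graph in $W_2$ must have $|V(G)|\geq 2$ and cannot have endvertices let alone isolated vertices, so the hypothesis is genuinely needed for the backward implication to make sense), while Corollary~\ref{sep3-19} needs nothing; and (ii) making sure both quoted statements apply verbatim with $\mathcal{C}=G$, which they do since a simple graph is a clutter and $\mathcal{F}_G,\mathcal{A}_G$ are defined identically in both settings. If a fully self-contained proof were desired one could instead re-derive the chain ${\rm v}(I)=\dim(S/I)\iff\mathcal{F}_G=\mathcal{A}_G$ and $G$ well-covered $\iff G\in W_2$ directly, but that would merely duplicate the proofs of Corollary~\ref{sep3-19} and Theorem~\ref{W2-graphs}, so I would not do that.
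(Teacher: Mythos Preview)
Your proposal is correct and matches the paper's own proof essentially verbatim: both directions are obtained by combining Corollary~\ref{sep3-19} with Theorem~\ref{W2-graphs}, with the ``no isolated vertices'' hypothesis used only when invoking the latter.
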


\begin{proof} 
$\Rightarrow$) By Theorem~\ref{W2-graphs}, $G$ is well covered and
$\mathcal{F}_G=\mathcal{A}_G$. Hence, by Corollary~\ref{sep3-19},
one has the equality ${\rm v}(I)=\dim(S/I)$.

$\Leftarrow$) By Corollary~\ref{sep3-19}, $G$ is
well-covered and $\mathcal{F}_G=\mathcal{A}_G$. Then, by
Theorem~\ref{W2-graphs}, $G$ is in $W_2$. 
\end{proof}

\begin{lemma}\label{nov27-19} Let $\mathcal{C}$ be a clutter and let
$I=I(\mathcal{C})$ be its edge ideal. The following hold.
\begin{enumerate}
\item[\rm(a)] $\mathcal{C}$ is edge-critical if and only if
$\beta_0(\mathcal{C}\setminus e)=\beta_0(\mathcal{C})+1$ for all $e\in
E(\mathcal{C})$.
\item[\rm(b)] $\mathcal{C}$ is edge-critical if and only if 
$\dim(S/(I(\mathcal{C}\setminus e)\colon t_e))=
\dim(S/I(\mathcal{C}))+1$ for all $e\in E(\mathcal{C})$.
\item[\rm(c)] If $\mathcal{C}$ is edge-critical, then $\dim(S/(I(\mathcal{C}\setminus e)\colon t_e))=
\dim(S/I(\mathcal{C}\setminus e))$ for all $e\in E(\mathcal{C})$.
\item[\rm(d)] A graph $G$ is edge-critical if and only if
$\beta_0(G\setminus(N_{G}(t_i)\cup N_G(t_j)))=\beta_0(G)-1$ for all
edges $\{t_i,t_j\}$ of $G$. 
\end{enumerate}
\end{lemma}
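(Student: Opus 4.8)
The plan is to prove (a) directly, derive (b) from a dimension formula for the colon ideal $(I(\mathcal{C}\setminus e)\colon t_e)$, read off (c) from (a) and (b), and specialize (b) to graphs to obtain (d).

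For (a) the key point is the two-sided bound $\beta_0(\mathcal{C})\le\beta_0(\mathcal{C}\setminus e)\le\beta_0(\mathcal{C})+1$ for every $e\in E(\mathcal{C})$. The lower bound is trivial because every stable set of $\mathcal{C}$ is stable in $\mathcal{C}\setminus e$. For the upper bound I would take a maximum stable set $A$ of $\mathcal{C}\setminus e$ and observe that the only edge of $\mathcal{C}$ that can be contained in $A$ is $e$ itself, since any other edge of $\mathcal{C}$ is an edge of $\mathcal{C}\setminus e$; hence either $A$ is already stable in $\mathcal{C}$, or $e\subset A$ and $A\setminus\{t_k\}$ is stable in $\mathcal{C}$ for any $t_k\in e$, and $|A|\le\beta_0(\mathcal{C})+1$ in both cases. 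Thus $e$ is critical precisely when $\beta_0(\mathcal{C}\setminus e)=\beta_0(\mathcal{C})+1$, and quantifying over all $e$ gives (a). The same reasoning yields an auxiliary fact I will reuse: if $e$ is critical, then every maximum stable set of $\mathcal{C}\setminus e$ contains $e$.

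The heart of the argument is the identity
$$\dim\big(S/(I(\mathcal{C}\setminus e)\colon t_e)\big)=\max\{\,|A|\colon e\subset A,\ A\text{ stable in }\mathcal{C}\setminus e\,\}.$$
To prove it I would note that $(I(\mathcal{C}\setminus e)\colon t_e)$ is a squarefree monomial ideal, generated by the $t_{f\setminus e}$ with $f\in E(\mathcal{C})\setminus\{e\}$, hence the edge ideal of a clutter $\mathcal{C}'$ on $V(\mathcal{C})$, so $\dim(S/I(\mathcal{C}'))=\beta_0(\mathcal{C}')$; inspecting these generators shows that a set $A$ is stable in $\mathcal{C}'$ exactly when $A\cup e$ is stable in $\mathcal{C}\setminus e$, and replacing $A$ by $A\cup e$ lets one impose $e\subset A$. (Alternatively, since $e$ is a face of the independence complex $\Delta$ of $\mathcal{C}\setminus e$, this is the identity $(I_\Delta\colon t_e)=I_{{\rm star}_\Delta(e)}$ used in the proof of Theorem~\ref{vertex-decomposable-vnumber}, together with $\dim(S/I_{{\rm star}_\Delta(e)})=\dim({\rm star}_\Delta(e))+1$.) Granting the identity, if $e$ is critical then by the auxiliary fact the right-hand side equals $\beta_0(\mathcal{C}\setminus e)=\beta_0(\mathcal{C})+1=\dim(S/I(\mathcal{C}))+1$, whereas if $e$ is not critical it is at most $\beta_0(\mathcal{C}\setminus e)=\beta_0(\mathcal{C})<\dim(S/I(\mathcal{C}))+1$; so the equation in (b) holds for a given $e$ iff $e$ is critical, which is (b). Part (c) is then immediate: for edge-critical $\mathcal{C}$, (b) gives $\dim(S/(I(\mathcal{C}\setminus e)\colon t_e))=\dim(S/I(\mathcal{C}))+1$, and (a) gives $\dim(S/I(\mathcal{C}\setminus e))=\beta_0(\mathcal{C}\setminus e)=\beta_0(\mathcal{C})+1=\dim(S/I(\mathcal{C}))+1$, so the two agree.

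For (d) I would specialize the displayed identity to a graph $G$ and an edge $e=\{t_i,t_j\}$, so $t_e=t_it_j$. A stable set $A$ of $G\setminus e$ containing $\{t_i,t_j\}$ is exactly a set $A=\{t_i,t_j\}\cup B$ with $B$ stable in $G$ and $B$ disjoint from $N_{G\setminus e}(t_i)\cup N_{G\setminus e}(t_j)\cup\{t_i,t_j\}=N_G(t_i)\cup N_G(t_j)$ (here $t_j\in N_G(t_i)$ and $t_i\in N_G(t_j)$ because $\{t_i,t_j\}\in E(G)$); that is, $B$ is a stable set of the induced subgraph $G\setminus(N_G(t_i)\cup N_G(t_j))$. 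Hence $\dim(S/(I(G\setminus e)\colon t_e))=2+\beta_0\big(G\setminus(N_G(t_i)\cup N_G(t_j))\big)$, and combining with (b) shows that $G$ is edge-critical iff $2+\beta_0(G\setminus(N_G(t_i)\cup N_G(t_j)))=\beta_0(G)+1$ for every edge, i.e.\ iff $\beta_0(G\setminus(N_G(t_i)\cup N_G(t_j)))=\beta_0(G)-1$ for all edges $\{t_i,t_j\}$. The only genuinely delicate point in the whole proof is the combinatorial translation of the colon ideal in the displayed identity for (b); once that is in place, parts (a), (c) and (d) are short.
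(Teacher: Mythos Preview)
Your proof is correct, and parts (a), (c), and (d) essentially match the paper. The real point of departure is part (b).

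For (b), the paper argues via the short exact sequence
\[
0\longrightarrow S/(I(\mathcal{C}\setminus e)\colon t_e)[-d]\xrightarrow{\ t_e\ } S/I(\mathcal{C}\setminus e)\longrightarrow S/I(\mathcal{C})\longrightarrow 0,
\]
picks an associated prime $\mathfrak{p}$ of the middle term of maximal dimension, and uses that $\mathfrak{p}$ must lie in ${\rm Ass}$ of one of the outer terms to compare the three dimensions. You instead prove the closed-form identity
\[
\dim\big(S/(I(\mathcal{C}\setminus e)\colon t_e)\big)=\max\{\,|A|\colon e\subset A,\ A\text{ stable in }\mathcal{C}\setminus e\,\}
\]
by recognising the colon ideal as the edge ideal of the clutter with edges $\{f\setminus e:f\in E(\mathcal{C}),\,f\neq e\}$, in which the vertices of $e$ are isolated. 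This is a genuinely different, more combinatorial route: it avoids the associated-prime bookkeeping entirely and makes (b) a direct consequence of (a) together with your auxiliary observation that critical edges lie in every maximum stable set of $\mathcal{C}\setminus e$. A further payoff is that your formula for (b) already contains the dimension computation needed in (d); the paper instead re-derives $\dim(S/(I(G\setminus e)\colon t_e))=2+\beta_0(G_e)$ in (d) by writing out $(I(G\setminus e)\colon t_e)=(I(G_e),\,(N_G[t_i]\cup N_G[t_j])\setminus\{t_i,t_j\})$ explicitly. The paper's exact-sequence argument, on the other hand, is the kind of module-theoretic template that generalises beyond squarefree monomial ideals.
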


\begin{proof} (a): It suffices to show that 
$\beta_0(\mathcal{C}\setminus e)\leq \beta_0(\mathcal{C})+1$ for any 
clutter $\mathcal{C}$ and any edge $e$ of $\mathcal{C}$. Pick a stable
set $A$ of $\mathcal{C}\setminus e$ with
$|A|=\beta_0(\mathcal{C}\setminus e)$. If $e\not\subset A$, then $A$ is
a stable set of $\mathcal{C}$ and $\beta_0(\mathcal{C}\setminus e)\leq
\beta_0(\mathcal{C})$. If $e\subset A$, take $t_i\in e$. Then
$A\setminus\{t_i\}$ is a stable set of $\mathcal{C}$ and $|A|-1\leq
\beta_0(\mathcal{C})$.

(b): Let $e$ be any edge of $\mathcal{C}$. We set $d:=|e|$. 
Pick $\mathfrak{p}\in{\rm Ass}(I(\mathcal{C}\setminus e))$ such that
$\dim(S/\mathfrak{p})$ is equal to $\dim(S/I(\mathcal{C}\setminus
e))$. Consider the exact sequence
\begin{equation}\label{nov23-19}
0\longrightarrow S/(I(\mathcal{C}\setminus e)\colon t_e)[-d]\stackrel{t_e}
{\longrightarrow} S/I(\mathcal{C}\setminus e) \longrightarrow
S/I(\mathcal{C})\longrightarrow 0,
\end{equation}
and set $N:=S/(I(\mathcal{C}\setminus e)\colon t_e)$,
$M:=S/I(\mathcal{C}\setminus e)$, and $L:=S/I(\mathcal{C})$. From
Eq.~(\ref{nov23-19}) we get that either $\mathfrak{p}$ is in ${\rm
Ass}(N)$ or $\mathfrak{p}$ is in 
${\rm Ass}(L)$.

(b): $\Rightarrow$) By part (a) one has $\beta_0(\mathcal{C}\setminus
e)=\beta_0(\mathcal{C})+1$, that is, $\dim(M)=\dim(L)+1$. Hence, as 
the first map of Eq.~(\ref{nov23-19}) is an inclusion, we get 
$\dim(N)\leq \dim(M)$. If $\mathfrak{p}\in{\rm Ass}(L)$, then 
$$\dim(L)\geq 
\dim(S/\mathfrak{p})=\dim(M)=\dim(L)+1,$$
a contradiction. Thus, $\mathfrak{p}$ is in ${\rm Ass}(N)$, and
$\dim(N)\geq\dim(M)$. Hence, $\dim(N)$ is equal to $\dim(M)$.
Therefore, $\dim(N)=\dim(M)=\dim(L)+1$. 

(b): $\Leftarrow$) The first map of Eq.~(\ref{nov23-19}) 
is an inclusion and by hypothesis $\dim(N)=\dim(L)+1$. Hence, 
one has $\dim(N)\leq\dim(M)$ and $\dim(L)+1\leq\dim(M)$. If
$\mathfrak{p}\in {\rm Ass}(L)$, then 
$$
\dim(L)+1\leq\dim(M)=\dim(S/\mathfrak{p})\leq\dim(L),
$$
a contradiction. Thus, $\mathfrak{p}\in{\rm Ass}(N)$ and
$\dim(M)\leq\dim(N)=\dim(L)+1$. This proves the equality
$\dim(L)+1=\dim(M)$, that is, $\mathcal{C}$ is edge-critical.

(c): This follows from parts (a) and (b).

(d): Let $e=\{t_i,t_j\}$ be an edge of $G$. From the equalities 
\begin{eqnarray}
&(I(G\setminus e)\colon t_e)=(I(G\setminus(N_G[t_i]\cup
N_G[t_j])),(N_G[t_i]\cup N_G[t_j])\setminus\{t_i,t_j\}),\ \ \ \ \ \ \
\ \ \ \ \ \ &\\
& V(G)\setminus((N_G[t_i]\cup
N_G[t_j])\setminus\{t_i,t_j\})=\{t_i,t_j\}\cup(V(G)\setminus(N_G[t_i]\cup
N_G[t_j])),&
\end{eqnarray}
and setting $G_e:=G\setminus(N_G[t_i]\cup N_G[t_j])$, we obtain
\begin{eqnarray}
& S/(I(G\setminus e)\colon t_e)\simeq K[\{t_i,t_j\}\cup
V(G_e)]/I(G\setminus e),\ \ \ \ \ \ \ \  \ \ \ \ \ \ \ \ \ \ \ \ \ \
\ \ \ \ \ \ \ \ \ 
\ &
\\  
&\dim(S/(I(G\setminus e)\colon
t_e))=2+\dim(K[V(G_e)]/I(G_e))=2+\beta_0(G_e),&\label{nov25-19}
\end{eqnarray}
where the $2$ on the right of Eq.~(\ref{nov25-19}) comes from the fact that $t_i$ and $t_j$ 
do not occur in a minimal generating 
set of $(I(G\setminus e)\colon t_e)$. By part (b), $G$ is
edge-critical if and only if 
\begin{equation}\label{nov25-19-1}
\dim(S/(I(G\setminus e)\colon t_e))=
\dim(S/I(G))+1=\beta_0(G)+1
\end{equation}
for all $e\in E(G)$. Therefore, by Eqs.~(\ref{nov25-19}) and
(\ref{nov25-19-1}), we obtain that the graph $G$ is edge-critical if and only if
$\beta_0(G_e)=\beta_0(G)-1$ for all $e\in E(G)$. 
\end{proof}

In \cite{Hoang-etal,hoang-gorenstein-second-jaco} the Cohen--Macaulay
property of the square and the symbolic square of the edge ideal of a graph is classified. 

\begin{theorem}\label{cm-second-power} Let $G$ be a graph. The following hold.
\begin{enumerate} 
\item[\rm(a)] {\rm \cite[Theorem~2.2]{Hoang-etal}} $I(G)^{(2)}$ is Cohen--Macaulay if and only
if $G$ is a Cohen--Macaulay graph and for any edge $e=\{t_i,t_j\}$ of
$G$, the subgraph $G_e:=G\setminus(N_G[t_i]\cup N_G[t_j]))$ is Cohen--Macaulay and
$\beta_0(G_e)=\beta_0(G)-1$. 
\item[\rm(b)] {\rm \cite[Theorem~4.4]{hoang-gorenstein-second-jaco}}
If $G$ has no isolated vertices, then $I(G)^2$ 
is Cohen--Macaulay if and only if $G$ is a triangle-free member of $W_2$. 
\end{enumerate}
\end{theorem}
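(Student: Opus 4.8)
\emph{No new argument is actually required here:} statement (a) is quoted from \cite[Theorem~2.2]{Hoang-etal} and statement (b) from \cite[Theorem~4.4]{hoang-gorenstein-second-jaco}, so the plan is simply to record them with full attribution. The one point I would spell out, because it is what the rest of the section uses, is the dictionary between the numerical half of (a) and edge-criticality: since $\{t_i,t_j\}\in E(G)$ forces $t_i\in N_G(t_j)$ and $t_j\in N_G(t_i)$, one has $N_G[t_i]\cup N_G[t_j]=N_G(t_i)\cup N_G(t_j)$, so the clause ``$\beta_0(G_e)=\beta_0(G)-1$ for every edge $e$'' appearing in (a) is literally the edge-critical condition isolated in Lemma~\ref{nov27-19}(d).

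If one wants to see why (a) holds, the mechanism of \cite{Hoang-etal} — which I would import wholesale rather than reprove — runs as follows. Because $I(G)^{(2)}$ is not a squarefree monomial ideal, one reads off ${\rm depth}\,S/I(G)^{(2)}$ either after polarizing to a squarefree ideal or directly from Takayama's formula for the local cohomology of a monomial ideal. Minh and Trung \cite[Theorem~2.1]{Minh-Trung-adv} had already reduced the Cohen--Macaulayness of $S/I_\Delta^{(2)}$ to the Cohen--Macaulayness of $\Delta$ together with that of a finite family of subcomplexes indexed by the minimal nonfaces of $\Delta$, each of a prescribed dimension; for $\Delta=\Delta_G$ the minimal nonfaces are exactly the edges $e=\{t_i,t_j\}$ (since $I(G)$ is generated in degree $2$), the subcomplex attached to $e$ is the independence complex of $G_e=G\setminus(N_G[t_i]\cup N_G[t_j])$, and the dimension requirement becomes $\beta_0(G_e)=\beta_0(G)-1$. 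The technical heart, and the reason the statement is not formal, is controlling the degrees in which this local cohomology can be nonzero, i.e.\ showing that these finitely many $G_e$ detect everything.

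For (b), the argument of \cite{hoang-gorenstein-second-jaco} has two movements. First, Cohen--Macaulayness of $I(G)^2$ forces $G$ to be triangle-free: a triangle obstructs unmixedness of $I(G)^2$, as one already sees for $G=\mathcal{K}_3$, where $(t_1,t_2,t_3)$ is an embedded associated prime of $I(G)^2$ of height $3$ while ${\rm ht}(I(G))=2$. Once $G$ is triangle-free one has $I(G)^2=I(G)^{(2)}$ by \cite[Theorem~4.13]{Dao-Stefani-Grifo-Huneke-Nunez}, so part (a) becomes available, and combining it with D.~T.~Hoang's deletion lemma \cite[Lemma~8]{Hoang-VJM} one extracts that $G$ is well-covered and that $G\setminus v$ is well-covered for every $v$, i.e.\ $G$ is $1$-well-covered, hence in $W_2$ by Theorem~\ref{w2-1-well-covered}; the reverse implication runs the same chain of equivalences backwards. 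The obstacle shared by both parts is the same one: extracting purely combinatorial information about $G$ from a genuinely non-squarefree (symbolic or ordinary) power of $I(G)$, and that obstacle is precisely the content I would be borrowing from \cite{Hoang-etal} and \cite{hoang-gorenstein-second-jaco}.
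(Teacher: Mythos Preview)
Your proposal is correct: the paper gives no proof of Theorem~\ref{cm-second-power} at all, simply recording parts (a) and (b) as citations of \cite[Theorem~2.2]{Hoang-etal} and \cite[Theorem~4.4]{hoang-gorenstein-second-jaco} respectively, which is exactly what you propose. Your additional remark linking the numerical clause in (a) to Lemma~\ref{nov27-19}(d) and your sketches of the cited arguments are helpful context but go beyond what the paper itself does.
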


The next result---together with the tables of edge-critical graphs
given in \cite{Plummer,B-Small}---allows us to give a list of all
connected graphs with fewer than $10$ vertices such that the symbolic
square of its edge ideal is Cohen--Macaulay over a field of
characteristic $0$ (Table~\ref{tab:C-M,G}). 

\begin{theorem}\label{c-m-edge-critical} If $G$ is a graph and $I(G)^{(2)}$ is Cohen--Macaulay,
then $G$ is edge-critical.
\end{theorem}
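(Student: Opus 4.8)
The plan is to obtain the theorem as a direct combination of two results already available in the paper: the algebraic characterization of the Cohen--Macaulayness of $I(G)^{(2)}$ due to Hoang--Minh--Trung, recorded here as Theorem~\ref{cm-second-power}(a), and the combinatorial characterization of edge-critical graphs established in Lemma~\ref{nov27-19}(d). First I would invoke Theorem~\ref{cm-second-power}(a): since $I(G)^{(2)}$ is Cohen--Macaulay, in particular one of the stated necessary conditions holds, namely $\beta_0(G_e)=\beta_0(G)-1$ for every edge $e=\{t_i,t_j\}$ of $G$, where $G_e=G\setminus(N_G[t_i]\cup N_G[t_j])$. (The Cohen--Macaulayness of $G$ and of each $G_e$ coming out of that theorem is not needed for the present conclusion.)

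The only bridging observation is that along an edge the unions of open and of closed neighborhoods of the endpoints coincide: because $\{t_i,t_j\}\in E(G)$ we have $t_j\in N_G(t_i)$ and $t_i\in N_G(t_j)$, hence $N_G[t_i]\cup N_G[t_j]=N_G(t_i)\cup N_G(t_j)$ and therefore $G_e=G\setminus(N_G(t_i)\cup N_G(t_j))$. Consequently the numerical equality extracted from Theorem~\ref{cm-second-power}(a) reads precisely $\beta_0(G\setminus(N_G(t_i)\cup N_G(t_j)))=\beta_0(G)-1$ for all edges $\{t_i,t_j\}$ of $G$.

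Finally I would apply Lemma~\ref{nov27-19}(d), which states that this last condition, holding for all edges of $G$, is equivalent to $G$ being edge-critical; this yields the desired conclusion. There is no genuine obstacle in this argument: all the substantive content is carried by the two cited results, Theorem~\ref{cm-second-power}(a) furnishing the equality $\beta_0(G_e)=\beta_0(G)-1$ and Lemma~\ref{nov27-19}(d) converting it into edge-criticality. The only point requiring any care is the trivial identification of $N_G[t_i]\cup N_G[t_j]$ with $N_G(t_i)\cup N_G(t_j)$ for an edge $\{t_i,t_j\}$, which reconciles the notation used in the two statements.
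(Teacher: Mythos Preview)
Your proposal is correct and follows exactly the paper's own proof: invoke Theorem~\ref{cm-second-power}(a) to obtain $\beta_0(G_e)=\beta_0(G)-1$ for every edge $e$, then apply Lemma~\ref{nov27-19}(d) to conclude that $G$ is edge-critical. The only addition you make is the explicit identification $N_G[t_i]\cup N_G[t_j]=N_G(t_i)\cup N_G(t_j)$ for an edge $\{t_i,t_j\}$, which the paper leaves implicit.
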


\begin{proof} By Theorem~\ref{cm-second-power}(a), 
$\beta_0(G_e)=\beta_0(G)-1$ for all $e\in E(G)$. Hence, by
Lemma~\ref{nov27-19}(d), $G$ is edge-critical.
\end{proof}

\begin{lemma}{\cite[Lemma~4.1]{Caviglia-et-al}}
\label{Caviglia-et-al-square-free} 
Let $I\subset S$ be a squarefree monomial ideal
and let $f$ be a squarefree monomial. Then 
${\rm depth}(S/(I\colon f))\geq{\rm depth}(S/I)$.
\end{lemma}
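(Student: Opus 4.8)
The plan is to derive the inequality from the canonical short exact sequence attached to the colon ideal, combined with the depth lemma. If $f$ is a unit (the empty monomial) there is nothing to prove since $(I\colon f)=I$, so we may assume $f\in\mathfrak m$.

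The key point is the exact sequence of graded $S$-modules
$$
0\longrightarrow \bigl(S/(I\colon f)\bigr)(-\deg f)\xrightarrow{\ \cdot\, f\ } S/I\longrightarrow S/(I,f)\longrightarrow 0,
$$
where the first map sends $\overline{g}\mapsto\overline{fg}$. This map is injective because $fg\in I$ if and only if $g\in(I\colon f)$, i.e.\ if and only if $\overline{g}=0$ in $S/(I\colon f)$; and its cokernel is $S/I$ modulo the image $f\cdot(S/I)=(I+fS)/I$, which is $S/(I,f)$. Since a degree shift does not affect depth, the depth lemma applied to this sequence $0\to A\to B\to C\to 0$ (which gives ${\rm depth}\,A\ge\min\{{\rm depth}\,B,\ {\rm depth}\,C+1\}$) yields
$$
{\rm depth}\bigl(S/(I\colon f)\bigr)\ \ge\ \min\bigl\{{\rm depth}(S/I),\ {\rm depth}\bigl(S/(I,f)\bigr)+1\bigr\}.
$$

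It remains to bound ${\rm depth}(S/(I,f))$ from below. Here one uses the general fact that for a finitely generated graded $S$-module $M$ and any homogeneous element $x\in\mathfrak m$ one has ${\rm depth}(M/xM)\ge{\rm depth}(M)-1$ (with equality when $x$ is a nonzerodivisor on $M$; the zerodivisor case is a standard depth computation, see e.g.\ \cite{Eisen,Mats}). Taking $M=S/I$ and $x=f$, so that $M/xM=S/(I,f)$, and substituting into the displayed inequality gives
$$
{\rm depth}\bigl(S/(I\colon f)\bigr)\ \ge\ \min\bigl\{{\rm depth}(S/I),\ {\rm depth}(S/I)\bigr\}\ =\ {\rm depth}(S/I),
$$
as desired; note that this argument uses neither the squarefreeness of $I$ nor that of $f$.

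The only non-formal ingredient, and hence the expected main obstacle, is the auxiliary estimate ${\rm depth}(M/xM)\ge{\rm depth}(M)-1$; everything else is bookkeeping with the depth lemma. If one prefers an argument internal to the simplicial-complex language of the paper, an alternative is available: writing $I=I_\Delta$ and $\sigma={\rm supp}(f)$, one has $(I\colon f)=\bigcap\{\mathfrak p\in{\rm Ass}(S/I)\colon f\notin\mathfrak p\}$, and this intersection equals $I_{{\rm star}_\Delta(\sigma)}$ when $\sigma\in\Delta$ (and $(I\colon f)=S$ otherwise). Since ${\rm star}_\Delta(\sigma)$ is the join ${\rm lk}_\Delta(\sigma)\ast\langle\sigma\rangle$, this gives ${\rm depth}(S/(I\colon f))={\rm depth}(K[{\rm lk}_\Delta(\sigma)])+|\sigma|$, and the conclusion follows from the standard inequality ${\rm depth}(K[{\rm lk}_\Delta(\sigma)])\ge{\rm depth}(K[\Delta])-|\sigma|$ --- which, once again, is a ``depth drops by at most $|\sigma|$'' statement and so encodes the same essential difficulty.
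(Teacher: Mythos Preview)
The paper does not prove this lemma; it merely cites it from \cite{Caviglia-et-al}. So there is no ``paper's own proof'' to compare against, and I will assess your argument on its merits.

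Your main argument hinges on the assertion that ${\rm depth}(M/xM)\ge{\rm depth}(M)-1$ holds for \emph{every} finitely generated graded $S$-module $M$ and \emph{every} homogeneous $x\in\mathfrak m$. You call this ``standard'' and point to \cite{Eisen,Mats}, but that inequality is not in those books; the textbook result covers only the case where $x$ is $M$-regular. Your own remark that ``this argument uses neither the squarefreeness of $I$ nor that of $f$'' should be a warning sign: those hypotheses are in the statement precisely because the inequality is not a formal consequence of the depth lemma alone. Indeed, if you try to push the depth lemma through the two short exact sequences $0\to(0:_Mx)\to M\to xM\to 0$ and $0\to xM\to M\to M/xM\to 0$, the resulting chain of inequalities is circular: bounding ${\rm depth}(M/xM)$ from below requires bounding ${\rm depth}\bigl((0:_Mx)\bigr)$ from below, which in turn feeds back into ${\rm depth}(M/xM)$. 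So as written, the main argument has a genuine gap at its crucial step.

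Your alternative sketch is much closer to how this is actually proved. Writing $I=I_\Delta$ and $\sigma={\rm supp}(f)$, one does have $(I\colon f)=I_{{\rm star}_\Delta(\sigma)}$ when $\sigma\in\Delta$, and since ${\rm star}_\Delta(\sigma)={\rm lk}_\Delta(\sigma)\ast\langle\sigma\rangle$ the identity ${\rm depth}(S/(I\colon f))={\rm depth}\,K[{\rm lk}_\Delta(\sigma)]+|\sigma|$ follows. The required inequality ${\rm depth}\,K[{\rm lk}_\Delta(\sigma)]\ge{\rm depth}\,K[\Delta]-|\sigma|$ is correct---for instance via Hochster's formula, using that every link in ${\rm lk}_\Delta(\sigma)$ is already a link in $\Delta$---but it is not the throwaway ``standard inequality'' you suggest: it is exactly where the squarefree hypothesis does its work. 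If you fill in that step (either via Hochster, or by the one-variable-at-a-time induction based on $0\to S/(I\colon t_i)\to S/I\to S/(I,t_i)\to 0$ together with a proof that ${\rm depth}(S/(I,t_i))\ge{\rm depth}(S/I)-1$ in the squarefree case), you will have a complete and self-contained argument.
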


\begin{proposition}\label{additivity-cm-symbolic}
Let $S=K[T]$ and $B=K[U]$ be polynomial rings over a field $K$, 
let $I$ be a squarefree monomial ideal of $S$, let
$V$ be a subset $U$, and let $R=K[T,U]$. The following hold. 
\begin{itemize}
\item[(a)] $(I,u)^{(2)}=(I^{(2)},uI,u^2)$ for all $u\in U$.
\item[(b)] $(I,V)^{(2)}$ is Cohen--Macaulay if and only if $I^{(2)}$
is Cohen-Macaulay.
\end{itemize}
\end{proposition}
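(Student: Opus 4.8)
The plan is to establish part (a) by a direct computation with the primary decomposition of $(I,u)$, and then to deduce part (b) from (a) by induction on $|V|$, the one–variable step being handled by decomposing $R/(I,V)^{(2)}$ over the subring over which it is module–finite.

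For (a) I would argue as follows. Let $\mathfrak p_1,\dots,\mathfrak p_r$ be the minimal primes of $I$ in $S$; these are monomial primes in the variables of $T$. Adjoining $u$ to $I$ intersects the corresponding varieties with $V(u)$, so the minimal primes of $(I,u)$ in $R$ are exactly the pairwise incomparable primes $(\mathfrak p_i,u)$, and since $(I,u)$ is squarefree it has no embedded primes; hence $(I,u)^{(2)}=\bigcap_{i=1}^r(\mathfrak p_i,u)^2$. Each $(\mathfrak p_i,u)^2=\mathfrak p_i^2+u\mathfrak p_i+(u^2)$ is a monomial ideal, so I check which monomials lie in the intersection. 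Writing a monomial as $u^{c}g$ with $u\nmid g$ (so $g$ involves only variables of $T$): if $c\ge 2$ it lies in $(u^2)$; if $c=1$ it lies in $(\mathfrak p_i,u)^2$ exactly when $g\in\mathfrak p_i$, hence in the intersection exactly when $g\in\bigcap_i\mathfrak p_i=I$, i.e.\ when $u^{c}g\in uI$; if $c=0$ it lies in $(\mathfrak p_i,u)^2$ exactly when $g\in\mathfrak p_i^2$, hence in the intersection exactly when $g\in\bigcap_i\mathfrak p_i^2=I^{(2)}$. As each of $I^{(2)}$, $uI$, $(u^2)$ is plainly contained in every $(\mathfrak p_i,u)^2$, this gives $(I,u)^{(2)}=(I^{(2)},uI,u^2)$.

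For (b): since the symbolic square of a monomial ideal and the Cohen–Macaulay property are both unaffected by adjoining free variables, I may assume $U=V$, so $R=K[T\cup V]$, and I would induct on $|V|$, the case $V=\emptyset$ being vacuous. For the inductive step write $V=V'\cup\{u\}$, put $A=K[T\cup V']$ and $J=(I,V')\subseteq A$ (a squarefree monomial ideal), so $R/(I,V)^{(2)}=A[u]/(J,u)^{(2)}$. By (a), $(J,u)^{(2)}=(J^{(2)},uJ,u^2)$, and this ideal is homogeneous for the grading of $A[u]=\bigoplus_{j\ge 0}Au^{j}$ by $u$–degree, with components $J^{(2)}$ in degree $0$, $Ju$ in degree $1$, and $Au^{j}$ for $j\ge 2$; hence as $A$–modules
\[
A[u]/(J,u)^{(2)}\;\cong\;(A/J^{(2)})\oplus(A/J)\,u,
\]
and $u^{2}$ annihilates it. Using the standard fact that the depth and dimension of a finitely generated $A[u]$–module killed by a power of $u$ coincide with those computed over $A$ (which rests on depth depending only on the radical of the defining ideal relative to the support, plus flat base change for $\mathrm{Ext}$), together with $\sqrt{J^{(2)}}=J$, I get
\[
\mathrm{depth}\,\bigl(R/(I,V)^{(2)}\bigr)=\min\{\mathrm{depth}(A/J^{(2)}),\,\mathrm{depth}(A/J)\},\qquad \dim\bigl(R/(I,V)^{(2)}\bigr)=\dim(A/J).
\]
Therefore $(I,V)^{(2)}$ is Cohen–Macaulay iff $A/J^{(2)}$ and $A/J$ are both Cohen–Macaulay. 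Finally $A/J\cong S/I$ (kill the variables of $V'$), and $J^{(2)}=(I,V')^{(2)}$ as an ideal of $A$, so the induction hypothesis yields that $J^{(2)}$ is Cohen–Macaulay iff $I^{(2)}$ is. Hence $(I,V)^{(2)}$ is Cohen–Macaulay iff both $I^{(2)}$ and $I$ are Cohen–Macaulay.

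The main point — the only place where something beyond formal bookkeeping is used — is that Cohen–Macaulayness of $I^{(2)}$ already forces that of $I$; granting this, the condition ``$I^{(2)}$ and $I$ both Cohen–Macaulay'' collapses to ``$I^{(2)}$ Cohen–Macaulay'' and the induction closes. For edge ideals this implication is contained in Theorem~\ref{cm-second-power}(a), and in general it follows from the combinatorial characterisation of the Cohen–Macaulayness of $I^{(2)}$ due to Minh and Trung \cite[Theorem~2.1]{Minh-Trung-adv}, which in particular requires the Stanley–Reisner complex of $I$ to be Cohen–Macaulay.
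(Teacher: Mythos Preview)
Your proof of (a) is essentially the paper's, only spelled out in more detail; both compute $(I,u)^{(2)}$ from the primary decomposition $\bigcap_i(\mathfrak p_i,u)^2$.

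For (b) your argument is correct but genuinely different from the paper's. You exploit the $u$-grading to split $A[u]/(J,u)^{(2)}\cong A/J^{(2)}\oplus(A/J)u$ as $A$-modules, then use that depth and dimension over $A[u]$ agree with those over $A$ because $u$ is nilpotent on the module; this handles both implications simultaneously and gives the clean equivalence ``$(J,u)^{(2)}$ CM $\iff$ $J^{(2)}$ and $J$ both CM''. The paper instead treats the two directions separately: for $(I,u)^{(2)}$ CM $\Rightarrow$ $I^{(2)}$ CM it pulls a homogeneous system of parameters for $S/I^{(2)}$ up to $S[u]/(I,u)^{(2)}$ and pushes the resulting regular sequence back down; for the converse it uses the short exact sequence
\[
0\longrightarrow R/((I,u)^{(2)}:u)[-1]\xrightarrow{\ u\ } R/(I,u)^{(2)}\longrightarrow R/(I^{(2)},u)\longrightarrow 0
\]
together with $((I,u)^{(2)}:u)=(I,u)$ and the depth lemma. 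Your direct-sum route is more conceptual and symmetric; the paper's is more elementary in that it avoids any discussion of change-of-ring for depth.

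One remark on the external input: both arguments need ``$I^{(2)}$ Cohen--Macaulay $\Rightarrow$ $I$ Cohen--Macaulay'' to close. The paper obtains this from the Herzog--Takayama--Terai result \cite{Radical-Herzog} that Cohen--Macaulayness of a monomial ideal passes to its radical, which is lighter and more direct than the Minh--Trung characterization you invoke; you might prefer to cite that instead.
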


\begin{proof} (a): Let $\mathfrak{p}_1,\ldots,\mathfrak{p}_r$ be the
associated primes of $I$. The set of associated primes of $(I,u)$ 
is $\{(\mathfrak{p}_i,u)\}_{i=1}^r$ \cite[Lemma~3.4]{HaM}. 
The equality follows from the following expressions:
$$
I^{(2)}=\mathfrak{p}_1^2\cap\cdots\cap\mathfrak{p}_r^2\ \mbox{ and }\
(I,u)^{(2)}=(\mathfrak{p}_1,u)^2\cap\cdots\cap(\mathfrak{p}_r,u)^2.
$$

(b): We proceed by induction on $|V|$. Assume that $|V|=1$, that is,
$V=\{u\}$ for some variable $u$ in $U$. 
First assume that $(I,u)^{(2)}$ is Cohen--Macaulay. 
Let $\underline{h}=\{h_1,\ldots,h_d\}$ be a 
homogeneous system of parameters for $S/I^{(2)}$, that is,
$d=\dim(S/I)$ and ${\rm rad}(I^{(2)},\underline{h})=S_+$. By part (a)
it follows that $\underline{h}$ is a system of parameters for
$S[u]/(I,u)^{(2)}$ and, since this ring is Cohen--Macaulay, 
$\underline{h}$ is a regular sequence on $S[u]/(I,u)^{(2)}$. Then, it
is seen that $\underline{h}$ is a regular sequence on
$S/I^{(2)}$, and this ring is Cohen--Macaulay. Conversely, assume that $I^{(2)}$ is 
Cohen--Macaulay. Then, $I$ is Cohen--Macaulay since $I$ is the radical
of $I^{(2)}$ \cite{Radical-Herzog}. By part (a), one has
$$
((I,u)^{(2)}\colon u)=(I,u)\ \mbox{ and }\
((I,u)^{(2)},u)=(I^{(2)},u),
$$
and both ideals are Cohen--Macaulay. 
From the exact sequence 
\begin{equation}\label{dec2-19}
0\longrightarrow R/((I,u)^{(2)}\colon
u)[-1]\stackrel{u}
{\longrightarrow} R/(I,u)^{(2)} \longrightarrow
R/(I^{(2)},u)\longrightarrow 0
\end{equation}
and the depth lemma \cite[Lemma~2.3.9]{monalg-rev} 
it follows that $(I,u)^{(2)}$ is Cohen--Macaulay since all rings in
Eq.~(\ref{dec2-19}) have the same dimension.

To complete the induction process assume $V=\{u_1,\ldots,u_m\}$ and
$m\geq 2$. Then 
$$(I,V)^{(2)}=((I,u_1,\ldots,u_{m-1}),u_m)^{(2)},$$ 
and, by an appropriate application of the previous case, $(I,V)^{(2)}$ is Cohen--Macaulay if and only if
$(I,u_1,\ldots,u_{m-1})^{(2)}$ is Cohen--Macaulay. Then, by induction
on $m$,  $(I,V)^{(2)}$ is Cohen-Macaulay if and only if $I^{(2)}$ is
Cohen--Macaulay.
\end{proof}

\begin{lemma}\label{sunday-morning-dec15-19} Let $G$ be a well-covered graph without isolated
vertices. If ${\rm v}(I(G))=1$ and $G\setminus N_G[t_i]$ has no
isolated vertices for all $t_i\in V(G)$, then $G=\mathcal{K}_s$ is a
complete graph.
\end{lemma}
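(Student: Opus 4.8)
The plan is to use the combinatorial formula for the v-number (Theorem~\ref{v-number-clutters-graphs}) to produce a vertex whose open neighborhood is a minimal vertex cover, and then to show that the two hypotheses on $G$ force the independence number $\beta_0(G)$ to equal $1$, which is the same as $G$ being complete.

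First I would note that, since ${\rm v}(I(G))=1$, Theorem~\ref{v-number-clutters-graphs} gives $\min\{|A|\colon A\in\mathcal{A}_G\}=1$, so there is a vertex $t_i$ with $\{t_i\}\in\mathcal{A}_G$; that is, $N_G(t_i)$ is a minimal vertex cover of $G$. Equivalently, $S:=V(G)\setminus N_G(t_i)$ is a maximal stable set of $G$, and $t_i\in S$ because $G$ is a simple graph (no loops).

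Next I would examine $G_{t_i}=G\setminus N_G[t_i]$. Its vertex set is $V(G)\setminus N_G[t_i]=S\setminus\{t_i\}$, which is contained in the stable set $S$ and is therefore itself a stable set of $G$. Hence $G_{t_i}$ has no edges, so every one of its vertices is isolated in $G_{t_i}$. But by hypothesis $G_{t_i}$ has no isolated vertices; therefore $G_{t_i}$ has no vertices at all, i.e., $V(G)=N_G[t_i]$. Consequently $S=N_G[t_i]\setminus N_G(t_i)=\{t_i\}$, so $\{t_i\}$ is itself a maximal stable set of $G$.

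Finally, since $G$ is well-covered, all maximal stable sets have the same size, so $\beta_0(G)=|\{t_i\}|=1$. A graph with independence number $1$ contains no two non-adjacent vertices, hence $G=\mathcal{K}_s$ with $s=|V(G)|$. The only point that needs a little care is checking that $G_{t_i}$ is genuinely edgeless, which is exactly where one uses that $V(G)\setminus N_G[t_i]$ sits inside the stable set $V(G)\setminus N_G(t_i)$; after that, the no-isolated-vertices hypothesis and well-coveredness finish the argument, so I do not anticipate a serious obstacle.
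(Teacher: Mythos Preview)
Your proof is correct and follows essentially the same route as the paper's: both use Theorem~\ref{v-number-clutters-graphs} to produce a vertex $t_i$ with $N_G(t_i)$ a minimal vertex cover, observe that any vertex outside $N_G[t_i]$ would be isolated in $G\setminus N_G[t_i]$ (you justify this by noting such vertices lie in the stable set $V(G)\setminus N_G(t_i)$, which is exactly the reason behind the paper's terser assertion), conclude $V(G)=N_G[t_i]$, and then invoke well-coveredness to get $\beta_0(G)=1$. The only difference is that you spell out the ``$G_{t_i}$ is edgeless'' step more explicitly than the paper does.
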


\begin{proof} As ${\rm v}(I(G))=1$,
by Theorem~\ref{v-number-clutters-graphs}, there is $t_i\in V(G)$ such
that $N_G(t_i)$ is a minimal vertex cover of $G$. Note that
$V(G)=N_G[t_i]$. Indeed, if $V(G)$ is not equal to $N_G[t_i]$, then
any vertex outside $N_G[t_i]$ will be an isolated vertex of
$G\setminus N_G[t_i]$, a contradiction. Hence, $\{t_i\}$ is a maximal
stable set of $G$, and $G=\mathcal{K}_s$ since $G$ is well-covered.
\end{proof}

\begin{theorem}\cite[Theorem~5]{Pinter-jgt}\label{Pinter-thm}
If a graph $G$ is in $W_2$ and $G$ is not complete, then
the subgraph $G_v:=G\setminus N_G[v]$ is
in $W_2$ and $\beta_0(G_v)=\beta_0(G)-1$ for all $v\in V(G)$.
\end{theorem}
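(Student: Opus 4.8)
The plan is to deduce everything from the v-number characterization of $W_2$ graphs in Theorem~\ref{reg-w2}. Since $G\in W_2$ it is well-covered, and since $G$ is not complete $\beta_0(G)\geq 2$; hence $\{v\}$ is not a maximal stable set, so $V(G)\setminus N_G[v]\neq\emptyset$ and $G_v$ is a nonempty graph. By Theorem~\ref{reg-w2} and the equality $\dim(S/I(G))=\beta_0(G)$ we have ${\rm v}(I(G))=\beta_0(G)$, and, again by Theorem~\ref{reg-w2} applied to $G_v$, it will suffice to prove that $G_v$ has no isolated vertices and that ${\rm v}(I(G_v))=\beta_0(G_v)$; the identity $\beta_0(G_v)=\beta_0(G)-1$ will fall out of the same estimates.

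First I would sandwich the two quantities attached to $G_v$. For the upper bounds, adjoining $v$ to a maximum stable set of $G_v$ produces a stable set of $G$, whence $\beta_0(G_v)\leq\beta_0(G)-1$, while Corollary~\ref{coro1-vnumber} gives ${\rm v}(I(G_v))\leq\beta_0(G_v)$. For the matching lower bound, Proposition~\ref{vnumber-comparison}(a) gives $\beta_0(G)={\rm v}(I(G))\leq{\rm v}(I(G)\colon t_v)+1$; and since $(I(G)\colon t_v)=(I(G_v),N_G(v))$, with the edge ideal $I(G_v)$ and the prime ideal $(N_G(v))$ supported on disjoint sets of variables, the additivity of the v-number (Proposition~\ref{additivity-vnumber}) together with ${\rm v}((N_G(v)))=0$ yields ${\rm v}(I(G)\colon t_v)={\rm v}(I(G_v))$. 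Combining, $\beta_0(G)-1\leq{\rm v}(I(G_v))\leq\beta_0(G_v)\leq\beta_0(G)-1$, so all three coincide; in particular ${\rm v}(I(G_v))=\beta_0(G_v)=\beta_0(G)-1$. (Alternatively, the lower bound can be obtained combinatorially from Theorem~\ref{v-number-clutters-graphs}: if $A\in\mathcal{A}_{G_v}$, then $N_G(A\cup\{v\})=N_G(A)\cup N_G(v)$ is a vertex cover of $G$, hence a minimal one by Lemma~\ref{v-number-clutters-graphs-lemma}(b), so $A\cup\{v\}\in\mathcal{A}_G$ and ${\rm v}(I(G))\leq|A|+1$.)

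The remaining and, I expect, decisive point is that $G_v$ has no isolated vertices, for only then can Theorem~\ref{reg-w2} be invoked on $G_v$. Suppose some $w$ were isolated in $G_v$. Then $w\notin N_G[v]$, so $w\neq v$, and every $G$-neighbor of $w$ lies outside $V(G_v)$ and is distinct from $v$, hence lies in $N_G(v)$; that is, $\{v\}$ is a stable set, $w\notin\{v\}$, and $N_G(w)\subseteq N_G(\{v\})$. This is precisely the configuration forbidden for $W_2$ graphs by Lemma~\ref{sep6-19}, a contradiction. Therefore $G_v$ has no isolated vertices, and since ${\rm v}(I(G_v))=\beta_0(G_v)=\dim(K[V(G_v)]/I(G_v))$, Theorem~\ref{reg-w2} shows $G_v\in W_2$. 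The hypothesis that $G$ is not complete is used only to guarantee $\beta_0(G)\geq 2$, equivalently that $G_v$ is nonempty, without which the conclusion $G_v\in W_2$ would be vacuous.
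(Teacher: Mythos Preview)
The paper does not supply its own proof of Theorem~\ref{Pinter-thm}; it is quoted from Pinter and used only as background for the converse direction treated in Theorem~\ref{Levit-Mandrescu-w2}. So there is no argument in the paper to compare against directly.

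That said, your proof is correct and is in effect the mirror image of the paper's proof of Theorem~\ref{Levit-Mandrescu-w2}: the same three ingredients---Proposition~\ref{vnumber-comparison}(a), the decomposition $(I(G)\colon t_v)=(I(G_v),N_G(v))$, and additivity of the v-number---are combined, but run in the opposite direction, and Lemma~\ref{sep6-19} replaces the well-coveredness hypothesis used there. The chain $\beta_0(G)-1\leq{\rm v}(I(G_v))\leq\beta_0(G_v)\leq\beta_0(G)-1$ is properly justified, and the appeal to Lemma~\ref{sep6-19} to exclude isolated vertices in $G_v$ is exactly the right tool. None of the results you invoke (Theorem~\ref{reg-w2}, Proposition~\ref{vnumber-comparison}, Proposition~\ref{additivity-vnumber}, Corollary~\ref{coro1-vnumber}, Lemma~\ref{sep6-19}) depend on Pinter's theorem, so there is no circularity. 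One cosmetic point: since Corollary~\ref{coro1-vnumber} and Proposition~\ref{additivity-vnumber} are most cleanly stated for nonzero edge ideals, it is slightly tidier to prove first that $G_v$ has no isolated vertices (hence $I(G_v)\neq(0)$ once $G_v$ is nonempty) and only then run the chain of inequalities; the logic is unaffected.
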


The next result of Levit and Mandrescu gives a partial converse of
Theorem~\ref{Pinter-thm}. As an application of our classification of 
$W_2$ graphs in terms of the v-number and the independence number
(Theorem~\ref{reg-w2}), we give a proof of this result. 

\begin{theorem}\cite[Theorem~3.9]{Levit-Mandrescu}\label{Levit-Mandrescu-w2} Let $G$ be a
well-covered graph without isolated vertices. If $G\setminus N_G[t_i]$
is in $W_2$ for all $t_i\in V(G)$, then $G$ is in $W_2$.
\end{theorem}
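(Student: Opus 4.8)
The plan is to invoke Theorem~\ref{reg-w2}, which turns the assertion $G\in W_2$ into the numerical equality ${\rm v}(I)=\dim(S/I)$ for $I=I(G)$. Writing $d:=\dim(S/I)=\beta_0(G)$, the inequality ${\rm v}(I)\le d$ is free from Corollary~\ref{coro1-vnumber}, so the whole job is to prove ${\rm v}(I)\ge d$. The idea is to descend to one of the subgraphs $G_{t_i}:=G\setminus N_G[t_i]$, all of which lie in $W_2$ by hypothesis, using the recursion for the v-number under colon ideals from Proposition~\ref{vnumber-comparison}.

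First I would dispose of the degenerate possibility ${\rm v}(I)=1$; note ${\rm v}(I)\ge 1$ because $G$ has no isolated vertices, hence an edge, so $I$ is not prime. If ${\rm v}(I)=1$, then by Theorem~\ref{v-number-clutters-graphs} some singleton $\{t_i\}$ belongs to $\mathcal{A}_G$, and since every $G_{t_j}$ is in $W_2$ it has no isolated vertices, so Lemma~\ref{sunday-morning-dec15-19} would force $G=\mathcal{K}_s$; but then $G_{t_i}=\emptyset\notin W_2$, contradicting the hypothesis. Hence ${\rm v}(I)\ge 2$, and now Proposition~\ref{vnumber-comparison}(c) produces a vertex $t_i$ with ${\rm v}(I\colon t_i)<{\rm v}(I)$, while part (a) gives ${\rm v}(I)\le {\rm v}(I\colon t_i)+1$; together these force ${\rm v}(I\colon t_i)={\rm v}(I)-1$. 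As in the proof of Lemma~\ref{colon-dim} one has $(I\colon t_i)=(I(G_{t_i}),\,N_G(t_i))$, the variables of $N_G(t_i)$ not occurring in $I(G_{t_i})$, so the additivity of the v-number (Proposition~\ref{additivity-vnumber}), together with the vanishing of the v-number of a prime ideal generated by variables (Proposition~\ref{ci-vnumber}), yields ${\rm v}(I\colon t_i)={\rm v}(I(G_{t_i}))$. Since $G_{t_i}\in W_2$, Theorem~\ref{reg-w2} applies to it and gives ${\rm v}(I(G_{t_i}))=\beta_0(G_{t_i})$.

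It then remains to compute $\beta_0(G_{t_i})=d-1$: an independent set $U$ of $G_{t_i}$ is disjoint from $N_G[t_i]$, so $U\cup\{t_i\}$ is independent in $G$ and $|U|\le d-1$, while, $G$ being well-covered, $\{t_i\}$ extends to a \emph{maximum} independent set $T$ of $G$, whose restriction $T\setminus\{t_i\}$ is an independent set of $G_{t_i}$ of size $d-1$. Combining the displayed identities gives ${\rm v}(I)={\rm v}(I\colon t_i)+1={\rm v}(I(G_{t_i}))+1=\beta_0(G_{t_i})+1=d$, which is exactly what Theorem~\ref{reg-w2} needs. The steps that demand the most care are the bookkeeping in the descent—one must make sure the vertex handed over by Proposition~\ref{vnumber-comparison}(c) produces a genuine (nonempty, isolated-vertex-free) graph $G_{t_i}$ so that Theorem~\ref{reg-w2} is applicable to it, and that replacing ${\rm v}(I\colon t_i)$ by ${\rm v}(I(G_{t_i}))$ through additivity is legitimate—together with the degenerate case ${\rm v}(I)=1$; it is precisely at these points that the hypothesis on \emph{all} of the $G_{t_j}$, rather than on a single one, is used, since the vertex coming out of Proposition~\ref{vnumber-comparison}(c) is not under our control.
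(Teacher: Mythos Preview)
Your proof is correct and follows essentially the same approach as the paper's: both use Lemma~\ref{sunday-morning-dec15-19} to reduce to ${\rm v}(I)\geq 2$, invoke Proposition~\ref{vnumber-comparison}(c) to find a vertex $t_i$ with ${\rm v}(I\colon t_i)<{\rm v}(I)$, pass from $(I\colon t_i)$ to $I(G_{t_i})$ via additivity (Proposition~\ref{additivity-vnumber}), apply Theorem~\ref{reg-w2} to $G_{t_i}\in W_2$, and close with the well-covered hypothesis. The only cosmetic differences are that the paper handles the ${\rm v}(I)=1$ case by noting $\mathcal{K}_s\in W_2$ (rather than your contradiction with the hypothesis) and computes $\beta_0(G_{t_i})=d-1$ via Lemma~\ref{colon-dim} and $\dim(S/(I\colon t_i))=\dim(S/I)$ (rather than your direct independent-set argument); these are equivalent bookkeeping choices.
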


\begin{proof} By Lemma~\ref{sunday-morning-dec15-19}, we may assume
that ${\rm v}(I(G))\geq 2$. We set $I=I(G)$.  
According to Proposition~\ref{vnumber-comparison}(c), ${\rm v}(I\colon
t_i)<{\rm v}(I)$ for some $t_i\in V(G)$.  By
Lemma~\ref{colon-dim}, one has 
$$
\dim(S/(I\colon t_i))=1+\dim(K[V(G)\setminus
N_G[t_i]]/I(G\setminus N_G[t_i])),
$$
and by Theorem~\ref{reg-w2} we get $\dim(
K[V(G\setminus N_G[t_i])]/I(G\setminus N_G[t_i]))={\rm
v}(I(G\setminus N_G[t_i]))$ since 
$G\setminus
N_G[t_i]$ is in $W_2$. From the equality 
$$
(I\colon t_i)=(I(G\setminus N_G[t_i]),N_G(t_i))
$$
and Proposition~\ref{additivity-vnumber} we get 
${\rm v}(I(G\setminus N_G[t_i]))={\rm v}(I\colon t_i)$. Altogether,
by Corollary~\ref{coro1-vnumber}, 
one has
$$ 
\dim(S/(I\colon t_i))=1+{\rm v}(I\colon t_i)\leq {\rm v}(I)\leq \dim(S/I).
$$
\quad Hence, as $G$ is well-covered, $\dim(S/(I\colon
t_i))=\dim(S/I)$, and we have equality everywhere. Then, ${\rm
v}(I)=\dim(S/I)$, and $G$ is in $W_2$ by Theorem~\ref{reg-w2}.
\end{proof}

\begin{proposition}\label{dec17-19} Let $I\subset S$ be a squarefree
monomial ideal and let $t_i$ be a variable. 
\begin{enumerate}
\item[\rm(a)] If $I^{(2)}$ is Cohen--Macaulay, then $(I\colon
t_i)^{(2)}$ is Cohen--Macaulay.
\item[\rm(b)] If $G$ is a graph and $I(G)^{(2)}$ is Cohen--Macaulay,
then $I(G\setminus N_G[t_i])^{(2)}$ is Cohen--Macaulay.
\end{enumerate} 
\end{proposition}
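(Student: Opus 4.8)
The plan for (a) is to observe that one cannot apply Lemma~\ref{Caviglia-et-al-square-free} directly, since $I^{(2)}$ is not squarefree, and instead to use localization at the variable $t_i$. Write ${\rm Ass}(I)=\{\mathfrak{p}_1,\dots,\mathfrak{p}_r\}$; since $I$ is squarefree these are precisely the minimal primes of $I$, so $I=\bigcap_j\mathfrak{p}_j$ and $I^{(2)}=\bigcap_j\mathfrak{p}_j^2$. First I would record that $(I\colon t_i)=\bigcap_{t_i\notin\mathfrak{p}_j}\mathfrak{p}_j$, that the primes on the right are again an antichain, hence the associated primes of $(I\colon t_i)$, so that $(I\colon t_i)^{(2)}=\bigcap_{t_i\notin\mathfrak{p}_j}\mathfrak{p}_j^2$; in particular $t_i$ divides no minimal generator of $(I\colon t_i)^{(2)}$.

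Next I would pass to $S_{t_i}=S[t_i^{-1}]$. Since $\mathfrak{p}_j^2 S_{t_i}=S_{t_i}$ precisely when $t_i\in\mathfrak{p}_j$, and localization commutes with finite intersections, $I^{(2)}S_{t_i}=\bigcap_j\mathfrak{p}_j^2 S_{t_i}=\bigcap_{t_i\notin\mathfrak{p}_j}\mathfrak{p}_j^2 S_{t_i}=(I\colon t_i)^{(2)}S_{t_i}$. As $S/I^{(2)}$ is Cohen--Macaulay and the Cohen--Macaulay property passes to localizations, the ring $S_{t_i}/I^{(2)}S_{t_i}=S_{t_i}/(I\colon t_i)^{(2)}S_{t_i}$ is Cohen--Macaulay. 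Finally, writing $S=S'[t_i]$ with $S'=K[\,t_j:j\neq i\,]$ and $(I\colon t_i)^{(2)}=J_0 S$ with $J_0\subset S'$ (possible because $t_i$ does not occur in $(I\colon t_i)^{(2)}$), the ring $S_{t_i}/(I\colon t_i)^{(2)}S_{t_i}=(S'/J_0)[t_i,t_i^{-1}]$ is a free, hence faithfully flat, $(S'/J_0)$-module; by faithfully flat descent of the Cohen--Macaulay property, $S'/J_0$ is Cohen--Macaulay, and therefore so is $S/(I\colon t_i)^{(2)}=(S'/J_0)[t_i]$.

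For (b) I would feed the structure of $(I(G)\colon t_i)$ into (a) and Proposition~\ref{additivity-cm-symbolic}(b): by (the proof of) Lemma~\ref{colon-dim} one has $(I(G)\colon t_i)=(I(G\setminus N_G[t_i]),N_G(t_i))$, an ideal in which $t_i$ does not occur, so $(I(G)\colon t_i)^{(2)}=(I(G\setminus N_G[t_i]),N_G(t_i))^{(2)}$ and its Cohen--Macaulayness is unaffected by the spare variable $t_i$; and by Proposition~\ref{additivity-cm-symbolic}(b) this ideal is Cohen--Macaulay if and only if $I(G\setminus N_G[t_i])^{(2)}$ is. Chaining with (a): $I(G)^{(2)}$ Cohen--Macaulay $\Rightarrow$ $(I(G)\colon t_i)^{(2)}$ Cohen--Macaulay $\Rightarrow$ $I(G\setminus N_G[t_i])^{(2)}$ Cohen--Macaulay. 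The step I expect to be the crux is in (a): the identity $I^{(2)}S_{t_i}=(I\colon t_i)^{(2)}S_{t_i}$ and the descent of Cohen--Macaulayness from $S_{t_i}/(I\colon t_i)^{(2)}S_{t_i}$ back down to $S/(I\colon t_i)^{(2)}$ — i.e. setting up the interplay of colon, symbolic power and localization at $t_i$ cleanly; once that is in place the rest is routine bookkeeping.
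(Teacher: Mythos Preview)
Your proof is correct. For part~(a) the paper takes a shorter route: it computes directly that
\[
(I^{(2)}\colon t_i^{2})=\bigcap_{t_i\notin\mathfrak{p}_k}\mathfrak{p}_k^{2}=(I\colon t_i)^{(2)},
\]
and then invokes the depth inequality ${\rm depth}\,S/(J\colon f)\geq{\rm depth}\,S/J$ of Lemma~\ref{Caviglia-et-al-square-free} with $J=I^{(2)}$ and $f=t_i^{2}$. You are right that this lemma, as quoted in the paper, is stated only for squarefree $J$ and $f$; the inequality is in fact known to hold for arbitrary monomial ideals and monomials, so the paper's argument goes through, but your caution about applying it verbatim is justified. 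Your localization and faithfully-flat-descent argument sidesteps this citation issue entirely and is self-contained, at the cost of a few extra lines; conceptually the two approaches are close cousins, since coloning out $t_i^{2}$ already suffices to kill the $\mathfrak{p}_k^{2}$ with $t_i\in\mathfrak{p}_k$, which is exactly what inverting $t_i$ accomplishes. For part~(b) your argument coincides with the paper's.
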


\begin{proof} (a): Let $\mathfrak{p}_1,\ldots,\mathfrak{p}_r$ be the
associated primes of $I$. Then, from the equalities
$$
(I^{(2)}\colon
t_i^2)=\bigcap_{t_i\notin\mathfrak{p}_k}\mathfrak{p}_k^2\ \mbox{ and
}\ (I\colon t_i)=\bigcap_{t_i\notin\mathfrak{p}_k}\mathfrak{p}_k,
$$
we obtain $(I^{(2)}\colon t_i^2)=(I\colon t_i)^{(2)}$. Hence, 
by Lemma~\ref{Caviglia-et-al-square-free}, $(I\colon t_i)^{(2)}$ is
Cohen--Macaulay.

(b): Since $(I(G)\colon t_i)=(I(G\setminus N_G[t_i]),N_G(t_i))$, by part
(a) and Proposition~\ref{additivity-cm-symbolic}, we obtain that $I(G\setminus N_G[t_i])^{(2)}$ is 
 Cohen--Macaulay.
\end{proof}

\begin{lemma}\label{edge-critical-isolated} 
Let $G$ be a graph without isolated vertices. If $G$ is
edge-critical and $t_i$ is a vertex of $G$, then $G\setminus N_G[t_i]$ has no isolated vertices. 
\end{lemma}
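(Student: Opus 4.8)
The plan is to argue by contradiction. Suppose $G$ is edge-critical with no isolated vertices, fix a vertex $t_i$, and assume $t_j$ is an isolated vertex of $G\setminus N_G[t_i]$. Unwinding this hypothesis: $t_j\neq t_i$, $\{t_i,t_j\}\notin E(G)$, and every neighbour of $t_j$ in $G$ lies in $N_G(t_i)$, i.e. $N_G(t_j)\subseteq N_G(t_i)$. Since $G$ has no isolated vertices, $N_G(t_j)\neq\emptyset$, so one can pick $t_k\in N_G(t_j)\subseteq N_G(t_i)$, and $t_k\notin\{t_i,t_j\}$. The goal is then to show that the edge $e':=\{t_i,t_k\}$ of $G$ is \emph{not} critical, contradicting that $G$ is edge-critical.

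First I would invoke Lemma~\ref{nov27-19}(a): as $G$ is edge-critical, $\beta_0(G\setminus e')=\beta_0(G)+1$, so there is an independent set $B$ of $G\setminus e'$ with $|B|=\beta_0(G)+1$. The only edge of $G$ missing from $G\setminus e'$ is $e'$, so if $e'\not\subseteq B$ then $B$ would already be independent in $G$, impossible for cardinality reasons; hence $\{t_i,t_k\}\subseteq B$, and $e'$ is the unique edge of $G$ contained in $B$. Next I would check $N_G(t_i)\cap B=\{t_k\}$: any further neighbour $w\neq t_k$ of $t_i$ lying in $B$ would give an edge $\{t_i,w\}\neq e'$ of $G\setminus e'$ inside $B$, a contradiction. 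Using $N_G(t_j)\subseteq N_G(t_i)$ this also yields $N_G(t_j)\cap B\subseteq\{t_k\}$, and since $\{t_j,t_k\}\in E(G)$ with $\{t_j,t_k\}\neq e'$, we conclude $t_j\notin B$ (otherwise $\{t_j,t_k\}$ is an edge of $G\setminus e'$ inside $B$).

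The concluding step is a swap: set $A:=(B\setminus\{t_k\})\cup\{t_j\}$. Since $t_k\in B$ and $t_j\notin B$, $|A|=|B|=\beta_0(G)+1$. Removing $t_k$ from $B$ destroys the unique edge $e'$ of $G$ inside $B$, so $B\setminus\{t_k\}$ is independent in $G$; and $t_j$ has no neighbour in $B\setminus\{t_k\}$, because $N_G(t_j)\cap(B\setminus\{t_k\})\subseteq N_G(t_i)\cap(B\setminus\{t_k\})=\{t_k\}\setminus\{t_k\}=\emptyset$. Hence $A$ is an independent set of $G$ of size $\beta_0(G)+1$, which is absurd. This contradiction proves $G\setminus N_G[t_i]$ has no isolated vertices.

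I expect the only genuine difficulty to be the choice of which edge to test and which vertex swap to perform. A naive approach—taking a large independent set of $G\setminus\{t_j,t_k\}$ and trying to insert $t_i$—fails because one has no control over the remaining neighbours of $t_i$. The decisive point is to test the edge $e'=\{t_i,t_k\}$ rather than $\{t_j,t_k\}$: passing to a maximum independent set $B$ of $G\setminus e'$ forces $N_G(t_i)\cap B$ to be the single vertex $t_k$, and this is exactly what makes the swap of $t_k$ for $t_j$ go through. The rest is routine bookkeeping.
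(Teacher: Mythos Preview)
Your proof is correct and follows essentially the same approach as the paper: both test edge-criticality at the edge from $t_i$ to a common neighbour and exploit the containment $N_G(t_j)\subseteq N_G(t_i)$ to produce an independent set of $G$ of size $\beta_0(G)+1$. The only cosmetic difference is that the paper invokes Lemma~\ref{nov27-19}(d) and adjoins $\{t_i,t_j\}$ to a maximum stable set of $G_e=G\setminus(N_G[t_i]\cup N_G[t_k])$, whereas you invoke Lemma~\ref{nov27-19}(a) and swap $t_k$ for $t_j$ in a maximum independent set of $G\setminus e'$; unwinding, both constructions yield exactly the same set.
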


\begin{proof} Let $t_i$ be a vertex of $G$. If $\beta_0(G)=1$, then
any two vertices of $G$ are adjacent, and $G$ is a
complete graph. Thus, $G\setminus N_G[t_i]=\emptyset$. Now, we assume
$\beta_0(G)\geq 2$. 
Since $\beta_0$ is additive on the connected components of $G$, each
component of $G$ is edge-critical and we may assume that $G$ is
connected.  We proceed by contradiction assuming that
there exists a vertex $t_k$ of $G_{t_i}:=G\setminus N_G[t_i]$ which is
isolated in $G_{t_i}$. Then, $N_G(t_k)\subset N_G(t_i)$. We may assume that
$$N_G[t_i]=\{t_i,t_1,\ldots,t_m\}\ \mbox{ and }\
N_G(t_k)=\{t_1,\ldots,t_j\},
$$
where $1\leq j\leq m$ and $t_k\notin N_G[t_i]$. We set $e= \{t_i,t_j\}$ and
$G_e:=G\setminus(N_G[t_i]\cup N_G[t_j]))$. By Lemma~\ref{nov27-19}, 
one has $\beta_0(G_e)=\beta_0(G)-1\geq 1$. Pick a stable set $A$ of
$G_e$ with $|A|=\beta_0(G_e)$ and note that $A$ is a stable set of
$G$. 
We claim that $A\cup\{t_i,t_k\}$ is a
stable set of $G$. Let $f$ be any edge of $G$. It suffices to show
that $f\not\subset A\cup\{t_i,t_k\}$. If $f\subset
A\cup\{t_i,t_k\}$, then either $f=\{t_i,v\}$ with $v\in A$ or
$f=\{t_k,v\}$ with $v\in A$. Then, $v\in N_G(t_i)$ or $v\in N_G(t_k)$,
a contradiction since $N_G(t_k)\subset N_G(t_i)$ and
$A\cap(N_G[t_i]\cup N_G[t_j])=\emptyset$. This proves the claim.
Hence, $\beta_0(G)\geq \beta_0(G_e)+2=\beta_0(G)+1$, a contradiction.
\end{proof}

\begin{corollary}{\rm(\cite[Theorem~2.2]{Hoang-etal},
\cite[Lemma~8]{Hoang-VJM})}\label{c-m-w2}
If $G$ is a graph without
isolated vertices and $I(G)^{(2)}$ is Cohen--Macaulay, then $G$ is in
$W_2$. 
\end{corollary}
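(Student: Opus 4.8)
The plan is to prove Corollary~\ref{c-m-w2} by induction on $|V(G)|$, using Theorem~\ref{Levit-Mandrescu-w2} as the engine that upgrades ``local'' membership in $W_2$ (of the subgraphs $G\setminus N_G[t_i]$) to membership of $G$ itself. The inductive data are exactly the hypotheses of that theorem, and the whole point is that the two standing assumptions ``no isolated vertices'' and ``second symbolic power Cohen--Macaulay'' are inherited by each $G\setminus N_G[t_i]$. First I would record the easy structural consequences of the hypothesis: by Theorem~\ref{cm-second-power}(a), $I(G)^{(2)}$ Cohen--Macaulay forces $G$ to be a Cohen--Macaulay graph, hence unmixed, hence well-covered; and by Theorem~\ref{c-m-edge-critical}, $G$ is edge-critical. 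Since $G$ has no isolated vertices we also have $|V(G)|\geq 2$.

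Next I would split into the base case and the inductive step. \emph{Base case:} if $G$ is a complete graph $\mathcal{K}_s$ (equivalently $\beta_0(G)=1$), then $s\geq 2$, so $\mathcal{K}_s$ belongs to $W_2$ and we are done; this in particular settles $|V(G)|=2$. \emph{Inductive step:} assume $G$ is not complete, fix $t_i\in V(G)$, and set $G_i:=G\setminus N_G[t_i]$. Because $G$ is well-covered and not complete, $\{t_i\}$ is not a maximal stable set, so $G_i\neq\emptyset$. Since $G$ is edge-critical, Lemma~\ref{edge-critical-isolated} shows $G_i$ has no isolated vertices, hence $|V(G_i)|\geq 2$; and by Proposition~\ref{dec17-19}(b), $I(G_i)^{(2)}$ is Cohen--Macaulay. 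As $t_i$ is not isolated, $|N_G[t_i]|\geq 2$, so $|V(G_i)|\leq|V(G)|-2<|V(G)|$, and the induction hypothesis gives $G_i\in W_2$. Since $t_i$ was arbitrary, $G\setminus N_G[t_i]\in W_2$ for every vertex $t_i$, and Theorem~\ref{Levit-Mandrescu-w2} yields $G\in W_2$.

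The one place requiring care is the boundary between the two cases: Theorem~\ref{Levit-Mandrescu-w2} cannot be applied to a complete graph, since there $G\setminus N_G[t_i]=\emptyset$ for all $t_i$ and the empty graph is not in $W_2$. This is precisely why complete graphs must be peeled off as the base case (using only the elementary fact that $\mathcal{K}_s\in W_2$ for $s\geq 2$), and why one must verify that $G\setminus N_G[t_i]$ is \emph{nonempty} for non-complete well-covered $G$ before invoking the theorem. Everything else is bookkeeping: confirming that the deleted-neighborhood subgraphs still satisfy the hypotheses of the corollary, which is supplied by edge-criticality together with Lemma~\ref{edge-critical-isolated} (no isolated vertices) and by Proposition~\ref{dec17-19}(b) (Cohen--Macaulayness of the second symbolic power). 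I do not expect any computational obstacle; the subtlety is entirely in correctly isolating the complete-graph base case.
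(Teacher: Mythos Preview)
Your proposal is correct and follows essentially the same route as the paper's proof: induction on $|V(G)|$, using Theorem~\ref{c-m-edge-critical} and Lemma~\ref{edge-critical-isolated} to ensure $G\setminus N_G[t_i]$ has no isolated vertices, Proposition~\ref{dec17-19}(b) to pass down the Cohen--Macaulayness of the second symbolic power, well-coveredness of $G$ from Cohen--Macaulayness of $I(G)$, and Theorem~\ref{Levit-Mandrescu-w2} to conclude. Your treatment is in fact slightly more careful than the paper's, which takes only $s=2$ as the base case and does not explicitly address complete graphs with $s\geq 3$ (where $G\setminus N_G[t_i]=\emptyset$ and neither the induction hypothesis nor Theorem~\ref{Levit-Mandrescu-w2} applies); peeling off all complete graphs as you do closes that small gap.
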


\begin{proof} We proceed by induction on $s=|V(G)|\geq 2$. If $s=2$,
then $G$ is a complete graph on $2$ vertices and $G$ is in $W_2$.
Assume $s\geq 3$. Let $t_i$ be any vertex of $G$. By
Theorem~\ref{c-m-edge-critical}, $G$ is edge-critical. Then, by
Lemma~\ref{edge-critical-isolated}, $G\setminus N_G[t_i]$ has no
isolated vertices. Using Proposition~\ref{dec17-19}, we obtain that 
$I(G\setminus N_G[t_i])^{(2)}$ is Cohen--Macaulay. 
Hence, by induction, $G\setminus N_G[t_i]$ is in $W_2$ for all
$t_i\in V(G)$. Now, the ideal $I(G)$ is Cohen--Macaulay because $I(G)$
is the radical of $I(G)^{(2)}$ \cite{Radical-Herzog}, and 
consequently $G$ is well-covered \cite[p.~269]{monalg-rev}. Therefore, by
Theorem~\ref{Levit-Mandrescu-w2}, 
$G$ is in $W_2$.
\end{proof}

 A triangle-free graph is \textit{maximal triangle-free}, if joining
 any two non-adjacent vertices creates a triangle. 
The next lemma was pointed
out to us by Carlos Valencia.

\begin{lemma}\label{carlitos-triangle-free} Let $G$ be a graph. The
following are equivalent.
\begin{enumerate}
\item[\rm(a)] $G$ is edge-critical and $\beta_0(G)=2$. 
\item[\rm(b)] $\overline{G}$ is a maximal triangle-free graph and
$|V(G)|\geq 2$. 
\end{enumerate}
\end{lemma}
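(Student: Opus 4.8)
The plan is to pass to the complement and translate every condition in sight. The basic dictionary is that a set $A\subseteq V(G)$ is a stable set of $G$ if and only if it induces a complete subgraph of $\overline{G}$; consequently $\beta_0(G)\le 2$ if and only if $\overline{G}$ is triangle-free, while $\beta_0(G)\ge 2$ if and only if $\overline{G}$ has at least one edge. So the clause ``$\beta_0(G)=2$'' in (a) says exactly that $\overline{G}$ is triangle-free and not edgeless. I would also record the elementary fact that a maximal triangle-free graph on at least two vertices cannot be edgeless: if it were, then any two of its vertices would be non-adjacent and joining them would produce a graph with a single edge and no triangle, contradicting maximality. With these two remarks the ``triangle-free with an edge'' content of (a) coincides with that of (b), and what remains is to show that, under the standing hypothesis $\beta_0(G)=2$, the graph $G$ is edge-critical if and only if $\overline{G}$ is maximal triangle-free.

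Next I would set up the dictionary for the critical-edge condition. By Lemma~\ref{nov27-19}(a), $G$ is edge-critical precisely when $\beta_0(G\setminus e)=\beta_0(G)+1=3$ for every edge $e=\{t_i,t_j\}$ of $G$---equivalently, since $\beta_0(G)=2$, when $G\setminus e$ has a stable set of size $3$. Fixing such an edge $e$ is the same as fixing a pair $t_i,t_j$ of vertices non-adjacent in $\overline{G}$. A stable set $T$ of size $3$ in $G\setminus e$ cannot be stable in $G$ because $\beta_0(G)=2$, so $T$ must contain the only deleted edge, namely $e$; write $T=\{t_i,t_j,t_k\}$, where necessarily $t_k\ne t_i,t_j$. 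Then $\{t_i,t_k\}$ and $\{t_j,t_k\}$ are non-edges of $G\setminus e$, and since they differ from $e$ they are non-edges of $G$ as well, i.e.\ edges of $\overline{G}$. Hence $e$ is critical if and only if $t_i$ and $t_j$ have a common neighbour in $\overline{G}$, which, because $\overline{G}$ is triangle-free, is precisely the statement that joining $t_i$ and $t_j$ in $\overline{G}$ creates a triangle.

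With this dictionary in hand both implications are short. For (a)$\Rightarrow$(b): the hypothesis $\beta_0(G)=2$ shows that $\overline{G}$ is triangle-free and has an edge, hence $|V(G)|\ge 2$; and since $G$ is edge-critical, applying the dictionary to every edge of $G$---equivalently, to every non-adjacent pair of $\overline{G}$---shows that joining any two non-adjacent vertices of $\overline{G}$ creates a triangle, so $\overline{G}$ is maximal triangle-free. For (b)$\Rightarrow$(a): $\overline{G}$ triangle-free gives $\beta_0(G)\le 2$, and maximality together with $|V(G)|\ge 2$ forces $\overline{G}$ to have an edge by the remark above, so $\beta_0(G)=2$; then for each edge $e$ of $G$ the maximality of $\overline{G}$ supplies the common neighbour required by the dictionary, so every edge of $G$ is critical and $G$ is edge-critical by Lemma~\ref{nov27-19}(a).

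I do not anticipate a genuine obstacle here; the argument is essentially careful bookkeeping of the complement correspondence. The points where I would be most careful are the two degenerate situations: first, making sure that in the direction (b)$\Rightarrow$(a) the clause $\beta_0(G)=2$ is not lost---this is exactly where the remark ``a maximal triangle-free graph on at least two vertices has an edge'' is used, and it is why the hypothesis $|V(G)|\ge 2$ appears in (b); second, checking that $t_k$ is genuinely distinct from $t_i$ and $t_j$, so that $\{t_i,t_k\}$ and $\{t_j,t_k\}$ are different from the deleted edge $e$, which is clear since a common neighbour of $t_i$ and $t_j$ in a simple graph cannot equal either of them.
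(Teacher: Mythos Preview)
Your proposal is correct and follows essentially the same approach as the paper: both arguments use the complement dictionary (stable sets of $G$ correspond to cliques of $\overline{G}$), invoke the characterization $\beta_0(G\setminus e)=\beta_0(G)+1$ for edge-criticality (your citation of Lemma~\ref{nov27-19}(a)), and observe that a size-$3$ stable set of $G\setminus e$ must contain $e$ and yields a triangle in $\overline{G}+e$. Your presentation is somewhat more systematic in setting up the dictionary first, but the logical content is identical to the paper's proof.
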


\begin{proof} (a)$\Rightarrow$(b): $\overline{G}$ is triangle 
free, otherwise $\beta_0(G)\geq 3$. Take two non-adjacent vertices
$t_i,t_j$ of $\overline{G}$. Then, $e:=\{t_i,t_j\}$ is an edge of $G$
and $\beta_0(G\setminus e)=\beta_0(G)+1=3$ since $G$ is edge-critical.
Thus, there is a stable set $A$ of $G\setminus e$ with $|A|=3$.
Hence, $A$ is the set of vertices of a $3$-cycle of $\overline{G}+e$,
that is, $\overline{G}+e$ contains a triangle. This follows noticing
that $e\subset A$ since $A$ is not a stable set of $G$, and picking a
vertex $t_k$ of $G$ such that $A=\{t_k\}\cup e$. Then, the vertices
$t_i,t_j,t_k$ form a 3-cycle of $\overline{G}+e$.

(b)$\Leftarrow$(a): If $\beta_0(G)=1$, then $G=\mathcal{K}_s$ with 
$s\geq 2$, and $\overline{G}$ is a set of $s$ isolated vertices which
is not a maximal triangle-free graph, a contradiction. Thus,
$\beta_0(G)\geq 2$, and $\beta_0(G)=2$ since $\overline{G}$ is
triangle-free. To show that $G$ is edge-critical take any edge
$e=\{t_i,t_j\}$ of $G$. Then, $t_i$ is not adjacent to $t_j$ in
$\overline{G}$, and $\overline{G}+e$ must contain a triangle with
vertex set $A$ such that $e\subset A$. Then, $A$ is a stable set of $G\setminus e$, and
$\beta_0(G\setminus e)\geq 3$, as required. 
\end{proof}

The diameter of a connected graph $G$,
denoted ${\rm diam}(G)$, is the greatest distance between any two
vertices of $G$. For triangle-free
 graphs of order $s\geq 3$ being maximal triangle-free
 is equivalent to having diameter $2$.   

\begin{theorem}{\rm
\cite[Theorem~2.3]{Minh-Trung}}\label{Minh-Trung-jalgebra} Let
$I\subset S$ be the Stanley--Reisner ideal of a pure simplicial
complex $\Delta$ of dimension $1$. Then, $S/I^{(2)}$ is Cohen--Macaulay if and
only if ${\rm diam}(\Delta)\leq 2$.
\end{theorem}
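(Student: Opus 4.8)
The plan is to pass to the multigraded local cohomology of $S/I^{(2)}$ and read off the Cohen--Macaulay property from the vanishing of reduced simplicial homology of the associated degree complexes. Since $\Delta$ is pure of dimension $1$, every facet is an edge and no vertex is isolated; hence the minimal (equivalently, all associated) primes of $I=I_\Delta$ are the ideals $\mathfrak{p}_e=(t_i\colon t_i\notin e)$ for $e\in E(\Delta)$, each of height $s-2$, so $\dim(S/I^{(2)})=\dim(S/I)=2$ and $I^{(2)}=\bigcap_{e\in E(\Delta)}\mathfrak{p}_e^2$. Being Cohen--Macaulay of dimension $2$ is then equivalent to $\mathrm{depth}(S/I^{(2)})\ge 2$, i.e. to $H^0_\mathfrak{m}(S/I^{(2)})=H^1_\mathfrak{m}(S/I^{(2)})=0$. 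First I would record that finiteness of the diameter is necessary: if $I^{(2)}$ is Cohen--Macaulay then so is its radical $I$ \cite{Radical-Herzog}, and a pure $1$-dimensional Stanley--Reisner ring is Cohen--Macaulay exactly when its complex is connected (Reisner's criterion, using $\mathrm{lk}(\emptyset)=\Delta$ and the absence of isolated vertices); thus $\Delta$ is connected and $\mathrm{diam}(\Delta)$ is well defined.

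The main computation is to describe the degree complexes. By Takayama's formula, for each $a\in\mathbb{Z}^{s}$ with negative support $G_a=\{i\colon a_i<0\}$ one has $\dim_K H^i_\mathfrak{m}(S/I^{(2)})_a=\dim_K\widetilde{H}_{i-|G_a|-1}(\Delta_a;K)$, where
$$\Delta_a=\{F\subseteq\{1,\dots,s\}\setminus G_a\colon\ \exists\, e\in E(\Delta),\ F\cup G_a\subseteq e,\ \textstyle\sum_{i\notin e}a_i\le 1\}.$$
I would derive this description by localizing the primary decomposition $I^{(2)}=\bigcap_e\mathfrak{p}_e^2$ at the variables of $F\cup G_a$: a component $\mathfrak{p}_e^2$ survives only when $F\cup G_a\subseteq e$, and for such $e$ membership of $t^a$ is governed solely by the nonnegative exponents outside $e$, namely by whether $\sum_{i\notin e}a_i\ge 2$. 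Two consequences fall out. For $a\ge 0$ the complex $\Delta_a$ is exactly the subgraph of $\Delta$ spanned by the edges $\{t_p,t_q\}$ with $\sum_{i\notin\{p,q\}}a_i\le 1$ (a union of stars), while for $|G_a|=1$, say $G_a=\{t_j\}$, it is the $0$-dimensional complex on the neighbors $t_v$ of $t_j$ satisfying $\sum_{i\notin\{v,j\}}a_i\le 1$. In either case, as soon as $\emptyset\in\Delta_a$ the very edge (resp. neighbor) witnessing this is itself a vertex of $\Delta_a$, so $\Delta_a\neq\{\emptyset\}$; hence $\widetilde{H}_{-1}(\Delta_a)=0$ for all $a$ and $H^0_\mathfrak{m}(S/I^{(2)})=0$ unconditionally. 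Therefore the only possible obstruction to Cohen--Macaulayness is a nonzero $\widetilde{H}_0(\Delta_a)$ with $a\ge 0$, that is, a disconnected $\Delta_a$.

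Finally I would match disconnectedness with the diameter. If $\Delta_a$ (with $a\ge 0$) has two qualifying edges in distinct components, these edges are vertex-disjoint, and a short degree count then forces $\deg(t^a)\le 2$; so only the monomials $t^a\in\{1,\,t_j,\,t_j^2,\,t_jt_k\}$ need to be inspected. For $t^a=1$ or $t_j$ one has $\Delta_a=\Delta$ (connected, since $\Delta$ is); for $t_j^2$ one gets the star of $t_j$, a cone, hence connected; and for $t^a=t_jt_k$ one gets $\mathrm{star}(t_j)\cup\mathrm{star}(t_k)$, which is disconnected precisely when $N_\Delta[t_j]\cap N_\Delta[t_k]=\emptyset$, i.e. when $d(t_j,t_k)\ge 3$. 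Thus $H^1_\mathfrak{m}(S/I^{(2)})\neq 0$ if and only if $\Delta$ is disconnected or has two vertices at distance at least $3$, i.e. if and only if $\mathrm{diam}(\Delta)\ge 3$; combined with $H^0_\mathfrak{m}=0$ and $\dim=2$ this yields the stated equivalence. The hard part will be the degree-complex computation and the $\mathbb{Z}^{s}$-graded bookkeeping---pinning down the surviving localized primary components, verifying the union-of-stars picture for every $a\ge 0$, and checking that the negative-support degrees never contribute---where purity is exactly what guarantees that all facets are edges, so that the surviving components are precisely the $\mathfrak{p}_e^2$.
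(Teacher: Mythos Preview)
The paper does not contain its own proof of this statement: Theorem~\ref{Minh-Trung-jalgebra} is quoted from \cite[Theorem~2.3]{Minh-Trung} and used as a black box in the proof of Theorem~\ref{c-m-edge-critical-dim1}. So there is nothing in the paper to compare your argument against.

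That said, your outline is sound and is essentially the method of the original Minh--Trung paper: reduce Cohen--Macaulayness to the vanishing of $H^0_{\mathfrak{m}}$ and $H^1_{\mathfrak{m}}$, compute the $\mathbb{Z}^s$-graded pieces via Takayama's formula, and identify the degree complexes $\Delta_a$ explicitly from the primary decomposition $I^{(2)}=\bigcap_e\mathfrak{p}_e^{\,2}$. Your key observations are correct: for $a\geq 0$ the complex $\Delta_a$ is the subcomplex of $\Delta$ generated by the edges $e$ with $\sum_{i\notin e}a_i\leq 1$; the counting argument showing that two vertex-disjoint qualifying edges force $|a|\leq 2$ is valid (add the two inequalities and use $\sum_{i\in e\cup e'}a_i\leq |a|$); the case analysis for $|a|\leq 2$ correctly identifies $\Delta_{e_j+e_k}=\mathrm{star}(t_j)\cup\mathrm{star}(t_k)$ as the only possibly disconnected complex, disconnected exactly when $N_\Delta[t_j]\cap N_\Delta[t_k]=\emptyset$; and for $|G_a|=1$ the complex $\Delta_a$ is nonempty whenever it contains $\emptyset$, so no $\widetilde{H}_{-1}$ contribution arises there. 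The only places that require care in a full write-up are exactly the ones you flag: making precise which formulation of Takayama's formula you use (so that the localization description of $\Delta_a$ is justified), and checking that degrees with $|G_a|\geq 2$ contribute nothing to $H^i$ for $i\leq 1$, which is immediate since $\widetilde{H}_{i-|G_a|-1}=0$ for $i-|G_a|-1\leq -2$.
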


\begin{theorem}\label{c-m-edge-critical-dim1} Let $G$ be a graph.
 If $\beta_0(G)=2$, then $I(G)^{(2)}$ is
Cohen--Macaulay if and only if $G$ is edge-critical.
\end{theorem}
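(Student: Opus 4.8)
The plan is to establish the two implications separately, noting that one of them is already available. The implication ``$I(G)^{(2)}$ Cohen--Macaulay $\Rightarrow$ $G$ edge-critical'' is exactly Theorem~\ref{c-m-edge-critical}, which makes no hypothesis on $\beta_0(G)$, so nothing new is needed there. The substance lies in the converse: assuming $\beta_0(G)=2$ and that $G$ is edge-critical, I must show that $S/I(G)^{(2)}$ is Cohen--Macaulay.

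For the converse I would first dispose of the trivial case: if $|V(G)|=2$, then $\beta_0(G)=2$ forces $G$ to have no edges, so $I(G)=(0)$ and $I(G)^{(2)}=(0)$ is Cohen--Macaulay. So assume $|V(G)|\ge 3$. By Lemma~\ref{carlitos-triangle-free}, edge-criticality together with $\beta_0(G)=2$ is equivalent to $\overline{G}$ being a maximal triangle-free graph. Since $\beta_0(G)=2$ bounds the clique number of $\overline{G}$ by $2$, the graph $\overline{G}$ is triangle-free of order at least $3$; being also maximal triangle-free, it has diameter $2$ (a maximal triangle-free graph of order $\ge 3$ has diameter $2$, as recorded just before Theorem~\ref{Minh-Trung-jalgebra}), and in particular $\overline{G}$ is connected.

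Next I would translate everything to the independence complex $\Delta_G$ of $G$, recalling that $I_{\Delta_G}=I(G)$. Because $\beta_0(G)=2$, the complex $\Delta_G$ has dimension $1$ (it has $2$-element faces and no larger ones); its $1$-dimensional faces are the independent $2$-subsets of $G$, i.e. the edges of $\overline{G}$, so as a graph $\Delta_G$ is precisely $\overline{G}$, and hence ${\rm diam}(\Delta_G)={\rm diam}(\overline{G})=2$. I also need $\Delta_G$ to be pure of dimension $1$: if some maximal face were a single vertex $\{v\}$, then $v$ would be adjacent in $G$ to every other vertex, i.e. $v$ would be isolated in $\overline{G}$, contradicting connectedness of $\overline{G}$; thus every maximal independent set of $G$ has exactly two elements. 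Now Theorem~\ref{Minh-Trung-jalgebra} applies to the pure $1$-dimensional complex $\Delta_G$ and yields that $S/I(G)^{(2)}=S/I_{\Delta_G}^{(2)}$ is Cohen--Macaulay if and only if ${\rm diam}(\Delta_G)\le 2$, which we have shown. This completes the converse.

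The only point requiring genuine attention is the purity of $\Delta_G$, since that is exactly what is needed to invoke Theorem~\ref{Minh-Trung-jalgebra}; the key observation is that the combinatorial hypotheses (edge-critical and $\beta_0(G)=2$) preclude a vertex of $G$ adjacent to all others --- equivalently, an isolated vertex of $\overline{G}$ --- and this is guaranteed by the connectedness of the maximal triangle-free graph $\overline{G}$. The remaining steps are just bookkeeping with the correspondence between $G$, $\overline{G}$ and $\Delta_G$.
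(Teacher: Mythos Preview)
Your proposal is correct and follows essentially the same route as the paper: the forward implication cites Theorem~\ref{c-m-edge-critical}, and the converse passes through Lemma~\ref{carlitos-triangle-free} to identify $\Delta_G$ with the maximal triangle-free graph $\overline{G}$ of diameter at most $2$, then invokes Theorem~\ref{Minh-Trung-jalgebra}. Your explicit verification of the purity of $\Delta_G$ is a useful detail that the paper's proof leaves implicit.
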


\begin{proof} Assume that $I(G)^{(2)}$ is Cohen--Macaulay. Then, by
Theorem~\ref{c-m-edge-critical}, $G$ is edge-critical. Conversely 
assume that $G$ is edge-critical.  Let $\overline{G}$ be the
complement of $G$. Then, by
Lemma~\ref{carlitos-triangle-free}, $\overline{G}$ is a maximal
triangle-free graph and $|V(G)|\geq 2$. In particular, $\overline{G}$
is connected. If $s=2$, then $\overline{G}=\mathcal{K}_2$, $I(G)=(0)$,
and $I(G)$ is Cohen--Macaulay. If $s\geq 3$, then ${\rm diam}(G)=2$ and
since the independence complex of $G$ is $\overline{G}$ and the
Stanley--Reisner ideal $I_{\overline{G}}$ of $\overline{G}$ is $I(G)$, by
Theorem~\ref{Minh-Trung-jalgebra} we get that $I(G)^{(2)}$ is
Cohen--Macaulay.
\end{proof}

\begin{corollary}\label{c-m-edge-critical-dim1-coro} If $G$ is an edge-critical graph without isolated
vertices and $\beta_0(G)=2$, then $G$ is in $W_2$ and 
${\rm v}(I(G))={\rm reg}(S/I(G))=2$.
\end{corollary}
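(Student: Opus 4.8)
The plan is to deduce everything from results already in the paper, so the ``proof'' is essentially a chain of citations. First I would note that the hypotheses of Theorem~\ref{c-m-edge-critical-dim1} are satisfied ($\beta_0(G)=2$ and $G$ edge-critical), whence $I(G)^{(2)}$ is Cohen--Macaulay; since $G$ has no isolated vertices, Corollary~\ref{c-m-w2} then gives $G\in W_2$. At that point Theorem~\ref{reg-w2} yields ${\rm v}(I(G))=\dim(S/I(G))$, and since $\dim(S/I(G))=\beta_0(G)=2$ we get ${\rm v}(I(G))=2$.

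Next I would establish the regularity statement by showing that the independence complex $\Delta_G$ is vertex decomposable and then squeezing ${\rm reg}(S/I(G))$ between $2$ and $2$. Since $\beta_0(G)=2$, the complex $\Delta_G$ has dimension $\beta_0(G)-1=1$, and it coincides (as a $1$-dimensional complex) with the complement graph $\overline{G}$, with $I_{\overline{G}}=I(G)$. By Lemma~\ref{carlitos-triangle-free}, $\overline{G}$ is a maximal triangle-free graph on $|V(G)|\geq 2$ vertices; a maximal triangle-free graph is connected, since adding an edge between two vertices lying in different components produces no triangle, contradicting maximality. Hence $\Delta_G$ is a connected $1$-dimensional complex, so it is vertex decomposable by \cite[Theorem~3.1.2]{provan-billera}. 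Now Theorem~\ref{vertex-decomposable-vnumber} applied to $I_{\overline G}=I(G)$ gives ${\rm v}(I(G))\leq{\rm reg}(S/I(G))$, while Proposition~\ref{nov6-19} gives ${\rm reg}(S/I(G))\leq\dim(S/I(G))=2$. Combining the two, $2={\rm v}(I(G))\leq{\rm reg}(S/I(G))\leq 2$, so ${\rm reg}(S/I(G))=2={\rm v}(I(G))$, as claimed.

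There is no real obstacle here; the argument is just a concatenation of the tools above. The only point needing a bit of care is the verification that $\Delta_G$ is connected, so that the Provan--Billera criterion applies: this is exactly where Lemma~\ref{carlitos-triangle-free} enters, converting the ``edge-critical with $\beta_0=2$'' hypothesis on $G$ into maximal triangle-freeness of $\overline{G}$, after which connectivity is an elementary observation. (Alternatively one could observe that $G\in W_2$ already forces $\Delta_G$ to be pure of dimension $1$ and argue connectivity directly, but routing through Lemma~\ref{carlitos-triangle-free} is the shortest path and reuses machinery already developed for Theorem~\ref{c-m-edge-critical-dim1}.)
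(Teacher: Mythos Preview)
Your proof is correct and follows essentially the same route as the paper's: both invoke Theorem~\ref{c-m-edge-critical-dim1} and Corollary~\ref{c-m-w2} to get $G\in W_2$, then combine Theorem~\ref{reg-w2}, Proposition~\ref{nov6-19}, Lemma~\ref{carlitos-triangle-free} (for connectivity of $\overline{G}$), Provan--Billera, and Theorem~\ref{vertex-decomposable-vnumber} to sandwich the regularity. Your explicit justification that a maximal triangle-free graph is connected is a nice addition; the paper simply asserts this.
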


\begin{proof} By Theorem~\ref{c-m-edge-critical-dim1} and
Lemma~\ref{carlitos-triangle-free}, $I(G)^{(2)}$ is
Cohen--Macaulay and $\overline{G}$ is a connected graph. Therefore, by
Corollary~\ref{c-m-w2}, $G\in W_2$.  By a result of Provan
and Billera \cite[Theorem~3.1.2]{provan-billera} any $1$-dimensional
connected complex is vertex decomposable. Thus, $\overline{G}$ is
vertex decomposable and the Stanley--Reisner ideal $I_{\overline{G}}$
of $\overline{G}$ is $I(G)$. Hence, by 
Theorems~\ref{vertex-decomposable-vnumber}, \ref{reg-w2} and
Proposition~\ref{nov6-19}, one
has
$$
{\rm reg}(S/I(G))\geq {\rm v}(I(G))=\dim(S/I(G))\geq {\rm
reg}(S/I(G)).
$$
\quad Thus, we have equality everywhere. Note that
$\dim(S/I(G))=\beta_0(G)=2$. 
\end{proof}

\section{Examples}

\begin{remark}\label{edge-criticalcm} There are $53$ connected
edge-critical graphs with at most $9$ vertices and at least $2$
vertices \cite{Plummer,B-Small} of which $31$ have $9$ vertices. Using
Theorem~\ref{c-m-edge-critical} and
Procedure~\ref{Tai-symbolic-powers-algorithm}, 
in Table~\ref{tab:C-M,G} we show 
the list of all connected graphs such that the symbolic
square of its edge ideal is Cohen--Macaulay over a field of
characteristic $0$. Table~\ref{tab:C-M,G} consists of $19$ graphs with
fewer than $9$ vertices and $17$ graphs with $9$ vertices.
\end{remark}

\begin{example}\label{example-graph4} 
Let $G$ be the graph whose edge ideal is given by 
$$
I(G)=(t_1t_2,t_2t_3,t_3t_4,t_4t_5,t_5t_6,t_1t_6,t_1t_8,t_2t_9,t_3t_7,t_4t_8,t_5t_9,
t_6t_7,t_7t_8,t_7t_9,t_8t_9).
$$
\quad Using Procedure~\ref{Tai-symbolic-powers-algorithm} for \textit{Macaulay}$2$
\cite{mac2} we obtain that $I(G)^{(2)}$ is a Cohen--Macaulay ideal
over a field of 
characteristic $0$. Using
Procedure~\ref{nov11-19} we get that ${\rm v}(I(G))={\rm
reg}(S/I(G))=\beta_0(G)=3$, $G$ is a graph in $W_2$, and $G$ is
edge-critical. In the list of 36 graphs of Table~\ref{tab:C-M,G}, this graph 
corresponds to graph number $5$ from bottom.
\end{example}

\begin{example}\label{example-graph3} 
The edge ideal of the graph $G$ in Figure~\ref{figure3} is given by 
\begin{align*}\label{eideal1}
I=I(G)=(&t_1t_3,t_1t_4,t_1t_7,t_1t_{10},t_1t_{11},t_2t_4,t_2t_5,t_2t_8,t_2t_{10},\\
&t_2t_{11},t_3t_5,t_3t_6,t_3t_8,t_3t_{11},t_4t_6,t_4t_9,t_4t_{11},t_5t_{7},\\
&t_5t_9,t_5t_{11},t_6t_8,t_6t_9,t_7t_9,t_7t_{10},t_8t_{10}).
\end{align*}
\quad The combinatorial properties and algebraic invariants of $G$,
$I(G)$, and $I(G)^{(2)}$ were
computed using Procedures~\ref{nov11-19} and
\ref{Tai-symbolic-powers-algorithm} 
for \textit{Macaulay}$2$
\cite{mac2}. The invariants of $I(G)$ are shown in Table~\ref{tab:1}. In particular
in characteristic $0$ this gives a counterexample to
\cite[Conjecture~4.2]{footprint-ci} because the v-number of $I(G)$ is $3$ and the
regularity of $S/I(G)$ is $2$. The graph $G$ is edge-critical, it is in
$W_2$, and it is Cohen--Macaulay over $\mathbb{Q}$. The symbolic
square $I(G)^{(2)}$ is not Cohen-Macaulay.

\begin{figure}[ht]
\begin{displaymath}
\xygraph{
 !{<0cm,1cm>;<1cm,0cm>;<0cm,1cm>}
!{(2,-.1)}*+{\text{$t_2$}}
!{(0,-.1)}*+{\text{$t_8$}}
!{(-2,-.1)}*+{\text{$t_3$}}
!{(-2.9,1.1)}*+{\text{$t_6$}}
!{(3,1.1)}*+{\text{$t_{10}$}}
!{(-3,2)}*+{\text{$t_4$}}
!{(3.1,2)}*+{\text{$t_{1}$}}
!{(-2.3,2.8)}*+{\text{$t_{11}$}}
!{(2.2,2.8)}*+{\text{$t_{7}$}}
!{(-1,3.5)}*+{\text{$t_{5}$}}
!{(1,3.5)}*+{\text{$t_{9}$}}
!{(2,.2)}*{\bullet}="v2"
!{(0,.2)}*{\bullet}="v8"
!{(-2,.2)}*{\bullet}="v3"
!{(-2.8,1.3)}*{\bullet}="v6"
!{(2.8,1.3)}*{\bullet}="v10"
!{(-2.8,2)}*{\bullet}="v4"
!{(2.8,2)}*{\bullet}="v1"
!{(-2,2.8)}*{\bullet}="v11"
!{(2,2.8)}*{\bullet}="v7"
!{(-1,3.2)}*{\bullet}="v5"
!{(1,3.2)}*{\bullet}="v9"
"v1"-"v10""v2"-"v10""v2"-"v8""v3"-"v8""v3"-"v6""v4"-"v6""v4"-"v11"
"v5"-"v11""v9"-"v5""v7"-"v9""v1"-"v7"
"v1"-"v3""v1"-"v4""v1"-"v11"
"v2"-"v4""v2"-"v5""v2"-"v11"
"v3"-"v5""v3"-"v11"     
"v4"-"v9"     
"v5"-"v7"   
"v6"-"v9""v6"-"v8"
"v7"-"v10" 
"v8"-"v10"        
 }
\end{displaymath}
\caption{}\label{figure3}
\end{figure}
\begin{table}[h]
\caption{Invariants of $I=I(G)$ in characteristic $0$ and
characteristic $2$.} 
\centering 
\begin{tabular}{|c|c|}
\hline\hline 
$\mathbb{Q}$  & $\mathbb{Z}_{2}$\\
\hline
v$(I)=3$&v$(I)=3$\\
\hline
reg$(S/I)=2$&reg$(S/I)=3$\\
\hline
dim$(S/I)=3$&dim$(S/I)=3$\\
\hline
\end{tabular}
\label{tab:1}
\end{table}
\end{example}

\begin{appendix}
\label{Appendix}

\section{Procedures for {\it Macaulay\/}$2$}


\begin{procedure}\label{nov11-19}
Computing the regularity, the v-number, and the dimension of the edge ideal $I=I(G)$
of a graph $G$ with
\textit{Macaulay}$2$ \cite{mac2}. This procedure also checks if a graph $G$
is in $W_2$, is Cohen--Macaulay or is edge-critical. This procedure can also be
applied to clutters and their edge ideals. The v-number is computed
using Proposition~\ref{lem:vnumber}. This procedure corresponds to 
Example~\ref{example-graph3}. To compute other examples just replace the
ideal $I$, the variable list, and the ground field $K$.
\begin{verbatim}
R=QQ[t1,t2,t3,t4,t5,t6,t7,t8,t9,t10,t11]--ground field K=QQ
X=toList{t1,t2,t3,t4,t5,t6,t7,t8,t9,t10,t11}
I=monomialIdeal(t1*t3,t1*t4,t1*t7,t1*t10,t1*t11,t2*t4,t2*t5,t2*t8,
t2*t10,t2*t11,t3*t5,t3*t6,t3*t8,t3*t11,t4*t6,t4*t9,t4*t11,t5*t7,
t5*t9,t5*t11,t6*t8,t6*t9,t7*t9,t7*t10,t8*t10)
--The next is True if and only if I is Cohen-Macaulay
codim(I)==pdim coker gens gb I
dim(I), M=coker gens gb I, L=ass I
f=(n)->flatten flatten  degrees mingens(quotient(I,L#n)/I)
g=(a)->toList(set a-set{0}) 
vnumber=min(flatten apply(apply(0..#L-1,f),g))
regularity M
--The next is True if and only if G is in W2
dim(I)==vnumber
G=flatten entries gens gb I
G1=(a)->toList(set G-set{a}) 
--The next two are True if and only if G is edge-critical
min apply(apply(apply(G,G1),ideal),codim)==max apply(apply
(apply(G,G1),ideal),codim)
codim I-min apply(apply(apply(G,G1),ideal),codim)==1
\end{verbatim}
\end{procedure}

\begin{procedure}{(Jonathan O'Rourke)}\label{Tai-symbolic-powers-algorithm} 
This procedure for \textit{Macaulay}$2$ \cite{mac2} 
computes the $k$-th symbolic power $I^{(k)}$ of an edge ideal 
$I$ and determines whether or not $I^{(k)}$ is Cohen--Macaulay. This procedure corresponds to 
Example~\ref{example-graph4}.
\begin{verbatim}
R=QQ[t1,t2,t3,t4,t5,t6,t7,t8,t9]
X=toList flatten entries vars R
I=monomialIdeal(t1*t2,t2*t3,t3*t4,t4*t5,t5*t6,t6*t1,t1*t8,t2*t9,
t3*t7,t4*t8,t5*t9,t6*t7,t7*t8,t7*t9,t8*t9) 
--Computes the k-th Symbolic Power of I
SP = (I,k) ->  (temp = primaryDecomposition I; 
temp2 = ((temp_0)^k); for i from 1 to #temp-1 do(temp2 =                               
     intersect(temp2,(temp_i)^k)); return temp2)                  
--The next is True if and only if 
--the second symbolic power is Cohen-Macaulay
codim(I)==pdim coker gens gb SP(I,2)
\end{verbatim}
\end{procedure}


\end{appendix}

\newpage

\begin{table}[H]
\caption{Connected graphs with $I(G)^{(2)}$
Cohen-Macaulay}
\centering 
\begin{tabular}{|c|c|c|c|}
\hline\hline 
$|V(G)|$  & Graph $G$ & $I=I(G)$\\


\hline
  \xygraph{
!{<0cm,.5cm>;<.5cm,0cm>;<0cm,.5cm>}
!{(0,1.1)}*+{\text{2}} 
}& 
\xygraph{
 !{<0cm,.5cm>;<.5cm,0cm>;<0cm,.5cm>} 
 !{(0,1.1)}*+{\text{$t_2$}} 
 !{(-1,1.1)}*+{\text{$t_1$}} 
 !{(0,.8)}*{\bullet}="v3"
 !{(-1,.8)}*{\bullet}="v1"
 "v1"-"v3"
}& 
\xygraph{
!{<0cm,.5cm>;<.5cm,0cm>;<0cm,.5cm>}
!{(0,1.1)}*+{\text{$(t_{1}t_{2})$}} 
}
\\


\hline
\xygraph{
!{<0cm,.5cm>;<.5cm,0cm>;<0cm,.5cm>}
!{(0,1.7)}*+{\text{3}} 
!{(0,1.3)}*+{} 
}  & 
\xygraph{
 !{<0cm,.5cm>;<.5cm,0cm>;<0cm,.5cm>}
  !{(0,1.1)}*+{\text{$t_3$}} 
 !{(-1,1.1)}*+{\text{$t_1$}} 
 !{(-.5,1.5)}*+{\text{$t_2$}} 
 !{(0,.8)}*{\bullet}="v3"
 !{(-1,.8)}*{\bullet}="v1"
 !{(-.5,1.3)}*{\bullet}="v2"
 "v1"-"v3" "v1"-"v2" "v2"-"v3"
}& \xygraph{
!{<0cm,.5cm>;<.5cm,0cm>;<0cm,.5cm>}
!{(0,1.5)}*+{\text{$(t_{1}t_{2},t_{1}t_{3},t_{2}t_{3})$}} 
}
\\


\hline
\xygraph{
!{<0cm,.5cm>;<.5cm,0cm>;<0cm,.5cm>}
!{(0,1.7)}*+{\text{4}} 
}  & 
\xygraph{
 !{<0cm,.5cm>;<.5cm,0cm>;<0cm,.5cm>}
  !{(0,1.8)}*+{\text{$t_4$}}  
  !{(0.2,1.1)}*+{\text{$t_3$}} 
 !{(-1.2,1.1)}*+{\text{$t_1$}} 
 !{(-1,1.8)}*+{\text{$t_2$}} 
 !{(0,1.6)}*{\bullet}="v4"
 !{(0,.8)}*{\bullet}="v3"
 !{(-1,.8)}*{\bullet}="v1"
 !{(-1,1.6)}*{\bullet}="v2"
 "v1"-"v3" "v1"-"v2" "v2"-"v3""v4"-"v3""v4"-"v2""v4"-"v1"
}& \xygraph{
!{<0cm,.5cm>;<.5cm,0cm>;<0cm,.5cm>}
!{(0,1.7)}*+{\text{$(t_{1}t_{2},t_{1}t_{3},t_{1}t_{4},$}} 
!{(0,1.3)}*+{\text{$t_{2}t_{3},t_{2}t_{4},t_{3}t_{4})$}} 
}
\\


\hline
\xygraph{
!{<0cm,.5cm>;<.5cm,0cm>;<0cm,.5cm>}
!{(0,1.7)}*+{\text{5}} 
}  & 
\xygraph{
 !{<0cm,.5cm>;<.5cm,0cm>;<0cm,.5cm>}
 !{(-0.5,2.3)}*+{\text{$t_1$}} 
  !{(0.4,1.8)}*+{\text{$t_2$}} 
 !{(0.3,1)}*+{\text{$t_3$}}
 !{(-1.3,1)}*+{\text{$t_4$}} 
 !{(-1.4,1.8)}*+{\text{$t_5$}} 
 !{(-0.5,2.1)}*{\bullet}="v1"
 !{(0.2,1.6)}*{\bullet}="v2"
  !{(0,.8)}*{\bullet}="v3"
 !{(-1,.8)}*{\bullet}="v4"
 !{(-1.2,1.6)}*{\bullet}="v5"
 "v1"-"v2" "v1"-"v5" "v2"-"v3""v4"-"v3""v4"-"v5"
}& \xygraph{
!{<0cm,.5cm>;<.5cm,0cm>;<0cm,.5cm>}
!{(0,2.5)}*+{\text{$(t_{1}t_{2},t_{2}t_{3},t_{3}t_{4},$}} 
!{(0,2)}*+{\text{$t_{4}t_{5},t_{5}t_{1})$}} 
!{(0,1.5)}*+{\text{}}
}
\\

\hline
\xygraph{
!{<0cm,.5cm>;<.5cm,0cm>;<0cm,.5cm>}
!{(0,1.7)}*+{\text{5}} 
}  & 
\xygraph{
 !{<0cm,.5cm>;<.5cm,0cm>;<0cm,.5cm>}
 !{(-0.5,2.3)}*+{\text{$t_1$}} 
  !{(0.4,1.8)}*+{\text{$t_2$}} 
 !{(0.3,1)}*+{\text{$t_3$}}
 !{(-1.3,1)}*+{\text{$t_4$}} 
 !{(-1.4,1.8)}*+{\text{$t_5$}} 
 !{(-0.5,2.1)}*{\bullet}="v1"
 !{(0.2,1.6)}*{\bullet}="v2"
  !{(0,.8)}*{\bullet}="v3"
 !{(-1,.8)}*{\bullet}="v4"
 !{(-1.2,1.6)}*{\bullet}="v5"
 "v1"-"v2" "v1"-"v5" "v2"-"v3""v4"-"v3""v4"-"v5""v4"-"v1""v1"-"v3""v4"-"v2""v3"-"v5""v2"-"v5"
}& \xygraph{
!{<0cm,.5cm>;<.5cm,0cm>;<0cm,.5cm>}
!{(0,2.5)}*+{\text{$(t_{1}t_{2},t_{1}t_{3},t_{1}t_{4},$}} 
!{(0,2)}*+{\text{$t_{1}t_{5},t_{2}t_{3},t_{2}t_{4},$}} 
!{(0,1.5)}*+{\text{$t_{2}t_{5},t_{3}t_{4},t_{3}t_{5},$}}
!{(0,1)}*+{\text{$t_{4}t_{5})$}}
}
\\


\hline
\xygraph{
!{<0cm,.5cm>;<.5cm,0cm>;<0cm,.5cm>}
!{(0,1.7)}*+{\text{6}} 
}  & 
\xygraph{
 !{<0cm,.5cm>;<.5cm,0cm>;<0cm,.5cm>}
 !{(-0.5,2.3)}*+{\text{$t_1$}} 
  !{(0.4,1.8)}*+{\text{$t_2$}} 
 !{(0.3,1)}*+{\text{$t_3$}}
 !{(-1.3,1)}*+{\text{$t_4$}} 
 !{(-1.4,1.8)}*+{\text{$t_5$}} 
 !{(-0.5,1.2)}*+{\text{$t_6$}} 
 !{(-0.5,2.1)}*{\bullet}="v1"
 !{(0.2,1.6)}*{\bullet}="v2"
  !{(0,.8)}*{\bullet}="v3"
 !{(-1,.8)}*{\bullet}="v4"
 !{(-1.2,1.6)}*{\bullet}="v5"
  !{(-0.5,1.4)}*{\bullet}="v6"
 "v1"-"v2" "v1"-"v5" "v2"-"v3""v4"-"v3""v4"-"v5""v1"-"v6""v2"-"v6""v6"-"v5"
}& \xygraph{
!{<0cm,.5cm>;<.5cm,0cm>;<0cm,.5cm>}
!{(0,2.5)}*+{\text{$(t_{1}t_{2},t_{1}t_{5},t_{1}t_{6},$}} 
!{(0,2)}*+{\text{$t_{2}t_{3},t_{2}t_{6},$}} 
!{(0,1.5)}*+{\text{$t_{3}t_{4},t_{4}t_{5},t_{5}t_{6})$}}
}
\\

\hline
\xygraph{
!{<0cm,.5cm>;<.5cm,0cm>;<0cm,.5cm>}
!{(0,1.7)}*+{\text{6}} 
}  & 
\xygraph{
 !{<0cm,.5cm>;<.5cm,0cm>;<0cm,.5cm>}
 !{(-1,2.3)}*+{\text{$t_1$}} 
  !{(0.4,1.6)}*+{\text{$t_3$}} 
 !{(0.3,1)}*+{\text{$t_4$}}
 !{(-1.3,1)}*+{\text{$t_5$}} 
 !{(-1.4,1.6)}*+{\text{$t_6$}} 
 !{(0,2.3)}*+{\text{$t_2$}} 
 !{(-1,2.1)}*{\bullet}="v1"
 !{(0.2,1.4)}*{\bullet}="v3"
  !{(0,.8)}*{\bullet}="v4"
 !{(-1,.8)}*{\bullet}="v5"
 !{(-1.2,1.4)}*{\bullet}="v6"
  !{(0,2.1)}*{\bullet}="v2"
  "v1"-"v2""v3"-"v2""v4"-"v3""v5"-"v4""v6"-"v5""v1"-"v6"
"v1"-"v3""v1"-"v4""v1"-"v5""v4"-"v2""v5"-"v2""v6"-"v2"
"v5"-"v3""v6"-"v3""v6"-"v4"
}& \xygraph{
!{<0cm,.5cm>;<.5cm,0cm>;<0cm,.5cm>}
!{(0,2.5)}*+{\text{$(t_{1}t_{2},t_{1}t_{3},t_{1}t_{4},t_{1}t_{5},$}} 
!{(0,2)}*+{\text{$t_{1}t_{6},t_{2}t_{3},t_{2}t_{4},t_{2}t_{5},$}} 
!{(0,1.5)}*+{\text{$t_{2}t_{6},t_{3}t_{4},t_{3}t_{5},t_{3}t_{6},$}}
!{(0,1)}*+{\text{$t_{4}t_{5},t_{4}t_{6},t_{5}t_{6})$}}
}
\\



\hline
\xygraph{
!{<0cm,.5cm>;<.5cm,0cm>;<0cm,.5cm>}
!{(0,2.4)}*+{\text{7}} 
!{(0,1.3)}*+{} 
}  & 
\xygraph{
 !{<0cm,.5cm>;<.5cm,0cm>;<0cm,.5cm>} 
 !{(-0.5,2.7)}*+{\text{$t_1$}} 
 !{(.15,2.2)}*+{\text{$t_2$}} 
 !{(-1.15,2.2)}*+{\text{$t_7$}} 
 !{(.5,1.3)}*+{\text{$t_3$}} 
 !{(-1.5,1.3)}*+{\text{$t_6$}} 
 !{(0,.6)}*+{\text{$t_4$}} 
 !{(-1,.6)}*+{\text{$t_5$}} 
 !{(-.5,2.5)}*{\bullet}="v1"
  !{(.15,2)}*{\bullet}="v2"
  !{(0.3,1.3)}*{\bullet}="v3"
  !{(0,.8)}*{\bullet}="v4"
 !{(-1,.8)}*{\bullet}="v5"
  !{(-1.3,1.3)}*{\bullet}="v6"
 !{(-1.15,2)}*{\bullet}="v7"
 "v1"-"v2" "v1"-"v3" "v2"-"v3""v2"-"v4""v4"-"v3""v5"-"v4"
"v6"-"v1""v7"-"v1""v6"-"v7""v6"-"v5""v5"-"v7"
}& \xygraph{
!{<0cm,.5cm>;<.5cm,0cm>;<0cm,.5cm>}
!{(0,2.5)}*+{\text{$(t_{1}t_{2},t_{1}t_{3},t_{1}t_{6},$}} 
!{(0,2)}*+{\text{$t_{1}t_{7},t_{2}t_{3},t_{2}t_{4},$}} 
!{(0,1.5)}*+{\text{$t_{3}t_{4},t_{4}t_{5},t_{5}t_{6},$}}
!{(0,1)}*+{\text{$t_{5}t_{7},t_{6}t_{7})$}}
}
\\


\hline
\xygraph{
!{<0cm,.5cm>;<.5cm,0cm>;<0cm,.5cm>}
!{(0,2.4)}*+{\text{7}} 
!{(0,1.3)}*+{} 
}  & 
\xygraph{
 !{<0cm,.5cm>;<.5cm,0cm>;<0cm,.5cm>} 
 !{(-0.5,2.7)}*+{\text{$t_1$}} 
 !{(.15,2.2)}*+{\text{$t_2$}} 
 !{(-1.15,2.2)}*+{\text{$t_7$}} 
 !{(.5,1.3)}*+{\text{$t_3$}} 
 !{(-1.5,1.3)}*+{\text{$t_6$}} 
 !{(0,.6)}*+{\text{$t_4$}} 
 !{(-1,.6)}*+{\text{$t_5$}} 
 !{(-.5,2.5)}*{\bullet}="v1"
  !{(.15,2)}*{\bullet}="v2"
  !{(0.3,1.3)}*{\bullet}="v3"
  !{(0,.8)}*{\bullet}="v4"
 !{(-1,.8)}*{\bullet}="v5"
  !{(-1.3,1.3)}*{\bullet}="v6"
 !{(-1.15,2)}*{\bullet}="v7"
 "v1"-"v2"  "v2"-"v3""v4"-"v3""v5"-"v4""v7"-"v1""v6"-"v7"
"v6"-"v5""v7"-"v5""v4"-"v2""v6"-"v3""v3"-"v5""v6"-"v4"
 }& \xygraph{
!{<0cm,.5cm>;<.5cm,0cm>;<0cm,.5cm>}
!{(0,2.5)}*+{\text{$(t_{1}t_{2},t_{5}t_{3},t_{4}t_{6},$}} 
!{(0,2)}*+{\text{$t_{1}t_{7},t_{2}t_{3},t_{2}t_{4},$}} 
!{(0,1.5)}*+{\text{$t_{3}t_{4},t_{4}t_{5},t_{5}t_{6},$}}
!{(0,1)}*+{\text{$t_{5}t_{7},t_{6}t_{7},t_{3}t_{6})$}}
}
\\

\hline

\end{tabular}
\label{tab:C-M,G}
\end{table}

\newpage

\begin{table}[H]
\centering 
\begin{tabular}{|c|c|c|c|}
\hline\hline 
$|V(G)|$  & Graph $G$ & $I=I(G)$\\


\hline
\xygraph{
!{<0cm,.5cm>;<.5cm,0cm>;<0cm,.5cm>}
!{(0,2.4)}*+{\text{7}} 
!{(0,1.3)}*+{} 
}  & 
\xygraph{
 !{<0cm,.5cm>;<.5cm,0cm>;<0cm,.5cm>} 
 !{(-0.5,2.7)}*+{\text{$t_1$}} 
 !{(.15,2.2)}*+{\text{$t_2$}} 
 !{(-1.15,2.2)}*+{\text{$t_7$}} 
 !{(.5,1.3)}*+{\text{$t_3$}} 
 !{(-1.5,1.3)}*+{\text{$t_6$}} 
 !{(0,.6)}*+{\text{$t_4$}} 
 !{(-1,.6)}*+{\text{$t_5$}} 
 !{(-.5,2.5)}*{\bullet}="v1"
  !{(.15,2)}*{\bullet}="v2"
  !{(0.3,1.3)}*{\bullet}="v3"
  !{(0,.8)}*{\bullet}="v4"
 !{(-1,.8)}*{\bullet}="v5"
  !{(-1.3,1.3)}*{\bullet}="v6"
 !{(-1.15,2)}*{\bullet}="v7"
 "v1"-"v2"  "v2"-"v3""v4"-"v3""v5"-"v4""v7"-"v1""v6"-"v7""v6"-"v5"
"v3"-"v1""v6"-"v1""v6"-"v2""v7"-"v3""v7"-"v2"
}& \xygraph{
!{<0cm,.5cm>;<.5cm,0cm>;<0cm,.5cm>}
!{(0,2.5)}*+{\text{$(t_{1}t_{2},t_{1}t_{3},t_{1}t_{6},$}} 
!{(0,2)}*+{\text{$t_{1}t_{7},t_{2}t_{3},t_{2}t_{6},$}} 
!{(0,1.5)}*+{\text{$t_{3}t_{4},t_{4}t_{5},t_{5}t_{6},$}}
!{(0,1)}*+{\text{$t_{3}t_{7},t_{6}t_{7},t_{2}t_{7})$}}
}
\\


\hline
\xygraph{
!{<0cm,.5cm>;<.5cm,0cm>;<0cm,.5cm>}
!{(0,2.4)}*+{\text{7}} 
!{(0,1.3)}*+{} 
}  & 
\xygraph{
 !{<0cm,.5cm>;<.5cm,0cm>;<0cm,.5cm>} 
 !{(-0.5,2.7)}*+{\text{$t_1$}} 
 !{(.15,2.2)}*+{\text{$t_2$}} 
 !{(-1.15,2.2)}*+{\text{$t_7$}} 
 !{(.5,1.3)}*+{\text{$t_3$}} 
 !{(-1.5,1.3)}*+{\text{$t_6$}} 
 !{(0,.6)}*+{\text{$t_4$}} 
 !{(-1,.6)}*+{\text{$t_5$}} 
 !{(-.5,2.5)}*{\bullet}="v1"
  !{(.15,2)}*{\bullet}="v2"
  !{(0.3,1.3)}*{\bullet}="v3"
  !{(0,.8)}*{\bullet}="v4"
 !{(-1,.8)}*{\bullet}="v5"
  !{(-1.3,1.3)}*{\bullet}="v6"
 !{(-1.15,2)}*{\bullet}="v7"
 "v1"-"v2"  "v2"-"v3""v4"-"v3""v5"-"v4""v7"-"v1""v6"-"v7""v6"-"v5"
"v3"-"v1""v6"-"v1""v6"-"v2""v7"-"v3""v7"-"v2"
 "v1"-"v4""v1"-"v5""v4"-"v2""v5"-"v2""v5"-"v3""v6"-"v3""v6"-"v4""v7"-"v4""v7"-"v5"         
}& \xygraph{
!{<0cm,.5cm>;<.5cm,0cm>;<0cm,.5cm>}
!{(0,2.5)}*+{\text{$(t_1t_2,t_1t_3,t_1t_4,t_1t_5,t_1t_6,$}} 
!{(0,2)}*+{\text{$t_2t_3,t_2t_4,t_2t_5,t_2t_6,t_3t_4,$}} 
!{(0,1.5)}*+{\text{$t_3t_5,t_3t_6,t_4t_5,t_4t_6,t_5t_6,$}}
!{(0,1)}*+{\text{$t_7t_1,t_7t_2,t_7t_3,t_7t_4,t_7t_5,t_7t_6)$}}
}
\\
\hline



\hline
\xygraph{
!{<0cm,.5cm>;<.5cm,0cm>;<0cm,.5cm>}
!{(0,2.4)}*+{\text{8}} 
!{(0,1.3)}*+{} 
}  & 
\xygraph{
 !{<0cm,.5cm>;<.5cm,0cm>;<0cm,.5cm>} 
 !{(-0.5,2.5)}*+{\text{$t_1$}} 
 !{(.6,2)}*+{\text{$t_2$}} 
 !{(.25,1.3)}*+{\text{$t_3$}} 
 !{(0.6,.6)}*+{\text{$t_4$}} 
 !{(-1.6,.6)}*+{\text{$t_5$}} 
 !{(-1.25,1.3)}*+{\text{$t_6$}} 
  !{(-1.6,2)}*+{\text{$t_7$}} 
   !{(-0.5,.1)}*+{\text{$t_8$}} 
 !{(-.5,2.3)}*{\bullet}="v1"
  !{(0.5,1.8)}*{\bullet}="v2"
  !{(0,1.3)}*{\bullet}="v3"
  !{(0.5,.8)}*{\bullet}="v4"
 !{(-1.5,.8)}*{\bullet}="v5"
  !{(-1,1.3)}*{\bullet}="v6"
 !{(-1.5,1.8)}*{\bullet}="v7"
  !{(-.5,.3)}*{\bullet}="v8"
 "v1"-"v2" "v1"-"v3""v1"-"v6""v1"-"v7""v1"-"v2" "v4"-"v2""v1"-"v2"
"v8"-"v4""v5"-"v8"
"v7"-"v5""v1"-"v7""v6"-"v8""v8"-"v3""v7"-"v2""v4"-"v5"
"v7"-"v3""v5"-"v3""v6"-"v2""v4"-"v6"                                         
}& \xygraph{
!{<0cm,.5cm>;<.5cm,0cm>;<0cm,.5cm>}
!{(0,2.5)}*+{\text{$(t_1t_3,t_1t_2,t_1t_6,t_1t_7,t_2t_7,t_2t_6,$}} 
!{(0,2)}*+{\text{$t_2t_4,t_3t_7,t_3t_5,t_3t_8,t_4t_5,t_4t_6,$}} 
!{(0,1.5)}*+{\text{$t_4t_8,t_5t_7,t_5t_8,t_6t_8)$}}
!{(0,1)}*+{\text{}}
}
\\
\hline


\hline
\xygraph{
!{<0cm,.5cm>;<.5cm,0cm>;<0cm,.5cm>}
!{(0,2.4)}*+{\text{8}} 
!{(0,1.3)}*+{} 
}  & 
\xygraph{
 !{<0cm,.5cm>;<.5cm,0cm>;<0cm,.5cm>} 
 !{(-0.3,2)}*+{\text{$t_1$}} 
 !{(.8,2)}*+{\text{$t_2$}} 
 !{(1.2,1.3)}*+{\text{$t_3$}} 
 !{(0.8,.4)}*+{\text{$t_4$}} 
 !{(-1.8,.4)}*+{\text{$t_5$}} 
 !{(-2.2,1.3)}*+{\text{$t_6$}} 
  !{(-1.8,2)}*+{\text{$t_7$}} 
   !{(-0.5,.8)}*+{\text{$t_8$}} 
 !{(-.3,1.8)}*{\bullet}="v1"
  !{(0.5,1.8)}*{\bullet}="v2"
  !{(1,1)}*{\bullet}="v3"
  !{(0.5,.4)}*{\bullet}="v4"
 !{(-1.5,.4)}*{\bullet}="v5"
  !{(-2,1)}*{\bullet}="v6"
 !{(-1.7,1.8)}*{\bullet}="v7"
  !{(-.5,1)}*{\bullet}="v8"
   "v1"-"v2" "v3"-"v2" "v3"-"v4" "v8"-"v4" "v5"-"v8" "v6"-"v5" "v7"-"v6" "v1"-"v7" "v1"-"v2" "v1"-"v4" "v1"-"v3" "v8"-"v2" "v8"-"v3" "v4"-"v2" "v5"-"v7" "v6"-"v8"
}& \xygraph{
!{<0cm,.5cm>;<.5cm,0cm>;<0cm,.5cm>}
!{(0,2.5)}*+{\text{$(t_1t_2,t_1t_3,t_1t_4,t_1t_7,t_2t_3,t_2t_4,$}} 
!{(0,2)}*+{\text{$t_2t_8,t_3t_4,t_3t_8,t_4t_8,t_5t_6,t_5t_7,$}} 
!{(0,1.5)}*+{\text{$t_5t_8,t_6t_7,t_6t_8)$}}
!{(0,1)}*+{\text{}}
}
\\
\hline


\hline
\xygraph{
!{<0cm,.5cm>;<.5cm,0cm>;<0cm,.5cm>}
!{(0,2.4)}*+{\text{8}} 
!{(0,1.3)}*+{} 
}  & 
\xygraph{
 !{<0cm,.5cm>;<.5cm,0cm>;<0cm,.5cm>} 
 !{(-0.9,.8)}*+{\text{$t_7$}} 
 !{(.8,2)}*+{\text{$t_2$}} 
 !{(1.2,1.3)}*+{\text{$t_3$}} 
 !{(0.8,.4)}*+{\text{$t_4$}} 
 !{(-1.8,.4)}*+{\text{$t_5$}} 
 !{(-2.2,1.3)}*+{\text{$t_6$}} 
  !{(-1.8,2)}*+{\text{$t_1$}} 
   !{(-0.5,.2)}*+{\text{$t_8$}} 
 !{(-1.1,1)}*{\bullet}="v7"
  !{(0.5,1.8)}*{\bullet}="v2"
  !{(1,1)}*{\bullet}="v3"
  !{(0.5,.4)}*{\bullet}="v4"
 !{(-1.5,.4)}*{\bullet}="v5"
  !{(-2,1)}*{\bullet}="v6"
 !{(-1.7,1.8)}*{\bullet}="v1"
  !{(-.5,.4)}*{\bullet}="v8"
  "v1"-"v2""v3"-"v2""v4"-"v3""v8"-"v4""v8"-"v5""v6"-"v5""v1"-"v6""v7"-"v5""v1"-"v7""v1"-"v5""v6"-"v7"
  "v8"-"v6""v8"-"v3""v4"-"v2""v7"-"v2"              
}& \xygraph{
!{<0cm,.5cm>;<.5cm,0cm>;<0cm,.5cm>}
!{(0,2.5)}*+{\text{$(t_1t_2,t_1t_5,t_1t_6,t_1t_7,t_2t_3,t_2t_4,$}} 
!{(0,2)}*+{\text{$t_2t_7,t_3t_4,t_3t_8,t_4t_8,t_5t_6,t_5t_7,$}} 
!{(0,1.5)}*+{\text{$t_5t_8,t_6t_7,t_6t_8)$}}
!{(0,1)}*+{\text{}}
}
\\
\hline


\hline
\xygraph{
!{<0cm,.5cm>;<.5cm,0cm>;<0cm,.5cm>}
!{(0,2.4)}*+{\text{8}} 
!{(0,1.3)}*+{} 
}  & 
\xygraph{
 !{<0cm,.5cm>;<.5cm,0cm>;<0cm,.5cm>} 
 !{(-0.5,1.2)}*+{\text{$t_7$}} 
 !{(.8,2)}*+{\text{$t_2$}} 
 !{(1.2,1.3)}*+{\text{$t_3$}} 
 !{(0.8,.4)}*+{\text{$t_4$}} 
 !{(-1.8,.4)}*+{\text{$t_5$}} 
 !{(-2.2,1.3)}*+{\text{$t_6$}} 
  !{(-1.8,2)}*+{\text{$t_1$}} 
   !{(-0.5,.2)}*+{\text{$t_8$}} 
 !{(-.5,1)}*{\bullet}="v7"
  !{(0.5,1.8)}*{\bullet}="v2"
  !{(1,1)}*{\bullet}="v3"
  !{(0.5,.4)}*{\bullet}="v4"
 !{(-1.5,.4)}*{\bullet}="v5"
  !{(-2,1)}*{\bullet}="v6"
 !{(-1.7,1.8)}*{\bullet}="v1"
  !{(-.5,.4)}*{\bullet}="v8"   
  "v1"-"v2""v3"-"v2""v3"-"v4""v8"-"v4""v8"-"v5""v5"-"v6""v1"-"v6""v1"-"v5""v4"-"v2""v6"-"v7""v3"-"v7"
  "v5"-"v7""v4"-"v7""v8"-"v3""v8"-"v6""v8"-"v7"         
}& \xygraph{
!{<0cm,.5cm>;<.5cm,0cm>;<0cm,.5cm>}
!{(0,2.5)}*+{\text{$(t_1t_2,t_1t_5,t_1t_6,t_2t_3,t_2t_4,t_3t_4,$}} 
!{(0,2)}*+{\text{$t_3t_7,t_3t_8,t_4t_7,t_4t_8,t_5t_6,t_5t_7,$}} 
!{(0,1.5)}*+{\text{$t_5t_8,t_6t_7,t_6t_8,t_7t_8)$}}
!{(0,1)}*+{\text{}}
}
\\
\hline


\hline
\xygraph{
!{<0cm,.5cm>;<.5cm,0cm>;<0cm,.5cm>}
!{(0,2.4)}*+{\text{8}} 
!{(0,1.3)}*+{} 
}  & 
\xygraph{
 !{<0cm,.5cm>;<.5cm,0cm>;<0cm,.5cm>} 
 !{(-0.7,1.4)}*+{\text{$t_7$}} 
 !{(.8,2)}*+{\text{$t_2$}} 
 !{(1.2,1)}*+{\text{$t_3$}} 
 !{(0.8,0)}*+{\text{$t_4$}} 
 !{(-1.8,0)}*+{\text{$t_5$}} 
 !{(-2.2,1)}*+{\text{$t_6$}} 
  !{(-1.8,2)}*+{\text{$t_1$}} 
   !{(-0.7,.7)}*+{\text{$t_8$}} 
 !{(-.5,1.5)}*{\bullet}="v7"
  !{(0.5,1.8)}*{\bullet}="v2"
  !{(1,.9)}*{\bullet}="v3"
  !{(0.5,0)}*{\bullet}="v4"
 !{(-1.5,0)}*{\bullet}="v5"
  !{(-2,.9)}*{\bullet}="v6"
 !{(-1.7,1.8)}*{\bullet}="v1"
  !{(-.5,.6)}*{\bullet}="v8"   
  "v1"-"v2"  "v3"-"v2""v3"-"v4""v5"-"v6""v1"-"v6" "v4"-"v5"  "v1"-"v7""v2"-"v7""v3"-"v7" "v4"-"v7""v8"-"v7""v2"-"v4" "v2"-"v8""v3"-"v5""v3"-"v8" "v8"-"v4"   "v8"-"v5"                  
}& \xygraph{
!{<0cm,.5cm>;<.5cm,0cm>;<0cm,.5cm>}
!{(0,2.5)}*+{\text{$(t_1t_2,t_1t_6,t_1t_7,t_2t_3,t_2t_4,t_2t_7,$}} 
!{(0,2)}*+{\text{$t_2t_8,t_3t_4,t_3t_5,t_3t_7,t_3t_8,t_4t_5,$}} 
!{(0,1.5)}*+{\text{$t_4t_7,t_4t_8,t_5t_6,t_5t_8,t_7t_8)$}}
!{(0,1)}*+{\text{}}
}
\\
\hline

\end{tabular}
\label{tab:C-M,G1}
\end{table}

\newpage

\begin{table}[H]
\centering 
\begin{tabular}{|c|c|c|c|}
\hline\hline 
$|V(G)|$ & Graph $G$ & $I=I(G)$\\


\hline
\xygraph{
!{<0cm,.5cm>;<.5cm,0cm>;<0cm,.5cm>}
!{(0,2.4)}*+{\text{8}} 
!{(0,1.3)}*+{} 
}  & 
\xygraph{
 !{<0cm,.5cm>;<.5cm,0cm>;<0cm,.5cm>}
 !{(-1.2,2.3)}*+{\text{$t_1$}} 
  !{(0.4,1.6)}*+{\text{$t_3$}} 
 !{(0.3,1)}*+{\text{$t_4$}}
 !{(-1.3,1)}*+{\text{$t_5$}} 
 !{(-1.4,1.6)}*+{\text{$t_6$}} 
 !{(0.2,2.3)}*+{\text{$t_2$}} 
 !{(-1.2,3)}*+{\text{$t_8$}} 
 !{(0.2,3)}*+{\text{$t_7$}} 
 !{(-1,2.1)}*{\bullet}="v1"
 !{(0.2,1.4)}*{\bullet}="v3"
  !{(0,.8)}*{\bullet}="v4"
 !{(-1,.8)}*{\bullet}="v5"
 !{(-1.2,1.4)}*{\bullet}="v6"
  !{(0,2.1)}*{\bullet}="v2"
  !{(0,3)}*{\bullet}="v7"
  !{(-1,3)}*{\bullet}="v8"
  "v3"-"v2""v4"-"v3""v5"-"v4""v6"-"v5""v1"-"v6""v1"-"v3""v1"-"v4""v1"-"v5""v4"-"v2""v5"-"v2""v6"-"v2"
"v5"-"v3""v6"-"v3""v6"-"v4""v1"-"v8""v8"-"v7""v2"-"v7"
}& \xygraph{
!{<0cm,.5cm>;<.5cm,0cm>;<0cm,.5cm>}
!{(0,2.5)}*+{\text{$(t_{1}t_{8},t_{1}t_{3},t_{1}t_{4},t_{8}t_{7},$}} 
!{(0,2)}*+{\text{$t_{1}t_{5},t_{1}t_{6},t_{2}t_{3},t_{7}t_{2},$}} 
!{(0,1.5)}*+{\text{$t_{2}t_{4},t_{2}t_{5},t_{2}t_{6},$}}
!{(0,1)}*+{\text{$t_{3}t_{4},t_{3}t_{5},t_{3}t_{6},$}}
!{(0,0.5)}*+{\text{$t_{4}t_{5},t_{4}t_{6},t_{5}t_{6})$}}
}
\\
\hline


\hline
\xygraph{
!{<0cm,.5cm>;<.5cm,0cm>;<0cm,.5cm>}
!{(0,2.4)}*+{\text{8}} 
!{(0,1.3)}*+{} 
}  & 
\xygraph{
 !{<0cm,.5cm>;<.5cm,0cm>;<0cm,.5cm>} 
 !{(-1.1,.1)}*+{\text{$t_7$}} 
 !{(-1.1,2)}*+{\text{$t_2$}} 
 !{(0,2)}*+{\text{$t_3$}} 
 !{(1,2)}*+{\text{$t_4$}} 
 !{(1,0.1)}*+{\text{$t_5$}} 
 !{(0,.1)}*+{\text{$t_6$}} 
  !{(-2.1,2)}*+{\text{$t_1$}} 
   !{(-2,.1)}*+{\text{$t_8$}} 
 !{(-1.1,.3)}*{\bullet}="v7"
  !{(-1.1,1.8)}*{\bullet}="v2"
  !{(0,1.8)}*{\bullet}="v3"
  !{(1,1.8)}*{\bullet}="v4"
 !{(1,.3)}*{\bullet}="v5"
  !{(0,.3)}*{\bullet}="v6"
 !{(-2,1.8)}*{\bullet}="v1"
  !{(-2,.3)}*{\bullet}="v8"  
  "v1"-"v2""v3"-"v2""v3"-"v4""v5"-"v4""v6"-"v5""v7"-"v6""v8"-"v7""v1"-"v8""v6"-"v2""v3"-"v7"                  
}& \xygraph{
!{<0cm,.5cm>;<.5cm,0cm>;<0cm,.5cm>}
!{(0,2.5)}*+{\text{$(t_1t_2,t_1t_8,t_2t_3,t_2t_6,t_3t_4,t_3t_7,$}} 
!{(0,2)}*+{\text{$t_4t_5,t_5t_6,t_6t_7,t_7t_8)$}} 
!{(0,1.5)}*+{\text{}}
!{(0,1)}*+{\text{}}
}
\\
\hline


\hline
\xygraph{
!{<0cm,.5cm>;<.5cm,0cm>;<0cm,.5cm>}
!{(0,2.4)}*+{\text{8}} 
!{(0,1.3)}*+{} 
}  & 
\xygraph{
 !{<0cm,.5cm>;<.5cm,0cm>;<0cm,.5cm>} 
 !{(-1,2.7)}*+{\text{$t_1$}} 
 !{(1,2)}*+{\text{$t_2$}} 
 !{(-2,2)}*+{\text{$t_7$}} 
 !{(1.2,1.3)}*+{\text{$t_3$}} 
 !{(-2.2,1.3)}*+{\text{$t_6$}} 
 !{(0,.6)}*+{\text{$t_4$}} 
 !{(-1,.6)}*+{\text{$t_5$}} 
  !{(0,2.7)}*+{\text{$t_8$}} 
 !{(-1,2.5)}*{\bullet}="v1"
  !{(.8,2)}*{\bullet}="v2"
  !{(1,1.3)}*{\bullet}="v3"
  !{(0,.8)}*{\bullet}="v4"
 !{(-1,.8)}*{\bullet}="v5"
  !{(-2,1.3)}*{\bullet}="v6"
 !{(-1.8,2)}*{\bullet}="v7"
 !{(0,2.5)}*{\bullet}="v8"
 "v1"-"v2"  "v2"-"v3""v4"-"v3""v5"-"v4""v7"-"v1""v6"-"v7""v6"-"v5""v3"-"v1""v6"-"v1""v6"-"v2""v7"-"v3""v7"-"v2"
 "v1"-"v4""v1"-"v5""v4"-"v2""v5"-"v2""v5"-"v3""v6"-"v3""v6"-"v4""v7"-"v4""v7"-"v5""v8"-"v5""v8"-"v1"
 "v8"-"v2""v8"-"v3""v8"-"v4""v8"-"v6""v8"-"v7"                                 
}& \xygraph{
!{<0cm,.5cm>;<.5cm,0cm>;<0cm,.5cm>}
!{(0,2.5)}*+{\text{$(t_1t_2,t_1t_3,t_1t_4,t_1t_5,t_1t_6,$}} 
!{(0,2)}*+{\text{$t_2t_3,t_2t_4,t_2t_5,t_2t_6,t_3t_4,$}} 
!{(0,1.5)}*+{\text{$t_3t_5,t_3t_6,t_4t_5,t_4t_6,t_5t_6,$}}
!{(0,1)}*+{\text{$t_7t_1,t_7t_2,t_7t_3,t_7t_4,t_7t_5,t_7t_6,$}}
!{(0,.5)}*+{\text{$t_8t_1,t_8t_2,t_8t_3,t_8t_4,t_8t_5,t_8t_6,t_7t_8)$}}
}
\\
\hline



\hline 
\xygraph{
!{<0cm,.5cm>;<.5cm,0cm>;<0cm,.5cm>}
!{(0,2.4)}*+{\text{9}} 
!{(0,1.3)}*+{} 
}  &
\xygraph{
 !{<0cm,.5cm>;<.5cm,0cm>;<0cm,.5cm>} 
  !{(-2.1,2.5)}*+{\text{$t_1$}} 
 !{(-.8,2.5)}*+{\text{$t_2$}} 
 !{(-1.3,1.2)}*+{\text{$t_8$}} 
 !{(.4,1.6)}*+{\text{$t_3$}} 
 !{(0.4,.1)}*+{\text{$t_4$}} 
  !{(-.5,-.7)}*+{\text{$t_5$}} 
    !{(-.5,1.2)}*+{\text{$t_9$}} 
      !{(-2.6,-.3)}*+{\text{$t_6$}} 
      !{(-2.7,1)}*+{\text{$t_7$}} 
    !{(-2,2.3)}*{\bullet}="v1"
     !{(-.8,2.3)}*{\bullet}="v2"
  !{(-1.5,1)}*{\bullet}="v8"
 !{(.4,1.4)}*{\bullet}="v3"
  !{(.4,.3)}*{\bullet}="v4"
 !{(-.5,-.5)}*{\bullet}="v5"
 !{(-.5,1)}*{\bullet}="v9"  
 !{(-2.4,0)}*{\bullet}="v6"  
 !{(-2.5,1)}*{\bullet}="v7"
  "v1"-"v2"  "v2"-"v3"   "v3"-"v4"   "v4"-"v5"   "v5"-"v6"   "v6"-"v7"   "v7"-"v1"   "v1"-"v3"   "v2"-"v7"    "v5"-"v3"   "v9"-"v3"   "v9"-"v4"   "v8"-"v4"   "v4"-"v6"   "v5"-"v8"   "v5"-"v9"   "v6"-"v8"   "v6"-"v9"   "v7"-"v8"   "v8"-"v9"   
}& \xygraph{
!{<0cm,.5cm>;<.5cm,0cm>;<0cm,.5cm>}
!{(0,2.5)}*+{\text{$(t_1t_2,t_1t_3,t_1t_7,t_2t_3,t_2t_7,t_3t_5,$}} 
!{(0,2)}*+{\text{$t_4t_3,t_3t_9,t_5t_4,t_4t_6,t_4t_8,t_4t_9,t_5t_8,$}} 
!{(0,1.5)}*+{\text{$t_5t_9,t_5t_6,t_6t_7,t_6t_8,t_6t_9,t_7t_8,t_8t_9)$}}
!{(0,1)}*+{\text{}}
}
\\
\hline


\hline 
\xygraph{
!{<0cm,.5cm>;<.5cm,0cm>;<0cm,.5cm>}
!{(0,2.4)}*+{\text{9}} 
!{(0,1.3)}*+{} 
}  &
\xygraph{
 !{<0cm,.5cm>;<.5cm,0cm>;<0cm,.5cm>} 
  !{(-2.1,2.5)}*+{\text{$t_1$}} 
 !{(-.8,2.5)}*+{\text{$t_2$}} 
 !{(-.2,2)}*+{\text{$t_3$}} 
 !{(.4,1.6)}*+{\text{$t_4$}} 
 !{(0.4,.1)}*+{\text{$t_5$}} 
  !{(-.5,-.7)}*+{\text{$t_6$}} 
    !{(-1.2,-.7)}*+{\text{$t_7$}} 
      !{(-2.6,-.3)}*+{\text{$t_8$}} 
      !{(-2.7,1)}*+{\text{$t_9$}} 
    !{(-2,2.3)}*{\bullet}="v1"
     !{(-.8,2.3)}*{\bullet}="v2"
  !{(-.2,1.8)}*{\bullet}="v3"
 !{(.4,1.4)}*{\bullet}="v4"
  !{(.4,.3)}*{\bullet}="v5"
 !{(-.5,-.5)}*{\bullet}="v6"
 !{(-1.5,-.5)}*{\bullet}="v7"  
 !{(-2.4,0)}*{\bullet}="v8"  
 !{(-2.5,1)}*{\bullet}="v9"
  "v1"-"v2"  "v2"-"v3"   "v3"-"v4"   "v4"-"v5"   "v5"-"v6"   "v6"-"v7"   "v7"-"v8"   "v8"-"v9"   "v1"-"v9"    "v1"-"v3"   "v1"-"v8"   "v9"-"v2"   "v8"-"v2"   "v3"-"v9"   "v4"-"v6"   "v7"-"v4"   "v1"-"v2"   "v8"-"v5"   "v7"-"v5"   "v1"-"v2"   "v8"-"v6"     "v1"-"v2"   "v9"-"v8"   
}& \xygraph{
!{<0cm,.5cm>;<.5cm,0cm>;<0cm,.5cm>}
!{(0,2.5)}*+{\text{$(t_1t_2,t_1t_3,t_1t_8,t_1t_9,t_2t_9,t_2t_8,$}} 
!{(0,2)}*+{\text{$t_2t_3,t_3t_4,t_3t_9,t_4t_6,t_4t_5,t_7t_4,t_6t_8,$}} 
!{(0,1.5)}*+{\text{$t_5t_6,t_5t_7,t_5t_8,t_7t_8,t_8t_9,t_6t_7)$}}
!{(0,1)}*+{\text{}}
}
\\
\hline


\hline 
\xygraph{
!{<0cm,.5cm>;<.5cm,0cm>;<0cm,.5cm>}
!{(0,2.4)}*+{\text{9}} 
!{(0,1.3)}*+{} 
}  &
\xygraph{
 !{<0cm,.5cm>;<.5cm,0cm>;<0cm,.5cm>} 
  !{(-2.1,2.5)}*+{\text{$t_1$}} 
 !{(-.8,2.5)}*+{\text{$t_2$}} 
 !{(-1.2,1.1)}*+{\text{$t_8$}} 
 !{(.4,1.6)}*+{\text{$t_3$}} 
 !{(0.4,.1)}*+{\text{$t_4$}} 
  !{(-.5,-.7)}*+{\text{$t_5$}} 
    !{(-.7,.9)}*+{\text{$t_9$}} 
      !{(-2.6,-.3)}*+{\text{$t_6$}} 
      !{(-2.7,1)}*+{\text{$t_7$}} 
    !{(-2,2.3)}*{\bullet}="v1"
     !{(-.8,2.3)}*{\bullet}="v2"
  !{(-1.5,1)}*{\bullet}="v8"
 !{(.4,1.4)}*{\bullet}="v3"
  !{(.4,.3)}*{\bullet}="v4"
 !{(-.5,-.5)}*{\bullet}="v5"
 !{(-.8,.7)}*{\bullet}="v9"  
 !{(-2.4,0)}*{\bullet}="v6"  
 !{(-2.5,1)}*{\bullet}="v7"
  "v1"-"v2"  "v2"-"v3"   "v3"-"v4"   "v4"-"v5"   "v5"-"v6"   "v6"-"v7"   "v7"-"v1"   "v1"-"v3"   "v1"-"v8"    "v7"-"v2"   "v2"-"v8"   "v5"-"v3"   "v9"-"v4"   "v4"-"v6"   "v7"-"v9"   "v5"-"v9"   "v6"-"v8"   "v6"-"v9"   "v7"-"v8"   "v8"-"v9"   
}& \xygraph{
!{<0cm,.5cm>;<.5cm,0cm>;<0cm,.5cm>}
!{(0,2.5)}*+{\text{$(t_1t_2,t_2t_3,t_3t_4,t_4t_5,t_5t_6,t_6t_7,t_7t_9,$}} 
!{(0,2)}*+{\text{$t_7t_1,t_1t_3,t_1t_8,t_7t_2,t_2t_8,t_5t_3,t_9t_4,$}} 
!{(0,1.5)}*+{\text{$t_4t_6,t_7t_9,t_5t_9,t_6t_8,t_6t_9,t_7t_8,t_8t_9)$}}
!{(0,1)}*+{\text{}}
}
\\
\hline

\end{tabular}
\end{table}

\newpage

\begin{table}[H]
\centering 
\begin{tabular}{|c|c|c|c|}
\hline\hline 
$|V(G)|$ & Graph $G$ & $I=I(G)$ \\


\hline
\xygraph{
!{<0cm,.5cm>;<.5cm,0cm>;<0cm,.5cm>}
!{(0,2.4)}*+{\text{9}} 
!{(0,1.3)}*+{} 
}  &
\xygraph{
 !{<0cm,.5cm>;<.5cm,0cm>;<0cm,.5cm>} 
 !{(-0.5,2.7)}*+{\text{$t_1$}} 
 !{(.15,2.2)}*+{\text{$t_2$}} 
 !{(-1.15,2.2)}*+{\text{$t_7$}} 
 !{(.5,1.3)}*+{\text{$t_3$}} 
 !{(-1.5,1.3)}*+{\text{$t_6$}} 
 !{(0.2,.6)}*+{\text{$t_4$}} 
 !{(-1.2,.6)}*+{\text{$t_5$}} 
  !{(0.2,-.2)}*+{\text{$t_8$}} 
 !{(-1.2,-.2)}*+{\text{$t_9$}} 
 !{(-.5,2.5)}*{\bullet}="v1"
  !{(.15,2)}*{\bullet}="v2"
  !{(0.3,1.3)}*{\bullet}="v3"
  !{(0,.8)}*{\bullet}="v4"
 !{(-1,.8)}*{\bullet}="v5"
  !{(-1.3,1.3)}*{\bullet}="v6"
 !{(-1.15,2)}*{\bullet}="v7"
 !{(0,-.2)}*{\bullet}="v8"
 !{(-1,-.2)}*{\bullet}="v9"
 "v1"-"v2"  "v2"-"v3""v4"-"v3""v7"-"v1""v6"-"v7""v6"-"v5""v3"-"v1""v6"-"v1""v6"-"v2""v7"-"v3""v7"-"v2"
 "v1"-"v4""v1"-"v5""v4"-"v2""v5"-"v2""v5"-"v3""v6"-"v3""v6"-"v4""v7"-"v4""v7"-"v5"  "v9"-"v5" "v8"-"v4"
 "v9"-"v8"      
}& \xygraph{
!{<0cm,.5cm>;<.5cm,0cm>;<0cm,.5cm>}
!{(0,2.5)}*+{\text{$(t_1t_2,t_1t_3,t_1t_4,t_1t_5,t_1t_6,t_5t_9,$}} 
!{(0,2)}*+{\text{$t_2t_3,t_2t_4,t_2t_5,t_2t_6,t_3t_4,t_4t_8,$}} 
!{(0,1.5)}*+{\text{$t_3t_5,t_3t_6,t_4t_6,t_5t_6,t_8t_9,$}}
!{(0,1)}*+{\text{$t_7t_1,t_7t_2,t_7t_3,t_7t_4,t_7t_5,t_7t_6)$}}
}
\\
\hline


\hline 
\xygraph{
!{<0cm,.5cm>;<.5cm,0cm>;<0cm,.5cm>}
!{(0,2.4)}*+{\text{9}} 
!{(0,1.3)}*+{} 
}  &
\xygraph{
 !{<0cm,.5cm>;<.5cm,0cm>;<0cm,.5cm>} 
  !{(-2.1,2.5)}*+{\text{$t_1$}} 
 !{(-.8,2.5)}*+{\text{$t_2$}} 
  !{(.4,1.6)}*+{\text{$t_3$}} 
 !{(0.4,.1)}*+{\text{$t_4$}} 
  !{(-.5,-.7)}*+{\text{$t_5$}} 
    !{(-2.4,-.2)}*+{\text{$t_6$}} 
      !{(-2.7,1)}*+{\text{$t_7$}} 
      !{(-1.4,1.7)}*+{\text{$t_8$}} 
       !{(-.3,1.6)}*+{\text{$t_9$}} 
    !{(-2,2.3)}*{\bullet}="v1"
     !{(-.8,2.3)}*{\bullet}="v2"
 !{(.4,1.4)}*{\bullet}="v3"
  !{(.4,.3)}*{\bullet}="v4"
 !{(-.5,-.5)}*{\bullet}="v5"
 !{(-2.4,0)}*{\bullet}="v6"  
 !{(-2.5,1)}*{\bullet}="v7"
 !{(-1.5,1.5)}*{\bullet}="v8"
  !{(-.3,1.4)}*{\bullet}="v9"  
  "v1"-"v2"  "v2"-"v3"   "v3"-"v4"   "v4"-"v5"   "v5"-"v6"   "v6"-"v7"   "v7"-"v1" "v1"-"v8" "v1"-"v6" "v3"-"v9" "v3"-"v5" "v1"-"v8" "v4"-"v6" "v4"-"v7" "v4"-"v8" "v5"-"v8" "v7"-"v5" "v8"-"v9"  "v6"-"v8" "v6"-"v9" "v7"-"v8" "v7"-"v9" "v9"-"v8"  "v5"-"v9""v4"-"v9"        
  }& \xygraph{
!{<0cm,.5cm>;<.5cm,0cm>;<0cm,.5cm>}
!{(0,2.5)}*+{\text{$(t_1t_2,t_2t_3,t_3t_4,t_4t_5,t_5t_6,t_6t_7,t_8t_9,$}} 
!{(0,2)}*+{\text{$t_7t_1,t_1t_6,t_1t_8,t_3t_5,t_3t_9,t_4t_6,t_7t_4,t_7t_9,$}} 
!{(0,1.5)}*+{\text{$t_4t_8,t_7t_5,t_5t_8,t_5t_9,t_6t_9,t_6t_8,t_7t_8,t_4t_9)$}}
!{(0,1)}*+{\text{}}
}
\\
\hline


\hline 
\xygraph{
!{<0cm,.5cm>;<.5cm,0cm>;<0cm,.5cm>}
!{(0,2.4)}*+{\text{9}} 
!{(0,1.3)}*+{} 
}  &
\xygraph{
 !{<0cm,.5cm>;<.5cm,0cm>;<0cm,.5cm>} 
  !{(-2.1,2.5)}*+{\text{$t_1$}} 
 !{(-.8,2.5)}*+{\text{$t_2$}} 
  !{(.4,1.6)}*+{\text{$t_3$}} 
 !{(0.4,.1)}*+{\text{$t_4$}} 
  !{(-.5,-.7)}*+{\text{$t_5$}} 
    !{(-2.4,-.2)}*+{\text{$t_6$}} 
      !{(-2.9,.8)}*+{\text{$t_7$}} 
      !{(-2.7,1.7)}*+{\text{$t_8$}} 
       !{(-1.3,1.3)}*+{\text{$t_9$}} 
    !{(-2,2.3)}*{\bullet}="v1"
     !{(-.8,2.3)}*{\bullet}="v2"
 !{(.4,1.4)}*{\bullet}="v3"
  !{(.4,.3)}*{\bullet}="v4"
 !{(-.5,-.5)}*{\bullet}="v5"
 !{(-2.4,0)}*{\bullet}="v6"  
 !{(-2.7,.8)}*{\bullet}="v7"
 !{(-2.5,1.7)}*{\bullet}="v8"
  !{(-1,1.2)}*{\bullet}="v9"  
  "v1"-"v2"  "v2"-"v3"   "v3"-"v4"   "v4"-"v5"   "v5"-"v6"   "v6"-"v7"  "v1"-"v8" "v8"-"v7" "v1"-"v7" "v1"-"v3" "v8"-"v2" "v4"-"v2" "v9"-"v2" "v8"-"v7" "v9"-"v3" "v5"-"v3" "v6"-"v4" "v4"-"v6" "v5"-"v7" "v5"-"v9" "v8"-"v6" "v9"-"v6" "v4"-"v9"          
  }& \xygraph{
!{<0cm,.5cm>;<.5cm,0cm>;<0cm,.5cm>}
!{(0,2.5)}*+{\text{$(t_1t_2,t_2t_3,t_3t_4,t_4t_5,t_5t_6,t_6t_7,t_4t_9$,}} 
!{(0,2)}*+{\text{$t_7t_8,t_1t_8,t_7t_1,t_1t_3,t_2t_8,t_2t_9,t_2t_4,$}} 
!{(0,1.5)}*+{\text{$t_3t_9,t_3t_5,t_4t_6,t_7t_5,t_5t_9,t_6t_8,t_6t_9)$}}
!{(0,1)}*+{\text{}}
}
\\
\hline


\hline 
\xygraph{
!{<0cm,.5cm>;<.5cm,0cm>;<0cm,.5cm>}
!{(0,2.4)}*+{\text{9}} 
!{(0,1.3)}*+{} 
}  &
\xygraph{
 !{<0cm,.5cm>;<.5cm,0cm>;<0cm,.5cm>} 
  !{(-2.5,2.4)}*+{\text{$t_1$}} 
 !{(-1.3,2.7)}*+{\text{$t_2$}} 
  !{(.1,2.4)}*+{\text{$t_3$}} 
 !{(0.1,-.4)}*+{\text{$t_4$}} 
  !{(-1.3,-.8)}*+{\text{$t_5$}} 
    !{(-2.5,-.4)}*+{\text{$t_6$}} 
      !{(-2.3,.6)}*+{\text{$t_7$}} 
      !{(-1.1,.7)}*+{\text{$t_8$}} 
       !{(-.1,.6)}*+{\text{$t_9$}} 
    !{(-2.5,2.1)}*{\bullet}="v1"
     !{(-1.3,2.5)}*{\bullet}="v2"
 !{(.1,2.1)}*{\bullet}="v3"
  !{(.1,-.1)}*{\bullet}="v4"
 !{(-1.3,-.5)}*{\bullet}="v5"
 !{(-2.5,-.1)}*{\bullet}="v6"  
 !{(-2,.5)}*{\bullet}="v7"
 !{(-1.1,1)}*{\bullet}="v8"
  !{(-.4,.5)}*{\bullet}="v9"  
  "v1"-"v2" "v3"-"v2"   "v3"-"v4"   "v4"-"v5"   "v5"-"v6"   "v1"-"v6" "v1"-"v3" "v1"-"v8"  "v1"-"v7" "v7"-"v2"                                
  "v8"-"v2" "v9"-"v2" "v3"-"v8" "v3"-"v9"    "v4"-"v6" "v4"-"v8" "v4"-"v9" "v5"-"v9" "v5"-"v7" "v7"-"v6" "v8"-"v6" "v9"-"v7"
  }& \xygraph{
!{<0cm,.5cm>;<.5cm,0cm>;<0cm,.5cm>}
!{(0,2.5)}*+{\text{$(t_1t_2,t_2t_3,t_3t_4,t_4t_5,t_5t_6,t_1t_6,$}} 
!{(0,2)}*+{\text{$t_1t_3,t_1t_7,t_1t_8,t_2t_9,t_2t_7,t_2t_8,t_9t_3,t_3t_8,$}} 
!{(0,1.5)}*+{\text{$t_4t_6,t_4t_8,t_4t_9,t_9t_5,t_5t_7,t_7t_6,t_8t_6,t_7t_9)$}}
!{(0,1)}*+{\text{}}
}
\\
\hline


\hline
\xygraph{
!{<0cm,.5cm>;<.5cm,0cm>;<0cm,.5cm>}
!{(0,2.4)}*+{\text{9}} 
!{(0,1.3)}*+{} 
}  & 
\xygraph{
 !{<0cm,.5cm>;<.5cm,0cm>;<0cm,.5cm>} 
  !{(-2.5,2.4)}*+{\text{$t_1$}} 
 !{(.1,2.3)}*+{\text{$t_2$}} 
  !{(.8,.2)}*+{\text{$t_3$}} 
 !{(0,-.3)}*+{\text{$t_4$}} 
  !{(-1.8,-.4)}*+{\text{$t_5$}} 
    !{(-2.7,.1)}*+{\text{$t_6$}} 
      !{(-2.3,1.2)}*+{\text{$t_7$}} 
      !{(-1.2,1.9)}*+{\text{$t_8$}} 
       !{(-.1,1.2)}*+{\text{$t_9$}} 
    !{(-2.5,2.1)}*{\bullet}="v1"
     !{(-0,2.1)}*{\bullet}="v2"
 !{(.5,.2)}*{\bullet}="v3"
  !{(-.2,-.1)}*{\bullet}="v4"
 !{(-1.8,-.2)}*{\bullet}="v5"
 !{(-2.7,.3)}*{\bullet}="v6"  
 !{(-2,1.2)}*{\bullet}="v7"
 !{(-1.2,1.6)}*{\bullet}="v8"
  !{(-.4,1.2)}*{\bullet}="v9"  
  "v1"-"v2" "v2"-"v3"   "v3"-"v4"   "v4"-"v5"   "v5"-"v6"   "v6"-"v1"  "v1"-"v7" "v2"-"v9"  "v3"-"v5"  "v3"-"v8"  "v3"-"v9" "v4"-"v6" "v4"-"v7" "v4"-"v8" "v4"-"v9" "v5"-"v7" "v5"-"v8" "v5"-"v9"   "v6"-"v7"  "v6"-"v8"  "v8"-"v7"  "v9"-"v8"  
  }& \xygraph{
!{<0cm,.5cm>;<.5cm,0cm>;<0cm,.5cm>}
!{(0,2.5)}*+{\text{$(t_1t_2,t_2t_3,t_3t_4,t_4t_5,t_5t_6,t_1t_6,t_7t_8,$}} 
!{(0,2)}*+{\text{$t_7t_1,t_2t_9,t_3t_5,t_3t_9,t_3t_8,t_4t_6,t_7t_4,t_8t_9,$}} 
!{(0,1.5)}*+{\text{$t_4t_8,t_4t_9,t_5t_9,t_5t_8,t_5t_7,t_7t_6,t_8t_6)$}}
!{(0,1)}*+{\text{}}
}
\\
\hline

\end{tabular}
\end{table}

\newpage

\begin{table}[H]
\centering 
\begin{tabular}{|c|c|c|}
\hline\hline 
$|V(G)|$ &Graph $G$ & $I=I(G)$ \\


\hline 
\xygraph{
!{<0cm,.5cm>;<.5cm,0cm>;<0cm,.5cm>}
!{(0,2.4)}*+{\text{9}} 
!{(0,1.3)}*+{} 
}  &
\xygraph{
 !{<0cm,.5cm>;<.5cm,0cm>;<0cm,.5cm>} 
  !{(-2.7,2.4)}*+{\text{$t_9$}} 
 !{(-1.8,2.4)}*+{\text{$t_8$}} 
  !{(-1,2.4)}*+{\text{$t_7$}} 
 !{(-.1,2.4)}*+{\text{$t_6$}} 
  !{(.8,2.4)}*+{\text{$t_5$}} 
    !{(.4,.1)}*+{\text{$t_4$}} 
      !{(-.4,.1)}*+{\text{$t_3$}} 
      !{(-1.4,.1)}*+{\text{$t_2$}} 
       !{(-2.3,.1)}*+{\text{$t_1$}} 
    !{(-2.7,2.1)}*{\bullet}="v9"
     !{(-1.8,2.1)}*{\bullet}="v8"
 !{(-1,2.1)}*{\bullet}="v7"
  !{(-.1,2.1)}*{\bullet}="v6"
 !{(.8,2.1)}*{\bullet}="v5"
 !{(.4,.3)}*{\bullet}="v4"  
 !{(-.4,.3)}*{\bullet}="v3"
 !{(-1.4,.3)}*{\bullet}="v2"
  !{(-2.3,.3)}*{\bullet}="v1"  
     "v1"-"v2" "v2"-"v3"   "v3"-"v4"   "v4"-"v5"   "v5"-"v6"  "v6"-"v7" "v7"-"v8" "v8"-"v9" "v9"-"v1"   "v2"-"v7""v8"-"v3" "v5"-"v3" "v4"-"v6"    
  }& \xygraph{
!{<0cm,.5cm>;<.5cm,0cm>;<0cm,.5cm>}
!{(0,2.5)}*+{\text{$(t_1t_2,t_2t_3,t_3t_4,t_4t_5,t_5t_6,t_6t_7,$}} 
!{(0,2)}*+{\text{$t_7t_8,t_8t_9,t_1t_9,t_7t_2,t_3t_8,t_5t_3,t_6t_4)$}} 
!{(0,1.5)}*+{\text{}}
!{(0,1)}*+{\text{}}
}
\\
\hline


\hline  
\xygraph{
!{<0cm,.5cm>;<.5cm,0cm>;<0cm,.5cm>}
!{(0,2.4)}*+{\text{9}} 
!{(0,1.3)}*+{} 
}  &
\xygraph{
 !{<0cm,.5cm>;<.5cm,0cm>;<0cm,.5cm>} 
  !{(-2,-0.4)}*+{\text{$t_1$}} 
 !{(-1,0)}*+{\text{$t_2$}} 
  !{(.3,-.4)}*+{\text{$t_3$}} 
 !{(1.2,0)}*+{\text{$t_4$}} 
  !{(1.2,1.5)}*+{\text{$t_5$}} 
   !{(.3,1.9)}*+{\text{$t_6$}} 
      !{(-1,1.5)}*+{\text{$t_7$}} 
      !{(-2,1.9)}*+{\text{$t_8$}} 
        !{(-2.6,.91)}*+{\text{$t_9$}} 
    !{(-2,-.2)}*{\bullet}="v1"
     !{(-1,.2)}*{\bullet}="v2"
 !{(.3,-.2)}*{\bullet}="v3"
  !{(1.2,.2)}*{\bullet}="v4"
 !{(1.2,1.3)}*{\bullet}="v5"
 !{(.3,1.7)}*{\bullet}="v6"  
 !{(-1,1.3)}*{\bullet}="v7"
 !{(-2,1.7)}*{\bullet}="v8"
  !{(-2.5,.71)}*{\bullet}="v9"    
  "v1"-"v2" "v2"-"v3"   "v3"-"v4"   "v4"-"v5"   "v5"-"v6"  "v6"-"v7" "v7"-"v8" "v8"-"v9" "v9"-"v1"   "v1"-"v7"  "v1"-"v8"  "v2"-"v4" "v2"-"v8" "v2"-"v9"  "v3"-"v5"  "v3"-"v6"   "v4"-"v5"  "v4"-"v6"   "v5"-"v7" "v9"-"v7"
  }& \xygraph{
!{<0cm,.5cm>;<.5cm,0cm>;<0cm,.5cm>}
!{(0,2.5)}*+{\text{$(t_1t_2,t_2t_3,t_3t_4,t_4t_5,t_5t_6,t_6t_7,$}} 
!{(0,2)}*+{\text{$t_7t_8,t_8t_9,t_1t_9,t_7t_1,t_1t_8,t_2t_9,t_2t_8,t_2t_4,$}} 
!{(0,1.5)}*+{\text{$t_3t_6,t_3t_5,t_6t_4,t_5t_7,t_7t_9)$}}
!{(0,1)}*+{\text{}}
}
\\
\hline


\hline  
\xygraph{
!{<0cm,.5cm>;<.5cm,0cm>;<0cm,.5cm>}
!{(0,2.4)}*+{\text{9}} 
!{(0,1.3)}*+{} 
}  &
\xygraph{
 !{<0cm,.5cm>;<.5cm,0cm>;<0cm,.5cm>} 
  !{(-2,2.5)}*+{\text{$t_1$}} 
 !{(0,2.5)}*+{\text{$t_2$}} 
  !{(1.2,1.7)}*+{\text{$t_3$}} 
 !{(.8,1)}*+{\text{$t_4$}} 
  !{(-1,.3)}*+{\text{$t_5$}} 
   !{(-2.7,1)}*+{\text{$t_6$}} 
      !{(-3.2,1.7)}*+{\text{$t_7$}} 
      !{(-1.3,1.7)}*+{\text{$t_8$}} 
        !{(-.9,2.1)}*+{\text{$t_9$}} 
    !{(-2,2.3)}*{\bullet}="v1"
     !{(0,2.3)}*{\bullet}="v2"
 !{(1,1.7)}*{\bullet}="v3"
  !{(.5,1)}*{\bullet}="v4"
 !{(-1,.5)}*{\bullet}="v5"
 !{(-2.5,1)}*{\bullet}="v6"  
 !{(-3,1.7)}*{\bullet}="v7"
 !{(-1.3,1.5)}*{\bullet}="v8"
  !{(-.7,2)}*{\bullet}="v9"  
    "v1"-"v2" "v2"-"v3"   "v3"-"v4"   "v4"-"v5"   "v5"-"v6"  "v6"-"v7" "v7"-"v1" "v6"-"v1" "v2"-"v9" "v2"-"v4"   
    "v3"-"v5" "v3"-"v8" "v3"-"v9"  "v4"-"v8" "v4"-"v9" "v5"-"v7" "v5"-"v8" "v5"-"v9"   "v6"-"v8"  "v7"-"v8" "v8"-"v9"  
  }& \xygraph{
!{<0cm,.5cm>;<.5cm,0cm>;<0cm,.5cm>}
!{(0,2.5)}*+{\text{$(t_1t_2,t_2t_3,t_3t_4,t_4t_5,t_5t_6,t_6t_7,t_8t_9,$}} 
!{(0,2)}*+{\text{$t_7t_1,t_1t_6,t_2t_9,t_2t_4,t_3t_8,t_3t_9,t_3t_5,$}} 
!{(0,1.5)}*+{\text{$t_4t_9,t_4t_8,t_5t_9,t_5t_7,t_5t_8,t_8t_6,t_7t_8)$}}
!{(0,1)}*+{\text{}}
}
\\
\hline


\hline  
\xygraph{
!{<0cm,.5cm>;<.5cm,0cm>;<0cm,.5cm>}
!{(0,2.4)}*+{\text{9}} 
!{(0,1.3)}*+{} 
}  &
\xygraph{
 !{<0cm,.5cm>;<.5cm,0cm>;<0cm,.5cm>} 
  !{(-1.3,3.2)}*+{\text{$t_1$}} 
 !{(1,2.5)}*+{\text{$t_2$}} 
  !{(1.3,1.5)}*+{\text{$t_3$}} 
 !{(1,.5)}*+{\text{$t_4$}} 
  !{(-.5,-.4)}*+{\text{$t_5$}} 
   !{(-2,-.4)}*+{\text{$t_6$}} 
      !{(-3.6,.5)}*+{\text{$t_7$}} 
      !{(-4,1.5)}*+{\text{$t_8$}} 
        !{(-3.6,2.5)}*+{\text{$t_9$}} 
    !{(-1.3,3)}*{\bullet}="v1"
     !{(.8,2.5)}*{\bullet}="v2"
 !{(1.1,1.5)}*{\bullet}="v3"
  !{(.8,.5)}*{\bullet}="v4"
 !{(-.5,-.2)}*{\bullet}="v5"
 !{(-2,-.2)}*{\bullet}="v6"  
 !{(-3.4,.5)}*{\bullet}="v7"
 !{(-3.8,1.5)}*{\bullet}="v8"
  !{(-3.4,2.5)}*{\bullet}="v9"    
  "v1"-"v2" "v1"-"v3" "v1"-"v4" "v1"-"v5" "v1"-"v6" "v1"-"v7" "v1"-"v8" "v1"-"v9" "v2"-"v3" "v2"-"v4" "v2"-"v5" "v2"-"v6" "v2"-"v7" "v2"-"v8" "v2"-"v9" "v3"-"v4" "v3"-"v5" "v3"-"v6" "v3"-"v7" "v3"-"v8" "v3"-"v9" "v4"-"v5" "v4"-"v6" "v4"-"v7" "v4"-"v8" "v4"-"v9" "v5"-"v6" "v5"-"v7" "v5"-"v8" "v5"-"v9"
  "v6"-"v7" "v6"-"v8" "v6"-"v9" "v7"-"v8" "v7"-"v9"  "v8"-"v9"
  }& \xygraph{
!{<0cm,.5cm>;<.5cm,0cm>;<0cm,.5cm>}
!{(0,2.5)}*+{\text{$(t_1t_2,t_1t_3,t_1t_4,t_1t_5,t_1t_6,t_1t_7,t_1t_8,t_1t_9,$}} 
!{(0,2)}*+{\text{$t_2t_3,t_2t_4,t_2t_5,t_2t_6,t_2t_7,t_2t_8,t_2t_9,t_3t_4,$}} 
!{(0,1.5)}*+{\text{$t_3t_5,t_3t_6,t_3t_7,t_3t_8,t_3t_9,t_4t_5,t_4t_6,t_4t_7,$}}
!{(0,1)}*+{\text{$t_4t_8,t_4t_9,t_5t_6,t_5t_7,t_5t_8,t_5t_9,t_6t_7,$}}
!{(0,.5)}*+{\text{$t_6t_8,t_6t_9,t_7t_8,t_7t_9,t_8t_9)$}}
}
\\
\hline


\hline  
\xygraph{
!{<0cm,.5cm>;<.5cm,0cm>;<0cm,.5cm>}
!{(0,2.4)}*+{\text{9}} 
!{(0,1.3)}*+{} 
}  &
\xygraph{
 !{<0cm,.5cm>;<.5cm,0cm>;<0cm,.5cm>} 
  !{(-1,3.2)}*+{\text{$t_1$}} 
 !{(1,3.2)}*+{\text{$t_2$}} 
  !{(2.2,2)}*+{\text{$t_3$}} 
 !{(1,-.2)}*+{\text{$t_4$}} 
  !{(-1,-.2)}*+{\text{$t_5$}} 
   !{(-2.2,2)}*+{\text{$t_6$}} 
      !{(0,2.6)}*+{\text{$t_7$}} 
      !{(-.8,1.3)}*+{\text{$t_8$}} 
        !{(.8,1.3)}*+{\text{$t_9$}} 
    !{(-1,3)}*{\bullet}="v1"
     !{(1,3)}*{\bullet}="v2"
 !{(2,2)}*{\bullet}="v3"
  !{(1,0)}*{\bullet}="v4"
 !{(-1,0)}*{\bullet}="v5"
 !{(-2,2)}*{\bullet}="v6"  
 !{(0,2.4)}*{\bullet}="v7"
 !{(-.6,1.3)}*{\bullet}="v8"
  !{(.6,1.3)}*{\bullet}="v9"    
   "v1"-"v2" "v2"-"v3"   "v3"-"v4"   "v4"-"v5"   "v5"-"v6"  "v6"-"v1"  "v1"-"v8" "v2"-"v9"   "v3"-"v7"   "v4"-"v8" "v5"-"v9" "v6"-"v7" "v8"-"v7" "v9"-"v7" "v8"-"v9"    
  }& \xygraph{
!{<0cm,.5cm>;<.5cm,0cm>;<0cm,.5cm>}
!{(0,2.5)}*+{\text{$(t_1t_2,t_2t_3,t_3t_4,t_4t_5,t_5t_6,t_1t_6,$}} 
!{(0,2)}*+{\text{$t_1t_8,t_2t_9,t_3t_7,t_4t_8,t_5t_9,t_6t_7,t_7t_8,$}} 
!{(0,1.5)}*+{\text{$t_7t_9,t_8t_9)$}}
!{(0,1)}*+{\text{}}
}
\\
\hline

\end{tabular}
\end{table}

\newpage

\begin{table}[H]
\centering 
\begin{tabular}{|c|c|c|}
\hline\hline 
$|V(G)|$ &Graph $G$ & $I=I(G)$ \\


\hline  
\xygraph{
!{<0cm,.5cm>;<.5cm,0cm>;<0cm,.5cm>}
!{(0,2.4)}*+{\text{9}} 
!{(0,1.3)}*+{} 
}  &
\xygraph{
 !{<0cm,.5cm>;<.5cm,0cm>;<0cm,.5cm>} 
  !{(-1.5,-1.7)}*+{\text{$t_1$}} 
 !{(0,-1.7)}*+{\text{$t_2$}} 
  !{(.9,-.7)}*+{\text{$t_3$}} 
 !{(2,-1.2)}*+{\text{$t_4$}} 
  !{(2.2,1.1)}*+{\text{$t_5$}} 
   !{(.2,1.7)}*+{\text{$t_6$}} 
      !{(-1.5,1.7)}*+{\text{$t_7$}} 
      !{(-2.2,0)}*+{\text{$t_8$}} 
        !{(-.7,-.4)}*+{\text{$t_9$}} 
    !{(-1.5,-1.5)}*{\bullet}="v1"
     !{(0,-1.5)}*{\bullet}="v2"
 !{(.8,-.5)}*{\bullet}="v3"
  !{(2,-1)}*{\bullet}="v4"
 !{(2,1)}*{\bullet}="v5"
 !{(0,1.5)}*{\bullet}="v6"  
 !{(-1.5,1.5)}*{\bullet}="v7"
 !{(-2,0)}*{\bullet}="v8"
  !{(-.5,-.5)}*{\bullet}="v9"    
  "v1"-"v2" "v2"-"v3"   "v3"-"v4"   "v4"-"v5"   "v5"-"v6"  "v6"-"v7" "v7"-"v8" "v8"-"v1" "v1"-"v9" "v9"-"v2" "v6"-"v2" "v7"-"v3" "v9"-"v3" "v9"-"v6"     
  }& \xygraph{
!{<0cm,.5cm>;<.5cm,0cm>;<0cm,.5cm>}
!{(0,2)}*+{\text{$(t_1t_2,t_2t_3,t_3t_4,t_4t_5,t_5t_6,t_6t_7,$}} 
!{(0,1.5)}*+{\text{$t_7t_8,t_8t_1,t_1t_9,t_2t_6,t_2t_9,t_3t_9,t_3t_7,$}} 
!{(0,1)}*+{\text{$t_6t_9)$}}
}
\\
\hline


\hline  
\xygraph{
!{<0cm,.5cm>;<.5cm,0cm>;<0cm,.5cm>}
!{(0,2.4)}*+{\text{9}} 
!{(0,1.3)}*+{} 
}  &
\xygraph{
 !{<0cm,.5cm>;<.5cm,0cm>;<0cm,.5cm>} 
  !{(-1.5,1.7)}*+{\text{$t_1$}} 
 !{(-2.5,.3)}*+{\text{$t_2$}} 
  !{(-1,-.7)}*+{\text{$t_3$}} 
 !{(0,-1,2)}*+{\text{$t_4$}} 
  !{(1,-.7)}*+{\text{$t_5$}} 
   !{(2.5,.3)}*+{\text{$t_6$}} 
      !{(1.5,1.7)}*+{\text{$t_7$}} 
      !{(.5,1)}*+{\text{$t_8$}} 
        !{(-.5,1)}*+{\text{$t_9$}} 
    !{(-1.5,1.5)}*{\bullet}="v1"
     !{(-2.5,.5)}*{\bullet}="v2"
 !{(-1,-.5)}*{\bullet}="v3"
  !{(0,-1)}*{\bullet}="v4"
 !{(1,-.5)}*{\bullet}="v5"
 !{(2.5,.5)}*{\bullet}="v6"  
 !{(1.5,1.5)}*{\bullet}="v7"
 !{(.5,.8)}*{\bullet}="v8"
  !{(-.5,.8)}*{\bullet}="v9"    
  "v1"-"v2" "v2"-"v3"   "v3"-"v4"   "v4"-"v5"   "v5"-"v6"  "v6"-"v7" "v7"-"v8" "v8"-"v9" "v9"-"v1"  "v3"-"v5"  "v3"-"v8"   "v4"-"v9"  "v4"-"v8"  "v9"-"v5"  "v8"-"v9" 
  }& \xygraph{
!{<0cm,.5cm>;<.5cm,0cm>;<0cm,.5cm>}
!{(0,2.5)}*+{\text{$(t_1t_2,t_2t_3,t_3t_4,t_4t_5,t_5t_6,t_6t_7,$}} 
!{(0,2)}*+{\text{$t_7t_8,t_8t_9,t_1t_9,t_3t_5,t_3t_8,t_4t_9,t_4t_8,$}} 
!{(0,1.5)}*+{\text{$t_5t_9,t_8t_9)$}}
}
\\
\hline


\hline  
\xygraph{
!{<0cm,.5cm>;<.5cm,0cm>;<0cm,.5cm>}
!{(0,2.4)}*+{\text{9}} 
!{(0,1.3)}*+{} 
}  &
\xygraph{
 !{<0cm,.5cm>;<.5cm,0cm>;<0cm,.5cm>} 
  !{(-2.5,-1.7)}*+{\text{$t_1$}} 
 !{(-1.2,-1.4)}*+{\text{$t_2$}} 
  !{(-.1,-.4)}*+{\text{$t_3$}} 
 !{(1,-1.1)}*+{\text{$t_4$}} 
  !{(1,1.1)}*+{\text{$t_5$}} 
   !{(-.2,1.1)}*+{\text{$t_6$}} 
      !{(-1.2,1.4)}*+{\text{$t_7$}} 
      !{(-2.5,1.7)}*+{\text{$t_8$}} 
        !{(-3.6,0)}*+{\text{$t_9$}} 
    !{(-2.5,-1.5)}*{\bullet}="v1"
     !{(-1.2,-1.2)}*{\bullet}="v2"
 !{(-.2,-.1)}*{\bullet}="v3"
  !{(1,-.9)}*{\bullet}="v4"
 !{(1,.9)}*{\bullet}="v5"
 !{(-.2,.9)}*{\bullet}="v6"  
 !{(-1.2,1.2)}*{\bullet}="v7"
 !{(-2.5,1.5)}*{\bullet}="v8"
  !{(-3.3,0)}*{\bullet}="v9"  
   "v1"-"v2" "v2"-"v3"   "v3"-"v4"   "v4"-"v5"   "v5"-"v6"  "v6"-"v7" "v7"-"v8" "v8"-"v9" "v9"-"v1"    "v1"-"v3"  "v1"-"v7"  "v1"-"v8"      "v2"-"v7"  "v2"-"v8"   "v2"-"v9"       "v3"-"v5"  "v3"-"v9"  "v3"-"v8"  "v6"-"v4"  "v7"-"v9"   
  }& \xygraph{
!{<0cm,.5cm>;<.5cm,0cm>;<0cm,.5cm>}
!{(0,2)}*+{\text{$(t_1t_2,t_2t_3,t_3t_4,t_4t_5,t_5t_6,t_6t_7,t_7t_9,$}} 
!{(0,1.5)}*+{\text{$t_7t_8,t_8t_9,t_1t_9,t_1t_3,t_1t_7,t_1t_8,t_2t_7,$}} 
!{(0,1)}*+{\text{$t_2t_8,t_2t_9,t_3t_5,t_3t_8,t_3t_9,t_4t_6)$}}
}
\\
\hline


\hline  
\xygraph{
!{<0cm,.5cm>;<.5cm,0cm>;<0cm,.5cm>}
!{(0,2.4)}*+{\text{9}} 
!{(0,1.3)}*+{} 
}  &
\xygraph{
 !{<0cm,.5cm>;<.5cm,0cm>;<0cm,.5cm>} 
  !{(-1,2.3)}*+{\text{$t_1$}} 
  !{(0.4,1.6)}*+{\text{$t_3$}} 
 !{(0.3,1)}*+{\text{$t_4$}}
 !{(-1.3,1)}*+{\text{$t_5$}} 
 !{(-1.4,1.6)}*+{\text{$t_6$}} 
 !{(0,2.3)}*+{\text{$t_2$}} 
      !{(-.6,0)}*+{\text{$t_7$}} 
      !{(-2.5,1.7)}*+{\text{$t_8$}} 
        !{(-3.6,0)}*+{\text{$t_9$}} 
     !{(-1,2.1)}*{\bullet}="v1"
 !{(0.2,1.4)}*{\bullet}="v3"
  !{(0,.8)}*{\bullet}="v4"
 !{(-1,.8)}*{\bullet}="v5"
 !{(-1.2,1.4)}*{\bullet}="v6"
  !{(0,2.1)}*{\bullet}="v2"
 !{(-.6,0.2)}*{\bullet}="v7"
 !{(-2.5,1.5)}*{\bullet}="v8"
  !{(-3.3,0)}*{\bullet}="v9"  
 "v1"-"v2""v3"-"v2""v4"-"v3""v5"-"v4""v6"-"v5""v1"-"v6""v1"-"v3""v1"-"v4""v1"-"v5""v4"-"v2""v5"-"v2""v6"-"v2"
"v5"-"v3""v6"-"v3""v6"-"v4" "v7"-"v4""v7"-"v3" "v7"-"v2" "v7"-"v5""v7"-"v9" "v8"-"v6" "v9"-"v8" "v1"-"v8"
  }& \xygraph{
!{<0cm,.5cm>;<.5cm,0cm>;<0cm,.5cm>}
!{(0,2)}*+{\text{$(t_1t_2,t_1t_3,t_1t_4,t_1t_5,t_2t_3,t_2t_4,t_2t_5,t_2t_6,t_2t_7,$}} 
!{(0,1.5)}*+{\text{$t_3t_4,t_3t_5,t_3t_6,t_3t_7,t_4t_5,t_4t_6,t_4t_7,t_5t_6,$}} 
!{(0,1)}*+{\text{$t_5t_7,t_6t_1,t_6t_8,t_1t_8,t_7t_9,t_8t_9)$}}
!{(0,.5)}*+{\text{}}
}
\\
\hline
\end{tabular}
\end{table}

\newpage

\bibliographystyle{plain}

\end{document}